\def\titlerunning#1{\gdef\titrun{#1}}
\def\author#1{\gdef\autrun{\def\and{\unskip, }#1}\gdef\@author{#1}}
\def\address#1{{\def\and{\\\hspace*{18pt}}\renewcommand{\thefootnote}{}%
\footnote {#1}}%
\markboth{\autrun}{\titrun}}
\def\email#1{e-mail: #1}
\numberwithin{equation}{section}
\theoremstyle{plain}
\newtheorem{theorem}[equation]{Theorem}
\newtheorem{corollary}[equation]{Corollary}
\newtheorem{proposition}[equation]{Proposition}
\newtheorem{lemma}[equation]{Lemma}
\newtheorem{theorem-definition}[equation]{Theorem Definition}
\theoremstyle{definition}
\newtheorem{definition}[equation]{Definition}
\newtheorem{notation}[equation]{Notation}
\newtheorem{example}[equation]{Example}
\newtheorem{remark}[equation]{Remark}
\DeclareMathOperator{\Tor}{Tor}
\DeclareMathOperator{\tr}{tr} 
\DeclareMathOperator{\ocone}{\overline{cone}}
\DeclareMathOperator{\Td}{Td}
\DeclareMathOperator{\cht}{\widetilde{ch}}
\DeclareMathOperator{\Sym}{Sym}
\DeclareMathOperator{\pd}{\partial}
\DeclareMathOperator{\cpd}{\overline{\partial}}
 \DeclareMathOperator{\pr}{pr}
 \DeclareMathOperator{\dd}{d}
\DeclareMathOperator{\ch}{ch} 
\DeclareMathOperator{\rk}{rk} 
\DeclareMathOperator{\Ob}{Ob} 
 \DeclareMathOperator{\Ker}{Ker}
\DeclareMathOperator{\Hom}{Hom}
\DeclareMathOperator{\Id}{id}
\DeclareMathOperator{\Spec}{Spec}
\DeclareMathOperator{\cone}{cone}
\DeclareMathOperator{\KA}{\mathbf{KA}}
\DeclareMathOperator{\Db}{\mathbf{D}^{b}}
\DeclareMathOperator{\hDb}{\mathbf{\widehat{D}}^{b}}
\DeclareMathOperator{\oDb}{\overline{\mathbf{D}}^{b}}
\DeclareMathOperator{\oSm}{\overline{\mathbf{Sm}}}
\DeclareMathOperator{\Sm}{\mathbf{Sm}}
\DeclareMathOperator{\im}{Im\,}
\renewcommand{\Im}{{\im}}
\newcommand{\bomega}{\boldsymbol{\omega}}
\newcommand{\ov}{\overline}
\newcommand{\dra}{\dashrightarrow}
\newcommand{\Ld}{}
\newcommand{\Rd}{}
\newcommand{\Lotimes}{\otimes^{\Ld}}
\newcommand{\FS}{\text{\rm FS}}
\newcommand{\can}{\text{{\rm can}}}
\newcommand{\ttwist}{\flat}
\newcommand{\ZZ}{{\mathbb Z}}
\newcommand{\RR}{{\mathbb R}}
\newcommand{\CC}{{\mathbb C}}
\newcommand{\OO}{{\mathcal O}}
\newcommand{\PP}{{\mathbb P}}
\newcommand{\DD}{{\mathbb D}}
\def\?{\ ???\ \immediate\write16{}%
\immediate\write16{Warning: There was still a question mark . . . }%
\immediate\write16{}}
\begin{document}
\titlerunning{holomorphic torsion}
\title{Generalized holomorphic analytic torsion}
\author{Jos\'e Ignacio Burgos Gil \and Gerard Freixas i Montplet \and
  R\u azvan Li\c tcanu}
\date{}
\maketitle
\address{J.~I. Burgos Gil: Instituto de Ciencias Matem\'aticas (CSIC-UAM-UCM-UC3), Calle Nicol\'as Cabrera 15
28049 Madrid, Spain;
\email{burgos@icmat.es} \and G. Freixas i Montplet:
Institut de Math\'ematiques de Jussieu (IMJ), Centre National
  de la Recherche Scientifique (CNRS), France;
\email{freixas@math.jussieu.fr} \and R. Li\c tcanu:
Faculty of Mathematics, University Al. I. Cuza Iasi, Romania;
\email{litcanu@uaic.ro}}

\begin{abstract}
\noindent
In this paper we extend the holomorphic analytic torsion classes of
Bismut and K\"ohler to arbitrary projective morphisms between
smooth algebraic complex varieties. To this end,
we propose an axiomatic definition and give a
classification of the
theories of generalized holomorphic analytic torsion classes for
projective morphisms.
The extension of the holomorphic analytic torsion classes of
Bismut and K\"ohler is obtained as the theory of generalized analytic
torsion classes associated to $-R/2$, $R$ being the $R$-genus.
As application of the axiomatic characterization, we give new simpler
proofs of known properties of holomorpic analytic torsion classes, we
give a characterization of the $R$ genus, and we
construct a direct image of hermitian structures for projective morphisms.
\end{abstract}

\maketitle

\section{Introduction}
\label{sec:introduction}

The aim of this paper is to extend the classes of analytic torsion
forms introduced by Bismut and K\"ohler to arbitrary projective
morphisms between complex algebraic varieties. The main tool for this
extension is an axiomatic characterization of all the possible
theories of holomorphic analytic torsion classes. Before stating
what we mean by a theory of holomorphic analytic torsion
classes, we briefly recall the origin of the analytic torsion.

The R-torsion is a topological invariant attached to certain
euclidean flat vector bundles on a finite CW-complex.
This invariant was introduced by
Reidemeister and generalized by Franz in order to distinguish
non-homeomorphic lens spaces that have the same
homology and homotopy groups. Let $W$ be a  connected CW-complex and
let $K$ be an orthogonal representation of $\pi _{1}(W)$. Then $K$
defines a flat vector bundle with an euclidean inner product
$E_{K}$. Assume that the
chain complex of $W$ with values in $E_{K}$ is acyclic. Then the
R-torsion is the
determinant of this complex with respect to a preferred basis.

Later, Ray and Singer introduced an analytic analogue of the
R-torsion and they conjectured that, for compact riemannian manifolds,
this analytic torsion
agrees with the R-torsion. This conjecture was proved by Cheeger and
M\"uller. If $W$ is a riemannian manifold and $K$ is as before, then
we have the de Rham complex of $W$ with values in $E_{K}$ at our
disposal. The hypothesis on $K$ implies that $(\Omega ^{\ast}(W,E_{K}),\dd)$ is
also acyclic. Then the analytic torsion is essentially the determinant of
the de Rham complex. Here the difficulty lies in that the vector
spaces $\Omega ^{p}(W,E_{K})$ are infinite dimensional and therefore the
``determinant'' has to be defined using a zeta function regularization
involving the laplacian. More details on the construction of
R-torsion and analytic torsion can be found in \cite{RaySinger:RTorsion}.

Ray and Singer observed that, with the help of
hermitian metrics, the acyclicity condition can be removed. Moreover,
their definition of analytic torsion can
be extended to any elliptic complex.
In the paper \cite{RaySinger:ATCM}, they introduced a
holomorphic analogue of the analytic torsion as the determinant of the
Dolbeault complex. They also studied some of its
properties and computed some examples. In particular, they showed that
this invariant depends on the complex structure and they gave a hint
that the holomorphic analytic torsion should be interesting in number
theory. This holomorphic analytic torsion and its generalizations are
the main object of study of the present paper. Since this is the only
kind of analytic torsion that we will consider, throughout the paper,
by analytic torsion we
will mean holomorphic analytic torsion.

In the paper \cite{Quillen:dCRo}, Quillen, using the
analytic torsion, associated to each holomorphic hermitian vector
bundle on a Riemann surface a hermitian metric on the determinant of
its cohomology. Furthermore, he showed that this metric varies smoothly
with the holomorphic structure on the vector bundle. He also computed
the curvature of the hermitian line bundle on the space of
all complex structures obtained in this way.

Subsequently Bismut and Freed \cite{BismutFreed:EFI},
\cite{BismutFreed:EFII} generalized the construction of Quillen to
families of Dirac operators on the fibers of a smooth fibration. They
obtained a smooth metric and a unitary connection on the determinant
bundle associated with the family of Dirac operators. Furthermore,
they computed the curvature of this connection, which agrees with the
degree 2 part of the differential form obtained by Bismut in his proof
of the Local Index theorem \cite{Bismut:indexdirac}. Later, in a
series of papers \cite{BismutGilletSoule:at},
\cite{BismutGilletSoule:atII}, \cite{BismutGilletSoule:atIII}, Bismut,
Gillet and Soul\'e considered the case of a holomorphic submersion
endowed with a holomorphic hermitian vector bundle. They defined a
Quillen type metric on the determinant of the cohomology of the
holomorphic vector bundle. In the locally K\"ahler case, they showed
the compatibility with the constructions of Bismut-Freed. In addition
they described the variation of the Quillen metric under change of the
metric on the vertical tangent bundle and on the hermitian vector
bundle. The results of \cite{BismutGilletSoule:at},
\cite{BismutGilletSoule:atII}, \cite{BismutGilletSoule:atIII}
represent a rigidification of \cite{BismutFreed:EFI},
\cite{BismutFreed:EFII}.  All in all, these works explain the
relationship between analytic torsion and the Atiyah-Singer index
theorem and, in the algebraic case, with Grothendieck's relative
version of the Riemann-Roch theorem.

In \cite{Deligne:dc}, Deligne, inspired by the Arakelov formalism,
gave a formula for the Quillen metric that can be seen as a very
precise version of the degree one case of the Riemann-Roch theorem for
families of curves. This result is in the same spirit as the
arithmetic Riemann-Roch theorem of Faltings \cite{Faltings:cas}.

In the paper \cite{GSATAT}, Gillet and Soul\'e conjectured an
arithmetic Riemann-Roch formula that generalizes the results of
Deligne and Faltings. Besides the analytic torsion or its avatar, the
Quillen metric, this Riemann-Roch formula involves a mysterious new
odd additive characteristic class, the $R$-genus, that they computed
with the help of Zagier.

In the work \cite{BismutLebeau:CiQm} Bismut and Lebeau studied the
behavior of the analytic torsion with respect to complex
immersions. Their compatibility formula also involved the
$R$-genus. Later Bost \cite{Bost:immersion} and Roessler
\cite{Roessler:ARR} explained, using geometric arguments, why the same
genus appears both in the arithmetic Riemann-Roch formula and the
Bismut-Lebeau compatibility formula. However these geometric arguments
do not characterize the $R$-genus.

Gillet and Soul\'e \cite{GilletSoule:aRRt} proved the
degree one part of the arithmetic Riemann-Roch theorem. A crucial
ingredient of the proof is the compatibility formula of Bismut-Lebeau.

In order to establish the arithmetic Riemann-Roch theorem in all
degrees it was necessary to generalize the analytic torsion and define
higher analytic torsion classes. It was clear from
\cite{GilletSoule:aRRt} that, once a suitable theory of higher
analytic torsion classes satisfying certain properties were developed,
then the arithmetic Riemann-Roch theorem would follow. A first
definition of such forms was given by Gillet and Soul\'e in
\cite{GSATAT}, but they did not prove all the necessary properties. A
second equivalent definition was given in \cite{Bismut-Kohler} by
Bismut and K\"ohler, where some of the needed properties are
proved. The compatibility of higher analytic torsion classes with
complex immersions, i.e. the generalization of Bismut-Lebeau
compatibility formula, was proved in \cite{Bismut:Asterisque}.  As a
consequence, Gillet, Soul\'e and R\"ossler
\cite{GilletRoesslerSoule:_arith_rieman_roch_theor_in_higher_degrees}
extended the arithmetic Riemann-Roch theorem to arbitrary degrees.

In the book \cite{Faltings:RR}, Faltings followed a similar strategy
to define direct images of hermitian vector bundles and proved an
arithmetic Riemann-Roch formula up to a unique unknown odd genus.

The arithmetic Riemann-Roch theorems of Gillet-Soul\'e and Faltings
deal only with projective morphisms between arithmetic varieties such
that, at the level of complex points, define a submersion. By
contrast, in his thesis \cite{Zha99:_rieman_roch} Zha follows a
completely different strategy to establish an arithmetic Riemann-Roch
theorem without analytic torsion. His formula does not involve the
$R$-genus. Moreover Zha's theorem is valid for any projective morphism
between arithmetic varieties.

In \cite{Soule:lag}, Soul\'e advocates for an axiomatic
characterization of the analytic torsion, similar to the axiomatic
characterization of Bott-Chern classes given by Bismut-Gillet-Soul\'e
in \cite{BismutGilletSoule:at}.  Note that the R-torsion has also been
generalized to higher degrees giving rise to different higher torsion
classes. In \cite{Igusa:Axioms}, Igusa gives an axiomatic
characterization of these higher torsion classes.

We now explain more precisely what we mean by a theory of generalized
analytic torsion classes. The central point is the relationship
between analytic torsion and the Grothendieck-Riemann-Roch theorem.

Let $\pi \colon X\to Y$ be a smooth projective morphism of smooth
complex varieties. Let $\omega $ be a closed $(1,1)$ form on $X$ that
induces a K\"ahler metric on the fibers of $\pi $ and, moreover, a hermitian metric
on the relative tangent bundle $T_{\pi }$. We denote $\ov T_{\pi }$ the relative tangent bundle provided with this metric.

Let $\ov F=(F,h^{F})$ be a hermitian vector bundle on $X$ such that
for every $i\ge 0$, $R^{i}\pi _{\ast} F$ is locally free. We consider
on $R^{i}\pi _{\ast} F$ the $L^{2}$ metric obtained using Hodge theory
on the fibers of $\pi $ and denote the corresponding hermitian vector
bundle as $\ov{R^{i}\pi _{\ast}F}$.  To these data, Bismut and
K\"ohler associate an analytic torsion differential form $\tau $ that
satisfies the differential equation
 \begin{equation}\label{eq:82}
  \ast \partial\bar \partial\tau = \sum(-1)^{i}\ch(\ov{R^{i}\pi
    _{\ast}F})-\pi _{\ast}(\ch(\ov F)\Td(\ov T_{\pi })),
\end{equation}
where $\ast $ is a normalization factor that is irrelevant here (see
\ref{sec:high-analyt-tors}). Moreover, if we consider the class of
$\tau $ up to $\Im\partial+\Im\bar \partial$, then $\tau $ behaves nicely
with respect to changes of metrics.

The Grothendieck-Riemann-Roch theorem in de Rham cohomology says that
the differential form on the right side of equation \eqref{eq:82} is
exact. Therefore, the existence of the higher analytic torsion classes
provides us an analytic proof of this theorem.

Since the Grothendieck-Riemann-Roch theorem is valid with more
generality, it is natural to extend the notion of higher analytic
torsion classes to non-smooth morphisms. To this end we will use the
language of hermitian structures on the objects of the bounded derived
category of coherent sheaves developed in
\cite{BurgosFreixasLitcanu:HerStruc}. In particular we will make
extensive use of the category $\oDb$ introduced in \emph{loc. cit.}.
Since, from now on, derived
categories will be the natural framework, all functors will tacitly
be assumed to be derived functors.
By reasons explained in \emph{loc. cit.} we will restrict ourselves to
the algebraic
category. Let $f\colon X \to Y$ be a projective
morphism between smooth complex algebraic varieties. Let $\ov F$ be a
hermitian vector bundle on $X$. Now, the relative tangent complex $T_{f
}$ and the derived direct image $f_{\ast}F$ are objects of the
bounded derived category of coherent sheaves on $X$ and $Y$
respectively. Since $X$ and $Y$ are smooth, using resolutions by
locally free sheaves, we can choose hermitian structures on $T_{f
}$ and $f_{\ast}F$.
Hence we have characteristic
forms $\ch(\ov{f_{\ast}F})$ and $\Td(\ov T_{f})$.
We denote by $\overline f$ the morphism $f$ together
with the choice of hermitian structure on $T_{f}$. Then the triple
$\ov{\xi}=(\ov f,\ov F,\ov{f_{\ast}F})$ will be called a
\emph{relative hermitian vector bundle}. This is a particular case of
the relative metrized complexes of Section~\ref{sec:transv-morph}.

Then, a \emph{generalized
analytic torsion class} for $\ov{\xi }$ is the class modulo
$\Im\partial+\Im\bar \partial$
of a current that satisfies the differential equation
\begin{equation}
  \label{eq:83}
  \ast \partial\bar \partial\tau = \ch(\ov{f
    _{\ast}F})-f_{\ast}(\ch(\ov F)\Td(\ov T_{f})).
\end{equation}
Note that such current $\tau$ always exists.
Again, the Grothendieck-Riemann-Roch theorem in de Rham cohomology
implies that the right hand side of equation \eqref{eq:83} is an exact
current. Thus, if $Y$ is proper, the $dd
^{c}$-lemma implies the existence of such a current. When $Y$ is
non-proper,  a compactification argument allows us
to reduce to the proper case.

Of course, in each particular case, there are many choices for $\tau
$. We can add to $\tau $ any closed current and obtain a new solution
of equation \eqref{eq:83}. By a
\emph{theory of generalized analytic torsion classes} we mean a
coherent way of choosing a solution of equation  \eqref{eq:83} for all
relative hermitian vector bundles, satisfying certain natural
minimal set of properties.

Each theory of generalized analytic torsion classes gives
rise to a definition of direct images in arithmetic $K$-theory and
therefore to an arithmetic Riemann-Roch formula. In fact, the
arithmetic Riemann-Roch theorems of Gillet-Soul\'e and of Zha
correspond to different choices of a theory of generalized analytic
torsion
classes.
We leave for a
subsequent paper the discussion of the relation with the arithmetic
Riemann-Roch formula.

Since each projective morphism is the composition of a closed
immersion followed by the projection of a projective bundle, it is
natural to study first the analytic torsion classes for closed
immersions and projective bundles and then combine them in a global
theory of analytic torsion classes.

In \cite{BurgosLitcanu:SingularBC} the authors studied the case of closed
immersions (see Section
\ref{sec:analyt-tors-clos}). The generalized analytic torsion classes for closed
immersions are called
singular Bott-Chern classes and we will use both terms
interchangeably. The definition of a \emph{theory of
  singular Bott-Chern classes} is obtained by imposing axioms
analogous to those
defining the classical Bott-Chern classes
\cite{GilletSoule:MR854556}. Namely, a theory of singular Bott-Chern
classes is an assignment that, to each relative hermitian vector
bundle $\ov \xi =(\ov f,\ov F,\ov{f_{\ast}F})$, with $f$ a closed
immersion, assigns the class of a current $T(\ov \xi )$ on $Y$,
satisfying the
following properties:
\begin{enumerate}
\item \label{item:5} the differential equation \eqref{eq:83};
\item \label{item:10} functoriality for morphisms that are transverse to $f$;
\item \label{item:54} a normalization condition.
\end{enumerate}
A crucial observation is that, unlike
the classical situation, these axioms do not uniquely characterize
the singular Bott-Chern classes. Consequently there are various
nonequivalent
theories of singular Bott-Chern classes. They are classified
by an arbitrary characteristic class of $F$ and $T_{f}$. If we further
impose the condition that the theory is \emph{transitive} (that is,
compatible with composition of closed immersions) and \emph{compatible with
the projection formula} then the ambiguity is reduced to an arbitrary additive
genus on $T_{f}$.
 The uniqueness can be obtained by
adding to the conditions \ref{item:5}--\ref{item:54}  an
additional homogeneity property. The theory obtained is transitive and
compatible with the projection formula and agrees (up to
normalization) with the theory
introduced in \cite{BismutGilletSoule:MR1086887}.

Similarly, one can define a theory of analytic torsion classes for
projective spaces (Section \ref{sec:projsp}). This is an assignment that, to each relative
hermitian vector bundle $\ov \xi =(\ov f,\ov F,\ov{f_{\ast}F})$, where
$f\colon \PP^{n}_{Y}\to Y$ is the projection of a  trivial projective
bundle, assigns the class of a current $T(\ov \xi )$ satisfying the
properties analogous to \ref{item:5}-\ref{item:54} below, plus the
additivity and the compatibility with the projection formula.
The theories of analytic torsion classes for
projective spaces are classified by their values in the cases $Y=\Spec
\CC$, $n\ge 0$, $F=\mathcal{O}(k)$, $0\le k \le n$ for one particular
choice of metrics (see Theorem \ref{thm:1}).

We say that a theory of analytic torsion classes for closed immersions
and one for projective spaces are compatible if they satisfy a
compatibility equation similar to Bismut-Lebeau compatibility formula
for the diagonal
immersion $\Delta\colon \PP^{n}_{\CC}\to \PP^{n}_{\CC}\times
\PP^{n}_{\CC}$, $n\ge 0$. Given a theory of singular Bott-Chern classes that
is transitive and compatible with the projection formula, there exists
a unique theory of analytic torsion classes for projective spaces that
is compatible with it (Theorem \ref{thm:15}).

The central result of this paper (Theorem \ref{thm:gen_anal_tor}) is
that, given a theory of singular
Bott-Chern classes and a compatible theory of analytic torsion classes for
projective spaces, they can be combined to produce a unique theory of
generalized analytic torsion classes (Definition
\ref{def:genAT}). Moreover, every theory of analytic torsion classes
arises in this way. Thus we have a complete classification of the theories of
generalized analytic torsion classes by additive
genera.

Once we have proved the classification theorem, we derive several
applications.
The first consequence of Theorem \ref{thm:gen_anal_tor} is that the
classes of the analytic torsion forms of Bismut-K\"ohler arise as the
restriction to K\"ahler fibrations of the theory of generalized
analytic torsion classes associated to minus one half of the $R$-genus
(Theorem \ref{thm:19}). In particular, we have succeeded to extend
Bismut-K\"ohler analytic torsion classes to arbitrary projective
morphisms in the algebraic category.
Moreover, we
reprove and generalize the theorems of
Berthomieu-Bismut \cite{BerthomieuBismut} and Ma \cite{Ma:MR1765553},
\cite{Ma:MR1796698} on the compatibility of analytic torsion with the
composition of submersions (Corollary~\ref{cor:comp_sub}).

The second application of the classification theorem is a
characterization of the $R$-genus.
From the axiomatic point of view, the role played by
the $R$-genus is mysterious. It would seem more natural to consider the generalized
analytic torsion classes associated to the trivial genus
$0$. This is the choice made implicitly by Zha in his thesis
\cite{Zha99:_rieman_roch}. In fact, with our point of view, one of the
main results of Zha's thesis is the existence of a theory of analytic
torsion classes associated to the trivial genus. This theory leads to
an arithmetic Riemann-Roch
formula identical to the classical one without any correction
term. Thus, one is tempted to consider the $R$-genus as an artifact of
the analytic definition of the analytic torsion. Nevertheless, by the
work of several authors, the $R$-genus
seems to have a deeper meaning. A paradigmatic example is the
computation by Bost and K\"uhn \cite{Kuehn:gainc} of the arithmetic
self-intersection of the line bundle of modular forms on a modular
curve, provided with the Petersson metric. This formula gives an
arithmetic meaning to the first term of the $R$-genus. Thus it is
important to characterize the $R$-genus from an axiomatic
point of view and to understand its role in the above computations.

From a theorem of Bismut \cite{Bismut:deRham} we know that the
Bismut-K\"ohler analytic torsion classes of the relative de Rham complex
of a K\"ahler fibration (with the appropriate hermitian structures)
vanish. This
result is important because one of the main difficulties
to apply the arithmetic Riemann-Roch theorem is precisely the
estimation of the analytic torsion. Moreover, this result explains why
the terms of the $R$-genus appear in different arithmetic
computations. For instance, the equivariant version of this result
(due to Maillot and Roessler in degree 0 and to Bismut in general)
allows Maillot and Roessler \cite{MaillotRoessler:MR2123937} to prove
some cases of a conjecture of Gross-Deligne.

The above vanishing property characterizes the analytic
torsion classes of Bismut and K\"ohler. In order to show this,
we first construct the
dual theory $T^{\vee}$ to a given theory $T$ of generalized analytic
torsion classes (Theorem Definition \ref{thm-def:T_dual}). A theory is
self-dual ($T=T^{\vee}$) if and only if the even coefficients of the associated genus
vanish (Corollary \ref{cor:char_self_dual}). In particular,
Bismut-K\"ohler's theory is self-dual. Self-duality can also be characterized in
terms of the de Rham complex of smooth morphisms (Theorem
\ref{thm:char_van_dR}). A theory $T$ is self-dual if its components of
bidegree $(2p-1,p)$, $p$ odd, in the Deligne complex, vanish on the
relative de Rham
complexes of K\"ahler fibrations. Finally,
in Theorem \ref{thm:23} we show that, if it exists, a theory of
analytic torsion classes that vanishes, on all degrees, on the relative de Rham
complexes of K\"ahler fibrations is unique, hence it agrees with
Bismut-K\"ohler's one.
In fact, to characterize this theory, it is enough to assume the
vanishing of the analytic torsion classes for the relative de Rham
complexes of K\"ahler fibrations of
relative dimension one.
To establish this characterization we
appeal to the non-vanishing of the tautological class $\kappa_{g-2}$
on the moduli stack $\mathcal{M}_{g}$ of smooth curves of genus~$g\geq
2$.

The third application of generalized analytic torsion classes is
the construction of direct images of hermitian structures. We consider
the category $\oSm_{\ast/\CC}$ introduced in
\cite{BurgosFreixasLitcanu:HerStruc}. The objects of this category are
smooth complex varieties, and the morphisms are projective morphisms
equipped with a hermitian structure on the relative
tangent complex. Assume that we have chosen a theory of generalized
analytic torsion classes. Let $\ov f\colon X\to Y$ be a morphism in
$\oSm_{\ast/\CC}$. One would like to define a direct image functor
$f_{\ast}\colon \oDb(X)\to \oDb(Y)$.
It turns out
that, using analytic torsion, we can not define the direct image
functor on the category $\oDb$
and we have to introduce a new category $\hDb$. Roughly speaking, the
relation between $\hDb$ and $\Db$, is the same as the relation between
the arithmetic $K$-groups and the usual
$K$-groups (\cite{GilletSoule:vbhm}). Then we are able to define a
direct image functor
$f_{\ast}\colon \hDb(X)\to \hDb(Y)$,
that satisfies the composition rule, projection formula and base
change. Moreover, if the theory of generalized analytic torsion is
self-dual (as the Bismut-K\"ohler theory) this functor satisfies a
Grothendieck duality theorem. In a forthcoming paper, the direct image functor
will be the base of an arithmetic Grothendieck-Riemann-Roch
theorem for projective morphisms.

The last application that we discuss is a new proof of
a theorem of Bismut-Bost on the singularity of the Quillen metric for
degenerating families of curves, whose singular fibers have at most
ordinary double points \cite{Bismut-Bost}.
In contrast with
\emph{loc. cit.}, where the spectral definition of the Ray-Singer
analytic torsion is required, our arguments rely on the existence of a
generalized theory for arbitrary projective morphisms and some
elementary computations of Bott-Chern classes.
This theorem has already
been generalized by Bismut \cite{Bismut:degeneracy} and Yoshikawa
\cite{Yoshikawa} to families of varieties of arbitrary dimension.
In fact, our approach is very similar to the one in
\cite{Bismut:degeneracy} and \cite{Yoshikawa}. One of the main
ingredients of their proof is  Bismut-Lebeau immersion formula, while our
approach uses implicitly Bismut's generalization of the immersion
formula in the comparison between Bismut-K\"ohler analytic torsion and
a theory of generalized analytic torsion classes. But what we want to
emphasize is that, once we have identified Bismut-K\"ohler as (part
of) a theory of generalized analytic torsion classes, many arguments
can be simplified considerably because the theory has been extended to
non-smooth projective morphism.
For simplicity, we treat only the case of families of curves and the
Quillen metric, but the methods can be applied to higher dimensional
families and analytic torsion forms of higher degree.

A few words about notations.  The normalizations of characteristic
classes and Bott-Chern classes in this paper differ from the ones used
by Bismut, Gillet-Soul\'e and other authors. The first difference is
that they work with real valued characteristic classes, while we use
characteristic classes in Deligne cohomology, that naturally include
the algebro-geometric twist. The second difference is a factor $1/2$
in Bott-Chern classes, that explains the factor $1/2$ that appears in
the characteristic class associated to the
torsion classes of Bismut-K\"ohler. This change of normalization
appears already in \cite{Burgos:CDB}
and its objective is to avoid the factor $1/2$ that appears in the
definition of arithmetic degree in \cite[\S 3.4.3]{GilletSoule:ait}
and the factor $2$ that appears in \cite[Theorem
3.5.4]{GilletSoule:ait} when relating Green currents with Beilinson
regulator. The origin of this factor is that the natural second order
differential equation that appears when defining Deligne-Beilinson
cohomology is $\dd_{\mathcal{D}}=-2\partial\bar \partial$, while the
operator used when dealing with real valued forms is
\begin{math}
  \dd \dd^{c}=\frac{1}{4\pi i}\dd_{\mathcal{D}}.
\end{math}
Thus the characteristic classes that appear in the present article
only agree with the ones in the papers of Bismut, Gillet and Soul\'e
after renormalization.  With respect to the work of these authors we
have also changed the sign of the differential equation that
characterizes singular Bott-Chern classes. In this way, the same
differential equation appears when considering both, singular
Bott-Chern classes and analytic torsion classes.  This change is
necessary to combine them.

We point out that our construction of generalized analytic
torsion classes is influenced by
the thesis of Zha \cite{Zha99:_rieman_roch}, where the author
uses implicitly a theory of analytic torsion classes different from
that of Bismut-K\"ohler.

In the unpublished e-print \cite{Weng:rBCsc}, L. Weng gives another
axiomatic approach to analytic torsion classes, only for smooth morphisms between K\"ahler
fibrations. This forces him to include a
continuity condition with respect to the deformation to the normal cone
as one of the axioms. The remaining axioms he uses are: the differential
equation, functoriality with respect to cartesian squares,
compatibility with respect to the projection formula and two anomaly
formulas. A collection of differential forms satisfying these axioms
are called relative Bott-Chern secondary characteristic classes.
These characteristic classes are not unique. The main
result of Weng's paper is that any
two such
theories are related by an additive genus.
Moreover he is able to obtain a weak form of the existence theorem for
relative Bott-Chern secondary characteristic classes.

Further applications of the generalized analytic torsion
classes are left for future work. We plan to prove generalizations of
the arithmetic Grothendieck-Riemann-Roch theorem of Gillet-Soul\'e
\cite{GilletSoule:aRRt} and Gillet-R\"ossler-Soul\'e
\cite{GilletRoesslerSoule:_arith_rieman_roch_theor_in_higher_degrees}
to arbitrary projective morhisms, along the lines of
\cite{BurgosLitcanu:SingularBC}.

It is possible to compute explicitly the characteristic
numbers of the unique theory of analytic torsion classes for projective
spaces compatible with the homogeneous theory for closed
immersions. This computation
makes the characterization of generalized analytic torsion
classes more precise. Nevertheless, since this computation is much more transparent
when written in terms of properties of arithmetic Chow groups and
the Riemann-Roch theorem, we leave it to the paper devoted to the
arithmetic Riemann-Roch theorem.

We also plan to study the possible axiomatic characterization of
equivariant analytic torsion classes. Note that the characterization of
equivariant singular Bott-Chern forms has already been obtained by Tang in
\cite{Tang:ueBC}.

\section{Deligne complexes, transverse morphisms and relative metrized complexes}
\label{sec:transv-morph}

In this section we fix the notations and conventions used through the
article, we also
recall the definition of transverse morphisms and
we review some basic properties. Finally we introduce the notion of
relative metrized complex, and explain some basic constructions.

The natural context where one can define the Bott-Chern
classes and the analytic torsion classes is that of Deligne
complexes. For the convenience of the reader we will summarize in this
section the basic facts about the Deligne complexes we will use in the
sequel. For more details the reader is referred to \cite{Burgos:CDB}
and \cite{BurgosKramerKuehn:cacg}.

 \begin{definition} \label{def:22}
A \emph{Dolbeault complex} $A=(A^{\ast}_{\mathbb{R}},\dd_{A})$ is
a bounded below graded complex of real vector spaces 
equipped with a compatible  bigrading on $A_{\mathbb{C}}=A_{\mathbb{R}}
\otimes_{\mathbb{R}}{\mathbb{C}}$, i.e.,
\begin{displaymath}
A^{n}_{\mathbb{C}}=\bigoplus_{p+q=n}A^{p,q},
\end{displaymath}
satisfying the following properties:
\begin{enumerate}
\item[(i)]
The differential $\dd_{A}$ can be decomposed as the sum $\dd_{A}=
\partial+\bar{\partial}$ of operators $\partial$ of type $(1,0)$,
respectively $\bar{\partial}$ of type $(0,1)$.
\item[(ii)]
The symmetry property $\overline{A^{p,q}}=A^{q,p}$ holds, where
$\overline{\phantom{M}}$ denotes complex conjugation.
\end{enumerate}
\end{definition}

The basic example of Dolbeault complex is the complex of differential
forms on a smooth
variety $X$ over $\CC$, denoted $E^{\ast}(X)_{\RR}$.

Following \cite[\S 5.2]{BurgosKramerKuehn:cacg}, to a Dolbeault complex
one assigns a Deligne complex denoted $\mathcal{D}^{\ast}(A,\ast)$. In
this paper we will only use the following pieces of this complex:
\begin{align*}
  \mathcal{D}^{2p+1}(A,p)&= (A^{p,p+1}\oplus A^{p+1,p})\cap (2\pi
  i)^{p}A^{2p+1}_{\RR},\\
  \mathcal{D}^{2p}(A,p)&= A^{p,p}\cap (2\pi
  i)^{p}A^{2p}_{\RR},\\
  \mathcal{D}^{2p-1}(A,p)&= A^{p-1,p-1}\cap (2\pi
  i)^{p-1}A^{2p-2}_{\RR},\\
  \mathcal{D}^{2p-2}(A,p)&= (A^{p-2,p-1}\oplus A^{p-1,p-2})\cap (2\pi
  i)^{p-1}A^{2p-3}_{\RR}.
\end{align*}
The differential of the Deligne complex, denoted by
$\dd_{\mathcal{D}}\colon \mathcal{D}^{n}(A,p)\to \mathcal{D}^{n+1}(A,p)
$ is given, in the above degrees by
\begin{alignat*}{2}
  \text{if }\eta &\in \mathcal{D}^{2p}(A,p), &\quad
  \dd_{\mathcal{D}}\eta&=\dd \eta ,\\
  \text{if }\eta &\in \mathcal{D}^{2p-1}(A,p), &\quad
  \dd_{\mathcal{D}}\eta&=-2\partial\bar \partial \eta ,\\
    \text{if }\eta =(u,v)&\in \mathcal{D}^{2p-2}(A,p), &\quad
  \dd_{\mathcal{D}}\eta&=-\partial u- \bar \partial v.
\end{alignat*}

When $A$ is a Dolbeault algebra, that is, $A$ is a graded commutative
real differential algebra and the
product is compatible with the bigrading, then
$\mathcal{D}^{\ast}(A,\ast)$ has a product
\begin{displaymath}
  \bullet\colon \mathcal{D}^{n}(A,p)\otimes
  \mathcal{D}^{m}(A,q)\longrightarrow \mathcal{D}^{n+m}(A,p+q)
\end{displaymath}
that is graded commutative with respect to the first degree, it is
associative up to homotopy and satisfies the Leibnitz rule. The only
case where we will need the explicit formula for the product is for
$\omega \in
\mathcal{D}^{2p}(A,p)$ and $\eta \in \mathcal{D}^{m}(A,q)$:
\begin{math}
  \omega \bullet \eta =\omega \land \eta.
\end{math}

The \emph{Deligne algebra of differential forms} on $X$
is defined to be
\begin{displaymath}
  \mathcal{D}^{\ast}(X,\ast):=\mathcal{D}^{\ast}(E^{\ast}(X)_{\RR},\ast).
\end{displaymath}

If $X$ is equi-dimensional of dimension $d$, there is a  natural
trace map given by
\begin{displaymath}
  \int \colon H^{2d}_{c}(X,\RR(d))\to \RR,\quad
  \omega \longmapsto \frac{1}{(2\pi i)^{d}}\int_{X}\omega.
\end{displaymath}

To take this trace map into account the Dolbeault complex of currents
is constructed as follows. Denote by $E^{\ast}_{c}(X)_{\RR}$ the space
of differential forms with compact support. Then $D_{p,q}(X)$ is the
topological dual of $E^{p,q}_{c}(X)$ and $D_{n}(X)_{\RR}$ is the topological
dual of $E^{n}_{c}(X)_{\RR}$. In this complex the differential is given by
\begin{displaymath}
  \dd T(\eta)=(-1)^{n}T(\dd \eta)
\end{displaymath}
for $T\in D_{n}(X)_{\RR}$. For $X$ equi-dimensional of dimension $d$
we write
\begin{displaymath}
  D^{p,q}(X)=D_{d-p,d-q}(X),\qquad D^{n}(X)_{\RR}=(2\pi i)^{-d}D_{2d-n}(X).
\end{displaymath}

With these definitions, $D^{\ast}(X)_{\RR}$ is a Dolbeault complex and
it is a Dolbeault module over $E^{\ast}(X)_{\RR}$. We will denote
\begin{displaymath}
  \mathcal{D}_{D}^{\ast}(X,\ast):=\mathcal{D}^{\ast}(D^{\ast}(X)_{\RR},\ast).
\end{displaymath}
for the Deligne complex of currents on $X$. The trace map above defines an element
\begin{displaymath}
  \delta _{X}\in \mathcal{D}_{D}^{0}(X,0).
\end{displaymath}
More generally, if $Y\subset X$ is a subvariety of pure codimension
$p$, then the current integration along $Y$, denoted $\delta _{Y}\in
\mathcal{D}_{D}^{2p}(X,p)$  is given by
\begin{displaymath}
  \delta _{Y}(\omega )=\frac{1}{(2\pi i)^{d-p}}\int_{Y}\omega .
\end{displaymath}

Moreover, if $S\subset T^{\ast}X_{0}$ is a closed conical subset of the
cotangent bundle of $X$ with the zero section removed, we will denote
by $(\mathcal{D}_{D}^{\ast}(X,S,\ast),\dd_{\mathcal{D}})$ the Deligne
complex of currents on $X$ whose wave front set is contained in $S$.
For instance, if $N^{\ast}_{Y}$ is the conormal bundle to
$Y$, then
\begin{displaymath}
  \delta _{Y}\in \mathcal{D}^{2p}_{D}(X,N^{\ast}_{Y},p).
\end{displaymath}

If $\omega $ is a locally integrable differential form, we associate
to it a current
\begin{displaymath}
 [\omega ](\eta)=\frac{1}{(2\pi i)^{\dim X}}\int_{X}\eta\land
  \omega.
\end{displaymath}

This map induces an isomorphism $\mathcal{D}^{\ast}(X,\ast)\to
\mathcal{D}_{D}^{\ast}(X,\emptyset,\ast)$ that we use to identify
them. For instance, when in a formula
sums of currents and differential forms appear, we will tacitly assume
that the differential forms are converted into currents by this map.

Note also that, if $f\colon X\to Y$ is a proper morphism of smooth complex
varieties of relative dimension $e$, then there are direct image
morphisms
\begin{displaymath}
  f_{\ast}\colon \mathcal{D}^{n}_{D}(X,p)\longrightarrow
  \mathcal{D}^{n-2e}_{D}(X,p-e).
 \end{displaymath}
If $f$ is smooth,
the direct image of differential forms is defined by, first
converting them into currents and then applying the above direct image of
currents. If $f$ is a smooth morphism of relative dimension $e$ we can
convert them back  into differential forms. This
procedure gives us $1/(2\pi i)^{e}$ times the usual integration along the
fiber.

We shall
use the notations and definitions of \cite{BurgosLitcanu:SingularBC}. In
particular, we write
\begin{align*}
  \widetilde
  {\mathcal{D}}^{n}(X,p)&=\left. \mathcal{D}^{n}(X,p)\right/
  \dd_{\mathcal{D}}\mathcal{D}^{n-1}(X,p),\\
  \widetilde
  {\mathcal{D}}^{n}_{D}(X,p)&=\left. \mathcal{D}^{n}_{D}(X,p)\right/
  \dd_{\mathcal{D}}\mathcal{D}^{n-1}_{D}(X,p).
\end{align*}

We now recall the definition of the set of normal direction of a map
and the definition of transverse morphisms.

\begin{definition}\label{def:15}
Let $f\colon X\to Y$ be a morphism of smooth complex varieties.
  Let $T^{\ast} Y_{0}$ be the cotangent bundle to $Y$ with the zero
  section removed.
  The \emph{set of normal directions of} $f$ is the conic subset of
  $T^{\ast} Y_{0}$ given by
  \begin{displaymath}
    N_{f}=\{(y,v)\in T^{\ast} Y_{0}| \dd f^{t}v=0\}.
  \end{displaymath}
\end{definition}

\begin{definition}\label{def:4}
  Let $f\colon X\to Y$ and $g\colon Z\to Y$ be morphisms of smooth
  complex varieties. We say that $f$ and $g$ are \emph{transverse} if
  \begin{math}
    N_{f}\cap N_{g}=\emptyset.
  \end{math}
\end{definition}

It is easily seen that, if $f$ is a closed immersion, this definition
of transverse morphisms agrees with that given in
\cite[IV-17.13]{GrothendieckDieudonne:EGAIV4}.
If $f$ and $g$ are transverse, then the cartesian product
$X\underset{Y}{\times}Z$ is smooth.
For lack of a good reference we
prove the following result.

\begin{proposition}
  Let $f\colon X\to Y$ and $g\colon Z\to Y$ be transverse morphisms of smooth
  complex varieties. Then they are tor-independent.
\end{proposition}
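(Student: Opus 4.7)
The plan is to reduce to a closed immersion via the graph of $f$ and then apply a Koszul complex argument.

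First, I would reinterpret the transversality hypothesis. The condition $N_{f}\cap N_{g}=\emptyset$ says that for every triple $x\in X$, $z\in Z$, $y\in Y$ with $f(x)=g(z)=y$, no nonzero covector $v\in T^{\ast}_{y}Y$ is killed by both $\dd f^{t}$ and $\dd g^{t}$. Equivalently, the natural map
\begin{equation*}
T^{\ast}_{y}Y\longrightarrow T^{\ast}_{x}X\oplus T^{\ast}_{z}Z,\qquad v\longmapsto (\dd f^{t}v,\dd g^{t}v),
\end{equation*}
is injective. Tor-independence can be checked stalkwise, so fix such a triple $(x,z,y)$ and work in the regular local rings $A=\mathcal{O}_{Y,y}$, $B=\mathcal{O}_{X,x}$, $C=\mathcal{O}_{Z,z}$.

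Next, I would factor $f$ as the composition $X\xrightarrow{\Gamma_{f}}X\times Y\xrightarrow{p_{2}}Y$, where $\Gamma_{f}$ is the graph (a closed regular immersion of smooth varieties) and $p_{2}$ is smooth, hence flat. Flatness of $p_{2}$ gives $\mathcal{O}_{X\times Y}\otimes^{L}_{\mathcal{O}_{Y}}\mathcal{O}_{Z}=\mathcal{O}_{X\times Z}$, and then the associativity of the derived tensor product yields
\begin{equation*}
\Tor^{\mathcal{O}_{Y}}_{i}(\mathcal{O}_{X},\mathcal{O}_{Z})=\Tor^{\mathcal{O}_{X\times Y}}_{i}(\mathcal{O}_{X},\mathcal{O}_{X\times Z}),
\end{equation*}
where $\mathcal{O}_{X}$ is an $\mathcal{O}_{X\times Y}$-module via $\Gamma_{f}$ and $\mathcal{O}_{X\times Z}$ via $\mathrm{id}_{X}\times g$. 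This reduces the problem to a closed immersion into a smooth ambient variety.

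Then I would use the Koszul complex. Choose local coordinates $t_{1},\ldots,t_{n}$ on $Y$ at $y$ (possible since $Y$ is smooth). Near $(x,y)$, the ideal of $\Gamma_{f}(X)\subset X\times Y$ is generated by the regular sequence $u_{i}=p_{2}^{\ast}(t_{i})-p_{1}^{\ast}f^{\ast}(t_{i})$, so the Koszul complex $K(u_{\bullet})$ is a free resolution of $\mathcal{O}_{X}$ over $\mathcal{O}_{X\times Y,(x,y)}$. Tensoring with $\mathcal{O}_{X\times Z,(x,z)}$ gives the Koszul complex on the elements $v_{i}=g^{\ast}(t_{i})-f^{\ast}(t_{i})\in\mathcal{O}_{X\times Z,(x,z)}$, so the Tor's in question are the higher Koszul cohomology of $v_{\bullet}$.

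The final step, which is also where the transversality is crucial, is to verify that $v_{1},\ldots,v_{n}$ form a regular sequence in the regular local ring $\mathcal{O}_{X\times Z,(x,z)}$. By the Jacobian criterion, this amounts to linear independence of the differentials $\dd v_{i}=(-f^{\ast}\dd t_{i},g^{\ast}\dd t_{i})\in T^{\ast}_{x}X\oplus T^{\ast}_{z}Z$. But these are precisely the images of the basis $\dd t_{1},\ldots,\dd t_{n}$ of $T^{\ast}_{y}Y$ under the map considered in the first step, and the reformulated transversality hypothesis is exactly that this map is injective. Hence $v_{\bullet}$ is regular, so $K(v_{\bullet})$ is acyclic in positive degrees and the Tor's vanish.

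The main obstacle is the bookkeeping: making the flat base change identification precise and ensuring the Koszul complex argument really reduces to the cotangent-space translation of transversality, but both steps are essentially formal once one sets things up in local coordinates.
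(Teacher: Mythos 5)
Your proof is correct. It shares with the paper the essential mechanism — reduce to a closed immersion whose ideal is generated by a regular sequence, resolve by the Koszul complex, and use transversality to see that the pulled-back sequence stays regular — but the reduction itself is different. The paper factors $f$ \emph{locally} as $X\overset{i}{\to}Y\times\mathbb{A}^{n}\overset{p}{\to}Y$, checks that transversality of $(f,g)$ implies transversality of $(i,g\times\Id)$ and that tor-independence passes back, and then, for the closed immersion, simply asserts that transversality makes the regular sequence $(s_{1},\dots,s_{k})$ generating $I$ remain regular in $B$. You instead use the canonical graph factorization $X\overset{\Gamma_{f}}{\to}X\times Y\overset{p_{2}}{\to}Y$ together with flatness of $p_{2}$ and associativity of $\otimes^{\Ld}$ to identify $\Tor^{\mathcal{O}_{Y}}_{i}(\mathcal{O}_{X},\mathcal{O}_{Z})$ with Tor's over $\mathcal{O}_{X\times Y}$; this avoids choosing a local embedding and avoids re-verifying transversality for an auxiliary immersion, because transversality enters only once, at the end, as linear independence of the differentials $\dd v_{i}=(-\dd f^{t}\dd t_{i},\dd g^{t}\dd t_{i})$ in $T^{\ast}_{x}X\oplus T^{\ast}_{z}Z$. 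In that respect your argument actually makes explicit the step the paper leaves implicit (transversality $\Rightarrow$ regularity of the pulled-back sequence, via the regular-system-of-parameters criterion in the regular local ring $\mathcal{O}_{X\times Z,(x,z)}$), at the price of a slightly heavier derived-tensor bookkeeping in the first step. Two cosmetic points: the phrase ``this amounts to'' overstates the Jacobian criterion — linear independence of the $\dd v_{i}$ is sufficient, not necessary, for $v_{\bullet}$ to be a regular sequence, and sufficiency is all you use; and the identity $\mathcal{O}_{X\times Y}\otimes^{\Ld}_{\mathcal{O}_{Y}}\mathcal{O}_{Z}=\mathcal{O}_{X\times Z}$ should be read as the statement that, $p_{2}$ being flat, the derived and ordinary fiber products of $X\times Y$ and $Z$ over $Y$ coincide; with that reading the reduction is the standard one and the proof is complete.
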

\begin{proof}
  Since the conditions of being transverse and being
  tor-independent are both local on $Y$, $X$ and $Z$ we may assume
  that the map $f$ factorizes as $X\overset{i}{\to}Y\times
  \mathbb{A}^{n}\overset{p}{\to} Y$, where $i$ is a closed immersion and $p$
  is the projection. Let $g'\colon Z\times \mathbb{A}^{n}\to Y\times
  \mathbb{A}^{n}$
  be the morphism $g\times \Id$. If $f$ and $g$ are transverse then
  $i$ and $g'$ are transverse. While, if $i$ and $g'$ are
  tor-independent then $f$ and $g$ are tor-independent. Hence we
  may suppose that  $f$ is a closed immersion.

  Since every closed immersion between smooth schemes is regular, we
  may assume that $Y=\Spec A$, $X=\Spec A/I$, where $I$ is an ideal generated
  by a regular sequence $(s_1,\dots,s_k)$ and $Z=\Spec B$. The
  transversality condition
  implies that $(s_1,\dots,s_k)$ is a regular sequence generating $
  IB$. Let $K$ be the Koszul resolution of $A/I$ attached to the above
  sequence. Then $K\otimes_{A} B$ is the Koszul resolution of $B/IB$,
  hence exact. Therefore, $\Tor_{A}^{i}(A/I,B)=0$ for all $i\ge
  1$. Thus $f$ and $g$ are tor-independent.
\end{proof}

Let now
$Y''\overset{h}{\to}Y'\overset{g}{\to}Y$ be morphisms of smooth complex
varieties such that $g$ and $g\circ h$ are smooth. We form
the cartesian diagram
\begin{displaymath}
  \xymatrix{
  X'' \ar[r]\ar[d]^{f''}& X'\ar[r]\ar[d]^{f'} & X\ar[d]^{f} \\
  Y'' \ar[r]^{h}& Y' \ar[r]^{g} & Y.
}
\end{displaymath}
The smoothness of $g$ implies that
$N_{f'}=g^{\ast}N_{f}$. Then the smoothness of $g\circ h$ implies that
$h$ and $f'$ are transverse. Therefore, any current $\eta\in
\mathcal{D}_{D}^{\ast}(Y',N_{f'},\ast)$ can be pulled back to a
current $h^{\ast}\eta \in \mathcal{D}_{D}^{\ast}(Y'',N_{f''},\ast)$.

The following result will be used to characterize several Bott-Chern
classes and analytic torsion classes.

\begin{lemma} \label{lemm:uniquenes_phi}
  Let $f\colon X\to Y$ be a morphism of smooth complex varieties. Let
  $\widetilde \varphi$ be an assignment that, to each smooth morphism
  of complex
  varieties $g\colon Y'\to Y$ and each acyclic complex $\ov A$ of
  hermitian vector bundles on $X':=X\underset{Y}{\times }Y'$
  assigns a class
  \begin{displaymath}
   \widetilde \varphi(\ov A)\in \bigoplus
  _{n,p}\widetilde{\mathcal{D}}_{D}^{n}(Y',g^{\ast}N_{f},p)
  \end{displaymath}
  fulfilling the following properties:
  \begin{enumerate}
  \item (Differential equation) the equality
    \begin{math}
      \dd_{\mathcal{D}}\widetilde \varphi(\ov A)
      =0
    \end{math}
    holds;
  \item (Functoriality) for each morphism $h\colon Y''\to Y'$ of smooth complex varieties
     with $g\circ h $ smooth, the relation
    \begin{math}
      h^{\ast} \widetilde \varphi(\ov A)=\widetilde \varphi(h^{\ast}\ov A)
    \end{math}
    holds;
  \item (Normalization) if $\ov A$ is orthogonally split, then $\widetilde
    \varphi(\ov A)=0$.
  \end{enumerate}
  Then $\widetilde \varphi=0$.
\end{lemma}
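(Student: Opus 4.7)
The plan is to adapt the classical Bismut--Gillet--Soul\'e deformation argument used for uniqueness of Bott--Chern secondary classes. Given an acyclic complex $\ov A$ of hermitian vector bundles on $X'$, I would deform it through a $\PP^{1}$-family to an orthogonally split complex, apply $\widetilde\varphi$ to the family, and compare the two endpoints via a $\dd_{\mathcal{D}}$-homotopy along the fibre.

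First I would construct a bounded acyclic complex $\ov{\mathcal{A}}$ of hermitian vector bundles on $X'\times\PP^{1}$ whose restriction to the fibre over $0$ is $\ov A$ and whose restriction over $\infty$ is orthogonally split. This is a standard transgression construction, obtained by iterating the one-step $\PP^{1}$-deformation of a short exact sequence used throughout the Bott--Chern literature and, in the present setting, in \cite{BurgosLitcanu:SingularBC}. Set $Y'':=Y'\times\PP^{1}$ and let $\pi\colon Y''\to Y'$ be the projection, so that $g\pi\colon Y''\to Y$ is smooth. Applying the assignment to $\ov{\mathcal{A}}$ produces a class
$$
 \omega:=\widetilde\varphi(\ov{\mathcal{A}})\in\bigoplus_{n,p}\widetilde{\mathcal{D}}^{n}_{D}(Y'',(g\pi)^{\ast}N_{f},p)
$$
which is $\dd_{\mathcal{D}}$-closed by the differential equation axiom. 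Its wave front set is horizontal with respect to $\pi$, so the inclusions $i_{0},i_{\infty}\colon Y'\to Y''$ of the two fibres, viewed as morphisms $h$ as in the functoriality axiom, satisfy $(g\pi)\circ i_{0}=(g\pi)\circ i_{\infty}=g$ smooth, and the pullbacks $i_{0}^{\ast}\omega,\,i_{\infty}^{\ast}\omega$ are defined. Functoriality then gives
$$
 i_{0}^{\ast}\omega=\widetilde\varphi(i_{0}^{\ast}\ov{\mathcal{A}})=\widetilde\varphi(\ov A),\qquad i_{\infty}^{\ast}\omega=\widetilde\varphi(i_{\infty}^{\ast}\ov{\mathcal{A}})=0,
$$
the last equality by the normalization axiom.

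It then suffices to show that $i_{0}^{\ast}\omega-i_{\infty}^{\ast}\omega$ vanishes in $\widetilde{\mathcal{D}}^{\ast}_{D}(Y',g^{\ast}N_{f},\ast)$ for any $\dd_{\mathcal{D}}$-closed current $\omega$ on $Y''$ with horizontal wave front. The explicit primitive is
$$
 \eta:=\frac{1}{2\pi i}\int_{\PP^{1}/Y'}\log|t|^{2}\bullet\omega,
$$
where $t$ is an affine coordinate on $\PP^{1}$: the Poincar\'e--Lelong identity $\frac{1}{2\pi i}\partial\bar\partial\log|t|^{2}=\delta_{0}-\delta_{\infty}$, together with $\dd_{\mathcal{D}}\omega=0$ and the Leibnitz rule for $\bullet$, yields the equality $\dd_{\mathcal{D}}\eta=\pm(i_{0}^{\ast}\omega-i_{\infty}^{\ast}\omega)$ up to a sign that depends on the Deligne degree. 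Combined with the previous step, this forces $\widetilde\varphi(\ov A)=0$ in $\widetilde{\mathcal{D}}^{\ast}_{D}(Y',g^{\ast}N_{f},\ast)$.

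The main obstacle is the final homotopy formula. The operator $\dd_{\mathcal{D}}$ takes three different shapes ($d$, $-2\partial\bar\partial$, and $-\partial u-\bar\partial v$) on the relevant pieces of the Deligne complex, so the identity $\dd_{\mathcal{D}}\eta=\pm(i_{0}^{\ast}\omega-i_{\infty}^{\ast}\omega)$ must be checked degree by degree with care about signs. One must also justify, by wave front calculus on products, that the fibrewise integral of $\log|t|^{2}\bullet\omega$ against the locally integrable function $\log|t|^{2}$ defines a current on $Y'$ with wave front in $g^{\ast}N_{f}$. The deformation of the first step is routine once one verifies that it is acyclic on all of $X'\times\PP^{1}$.
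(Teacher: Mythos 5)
Your proposal is correct and is essentially the argument the paper relies on: the published proof simply invokes the deformation-and-transgression argument of Theorem 2.3 of \cite{BurgosLitcanu:SingularBC} (deform $\ov A$ over $\PP^{1}$ to an orthogonally split complex, use functoriality at the fibres over $0$ and $\infty$, and transgress the closed class with the kernel $\frac{-1}{2}\log t\bar t$), noting only that the operations extend to currents because their wave front sets lie in $g^{\ast}N_{f}$. Your explicit reconstruction, including the final remark on wave front calculus for the fibre integration, is exactly this \emph{mutatis mutandis} adaptation.
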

\begin{proof}
  The argument of the proof of \cite[Thm.
  2.3]{BurgosLitcanu:SingularBC} applies \emph{mutatis mutandis} to
  the present situation. One only needs to observe that all the
  operations with differential forms of that argument can be extended to
  the currents that appear in the present situation thanks to the
  hypothesis about their wave front sets.
\end{proof}

In the paper \cite{BurgosFreixasLitcanu:HerStruc} we defined and
studied hermitian structures on objects of the bounded derived
category of coherent sheaves on a smooth complex variety. The language
and the results of \emph{loc. cit.} will be used extensively in this
paper. We just mention here that a hermitian metric on an object
$\mathcal{F}$ of $\Db(X)$ is an isomorphism $E\dra \mathcal{F}$ in
$\Db(X)$, with $E$ a bounded complex of vector bundles, together with
a choice of a hermitian metric on each constituent vector bundle of
$E$. Such an isomorphism always exists due to the fact that we work in
the algebraic category.  A hermitian structure is an equivalence class
of hermitian metrics.  To each smooth complex variety $X$, we
associated the category $\oDb(X)$
(\cite[\S~3]{BurgosFreixasLitcanu:HerStruc}) whose objects are objects
of $\Db(X)$ provided with a hermitian structure. We introduced the 
hermitian cone (\cite[Def.~3.14]{BurgosFreixasLitcanu:HerStruc}),
denoted $\ocone$, of a morphism in $\oDb(X)$. We also 
defined Bott-Chern 
classes for isomorphisms
(\cite[Thm.~4.11]{BurgosFreixasLitcanu:HerStruc}) and distinguished
triangles (\cite[Thm.~4.18]{BurgosFreixasLitcanu:HerStruc}) in
$\oDb(X)$. We introduced a universal abelian group for additive
Bott-Chern classes.  Namely, the set of hermitian structures on a zero
object of $\Db(X)$ is an abelian group that we denote $\KA(X)$
(\cite[Def.~2.31]{BurgosFreixasLitcanu:HerStruc}).
Finally, we defined the category $\oSm_{\ast/\CC}$
(\cite[\S~5]{BurgosFreixasLitcanu:HerStruc}) whose objects are
smooth complex varieties and whose morphisms are projective morphisms
together with a hermitian structure on the relative
tangent complex.

We introduce now one of the central objects of the paper.
\begin{definition} \label{def:19}
  Let $f\colon X\to Y$ be a projective morphism of smooth complex
  varieties and $\ov f\in \Hom_{\oSm_{\ast/\CC}}(X,Y)$ a morphism
  over $f$. Let $\ov{\mathcal{F}}\in \Ob \oDb(X)$ and let $\ov{\Rd
    f_{\ast}\mathcal{F}}\in \Ob \oDb(Y)$ be an object over $\Rd
  f_{\ast}\mathcal{F}$. The triple $\ov \xi=(\ov f,\ov
  {\mathcal{F}},\ov{\Rd f_{\ast}\mathcal{F}})$ will be called \emph{a
    relative metrized complex}. When $f$  is a closed immersion we
  will also call it an \emph{embedded metrized complex}. When
  $\ov{\mathcal{F}}$ and $\ov{\Rd f_{\ast}\mathcal{F}}$ are clear from
  the context we will denote the relative metrized complex $\ov \xi $
  by the morphism $\ov f$.
\end{definition}

Let $\ov \xi=(\ov f,\ov
{\mathcal{F}},\ov{\Rd f_{\ast}\mathcal{F}})$ be a relative metrized
complex and let $g\colon Y'\to Y$ be a morphism
of smooth complex varieties that is transverse to $f$. Consider the
cartesian diagram
\begin{equation}\label{eq:9}
  \xymatrix{
  X'\ar [r]^{g'}\ar[d]_{f'}& X\ar[d]^{f}\\
  Y'\ar [r]_{g}& Y.
}
\end{equation}
Then $f'$ is still projective.  Moreover, the transversality condition
implies that the canonical morphism $\Ld {g'}^{\ast} T_{\ov f}\dra T_{f'}$ is a
hermitian structure on $T_{f'}$.
 We define
\begin{equation}
  \label{eq:67}
  \Ld g^{\ast}\ov
f=(f',\Ld {g'}^{\ast} T_{\ov f})\in \Hom_{\oSm_{\ast/\CC}}(X',Y').
\end{equation}

By tor-independence, there is a canonical isomorphism
\begin{math}
 \Ld g^{\ast}\Rd f_{\ast} \mathcal{F}\dra \Rd f'_{\ast}\Ld
 {g'}^{\ast}\mathcal{F}.
\end{math}
Therefore $\Ld g^{\ast}\ov{\Rd f_{\ast} \mathcal{F}}$ induces a
hermitian structure on $\Rd f'_{\ast}\Ld
 {g'}^{\ast}\mathcal{F}$.

 \begin{definition}
   The \emph{pull-back } of $\ov \xi$ by $g$ is the relative metrized
   complex
   \begin{displaymath}
     g^{\ast}\ov \xi=(\Ld g^{\ast}\ov f,\Ld
     {g'}^{\ast}\ov{\mathcal{F}},\Ld g^{\ast}\ov{\Rd f_{\ast}
       \mathcal{F}}).
   \end{displaymath}
 \end{definition}

\begin{definition}
  Let $\ov\xi=(\ov f\colon X\to Y,\ov{\mathcal{F}},\ov{\Rd
    f_{\ast}\mathcal{F}})$ be a relative metrized complex. Let
  $\ov{\mathcal{G}}$ be an object of $\oDb(Y)$. The hermitian
  structures on $\ov{\Rd f_{\ast}\mathcal{F}}$
  and $\ov{\mathcal{G}}$ induce a natural hermitian structure on $\Rd
  f_{\ast}(\mathcal{F}\Lotimes\Ld f^{\ast}\mathcal{G})$ that we denote
  $\ov{\Rd f_{\ast}
	\mathcal{F}}\Lotimes\ov{\mathcal{G}}$.
  The \emph{tensor product of $\ov{\xi}$ by $\ov{\mathcal{G}}$} is
  then defined to be the relative metrized complex
\begin{displaymath}
	\ov{\xi}\otimes\ov{\mathcal{G}}=(\ov{f}, \ov{\mathcal{F}}
	\Lotimes\Ld f^{\ast}\ov{\mathcal{G}},\ov{\Rd f_{\ast}
	\mathcal{F}}\Lotimes\ov{\mathcal{G}}).
\end{displaymath}
\end{definition}

\begin{definition}\label{def:18}
  Let $\overline{\xi}_i=(\ov{f},\ov{\mathcal{F}_{i}},\ov{\Rd
    f_{\ast}\mathcal{F}_{i}})$, $i=1,2$ be relative metrized coherent complexes
  on $X$.
  Then \emph{the direct sum relative metrized complex} is
  \begin{displaymath}
     \overline{\xi}_1\oplus\overline{\xi}_2 :=
    (\ov{f},\ov{\mathcal{F}_{1}}\oplus
    \ov{\mathcal{F}_{2}},\ov{\Rd f_{\ast}\mathcal{F}_{1}}\oplus
    \ov{\Rd f_{\ast}\mathcal{F}_{2}}).
  \end{displaymath}
\end{definition}

We now introduce a notation for Todd-twisted direct images of currents
and differential forms, that will simplify many formulas
involving the Todd genus.
 Let $\ov f=(f,\ov T_f)$ be a morphism in $\oSm_{\ast/\CC}$. To $\ov
 f$ we associate a Todd differential form $\Td(\ov f):=\Td(\ov
 T_{f})\in \bigoplus_{p} \mathcal{D}^{2p}(X,p)$
 \cite[(5.15)]{BurgosFreixasLitcanu:HerStruc}.
 Let
$S$
be a closed conic subset of $T^{\ast}X_{0}$. Then we denote
\begin{equation}
  \label{eq:28}
  f_{\ast}(S)=\{(f(x),\eta )\in T^{\ast}Y_{0}\mid (x,(\dd
  f)^{t}\eta)\in S\}\cup N_{f}.
\end{equation}
If $g\colon Y\to Z$ is another morphism of smooth complex varieties,
it is easy to see that we have $(g\circ f)_{\ast}(S)\subseteq
g_{\ast}f_{\ast}(S)$.

\begin{definition}
  Let $\ov f\colon X\to Y$ be a morphism in $\ov\Sm_{\ast/\CC}$ of
  relative dimension~$e$. For each closed conical subset $S\subset
  T^{\ast}X_{0}$ and each pair of integers $n$, $p$, we
  define the map
  \begin{displaymath}
    \ov f_{\ttwist}\colon \mathcal{D}^{n}_{D}(X,S,p)\to
    \mathcal{D}^{n-2e}_{D}(Y,f_{\ast}S,p-e), \quad
    \ov f_{\ttwist}(\omega )= f_{\ast}(\omega \bullet \Td(\ov f)).
  \end{displaymath}
\end{definition}

\begin{proposition}\label{prop:12}
  Let  $\ov f\colon X\to Y$  and $\ov g\colon Y \to Z$ be morphisms
  in $\ov\Sm_{\ast/\CC}$ of relative dimensions $e_{1}$ and $e_{2}$
  respectively. Let $S\subset T^{\ast}X_{0}$, $T\subset T^{\ast}Y_{0}$ be closed conical subsets and
  let $\ov h=\ov f\circ \ov g$, of relative dimension $e=e_1+e_2$.
  \begin{enumerate}
  \item \label{item:prop_12_2}The following diagram is commutative
    \begin{displaymath}
      \xymatrix{
        \mathcal{D}_{D}^{n}(X,S,p)\ar[r]^{\ov f_{\ttwist}}
        \ar[d]_{\ov h_{\ttwist}}&
        \mathcal{D}_{D}^{n-2e_1}(Y,f_{\ast} S,p-e_1) \ar [d]^{\ov g_{\ttwist}} \\
        \mathcal{D}_{D}^{n-2e}(Z,h_{\ast} S,p-e) \ar@{^{(}->}[r]&
        \mathcal{D}_{D}^{n-2e}(Z,g_{\ast}f_{\ast} S,p-e).
      }
     \end{displaymath}
   \item \label{item:prop_12_3} Let
     $\theta\in\mathcal{D}_{D}^{m}(X,S,q)$ and
     $\omega\in\mathcal{D}_{D}^{n}(Y,T,p)$. Assume $T\cap
     N_{f}=\emptyset$ and that $T+f_{\ast}S$ is disjoint with the zero
     section in $T^{\ast}Y_{0}$. Then $f^{\ast}T+S$ is disjoint with
     the zero section and there is an equality of currents
     \begin{displaymath}
     	\ov{f}_{\ttwist}(f^{\ast}(\omega)\bullet\theta)=\omega\bullet \ov{f}_{\ttwist}(\theta)
     \end{displaymath}
     in $\mathcal{D}_{D}^{n+m}(Y,W,p+q)$, with
     \begin{math}
     	W=f_{\ast}(S + f^{\ast}T)\cup f_{\ast}S\cup f_{\ast}f^{\ast}T.
     \end{math}
 \end{enumerate}
\end{proposition}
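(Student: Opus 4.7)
The plan is to reduce both statements to the standard projection formula and composition rule for push-forwards of currents, combined with the multiplicativity of the Todd form for compositions in $\oSm_{\ast/\CC}$. The bookkeeping of wave front sets is what guarantees all products and pull-backs are legal, and it is what I expect to be the most tedious (if not the most conceptually subtle) point.

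For part \ref{item:prop_12_2}, I would first unfold the definitions:
\begin{displaymath}
\ov g_{\ttwist}(\ov f_{\ttwist}(\omega)) = g_{\ast}\bigl(f_{\ast}(\omega\bullet\Td(\ov f))\bullet\Td(\ov g)\bigr).
\end{displaymath}
Then I would apply the standard projection formula for currents with respect to $f$, which is legitimate here since $f$ is proper and $\Td(\ov g)$ is a smooth form (so its pull-back via $f$ is harmless for any wave front set), to rewrite this as
\begin{displaymath}
g_{\ast}f_{\ast}\bigl(\omega\bullet\Td(\ov f)\bullet f^{\ast}\Td(\ov g)\bigr) = h_{\ast}\bigl(\omega\bullet\Td(\ov f)\bullet f^{\ast}\Td(\ov g)\bigr).
\end{displaymath}
The equality of currents $\ov h_{\ttwist}(\omega)=\ov g_{\ttwist}\ov f_{\ttwist}(\omega)$ then reduces to the on-the-nose identity $\Td(\ov h)=\Td(\ov f)\bullet f^{\ast}\Td(\ov g)$, which is built into the construction of the hermitian structure $T_{\ov h}$ obtained via the hermitian cone of $\Ld f^{\ast}T_{\ov g}[-1]\to T_{\ov f}$ in \cite{BurgosFreixasLitcanu:HerStruc}. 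Finally, for the wave front inclusion, a current in $\mathcal{D}_{D}^{\ast}(X,S,\ast)$ pushed forward by $f$ lands in $\mathcal{D}_{D}^{\ast}(Y,f_{\ast}S,\ast)$ by definition of $f_{\ast}S$, and a further push by $g$ lands in $\mathcal{D}_{D}^{\ast}(Z,g_{\ast}f_{\ast}S,\ast)$; direct push by $h$ lands in the a priori smaller set $h_{\ast}S\subseteq g_{\ast}f_{\ast}S$ already recorded just before the statement, which gives the bottom inclusion in the diagram.

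For part \ref{item:prop_12_3}, the argument is the standard projection formula for currents, carried out with care about the wave front sets. The hypothesis $T\cap N_{f}=\emptyset$ is precisely what allows $f^{\ast}\omega$ to be defined, and its wave front set then sits inside $f^{\ast}T:=\{(x,(\dd f)^{t}\eta)\mid (f(x),\eta)\in T\}$; the hypothesis that $T+f_{\ast}S$ is disjoint from the zero section translates, via the duality between $f^{\ast}$ on cotangent vectors and $f_{\ast}$ on the base, to $f^{\ast}T+S$ being disjoint from the zero section in $T^{\ast}X_{0}$, so the product $f^{\ast}\omega\bullet\theta$ makes sense as a current with wave front contained in $S\cup f^{\ast}T\cup(S+f^{\ast}T)$. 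Multiplying by $\Td(\ov f)$ (a smooth form) and pushing forward by $f$ gives the projection formula $f_{\ast}(f^{\ast}\omega\bullet\theta\bullet\Td(\ov f))=\omega\bullet f_{\ast}(\theta\bullet\Td(\ov f))$, which is precisely the asserted identity; the target wave front set $W=f_{\ast}(S+f^{\ast}T)\cup f_{\ast}S\cup f_{\ast}f^{\ast}T$ is read off the wave front set of $f^{\ast}\omega\bullet\theta$ after push-forward.

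The principal obstacle I foresee is not algebraic but rather verifying that each pull-back, product and push-forward is genuinely well-defined at the level of currents with prescribed wave fronts (this is where Hörmander's conditions intervene) and that the resulting wave front sets match the ones in the statement. Once these verifications are in place, both claims are formal consequences of the classical projection and composition formulas applied to the Todd-twisted direct image, using Lemma \ref{lemm:uniquenes_phi}'s style of wave front tracking and the multiplicativity built into $\oSm_{\ast/\CC}$.
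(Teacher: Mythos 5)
Your proposal is correct and follows essentially the same route as the paper: part (i) is reduced to the identity $\ov g_{\ttwist}(\ov f_{\ttwist}(\omega))=(g\circ f)_{\ast}(\omega\bullet\Td(\ov f)\bullet f^{\ast}\Td(\ov g))$ together with the multiplicativity of the Todd form for the composed hermitian structure, and part (ii) to the projection formula $f_{\ast}(f^{\ast}\omega\bullet\theta)=\omega\bullet f_{\ast}(\theta)$ under the stated wave front hypotheses. The one step the paper spells out that you leave inside your general ``Hörmander bookkeeping'' remark is how the projection formula is extended from smooth forms to currents, namely by approximating $\theta$ and $\omega$ by smooth currents and invoking the continuity of $f^{\ast}$ and $f_{\ast}$.
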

\begin{proof}
  For the first assertion, it is enough to notice the equality of currents
  \begin{displaymath}
  	\ov{g}_{\ttwist}(\ov{f}_{\ttwist}(\omega))=(g\circ f)_{\ast}(\omega\bullet f^{\ast}\Td(\ov{g})\bullet\Td(\ov{f}))).
  \end{displaymath}
  For the second, it is easy to see that $f^{\ast}T+S$ does not cross
  the zero section, and hence both sides of the equality are
  defined. It then suffices to establish the equality of currents
  \begin{math}
  	f_{\ast}(f^{\ast}\omega\bullet\theta)=f_{\ast}(\omega)\bullet\theta.
  \end{math}
  If $\theta$ and $\omega$ are smooth, then the equality follows from
  the definitions. The general case
  follows by approximation of $\theta$ and $\omega$ by smooth currents
  and the continuity of the operators $f^{\ast}$ and $f_{\ast}$.
\end{proof}

\begin{proposition} \label{prop:11} Let $\ov{f}$ be a morphism in
  $\ov\Sm_{\ast/\CC}$ of relative dimension $e$ and $S$ a closed
  conical subset of $T^{\ast}X_{0}$. Let $g\colon Y'\to Y$ be a morphism of
  smooth complex varieties transverse to $f$. Consider the cartesian
  diagram \eqref{eq:9} and let $\ov f'=g^{\ast} \ov f$.
  Suppose that $N_{g'}$ is
  disjoint with $S$. Then:
  \begin{enumerate}	
  \item $N_{g}$ and $f_{\ast}S$ are disjoint and
    $g^{\ast}f_{\ast}S\subset f'_{\ast}{g'}^{\ast}S$;
  \item the following diagram commutes:
    \begin{displaymath}
      \xymatrix{
        \mathcal{D}_{D}^{n}(X,S,p)\ar[r]^{\hspace{-1cm}\ov{f}_{\ttwist}}
        \ar[d]_{{g'}^{\ast}}
        &\mathcal{D}_{D}^{n-2e}(Y,f_{\ast}S,p-e)\ar[d]^{g^{\ast}}\\
        \mathcal{D}_{D}^{n}(X',{g'}^{\ast}S,
        p)\ar[r]^{\hspace{-1cm}\ov{f}'_{\ttwist}}
        &\mathcal{D}_{D}^{n-2e}(Y',f'_{\ast}{g'}^{\ast}S,p-e)\\
      }
    \end{displaymath}
  \end{enumerate}
\end{proposition}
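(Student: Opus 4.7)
The plan is to prove parts~(1) and~(2) in order: part~(1) by a direct diagram chase from the definitions of $N_{\cdot}$ and $f_\ast S$, exploiting the cartesian structure of~\eqref{eq:9}, and part~(2) by reducing to base change for currents combined with the naturality of the Todd class under transverse base change. The main obstacle is the careful bookkeeping of wave front sets in part~(1)(b), where every cotangent vector produced by a dual-tangent pullback has to be certified as nonzero before it can be placed in $N_{f'}$ or $(g')^\ast S$; once part~(1) is in hand, part~(2) is routine.

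For part~(1)(a), I argue by contradiction: assume $(y,\eta)\in N_g\cap f_\ast S$. By definition of $N_g$ there is $y'\in g^{-1}(y)$ with $(\dd g_{y'})^t\eta=0$; and $(y,\eta)\in f_\ast S$ means either $(y,\eta)\in N_f$, directly contradicting $N_f\cap N_g=\emptyset$, or else there is $x\in f^{-1}(y)$ with $(x,(\dd f_x)^t\eta)\in S$. In the second case set $x'=(x,y')\in X'$, so $g'(x')=x$ and $f'(x')=y'$; the commutativity $f\circ g'=g\circ f'$ then yields
\begin{equation*}
  (\dd g'_{x'})^t(\dd f_x)^t\eta = (\dd f'_{x'})^t(\dd g_{y'})^t\eta = 0.
\end{equation*}
If $(\dd f_x)^t\eta\neq0$, this puts $(x,(\dd f_x)^t\eta)$ in $N_{g'}\cap S$, against the hypothesis; if $(\dd f_x)^t\eta=0$ we are already in the case $(y,\eta)\in N_f$, closing the loop.

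For the inclusion $g^\ast f_\ast S\subset f'_\ast(g')^\ast S$, I take $(y',\eta')\in g^\ast f_\ast S$ and write $\eta'=(\dd g_{y'})^t\eta$ with $(y,\eta)\in f_\ast S$, $y=g(y')$. When $(y,\eta)\in N_f$, choose $x\in f^{-1}(y)$ with $(\dd f_x)^t\eta=0$ and set $x'=(x,y')$; the same dual-tangent identity gives $(\dd f'_{x'})^t\eta'=0$, while transversality rules out $\eta'=0$, so $(y',\eta')\in N_{f'}\subset f'_\ast(g')^\ast S$. Otherwise, pick $x\in f^{-1}(y)$ with $\xi:=(\dd f_x)^t\eta\in S$, necessarily nonzero; the hypothesis $N_{g'}\cap S=\emptyset$ forces $(\dd g'_{x'})^t\xi\neq0$, and the equality $(\dd g'_{x'})^t\xi=(\dd f'_{x'})^t\eta'$ shows that $(x',(\dd f'_{x'})^t\eta')\in(g')^\ast S$, hence $(y',\eta')\in f'_\ast(g')^\ast S$.

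For part~(2), transversality of $f$ and $g$ implies, by the very definition of $g^\ast\ov f$ in~\eqref{eq:67}, that $(g')^\ast\Td(\ov f)=\Td(\ov{f}')$. Consequently
\begin{equation*}
  \ov{f}'_{\ttwist}((g')^\ast\omega) = f'_\ast\bigl((g')^\ast(\omega\bullet\Td(\ov f))\bigr),
\end{equation*}
so the commutativity of the diagram reduces to the base change identity $g^\ast f_\ast=f'_\ast(g')^\ast$ applied to $\omega\bullet\Td(\ov f)$, a current with wave front set contained in $S$. Part~(1) ensures that every pullback and direct image above is well-defined with the prescribed wave front set. The identity itself holds on smooth forms by the standard Fubini-type calculation for transverse cartesian squares, and extends to currents by approximation and the continuity of $f^\ast$ and $f_\ast$, in the same spirit as the proof of Proposition~\ref{prop:12}\ref{item:prop_12_3}.
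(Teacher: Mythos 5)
Your overall route coincides with the paper's: part (1) is a pointwise chase from the definitions of $N_{f}$, $N_{g}$, $N_{g'}$ and $f_{\ast}S$ (which the paper simply asserts "follows from the definitions" — your more detailed version is correct), and part (2) is reduced, via the identity $(g')^{\ast}\Td(\ov f)=\Td(\ov f')$ coming from \eqref{eq:67}, to the base-change identity $g^{\ast}f_{\ast}(\theta)=f'_{\ast}(g')^{\ast}(\theta)$ for currents with wave front set in $S$, which one establishes for smooth $\theta$ and then extends by continuity of the four operators (note: the continuity you need is that of $g^{\ast}$, ${g'}^{\ast}$, $f_{\ast}$, $f'_{\ast}$, not of "$f^{\ast}$ and $f_{\ast}$").

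The gap is in your justification of the smooth-form identity. You invoke "the standard Fubini-type calculation for transverse cartesian squares", but a Fubini/fiber-integration argument only applies when $f$ is a submersion, in which case $f_{\ast}[\theta]$ is again represented by a smooth form. Here $f$ is an arbitrary projective morphism — in particular a closed immersion, which is the case of real interest since that is when $N_{f}$ is nontrivial — and then $f_{\ast}[\theta]$ is a genuinely singular current of integration type along $f(X)$, whose pullback by $g$ exists only through the wave-front calculus; there is no fiber integration to compute. This is exactly what the paper isolates as Lemma~\ref{lemm:1}: after a partition of unity, one factors $f$ locally through its graph, $X\to X\times Y\to Y$, reducing to the submersion case (where your Fubini argument is valid) and to the closed-immersion case, which is settled by writing $\theta=f^{\ast}\widetilde\theta$ and checking $g^{\ast}(\widetilde\theta\wedge\delta_{X})=g^{\ast}\widetilde\theta\wedge\delta_{X'}$. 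Without this factorization (or a precise citation of base change for pullbacks of currents under transversality), the step you call routine is precisely where the work lies; with it supplied, your argument agrees with the paper's proof.
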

\begin{proof}
  The first claim follows from the definitions. In
  particular the diagram makes sense. For the commutativity of the
  diagram, we observe that, since
  \begin{math}
    g'{}^{\ast}\Td(\ov f)=\Td(\ov f'),
  \end{math}
  it suffices to check the equality of currents
  \begin{math}
  	g^{\ast}f_{\ast}(\theta)={f'}_{\ast}{g'}^{\ast}(\theta)
  \end{math}
  for $\theta\in\mathcal{D}_{D}^{n}(X,S,p)$.

  By the continuity of the operators $g^{\ast}$, ${g'}^{\ast}$,
  $f_{\ast}$ and ${f'}_{\ast}$, it is enough to prove the relation
  whenever $\theta$ is smooth. Moreover, using a partition of unity
  argument we are reduced to the following local analytic statement.
  \begin{lemma} \label{lemm:1}
    Let $f\colon X\to Y$ and $g\colon Y'\to Y$ be transverse morphisms
    of complex manifolds. Let $\theta $ be a smooth differential form
    on $X$ with compact support. Consider the diagram
    \eqref{eq:9}. Then
    \begin{equation}\label{eq:52}
      g^{\ast}f_{\ast}(\theta)={f'}_{\ast}{g'}^{\ast}(\theta).
    \end{equation}
  \end{lemma}
  \noindent
  \emph{Proof.}
    The map $f$ can be factored as $X\overset{\varphi
    }{\longrightarrow}X\times Y\overset{p_{2}}{\longrightarrow} Y$,
    where $\varphi(x)=(x,f(x))$ is a closed immersion and $p_{2}$, the
    second projection, is smooth. Using again the continuity of the
    operators $g^{\ast}$ (respectively ${g'}^{\ast}$) and
    $f_{\ast}$ (respectively ${f'}^{\ast}$), we are reduced to prove the
    equation \eqref{eq:52} in the case when $f$ is smooth and in the
    case when $f$ is a closed immersion. The case when $f$ is smooth
    is clear. Assume now that $f$ is a closed immersion. By
    transversality, $f'$ is also a closed immersion of complex
    manifolds. We may assume that $\theta=f^{\ast}\widetilde \theta $
    for some smooth form $\widetilde \theta $ on $Y$. Then equation
    \eqref{eq:52} follows from the chain of equalities
    \begin{displaymath}
      g^{\ast}f_{\ast}\theta = g^{\ast}f_{\ast}f^{\ast}\widetilde
      \theta= g^{\ast}(\widetilde \theta \land \delta _{X})=
      g^{\ast}(\widetilde \theta) \land \delta _{X'}
      =f'_{\ast}f'{}^{\ast}g^{\ast}\widetilde \theta
      = f'_{\ast} g'{}^{\ast} f^{\ast}\widetilde \theta
      = f'_{\ast} g'{}^{\ast} \theta .
    \end{displaymath}
    This concludes the proof of the lemma and the proposition. \hfill
    $\square$
\end{proof}

\section{Analytic torsion for closed immersions}
\label{sec:analyt-tors-clos}

In the paper \cite{BurgosLitcanu:SingularBC} the authors study the
singular Bott-Chern classes associated to closed immersions of smooth
complex varieties. The singular Bott-Chern classes are the analogue,
for closed immersions, of the analytic torsion for smooth morphisms.
For this reason, we will call them also analytic torsion classes.
The aim of this section is to
recall the main results of \cite{BurgosLitcanu:SingularBC} and to
translate them into the language of derived categories.

\begin{definition} \label{def:1}
  A \emph{theory of analytic torsion classes for closed immersions} is
  a map that, to each embedded metrized complex $\ov \xi =(\ov
  f\colon X\to Y,
  \ov{\mathcal{F}}, \ov{\Rd f_{\ast}\mathcal{F}})$
  assigns a class
  \begin{displaymath}
    T(\ov \xi) \in
    \bigoplus _{p}\widetilde{\mathcal{D}}^{2p-1}_{D}(Y,N_{f},p)
  \end{displaymath}
  satisfying the following conditions.
  \begin{enumerate}
  \item (Differential equation) The equality
    \begin{math}
      \dd_{\mathcal{D}}T(\ov \xi) =
      \ch(\ov{\Rd
        f_{\ast}\mathcal{F}})-\ov f_{\ttwist}[\ch(\ov{\mathcal{F}})]
    \end{math}
    holds.
  \item (Functoriality) For every morphism $h\colon Y'\to Y$ of smooth complex varieties
     that is
    transverse to $f$ we have the equality
    \begin{math}
      h^{\ast}T(\ov \xi)=
      T(h^{\ast} \ov \xi).
    \end{math}
  \item (Normalization) If $X=\emptyset$ (hence $\ov
    {\mathcal{F}}=\ov 0$), $Y=\Spec \CC$, and $\ov {\Rd
      f_{\ast}\mathcal{F}}=\ov 0$, then
    \begin{math}
      T(\ov f,\ov 0,\ov 0)=0.
    \end{math}
  \end{enumerate}
\end{definition}

\begin{definition} Let $T$ be a theory of analytic torsion classes for closed
  immersions.
  \begin{enumerate}
  \item We say that $T$
    is \emph{compatible with the projection formula} if, for every embedded
    metrized complex $\ov \xi=(\ov f, \ov{\mathcal{F}}, \ov{\Rd
      f_{\ast}\mathcal{F}})$,
    and every object $\ov {\mathcal{G}}\in \oDb(Y)$, we have
    \begin{equation}\label{eq:69}
      T(\ov{\xi}\otimes\ov{\mathcal{G}})=
      T(\ov \xi) \bullet
      \ch(\ov {\mathcal{G}}).
    \end{equation}
  \item We say that $T$ is \emph{additive} if, given  $\overline{\xi}_i=
      (\ov{f},\ov{\mathcal{F}_{i}},\ov{\Rd f_{\ast}\mathcal{F}_{i}})$,
      $i=1,2$, two embedded
    metrized complexes, we have
    \begin{equation}
      \label{eq:49}
      T(\ov{\xi}_1\oplus\ov{\xi}_{2})=T(\ov\xi_{1})+T(\ov{\xi}_{2}).
    \end{equation}
  \item We say that $T$ is \emph{transitive} if, given a embedded
    metrized complex
    $\ov \xi=(\ov f, \ov{\mathcal{F}}, \ov{\Rd f_{\ast}\mathcal{F}})$, a
    closed immersion of smooth complex varieties
    $g\colon Y\to Z$, a morphism  $\ov g$ over $g$, and an object
  $\ov{\Rd (g\circ f)_{\ast}\mathcal{F}}\in \Ob \oDb(Z)$ over $\Rd
  (g\circ f)_{\ast}\mathcal{F}$, we have
  \begin{equation}
    \label{eq:68}
    T(\ov g\circ \ov f)=
    T(\ov g) +
    \ov g_{\ttwist}(T(\ov f)).
  \end{equation}
  \end{enumerate}
\end{definition}

\begin{remark} \label{rem:4}
  \begin{enumerate}
  \item \label{item:52} If $T$ is well defined for objects of $\oDb$,
    then the normalization condition in Definition \ref{def:1} and the
    normalization
    condition in \cite[Def.
    6.9]{BurgosLitcanu:SingularBC} are equivalent. The compatibility
    with the projection formula implies
    the normalization condition and the additivity (see
    \cite[Prop. 10.9]{BurgosLitcanu:SingularBC})
  \item \label{item:53} To check that a theory
    is compatible with the projection formula, it is enough to
    consider complexes consisting of a single
    hermitian vector bundle in degree 0.
  \end{enumerate}
\end{remark}

Let $X$ be a smooth complex variety and let $\ov N$ be a hermitian
vector bundle of rank~$r$. We denote by $P=\PP(N\oplus \mathbf{1})$
the projective
bundle obtained by completing $N$. Let $\pi _{P}\colon P\to X$ be the
projection and let $s\colon X\to P$ be the zero
section. Since $N$ can be identified with the normal bundle to $X$ on
$P$, the hermitian metric of $\ov N$ induces a hermitian structure on
$s$. We will denote it by $\ov s$. On $P$ we
have a tautological quotient vector bundle with an
induced metric $\ov Q$. For each hermitian vector bundle $\ov F$ on
$X$ we have the Koszul resolution $K(F,N)$  of $s_{\ast}F$. We
denote by $K(\ov F,\ov N)$ the Koszul resolution with the induced
metrics. See \cite[Def. 5.3]{BurgosLitcanu:SingularBC} for
details.

\begin{definition} \label{def:23}
  Let $T$ be a theory of analytic torsion classes for closed
  immersions. We say that $T$ is \emph{homogeneous} if, for every pair
  of hermitian vector bundles $\ov N$ and $\ov F$ with $\rk N=r$, there
  exists a homogeneous class of bidegree  $(2r-1,r)$  in the Deligne
  complex
  $$\widetilde e(\ov F,\ov N)\in
  \widetilde{\mathcal{D}}^{2r-1}_{D}(P,N_{s},r)$$
  such that
  \begin{equation}\label{eq:43}
    T(\ov s,\overline F,K(\ov F,\ov N))\bullet \Td(\ov Q)=
    \widetilde e(\ov F,\ov N)\bullet
    \ch(\pi _{P}^{\ast}\ov F).
  \end{equation}
\end{definition}

\begin{remark}
  Observe that Definition \ref{def:23} is equivalent to
  \cite[Def. 9.2]{BurgosLitcanu:SingularBC}. The advantage of
  the definition in this paper is that it treats on equal footing the
  case when $\rk F=0$.
\end{remark}

Let $\DD$ denote the base ring for Deligne cohomology (see
\cite{BurgosLitcanu:SingularBC} before Definition 1.5). A consequence
of \cite[Thm. 1.8]{BurgosLitcanu:SingularBC} is that there is a
bijection between the set of additive genus in Deligne cohomology and
the set of power series in one variable $\DD[[x]]$. To each power
series $\varphi \in \DD[[x]]$ it corresponds the unique additive genus
such that
\begin{math}
  \varphi(L)=\varphi (c_{1}(L))
\end{math}
for every line bundle $L$.

\begin{definition} \label{def:25}
  A \emph{real additive genus} is an additive genus such that the
  corresponding power series belongs to $\RR[[x]]$.
\end{definition}

Let
$\mathbf{1}_{1}\in \DD$ be the element represented by the constant
function 1 of $\mathcal{D}^{1}(\Spec \mathbb{C},1)=\RR$.
The main result of \cite{BurgosLitcanu:SingularBC} can be written
as follows.

\begin{theorem}\label{thm:17}
  \begin{enumerate}
  \item \label{item:55} There is a unique homogeneous theory of
    analytic torsion classes for
    closed immersions, that we denote $T^{h}$. This theory is compatible
    with the projection formula, additive and transitive.
  \item \label{item:56} Let $T$ be any transitive theory of analytic
    torsion classes
    for closed immersions, that is compatible with the projection
    formula. Then there is a unique real additive genus $S_{T}$
    such that, for any embedded metrized
    complex $\ov \xi :=(\ov f, \ov{\mathcal{F}}, \ov{\Rd
      f_{\ast}\mathcal{F}})$, we have
    \begin{equation}
      \label{eq:70}
      T(\ov \xi )-T^{h}(\ov \xi )=-f_{\ast}[\ch(\mathcal{F})\bullet
      \Td(T_{f}) \bullet S_{T}(T_{f})\bullet \mathbf{1}_{1}].
    \end{equation}
  \item \label{item:57}
    Conversely, any real additive
    genus $S$ defines, by means of equation \eqref{eq:70}, a
    unique transitive
    theory of analytic torsion classes $T_S$
    for closed immersions, that is compatible with the projection
    formula and additive.
  \end{enumerate}
  \end{theorem}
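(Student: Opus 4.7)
The plan is to exploit the deformation to the normal cone, the uniqueness lemma \ref{lemm:uniquenes_phi}, and a splitting principle in Deligne cohomology, following the strategy of \cite{BurgosLitcanu:SingularBC}.

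For uniqueness in (i), suppose $T_1, T_2$ are two homogeneous theories compatible with the projection formula. Their difference $D:=T_1-T_2$ satisfies $\dd_{\mathcal{D}}D(\ov\xi)=0$, is functorial under transverse base change, vanishes on orthogonally split complexes, and inherits additivity, transitivity and compatibility with the projection formula. By transitivity and projection formula compatibility, $D$ is completely determined by its values on Koszul resolutions $(\ov s,\ov F,K(\ov F,\ov N))$ for the zero section $s\colon X\to\PP(N\oplus\mathbf 1)$. Homogeneity pins down both $T_i$ on such data as the class in bidegree $(2r-1,r)$ given by \eqref{eq:43}; the difference of the homogeneous data $\widetilde e_1-\widetilde e_2$ is then a closed, functorial, additive assignment of a class in the correct bidegree on a projective bundle, and Lemma~\ref{lemm:uniquenes_phi} applied to its universal incarnation on a flag variety forces it to vanish. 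Existence in (i) is obtained by constructing $T^h$ explicitly from Bismut-Gillet-Soul\'e's singular Bott-Chern currents attached to the Koszul resolution via the Mathai-Quillen transgression on the deformation to the normal cone: the differential equation and functoriality are built into the universal formula, transitivity and projection-formula compatibility follow from the behaviour of these currents under composition and tensor product, and homogeneity is automatic since $\widetilde e(\ov F,\ov N)$ in the construction depends only on $\ov N$ while the dependence on $\ov F$ enters through $\ch(\pi_P^{\ast}\ov F)$.

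For (ii), set $D(\ov\xi):=T(\ov\xi)-T^{h}(\ov\xi)$. Then $\dd_{\mathcal{D}}D=0$, $D$ is functorial under transverse pullback, and $D$ inherits additivity, transitivity, and compatibility with the projection formula. Evaluating $D$ on the Koszul resolution of the zero section of a sum of line bundles and invoking the splitting principle in Deligne cohomology, the functoriality and additivity of $D$ force it to be of the form $-\ov f_{\ttwist}[\ch(\mathcal F)\bullet \phi(T_f)]$ for some universal closed class $\phi$ depending only on $T_f$. Transitivity, combined with the differential equation satisfied by both $T$ and $T^{h}$, forces the factorization $\phi=\Td(T_f)\bullet S_T(T_f)\bullet\mathbf{1}_1$ with $S_T$ an additive genus (additivity of $S_T$ follows from compatibility with direct sums of normal bundles, coupled with the splitting principle). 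Reality of $S_T$ is a consequence of the fact that $\mathcal{D}^{2p-1}_{D}(Y,N_f,p)$ is defined as the intersection with a real subspace, so the target of $D$ is stable under complex conjugation.

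For the converse (iii), given a real additive genus $S$, define $T_S$ by \eqref{eq:70}. The differential equation holds because $\ch(\mathcal F)\bullet \Td(T_f)\bullet S(T_f)$ is a closed Deligne-cohomology class and $\mathbf 1_1\in \DD$ lifts the unit, so $\dd_{\mathcal{D}}(\mathbf{1}_1\bullet\alpha)=0$ for $\alpha$ closed of the appropriate bidegree; hence the correction added to $T^{h}$ is $\dd_{\mathcal{D}}$-closed. Functoriality under transverse morphisms follows from Proposition~\ref{prop:11} applied to $\ov f_{\ttwist}$. Additivity is immediate from the additivity of $\ch$; transitivity uses the multiplicativity of $\Td$ in distinguished triangles and the additivity of $S$ applied to $T_{\ov g\circ\ov f}$; compatibility with the projection formula follows from the projection formula for $\ov f_{\ttwist}$ established in Proposition~\ref{prop:12}\,\ref{item:prop_12_3}.

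The main obstacle is the classification step~(ii): one must rule out more exotic characteristic classes than those of the form $\Td(T_f)\bullet S(T_f)\bullet\mathbf 1_1$. This is precisely the point where transitivity (which forces the $\Td$ factor, since the Todd class is what makes $\ov f_{\ttwist}$ compatible with composition of closed immersions) together with the projection formula (which cuts the ambiguity from arbitrary multiplicative characteristic classes down to \emph{additive} genera of $T_f$) are both indispensable; dropping either axiom would enlarge the classifying space beyond $\RR[[x]]$.
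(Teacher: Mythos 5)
Your proposal sets out to re-derive the classification itself (deformation to the normal cone, splitting principle, characteristic-class ambiguity), but that is not where the content of this theorem lies, and the sketch has concrete gaps. In the paper, the classification — existence and uniqueness of the homogeneous theory and the parametrization by real additive genera — is quoted from \cite{BurgosLitcanu:SingularBC}, where it is proved only for triples with $T_{\ov f}=\ov N_{X/Y}[-1]$, $\ov{\mathcal F}$ a hermitian vector bundle in degree $0$, and $\ov{\Rd f_{\ast}\mathcal F}$ a finite locally free resolution. The entire proof here is the \emph{unique extension} of such a theory to arbitrary embedded metrized complexes: arbitrary objects of $\oDb(X)$, arbitrary hermitian structures on $T_{f}$ and on $\Rd f_{\ast}\mathcal F$. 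This is done by induction on the length of a representative complex using the b\^ete filtration, correcting by Bott--Chern classes of distinguished triangles and by $\widetilde{\Td}_{m}$ for the change of hermitian structure on $T_f$, and invoking Lemma \ref{lemm:uniquenes_phi} for well-definedness and uniqueness of the extension; items \ref{item:56} and \ref{item:57} then follow from the results of \emph{loc.\ cit.} (with the sign in \eqref{eq:70} coming from $S(T_f)=-S(N_{X/Y})$). Your reduction ``$D$ is completely determined by its values on Koszul resolutions by transitivity and projection formula'' silently assumes exactly this passage from arbitrary $\ov{\mathcal F}$ and arbitrary hermitian structures down to Koszul data; making it honest requires the anomaly formulas and the compatibility with distinguished triangles (Propositions \ref{prop:1bis} and \ref{prop:2bis}), whose proofs rest on the axioms plus Lemma \ref{lemm:uniquenes_phi} — i.e.\ the technical heart of the argument you skipped — and, for general closed immersions, the deformation to the normal cone, which you invoke only in passing.

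Within the classification sketch there are further problems. The step ``Lemma \ref{lemm:uniquenes_phi} applied to the universal incarnation of $\widetilde e_1-\widetilde e_2$ on a flag variety forces it to vanish'' is not a valid use of that lemma: it applies to assignments on \emph{acyclic complexes of hermitian vector bundles} that vanish on orthogonally split complexes, not to closed characteristic-class-type data on $\PP(N\oplus\mathbf 1)$; indeed, without homogeneity such differences genuinely exist (theories of singular Bott--Chern classes are not unique), so no lemma of this shape can kill them — the real argument plays the bidegree $(2r-1,r)$ constraint of \eqref{eq:43} against the structure of the Deligne cohomology of $\PP(N\oplus\mathbf 1)$ after first classifying all theories by arbitrary characteristic classes. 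Likewise, that the Bismut--Gillet--Soul\'e construction is homogeneous, transitive and compatible with the projection formula is a substantial theorem (compare Theorem \ref{thm:18}), not ``automatic'' from the shape of the Mathai--Quillen currents. Finally, in your closing paragraph the roles of the two axioms are interchanged: it is the projection formula that cuts the dependence on $\mathcal F$ down to $\ch(\mathcal F)$, and transitivity (via the normal-bundle exact sequences for composed immersions) that forces the genus to be \emph{additive}.
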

  \begin{proof}
    Existence and uniqueness for both $T^{h}$ and $T_S$ is the content of
    \cite{BurgosLitcanu:SingularBC} when restricting to triples
    $\ov{\xi}$ with $T_{\ov f}=\overline{N}_{X/Y}[-1]$, $\ov{\mathcal{F}}$ a
    hermitian vector bundle placed in degree 0 and $\ov{\Rd
    f_{\ast}\mathcal{F}}$ given by a finite locally free resolution. For
    the general case, we thus need to prove that the theories of
    analytic torsion classes for closed immersions in the sense of
    \emph{loc. cit.} uniquely extend to arbitrary $\ov{\xi}$, fulfilling
    the desired properties.

    Assume given a theory $T$ in the sense of
    \cite{BurgosLitcanu:SingularBC}, compatible with the projection
    formula and transitive. We will call $T$ the initial theory.
    We consider a triple $\ov{\xi}$ with $T_{\ov
      f}=\overline{N}_{X/Y}[-1]$ and $\ov{\mathcal{F}}\in\Ob\oDb(X)$. Choose
    a representative $\ov{F}\dra\mathcal{F}$ of the hermitian structure
    on $\ov{\mathcal{F}}$. We then define $T(\ov{\xi})$ by induction on
    the length of the complex $F$. First suppose that $F=F^{d}[-d]$
    consists of a
    single vector bundle placed in degree $d$. Choose a finite
    locally free resolution
    \begin{displaymath}
      \dots\to E^{1}\to E^{0}\to f_{\ast} F^{d}\to 0.
    \end{displaymath}
    Endow the vector bundles $E^{i}$ with smooth hermitian
    metrics. Observe that there is an induced isomorphism
    \begin{math}
      \overline{E}[-d]\overset{\sim}{\dra}\overline{\Rd f_{\ast}\mathcal{F}},
    \end{math}
     in $\oDb(Y)$,
    whose Bott-Chern classes $\cht$ are
    defined in \cite[\S~4]{BurgosFreixasLitcanu:HerStruc}. We then put
    \begin{equation}\label{eq:72}
      T(\ov{\xi})=(-1)^{d}T(\ov{N}_{X/Y}, \ov{F}^{d},\ov{E})
      +\widetilde{\ch}(\ov{E}[-d]\overset{\sim}{\dra}\ov{\Rd
        f_{\ast}\mathcal{F}}).
    \end{equation}
    This definition does not depend on the choice of
    representative of the hermitian structure on $\ov{\mathcal{F}}$, nor
    on the choice of $\overline{E}$, due to
    \cite[Thm.~4.11, Prop.~4.13]{BurgosFreixasLitcanu:HerStruc},
    and \cite[Cor.~6.14]{BurgosLitcanu:SingularBC}. The differential equation is
    satisfied as a consequence of the differential equations for
    $T(\ov{N}_{X/Y},\ov{F}^{d},\ov{E})$ and
    $\widetilde{\ch}(\ov{E}[-d]\overset{\sim}{\dra}\ov{\Rd
      f_{\ast}\mathcal{F}})$. The compatibility with pull-back by
    morphisms $h\colon Y'\to Y$ transverse to $f$ is immediate as
    well. Finally, notice that by construction, if
    $\ov{\xi}'=(\ov{N}_{X/Y}, \ov{\mathcal{F}}, \overline{\Rd
      f_{\ast}\mathcal{F}}')$, then
    \begin{equation}\label{eq:71}
      T(\ov{\xi'})=T(\ov{\xi})+
      \widetilde{\ch}(\ov{\Rd f_{\ast}\mathcal{F}}' ,
      \ov{\Rd f_{\ast}\mathcal{F}}).
    \end{equation}
    Now suppose that $T(\ov{\xi})$ has been defined for $F$ of length
    $l$, satisfying in addition (\ref{eq:71}). If $F$ has length $l+1$,
    let $F^{d}$ be the first non-zero vector bundle of $F$. Consider the
    exact sequence of complexes
    \begin{displaymath}
      (\ov{\varepsilon})\qquad 0\to\sigma^{> d}\ov{F}\to \ov{F}\to
      \ov{F}^{d}[-d]\to 0,
    \end{displaymath}
    where $\sigma^{>d}$ is the b\^ete filtration. Observe that as a
    distinguished triangle, $(\ov{\varepsilon})$ is tightly
    distinguished, hence
    $\widetilde{\ch}(\ov{\varepsilon})=0$. Choose hermitian metrics on
    $\Rd f_{\ast}\sigma^{>d}F$ and $\Rd f_{\ast} F^{d}[-d]$. We thus have a
    distinguished triangle in $\oDb(Y)$
    \begin{displaymath}
      (\ov{\tau})\qquad \ov{\Rd f_{\ast}\sigma^{>d}F}\to\ov{\Rd f_{\ast} F}
      \to\ov{\Rd f_{\ast} F^{d}}[-d]\to\ov{\Rd f_{\ast}\sigma^{>d}F}[1]\to\dots.
    \end{displaymath}
    We define
    \begin{equation} \label{eq:74}
      T(\ov{\xi})=T(\ov{N}_{X/Y},\sigma^{>d}\ov{F},\ov{\Rd
        f_{\ast}\sigma^{>d}F})
      +(-1)^{d}T(\ov{N}_{X/Y}, \ov{F}^{d},\ov{\Rd f_{\ast} F^{d}})
      -\widetilde{\ch}(\ov{\tau}).
    \end{equation}
    This does not depend on the choice of hermitian structures on $\Rd
    f_{\ast}\sigma^{>d}F$ and $\Rd f_{\ast} F^{d}$, by the analogue to
    \cite[Thm.~3.33~(vii)]{BurgosFreixasLitcanu:HerStruc} for
    $\widetilde{\ch}$ and because \eqref{eq:71} holds by assumption
    for $T(\ov{N}_{X/Y},\sigma^{>d}\ov{F},\ov{\Rd
      f_{\ast}\sigma^{>d}F})$ and $T(\ov{N}_{X/Y}, \ov{F}^{d},\ov{\Rd
      f_{\ast} F^{d}})$. Similarly, \eqref{eq:71} holds for
    $T(\ov{\xi})$ defined in \eqref{eq:74}.  The differential equation
    and compatibility with pull-back are proven as in the first
    case. This concludes the proof of the existence in case that
    $T_{\ov f}=\overline{N}_{X/Y}[-1]$.

    To conclude with the existence, we may now consider a general
    $\ov{\xi}$. Choose a hermitian metric on the normal bundle
    $N_{X/Y}$. Put
    $\ov{\xi}'=(\ov{N}_{X/Y}[-1],\ov{\mathcal{F}},\ov{\Rd
      f_{\ast}\mathcal{F}})$.
    We define
    \begin{equation}\label{eq:73}
      T(\ov{\xi})=T(\ov{\xi}')+
      \ov f_{\ttwist}[\ch(\ov{\mathcal{F}})\bullet
      \widetilde{\Td}_{m}(
      T_{\ov f}\dra \ov{N}_{X/Y}[-1])],
    \end{equation}
    where $\widetilde{\Td}_{m}$ is the multiplicative Todd secondary
    class defined in \cite[\S~5]{BurgosFreixasLitcanu:HerStruc}.
    It is straightforward from the definition that $T(\ov{\xi})$ satisfies
    the differential equation and is compatible with pull-back by
    morphisms transverse to $f$. We call $T$ the extended theory.

    We now proceed to prove that the extended theory $T$ is transitive
    and compatible with the projection formula. For the projection
    formula, it suffices by Remark \ref{rem:4}~\ref{item:53} to prove
    \begin{displaymath}
      T(\ov{\xi}\otimes\ov{G})=T(\ov{\xi})\bullet\ch(\ov{G})
    \end{displaymath}
    for $\ov{G}$ a hermitian vector bundle placed in degree 0. This
    readily follows from the inductive construction of the extended
    theory $T$ and the assumptions on the initial theory $T$. One
  similarly establishes the transitivity and the additivity

  We conclude by observing that, since Lemma \ref{lemm:uniquenes_phi}
  implies that the equations \eqref{eq:72}, \eqref{eq:71},
  \eqref{eq:74} and \eqref{eq:73} hold, the theory $T(\ov{\xi})$ thus
  constructed for arbitrary $\ov{\xi}$ is completely determined by the
  values $T(\ov{\xi}')$, with $\ov{\xi}'$ of the form $(\ov{N}_{X/Y},
  \ov{F},\overline{E})$ where $\ov{F}$ is a hermitian vector bundle
  and $E\to f_{\ast}F$ is a finite locally free resolution.

  Once we have seen that any theory of singular Bott-Chern classes as
  in \cite{BurgosLitcanu:SingularBC} can be uniquely extended, then
  statements \ref{item:56} and \ref{item:57} follow combining equation
  (7.3) and Corollary 9.43 in \cite{BurgosLitcanu:SingularBC}. Note
  that the minus sign in equation \eqref{eq:70} comes from the fact
  that $S(T_{f})=-S(N_{X/Y})$.
\end{proof}

In \cite[\S 6]{BurgosLitcanu:SingularBC} several
anomaly formulas are proved. We now indicate the translation of these
formulas to the current setting. Recall that we are using the notation
of \cite{BurgosFreixasLitcanu:HerStruc} with respect to secondary
characteristic classes.

\begin{proposition}\label{prop:1bis}
  Let $T$ be a theory of analytic torsion classes for closed immersions. Let
  \begin{math}
    \overline{\xi}= (\ov{f}\colon X\to Y,\ov{\mathcal{F}}, \ov{\Rd
      f_{\ast}\mathcal{F}})
  \end{math}
  be an embedded metrized complex.
  \begin{enumerate}
  \item \label{item:6bis}
    If $\ov{\mathcal{F}}'$ is another choice
    of metric on $\mathcal{F}$ and
    $\overline{\xi}_1=(\ov{f}\colon X\to Y,\ov{\mathcal{F}}', \ov{\Rd
      f_{\ast}\mathcal{F}})$, then
    \begin{displaymath}
      T(\overline{\xi}_1)=T(\overline{\xi})+\ov f
      _{\ttwist}[\cht(\ov{\mathcal{F}}',\ov{\mathcal{F}})].
    \end{displaymath}
  \item \label{item:7bis}
    If $\ov{f}'$ is another hermitian structure on
    $f$ and
    $\overline{\xi}_{2}=(\ov{f}'\colon X\to Y,\ov{\mathcal{F}}, \ov{\Rd
      f_{\ast}\mathcal{F}})$, then
    \begin{equation}\label{eq:14bis}
      T(\overline{\xi}_{2})=T(\overline{\xi})+\ov f'
      _{\ttwist}[\ch(\overline {\mathcal{F}})\bullet
      \widetilde{\Td}_{m}(\ov{f}',\ov{f})].
    \end{equation}
  \item \label{item:8bis}
    If $\ov{\Rd f_{\ast}\mathcal{F}}'$ is another
    choice of metric on $\Rd f_{\ast}\mathcal{F}$, and $\overline
    {\xi}_{3}=(\ov{f}\colon X\to Y,\ov{\mathcal{F}}, \ov{\Rd
      f_{\ast}\mathcal{F}}')$, then
    \begin{displaymath}
      T(\overline{\xi}_{3})= T(\overline{\xi})-
      \cht(\ov{\Rd f_{\ast}\mathcal{F}}',
      \ov{\Rd f_{\ast}\mathcal{F}}).
    \end{displaymath}
  \end{enumerate}
\end{proposition}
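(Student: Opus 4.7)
The strategy is to deduce each of the three anomaly formulas from the inductive construction of the extended theory $T$ carried out in the proof of Theorem \ref{thm:17}, combining it with the anomaly formulas already established in \cite[\S~6]{BurgosLitcanu:SingularBC} for the initial theory and with the cocycle identities for the secondary classes $\widetilde{\ch}$ and $\widetilde{\Td}_m$ proven in \cite{BurgosFreixasLitcanu:HerStruc}. In all three cases I first treat the restricted situation $T_{\ov f} = \ov{N}_{X/Y}[-1]$, and at the end use the extension formula \eqref{eq:73} to pass to a general hermitian structure on $T_f$.

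For \ref{item:6bis}, the formula is known for the base triples $(\ov{N}_{X/Y}, \ov F, \ov E)$ appearing in the inductive construction. I then propagate it through the extension formulas \eqref{eq:72} and \eqref{eq:74}, invoking at each step the cocycle relation for $\widetilde{\ch}$ applied either to the isomorphism $\ov E[-d] \dra \ov{\Rd f_{\ast}\mathcal{F}}$ or to the distinguished triangle $(\ov\tau)$. The passage from $T_{\ov f} = \ov{N}_{X/Y}[-1]$ to general $T_{\ov f}$ is immediate, since the secondary Todd correction in \eqref{eq:73} is independent of the metric chosen on $\mathcal{F}$. Part \ref{item:8bis} is handled analogously: at the base level with two choices $\ov E, \ov E'$ of resolution, \eqref{eq:72} identifies the difference $T(\overline{\xi}_3) - T(\overline{\xi})$ with a difference of two $\widetilde{\ch}$-classes, which by the cocycle identity collapses to $-\widetilde{\ch}(\ov{\Rd f_{\ast}\mathcal{F}}', \ov{\Rd f_{\ast}\mathcal{F}})$; the induction on the length of $F$ proceeds via \eqref{eq:74} and the behavior of $\widetilde{\ch}$ on distinguished triangles, and the final passage to arbitrary $T_{\ov f}$ is again automatic.

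For \ref{item:7bis} only the extension step \eqref{eq:73} is relevant, since every preceding definition keeps the auxiliary metric on $N_{X/Y}$ fixed. Subtracting the two instances of \eqref{eq:73} attached to $\ov f$ and $\ov f'$ and collecting terms, \eqref{eq:14bis} should fall out by combining the cocycle property of $\widetilde{\Td}_m$ modulo $\im d_{\mathcal{D}}$ with the identity
\begin{displaymath}
\ov f'_{\ttwist}(\,\cdot\,) - \ov f_{\ttwist}(\,\cdot\,) = f_{\ast}\bigl((\,\cdot\,) \bullet (\Td(\ov f') - \Td(\ov f))\bigr),
\end{displaymath}
the latter being exactly the primary content of $d_{\mathcal{D}} \widetilde{\Td}_m(\ov f', \ov f)$. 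I expect this last part to be the main technical obstacle: both sides of \eqref{eq:14bis} depend on the hermitian structure of $T_f$ through two separate mechanisms (the multiplicative Todd secondary class on the one hand, and the twist operator $\ov f_{\ttwist}$ itself on the other), and checking that the change in the twist is cancelled exactly by the cocycle correction relating $\widetilde{\Td}_m(T_{\ov f'} \dra \ov{N}_{X/Y}[-1])$ and $\widetilde{\Td}_m(T_{\ov f} \dra \ov{N}_{X/Y}[-1])$ will require careful bookkeeping of signs and normalizations.
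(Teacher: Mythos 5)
Your strategy has a genuine gap, and it is one of scope rather than of detail. Proposition \ref{prop:1bis} is asserted for an \emph{arbitrary} theory $T$ in the sense of Definition \ref{def:1}, i.e.\ assuming only the differential equation, functoriality and normalization. Your argument presupposes that $T$ is obtained by the inductive extension procedure of Theorem \ref{thm:17}, i.e.\ that $T$ satisfies the formulas \eqref{eq:72}, \eqref{eq:71}, \eqref{eq:74} and \eqref{eq:73}. But that construction is carried out only for initial theories that are transitive and compatible with the projection formula, and, more to the point, the fact that an arbitrary theory satisfying the three axioms obeys \eqref{eq:72}--\eqref{eq:73} at all is established in the paper only by means of Lemma \ref{lemm:uniquenes_phi}. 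So your route either silently shrinks the class of theories covered by the proposition, or it must invoke the uniqueness lemma anyway --- and once that lemma is in play, the paper's proof is both shorter and more general: for each of the three items one writes the difference of the two sides of the claimed identity as an assignment $\widetilde{\varphi}(\ov{A})$ attached to bounded acyclic complexes of hermitian vector bundles (using \cite[Thm.~3.13]{BurgosFreixasLitcanu:HerStruc} to encode the change of hermitian structure on $\mathcal{F}$, on $T_{f}$ or on $\Rd f_{\ast}\mathcal{F}$ by a class $[\ov{A}]\in\KA$), checks that it is $\dd_{\mathcal{D}}$-closed, functorial for smooth base changes and vanishes on orthogonally split complexes, and concludes $\widetilde{\varphi}=0$ from Lemma \ref{lemm:uniquenes_phi}. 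No induction on the length of $F$ and no appeal to the initial theory's anomaly formulas is needed.

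Even within the restricted setting, your treatment of item \ref{item:7bis} is not actually carried out: you flag the cancellation between the change of the twisted push-forward $\ov{f}_{\ttwist}$ and the cocycle relation for $\widetilde{\Td}_{m}$ as the main obstacle and leave it there. Be aware that this is not mere bookkeeping. When you subtract the two instances of \eqref{eq:73} you generate terms of the shape $T(\cdot)\bullet\dd_{\mathcal{D}}(\text{secondary class})$, and since the torsion class is \emph{not} $\dd_{\mathcal{D}}$-closed such terms do not disappear modulo $\Im\partial+\Im\bar\partial$; by Leibniz they transform into $\dd_{\mathcal{D}}T(\cdot)\bullet(\text{secondary class})$, which must then be re-expanded via the differential equation and recombined --- exactly the kind of computation the paper performs elsewhere (e.g.\ around equations \eqref{eq:21}--\eqref{eq:25}) but avoids here by the uniqueness-lemma argument. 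As it stands, the crucial step of your proof of \eqref{eq:14bis} is missing.
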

\begin{proof}
  We first prove the second assertion. Let $\ov{E}\dra
  T_{f}$ be a representative of the hermitian structure on
  $T_{\ov{f}}$. By
  \cite[Thm.~3.13~(ii)]{BurgosFreixasLitcanu:HerStruc}, we may assume
  the
  hermitian structure on $T_{\ov{f}'}$ is represented by the
  composition
  \begin{displaymath}
    \ov{E}\oplus\ov{A}\overset{\pr_{1}}{\longrightarrow }\ov{E}\dra T_{f}
  \end{displaymath}
  for some bounded acyclic complex $\ov A$ of hermitian vector
  bundles on $X$. For every
  smooth morphism $g\colon Y'\to Y$ of complex varieties, consider the
  cartesian diagram~\eqref{eq:9}.
  We introduce the assignment that, to every such $g$ and each bounded
  acyclic complex of hermitian vector bundles $\ov{A}$ on $X'$,
  assigns the class
  \begin{displaymath}
    \begin{split}
      \widetilde{\varphi}(\ov{A})=&T({g'}^{\ast}\ov{\xi})
      -T\left((f',\Ld {g'}^{\ast}T_{\ov{f}}+ [\ov{A}]),\Ld
        {g'}^{\ast}\ov{\mathcal{F}},
	\Ld g^{\ast}\ov{\Rd f_{\ast}\mathcal{F}}\right)\\
      &+{f'}_{\ast}\left[\ch(\Ld {g'}^{\ast}\ov{\mathcal{F}})
	\widetilde{\Td}_{m}\left((\Ld{g'}^{\ast}T_{\ov{f}} +
          [\ov{A}]),\Ld {g'}^{\ast}T_{\ov{f}}\right) \Td(\Ld
        {g'}^{\ast}T_{\ov{f}}+[\ov{A}])\right].
    \end{split}
  \end{displaymath}
  Here $[\ov{A}]$ stands for the class of $\ov{A}$ in
  $\KA(X')$ (\cite[Def.~2.31]{BurgosFreixasLitcanu:HerStruc}) and $+$
  denotes the action of $\KA(X')$ on $\oDb(X')$
  (\cite[Thm.~3.13]{BurgosFreixasLitcanu:HerStruc}). Since
  $\widetilde{\varphi}$ satisfies the hypothesis of Lemma
  \ref{lemm:uniquenes_phi}, we have
  $\widetilde{\varphi}=0$. This concludes the proof of
  \ref{item:7bis}.

  To prove \ref{item:6bis}, we observe that
  $\ov{\mathcal{F}}'=\ov{\mathcal{F}}+ [\ov A]$ for some bounded
  acyclic complex $\ov A$ of hermitian vector bundles on $X$. For each
  cartesian diagram as
  \eqref{eq:9}, we set $\ov f'=g^{\ast}
  \ov f$. Let $\widetilde {\varphi}_{1}$ be the assignment that,
  to each such diagram and each bounded
  acyclic complex of hermitian vector bundles $\ov{A}$ on $X'$,
  assigns the class
  \begin{displaymath}
    \widetilde{\varphi}_{1}(\ov{A})=T({g'}^{\ast}\ov{\xi})
    -T\left(\ov f',\Ld
      {g'}^{\ast}\ov{\mathcal{F}}+[A],
      \Ld g^{\ast}\ov{\Rd f_{\ast}\mathcal{F}}\right)
    -{\ov f'}_{\ttwist}[\cht(A)].
  \end{displaymath}
  The hypothesis of
  Lemma \ref{lemm:uniquenes_phi} are satisfied, hence
  $\widetilde{\varphi}_{1}=0$. This concludes the proof of~\ref{item:6bis}.

  Finally, to prove \ref{item:8bis}, to each morphism $g\colon Y'\to
  Y$, transverse to $f$,
  we associate the cartesian diagram \eqref{eq:9} and we consider the
  assignment
  $\widetilde{\varphi}_{2}$ that,
  to each bounded acyclic complex of hermitian vector bundles
  $\ov{B}$ on $Y'$, assigns the class
  \begin{displaymath}
    \widetilde{\varphi}_{2}(\ov{B})=T({g'}^{\ast}\ov{\xi})
    -T\left(\ov f',\Ld
      {g'}^{\ast}\ov{\mathcal{F}},
      \Ld g^{\ast}\ov{\Rd f_{\ast}\mathcal{F}}+[B]\right)
    +\cht(B).
  \end{displaymath}
  By Lemma \ref{lemm:uniquenes_phi} applied to $\Id_{Y}$, we have
  $\widetilde{\varphi}_{2}=0$. This concludes the proof of~\ref{item:8bis}.
\end{proof}

The following result provides a compatibility relation for analytic
torsion classes for closed immersions with respect to distinguished
triangles. The statement is valid for additive theories, in particular
the ones we are concerned with.
\begin{proposition}\label{prop:2bis}
  Let $T$ be an additive theory of analytic torsion classes for closed
  immersions. Let $f\colon X\to Y$ be a closed immersion of smooth
  complex varieties. Consider distinguished triangles in $\oDb(X)$ and
  $\oDb(Y)$ respectively,
\begin{displaymath}
	(\ov{\tau}):\
        \ov{\mathcal{F}}_{2}\to\ov{\mathcal{F}}_{1}
        \to\ov{\mathcal{F}}_{0}\to\ov{\mathcal{F}}_{2}[1],\quad
	(\ov{\Rd f_{\ast}\tau}):\
        \ov{\Rd f_{\ast}\mathcal{F}}_{2}\to
        \ov{\Rd f_{\ast}\mathcal{F}}_{1}
	\to\ov{\Rd f_{\ast}\mathcal{F}}_{0}\to
        \ov{\Rd f_{\ast}\mathcal{F}}_{2}[1],
\end{displaymath}
and the relative hermitian complexes $\ov{\xi}_{i}=(\ov{f},\ov{\mathcal{F}}_i,
\ov{\Rd f_{\ast}\mathcal{F}}_i),$ $i=0,1,2$.
Then we have:
\begin{displaymath}
	\sum_{j}(-1)^{j}T(\ov{\xi}_{j})
	=\widetilde{\ch}(\ov{\Rd f_{\ast}\tau})
	-\ov f_{\ttwist}(\widetilde{\ch}(\ov{\tau })).
\end{displaymath}
\end{proposition}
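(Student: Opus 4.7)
The plan is to reduce to the case where both distinguished triangles are tightly distinguished, where additivity of $T$ applies directly, and then to track the Bott--Chern corrections using the anomaly formulas of Proposition~\ref{prop:1bis} together with the behavior of $\cht$ for distinguished triangles under change of hermitian structure on the middle term.

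First, endow $\mathcal{F}_1$ with the hermitian structure $\ov{\mathcal{F}}'_1:=\ov{\mathcal{F}}_2\oplus\ov{\mathcal{F}}_0$ given by the obvious inclusion and projection, and analogously set $\ov{\Rd f_\ast\mathcal{F}}'_1:=\ov{\Rd f_\ast\mathcal{F}}_2\oplus\ov{\Rd f_\ast\mathcal{F}}_0$. With these choices, the modified triangles $(\ov{\tau}')$ and $(\ov{\Rd f_\ast\tau}')$, obtained by replacing the respective middle terms, are tightly distinguished in the sense of \cite{BurgosFreixasLitcanu:HerStruc}, so their Bott--Chern classes vanish. Setting $\ov{\xi}'_1:=(\ov f,\ov{\mathcal{F}}'_1,\ov{\Rd f_\ast\mathcal{F}}'_1)$, one has $\ov{\xi}'_1=\ov{\xi}_2\oplus\ov{\xi}_0$ as relative metrized complexes in the sense of Definition~\ref{def:18}, and additivity of $T$ yields
\[
T(\ov{\xi}'_1)=T(\ov{\xi}_0)+T(\ov{\xi}_2).
\]

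Next, compare $T(\ov{\xi}_1)$ and $T(\ov{\xi}'_1)$ by chaining the anomalies (\ref{item:6bis}) and (\ref{item:8bis}) of Proposition~\ref{prop:1bis} through the intermediate metrized complex $(\ov f,\ov{\mathcal{F}}'_1,\ov{\Rd f_\ast\mathcal{F}}_1)$, obtaining
\[
T(\ov{\xi}'_1)=T(\ov{\xi}_1)+\ov f_{\ttwist}[\cht(\ov{\mathcal{F}}'_1,\ov{\mathcal{F}}_1)]-\cht(\ov{\Rd f_\ast\mathcal{F}}'_1,\ov{\Rd f_\ast\mathcal{F}}_1).
\]
Combining with the additivity relation and rearranging expresses $\sum_j(-1)^jT(\ov{\xi}_j)$ in terms of these Bott--Chern secondary classes.

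The final identification is provided by the anomaly property of $\cht$ for distinguished triangles (\cite[Thm.~4.18]{BurgosFreixasLitcanu:HerStruc}): since $\cht(\ov{\tau}')=0$ and $\cht(\ov{\Rd f_\ast\tau}')=0$, one obtains $\cht(\ov{\mathcal{F}}'_1,\ov{\mathcal{F}}_1)=-\cht(\ov{\tau})$ and $\cht(\ov{\Rd f_\ast\mathcal{F}}'_1,\ov{\Rd f_\ast\mathcal{F}}_1)=-\cht(\ov{\Rd f_\ast\tau})$. Substituting into the previous display yields the desired formula. The main obstacle is the careful bookkeeping of sign conventions, in particular the sign with which a change of metric on the middle term enters the anomaly formula for $\cht$ of a distinguished triangle, together with the verification that the direct-sum hermitian structures on source and target indeed render the two triangles tightly distinguished simultaneously. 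Once these points are confirmed, the argument is reduced to elementary manipulation of the anomaly formulas and additivity.
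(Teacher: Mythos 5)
There is a genuine gap at the heart of your reduction. You propose to ``endow $\mathcal{F}_1$ with the hermitian structure $\ov{\mathcal{F}}_2\oplus\ov{\mathcal{F}}_0$'' and then invoke the additivity axiom to get $T(\ov{\xi}'_1)=T(\ov{\xi}_0)+T(\ov{\xi}_2)$. But a hermitian structure lives on a fixed object of $\Db(X)$, and $\mathcal{F}_1$ is in general \emph{not} isomorphic to $\mathcal{F}_2\oplus\mathcal{F}_0$ in $\Db(X)$: the triangle need not split. You therefore face a dilemma. If $\ov{\mathcal{F}}'_1$ really means the orthogonal direct sum as an object, then the anomaly formulas of Proposition~\ref{prop:1bis}~\ref{item:6bis} and \ref{item:8bis} do not apply in your comparison step, since they only compare two hermitian structures on the \emph{same} object. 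If instead $\ov{\mathcal{F}}'_1$ means the hermitian cone structure $\ocone(\ov{\mathcal{F}}_0[-1]\dra\ov{\mathcal{F}}_2)$ on $\mathcal{F}_1$ (which is what makes the modified triangle tightly distinguished), then the anomaly step is fine but the additivity axiom \eqref{eq:49} does not apply: $\ov{\xi}'_1$ is not a direct sum of relative metrized complexes, and the assertion that $T$ of a tightly distinguished triangle is the sum of the $T$'s of the outer terms is precisely (a special case of) the proposition you are trying to prove. Indeed, in the paper the corresponding statement for projective spaces (Corollary~\ref{cor:2}) is deduced \emph{from} Proposition~\ref{prop:2}, not from the axioms directly. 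So your argument is circular at its key step; the final bookkeeping with $\cht$ of distinguished triangles (vanishing on tightly distinguished ones) is fine, but it cannot rescue the missing input.

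The missing content is supplied in the paper by a deformation argument: both triangles are represented by short exact sequences of complexes of hermitian vector bundles, the construction of \cite[Thm.~2.3]{BurgosLitcanu:SingularBC} produces exact sequences over $X\times\PP^{1}$ and $Y\times\PP^{1}$ restricting to the given ones at $0$ and to orthogonally split ones at $\infty$, and one computes
\begin{displaymath}
  \dd_{\mathcal{D}}\,\frac{1}{2\pi i}\int_{\PP^{1}}\frac{-1}{2}\log t\bar t\bullet\sum_{j}(-1)^{j}T(\widetilde{\xi}_{j})
\end{displaymath}
using the differential equation and functoriality of $T$; only at the orthogonally split fiber at $\infty$ does the additivity axiom enter, and the boundary terms produce exactly $\cht(\ov{\Rd f_{\ast}\tau})-\ov f_{\ttwist}(\cht(\ov{\tau}))$. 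If you want to salvage your outline, you would need to first prove that $T$ of a hermitian cone with the induced structure equals the sum of the $T$'s of the two pieces, and that step requires an argument of this deformation (or Lemma~\ref{lemm:uniquenes_phi}) type, not the additivity axiom alone.
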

\begin{proof}
  We can assume that the distinguished triangles $\ov \tau $ and
  $\ov{\Rd f_{\ast}\tau }$ can be
  represented by short exact sequences of complexes of hermitian vector
  bundles
  \begin{align*}
    \ov \varepsilon :\quad &0\longrightarrow \ov E_{2}
    \longrightarrow \ov E_{1}
    \longrightarrow \ov E_{0} \longrightarrow 0,\\
    \ov \nu : \quad &0\longrightarrow \ov V_{2}
    \longrightarrow \ov V_{1}
    \longrightarrow \ov V_{0} \longrightarrow 0.
  \end{align*}
  Applying the explicit construction at the beginning of the proof of
  \cite[Theorem 2.3]{BurgosLitcanu:SingularBC} to each row of the
  above exact sequences, we obtain exact sequences
  \begin{align*}
    \widetilde \varepsilon^{i} :\quad &0\longrightarrow \widetilde  E_{2}^{i}
    \longrightarrow \widetilde E_{1}^{i}
    \longrightarrow \widetilde E_{0}^{i} \longrightarrow 0,\\
    \widetilde \nu^{i} :\quad  &0\longrightarrow \widetilde V_{2}^{i}
    \longrightarrow \widetilde V_{1}^{i}
    \longrightarrow \widetilde V_{0}^{i} \longrightarrow 0
  \end{align*}
  over $X\times \PP^{1}$ and $Y\times \PP^{1}$
  respectively. The restriction of $\widetilde \varepsilon ^{i} $
  (respectively $\widetilde \nu ^{i}$) to $X\times \{0\}$
  (respectively $Y\times \{0\}$) agrees with $\ov \varepsilon $
  (respectively $\ov \nu $). Whereas the restriction to
  $X\times \{\infty\}$ (respectively $Y\times \{\infty\}$)
  is orthogonally split. The sequences $\widetilde \varepsilon ^{i}$ and
  $\widetilde \nu ^{i}$ form exact sequences of complexes that we
  denote $\widetilde \varepsilon $ and
  $\widetilde \nu$. It is easy to verify that the restriction to
  $X\times \{\infty\}$ (respectively $Y\times \{\infty\}$)
  are orthogonally split as sequences of complexes. Moreover, there are
  isomorphisms $\widetilde V_{j}\dra \Rd f _{\ast}\widetilde E_{j}$,
  $j=0,1,2$. We denote
  \begin{math}
    \widetilde \xi _{j}=(\ov f \times \Id_{\PP^{1}},\widetilde
    E_{j},\widetilde V_{j}).
  \end{math}
  Then, in the group
  $\bigoplus_{p}\widetilde{\mathcal{D}}_{D}^{2p-1}(Y,N_{f},p)$, we
  have
  \begin{align*}
    0&=\dd_{\mathcal{D}}\frac{1}{2\pi i}\int_{ \PP^{1}}
    \frac{-1}{2}\log t\bar t\bullet\sum_{j}
    (-1)^{j}T(\widetilde{\xi}_{j})\\
    &=T(\overline {\xi}_{1})-T(\overline
    {\xi}_{0}\oplus \overline{\xi}_{2})-
    \frac{1}{2\pi i}\int_{ \PP^{1}}
    \frac{-1}{2}\log t\bar t\bullet\sum_{j}
    (-1)^{j}\ch(\widetilde V_{j})\\
    &+
  \frac{1}{2\pi i}\int_{ \PP^{1}}
    \frac{-1}{2}\log t\bar t\bullet
    \sum_{j}(-1)^{j}(f\times \Id_{\PP^{1}})
    _{\ast}(\ch(\widetilde{E}_{j})\bullet
    \Td(
    \overline{f}\times \Id_{\PP^{1}} ) )\\
    &=T(\overline {\xi}_{1})-T(\overline
    {\xi}_{0}\oplus \overline{\xi}_{2})
    +\widetilde{\ch}(\ov{\Rd f_{\ast}\tau})
    -f_{\ast}(\widetilde{\ch}(\ov{\tau })\Td(\ov{f})).
  \end{align*}
Thus the result follows from the additivity.
\end{proof}

We end this chapter with the relation between the singular Bott-Chern
classes of Bismut-Gillet-Soul\'e \cite{BismutGilletSoule:MR1086887}
and the theory of homogeneous analytic torsion classes. We draw
attention to the difference of normalizations. Let us momentarily
denote by $\tau$ the theory of singular Bott-Chern classes of
Bismut-Gillet-Soul\'e. By the anomaly formulas, it may be extended to
arbitrary embedded metrized complexes. Let $\ov{\xi}=(\ov{f}:X\to
Y,\ov{\mathcal{F}},\ov{\Rd f_{\ast}\mathcal{F}})$ be a relative
metrized complex, with $Y$ of dimension $d$. If $\tau^{(p-1,p-1)}$
denotes the component of degree $(p-1,p-1)$ of the current $\tau$, we
define
\begin{equation}\label{eq:88}
  T^{BGS}(\ov{\xi})^{(2p-1,p)}= -\frac{1}{2(2\pi
    i)^{d-(p-1)}}\tau^{(p-1,p-1)}
  \in\widetilde{\mathcal{D}}_{D}^{2p-1}(Y,N_{f},p).
\end{equation}
In the above equation, the factor $(2\pi i)^{(p-1)}$ comes from the
difference in the normalization of characteristic classes. In
\cite{BismutGilletSoule:MR1086887} the authors use real valued classes
while we use twisted coefficients. The factor $(2\pi i)^{d}$ comes
from our convention about the Deligne complex of currents. The factor
$2$ comes from the fact that the second order differential operator
that appears in the Deligne complex is $-2\partial\bar \partial=2(2\pi
i)d d^{c}$, while the second order differential operator that appears
in the differential equation considered by Bismut, Gillet and Soul\'e
is $dd^{c}$. The main reason behind this change is that we want the
Bott-Chern classes to be related to Beilinson's regulator and not to
twice Beilinson's regulator (see \cite{GilletSoule:ait} Theorem
3.5.4). Finally, the minus sign comes from the discrepancy of the
differential equations of the singular Bott-Chern forms of
Bismut-Gillet-Soul\'e and the analytic torsion forms of
Bismut-K\"ohler. Note that we are forced to change this sign because
we want to merge singular Bott-Chern forms and analytic torsion forms
on a single theory.
We put
\begin{displaymath}
	T^{BGS}(\ov{\xi})=\sum_{p\geq
          1}T^{BGS}(\ov{\xi})^{(2p-1,p)}\in
        \bigoplus_{p}\widetilde{\mathcal{D}}_{D}^{2p-1}(Y, N_{f},p).
\end{displaymath}
We have the following comparison theorem
\cite[Thm. 9.25]{BurgosLitcanu:SingularBC}.

\begin{theorem}\label{thm:18}
For every embedded metrized complex $\xi$ we have
\begin{displaymath}
	T^{BGS}(\ov{\xi})=T^{h}(\ov{\xi}).
\end{displaymath}
\end{theorem}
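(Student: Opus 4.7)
The plan is to invoke the uniqueness part of Theorem~\ref{thm:17}\ref{item:55}: there exists a unique homogeneous theory of analytic torsion classes for closed immersions, namely $T^{h}$. Hence it suffices to verify that the renormalized Bismut--Gillet--Soul\'e assignment $T^{BGS}$, as defined in~\eqref{eq:88} and extended to arbitrary embedded metrized complexes $\ov{\xi}=(\ov f,\ov{\mathcal{F}},\ov{\Rd f_{\ast}\mathcal{F}})$ via the anomaly formulas, satisfies the three axioms of Definition~\ref{def:1} together with the homogeneity condition of Definition~\ref{def:23}.

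First I would translate the differential equation of Bismut--Gillet--Soul\'e \cite{BismutGilletSoule:MR1086887}, which involves the real operator $dd^{c}$ and real-valued characteristic forms, into the language of the Deligne complex using the relation $\dd_{\mathcal{D}}=-2\partial\bar\partial$ on bidegree $(2p-1,p)$ components. The bookkeeping of the three discrepancy factors identified in the discussion preceding Theorem~\ref{thm:18} (the twist $(2\pi i)^{p-1}$, the currential normalization $(2\pi i)^{d}$, and the factor $2$ between $dd^{c}$ and $-2\partial\bar\partial$), together with the sign change, is exactly what makes the BGS differential equation of loc.~cit.\ coincide with the equation of Definition~\ref{def:1}(i) with the Todd-twisted direct image $\ov{f}_{\ttwist}$. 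Next I would verify the functoriality axiom: the BGS classes are compatible with pull-back by morphisms transverse to $f$ (this is a direct consequence of their construction via Bismut superconnection currents, which have wave front set contained in $N_{f}$, making the pull-back well defined and compatible). Normalization is trivial since the construction yields $0$ when $X=\emptyset$.

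The heart of the argument is the homogeneity axiom. For this I would specialise to the deformation-to-the-normal-cone setting: take $\ov{\xi}=(\ov s,\ov F,K(\ov F,\ov N))$ with $s\colon X\hookrightarrow P=\PP(N\oplus\mathbf{1})$ the zero section and $K(\ov F,\ov N)$ the Koszul resolution. The Bismut--Gillet--Soul\'e construction of the singular Bott-Chern current attached to this datum is explicitly given in terms of the Mathai--Quillen form on the total space of $N$, pushed to $P$. From this explicit description one reads off that the resulting current, after multiplication by $\Td(\ov Q)$, factors as $\widetilde{e}(\ov F,\ov N)\bullet\ch(\pi_{P}^{\ast}\ov F)$, where $\widetilde{e}(\ov F,\ov N)$ is the class of bidegree $(2r-1,r)$ obtained from the Mathai--Quillen transgression. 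This is exactly the homogeneity condition of Definition~\ref{def:23}.

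The main obstacle is precisely this last verification: one has to show that the BGS singular Bott-Chern current for a Koszul resolution has the correct bidegree factorization. Once functoriality, the differential equation, the normalization and the homogeneity have all been checked for $T^{BGS}$, the uniqueness statement in Theorem~\ref{thm:17}\ref{item:55} forces $T^{BGS}=T^{h}$ on the restricted class of triples $(\ov s,\ov F,\ov E)$ with $T_{\ov s}=\ov N_{X/Y}[-1]$, $\ov F$ a vector bundle in degree $0$ and $\ov E$ a finite locally free resolution. By the extension procedure in the proof of Theorem~\ref{thm:17} (which is dictated by Lemma~\ref{lemm:uniquenes_phi} and formulas~\eqref{eq:72}, \eqref{eq:71}, \eqref{eq:74} and~\eqref{eq:73}), this equality propagates uniquely to all embedded metrized complexes, yielding the desired identity $T^{BGS}(\ov\xi)=T^{h}(\ov\xi)$.
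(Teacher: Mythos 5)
The paper itself does not prove this statement: after fixing the renormalization \eqref{eq:88}, Theorem~\ref{thm:18} is simply imported from \cite[Thm.~9.25]{BurgosLitcanu:SingularBC}, the passage to arbitrary embedded metrized complexes being absorbed into the anomaly formulas. Your strategy --- verify that $T^{BGS}$ satisfies the axioms of Definition~\ref{def:1} and the homogeneity property of Definition~\ref{def:23}, invoke the uniqueness of the homogeneous theory in Theorem~\ref{thm:17}~\ref{item:55}, and then propagate the equality from triples $(\ov{N}_{X/Y},\ov F,\ov E)$ to all embedded metrized complexes through the extension procedure of Theorem~\ref{thm:17} --- is exactly the right reduction, and it is the route taken in the cited companion paper; the bookkeeping of the factors $(2\pi i)^{p-1}$, $(2\pi i)^{d}$, $2$ and the sign is also correctly placed.

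However, as written the argument stops short precisely at the step you call the main obstacle. Using the compatibility of the Bismut--Gillet--Soul\'e classes with the projection formula one has $T^{BGS}(\ov s,\ov F,K(\ov F,\ov N))=T^{BGS}(\ov s,\ov{\mathcal{O}}_{X},K(\ov{\mathcal{O}},\ov N))\bullet\ch(\pi_{P}^{\ast}\ov F)$, so homogeneity amounts to the claim that $T^{BGS}(\ov s,\ov{\mathcal{O}}_{X},K(\ov{\mathcal{O}},\ov N))\bullet\Td(\ov Q)$ is concentrated in Deligne bidegree $(2r-1,r)$. A priori this current has components in every bidegree $(2p-1,p)$, and the vanishing of all the others is not something one can simply ``read off'' from the superconnection construction: it rests on the explicit evaluation of the BGS singular current of a Koszul resolution in terms of Mathai--Quillen-type transgression currents, a substantive result of Bismut, Gillet and Soul\'e that has to be quoted and matched against Definition~\ref{def:23} (just as their differential equation, transverse functoriality and wave-front-set control must be quoted for the other axioms). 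So your proposal is a correct and faithful reduction to known results, but the decisive homogeneity computation is asserted rather than established, and it is exactly there that the content of the comparison theorem lies.
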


\section{Regular coherent sheaves}
\label{sec:regsheaf}

In this section we recall some properties of regular sheaves.  Let $X$
be a scheme and let $\PP^n_X=\PP_X(V)$ be the projective space of
lines of the trivial bundle $V$ of rank $n+1$ on $X$. Let $\pi\colon
\PP^n_X\rightarrow X$ be the natural projection. By abuse of notation,
if ${\mathcal{G}}$ is a sheaf on $X$, we will denote also by
${\mathcal{G}}$ the inverse image $\pi ^{\ast}{\mathcal{G}}$.

\begin{definition}[\cite{Mumford:Lectures_on_Curves}, Lecture
  14]\label{def:regsheaf}
  A quasi-coherent sheaf ${\mathcal{F}}$ on $\PP^n_X$ is called
  \textit{regular} if $R^q\pi_{\ast}{\mathcal{F}}(-q)=0$ for all
  $q>0$.
\end{definition}

Recall the following properties of regular sheaves (see \cite{Quillen:haKt}).
\begin{enumerate}
\item If ${\mathcal{G}}$ is a quasi-coherent sheaf on $X$, then
  ${\mathcal{G}}\otimes_X \mathcal{O}_{\PP^n_X}(k)$ is regular for
  $k\ge 0$.
\item If the scheme $X$ is noetherian and ${\mathcal{F}}$ is a
  coherent sheaf on $\PP^n_X$, then Serre's vanishing theorem implies
  that for $d$ large enough ${\mathcal{F}}(d)$ is regular.
\item Let $0\rightarrow {\mathcal{F}}_2\rightarrow {\mathcal{F}}_1
  \rightarrow {\mathcal{F}}_0 \rightarrow 0$ be an exact sequence of
  quasi-coherent sheaves on $\PP^n_X$ and $d$ be an integer. Then
  \begin{enumerate}
  \item if ${\mathcal{F}}_{2}(d)$ and ${\mathcal{F}}_{0}(d)$ are
    regular, then ${\mathcal{F}}_{1}(d)$ is regular;
  \item if ${\mathcal{F}}_{2}(d+1)$ and ${\mathcal{F}}_{1}(d)$ are
    regular, then ${\mathcal{F}}_{0}(d)$ is regular;
  \item if ${\mathcal{F}}_0(d)$ and ${\mathcal{F}}_1(d+1)$ are regular
    and the map $R^0\pi_{\ast}({\mathcal{F}}_1(d))\rightarrow
    R^0\pi_{\ast}({\mathcal{F}}_0(d))$ is surjective, then
    ${\mathcal{F}}_2(d+1)$ is regular.
  \end{enumerate}
\item If ${\mathcal{F}}$ is regular, then ${\mathcal{F}}(k)$ is regular for $k>0$.
\item If ${\mathcal{F}}$ is regular, then the canonical map $R^0\pi_{\ast}{\mathcal{F}}\otimes _X
  \mathcal{O}_{\PP^n_X}\rightarrow {\mathcal{F}}$ is surjective.
\end{enumerate}

\begin{theorem}[{\cite[\S
8.1]{Quillen:haKt}}]\label{thm:2}
  Let ${\mathcal{F}}$ be a regular quasi-coherent sheaf on
  $\PP^{n}_{X}$. Then there exists a \textit{canonical resolution}
\begin{equation*}
  \gamma_{\can}({\mathcal{F}})\ :\ 0\rightarrow {\mathcal{G}}_n(-n)\rightarrow
  {\mathcal{G}}_{n-1}(-n+1)\rightarrow \dots \rightarrow
  {\mathcal{G}}_0\rightarrow {\mathcal{F}}
  \rightarrow 0
\end{equation*}
where ${\mathcal{G}}_i$ ($i=0,\dots,n$) are quasi-coherent sheaves on
$X$.  Moreover, for every $k\geq 0$, the sequence
\begin{displaymath}
  0\rightarrow {\mathcal{G}}_k\rightarrow {\mathcal{G}}_{k-1}
  \otimes\Sym^1V^{\vee}
  \rightarrow \dots\rightarrow {\mathcal{G}}_{0}\otimes\Sym^kV^{\vee}
  \rightarrow R^0\pi_{\ast}({\mathcal{F}}(k))\rightarrow 0
\end{displaymath}
is exact. Hence the sheaves ${\mathcal{G}}_k$ are determined
by ${\mathcal{F}}$ up to unique isomorphism.
\end{theorem}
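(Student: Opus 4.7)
The plan is to construct $\mathcal{G}_k$ and the resolution inductively by a kernel-and-twist procedure, using properties (1)--(5) of regular sheaves to propagate regularity, and then to deduce the second displayed exact sequence and uniqueness as byproducts. Concretely, I set $\mathcal{K}_0:=\mathcal{F}$ and $\mathcal{G}_0:=R^{0}\pi_{\ast}\mathcal{F}$. Property~(5) furnishes a surjection $\pi^{\ast}\mathcal{G}_0\to\mathcal{F}$, whose kernel I call $\mathcal{K}_1$. Inductively, assuming $\mathcal{K}_k(k)$ is regular, I define $\mathcal{G}_k:=R^{0}\pi_{\ast}(\mathcal{K}_k(k))$ and let $\mathcal{K}_{k+1}$ be the kernel of the surjection $\pi^{\ast}\mathcal{G}_k\otimes\mathcal{O}(-k)\to\mathcal{K}_k$ obtained from property~(5) applied to $\mathcal{K}_k(k)$ and then twisted down by $\mathcal{O}(-k)$.

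To keep the induction going, I would verify that $\mathcal{K}_{k+1}(k+1)$ is regular by applying property~(3)(c) with $d=k$ to
\begin{displaymath}
0\to\mathcal{K}_{k+1}\to\pi^{\ast}\mathcal{G}_k\otimes\mathcal{O}(-k)\to\mathcal{K}_k\to 0.
\end{displaymath}
This reduces to three inputs, all immediate: $\mathcal{K}_k(k)$ is regular by inductive hypothesis, $\pi^{\ast}\mathcal{G}_k\otimes\mathcal{O}(1)$ is regular by property~(1), and the induced map on $R^{0}\pi_{\ast}((\,\cdot\,)\otimes\mathcal{O}(k))$ is the identity $\mathcal{G}_k\to\mathcal{G}_k$ by the very construction of $\mathcal{G}_k$. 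The main obstacle is then termination: I must show $\mathcal{K}_{n+1}=0$, so that splicing the short exact sequences yields the required resolution $0\to\mathcal{G}_n(-n)\to\cdots\to\mathcal{G}_0\to\mathcal{F}\to 0$. Here the formal properties (1)--(5) do not suffice; one must use the specific cohomology of $\PP^n$, in particular the vanishing $R^{q}\pi_{\ast}\mathcal{O}(\ell)=0$ for $-n\leq\ell<0$ (all $q$) and for $\ell\geq 0$ ($q\geq 1$). My approach is to iterate $R^{q}\pi_{\ast}(\cdot\otimes\mathcal{O}(j))$ on the short exact sequences, combining these vanishings with the regularity just established, to force all pushforwards of $\mathcal{K}_{n+1}$ to vanish and hence $\mathcal{K}_{n+1}=0$.

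Once termination is established, for the second exact sequence I would twist the spliced resolution by $\mathcal{O}(k)$ and apply $R^{0}\pi_{\ast}$; the cohomological vanishings above kill every connecting morphism and produce precisely the stated long exact sequence. Uniqueness then follows by induction on $k$: $\mathcal{G}_0=R^{0}\pi_{\ast}\mathcal{F}$ is canonical, and once $\mathcal{G}_0,\ldots,\mathcal{G}_{k-1}$ are fixed, the exactness of the second sequence pins down $\mathcal{G}_k$ up to unique isomorphism as the kernel of the already-determined differential $\mathcal{G}_{k-1}\otimes\Sym^1V^{\vee}\to\mathcal{G}_{k-2}\otimes\Sym^2V^{\vee}$ (or, for $k=0$, trivially as $R^{0}\pi_{\ast}\mathcal{F}$). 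The termination step, which is precisely where the geometry of $\PP^n$ enters rather than the formal properties of regular sheaves, is the piece I expect to require most care.
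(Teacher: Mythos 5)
The paper does not actually prove this statement; it is recalled from Quillen's \emph{Higher algebraic $K$-theory I}, \S 8, so there is no internal proof to compare against. Judged on its own, your plan is precisely Quillen's construction, and the first half is correct as written: property (5) gives the surjections $\pi^{\ast}\mathcal{G}_{k}(-k)\to\mathcal{K}_{k}$, and your application of (3)(c) with $d=k$ (using that $\pi^{\ast}\mathcal{G}_{k}(1)$ is regular by (1) and that the map on $R^{0}\pi_{\ast}$ is the identity on $\mathcal{G}_{k}$) correctly propagates regularity of $\mathcal{K}_{k+1}(k+1)$.

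Two places need to be nailed down. For termination, ``all pushforwards of $\mathcal{K}_{n+1}$ vanish, hence $\mathcal{K}_{n+1}=0$'' is not a formal implication at a single twist; note also that your induction only makes $\mathcal{K}_{n+1}(n+1)$ regular, while $\pi_{\ast}(\mathcal{K}_{n+1}(n+1))$ is exactly the would-be next term $\mathcal{G}_{n+1}$, so you cannot conclude at that twist. The chase you propose does work if aimed at the twist $n$: first, $\pi_{\ast}(\mathcal{K}_{m}(j))=0$ for $0\leq j\leq m-1$ (the case $j=m-1$ by your identity-map argument, the lower $j$ because $\pi_{\ast}\mathcal{O}(\ell)=0$ for $-n\leq\ell<0$); second, since $R^{q}\pi_{\ast}\mathcal{O}(\ell)=0$ for $q\geq 1$, $\ell\geq-n$, the long exact sequences show $R^{q}\pi_{\ast}(\mathcal{K}_{k+1}(j))$ is a quotient of $R^{q-1}\pi_{\ast}(\mathcal{K}_{k}(j))$ for $j\geq 0$, and iterating down the chain of kernels gives $R^{q}\pi_{\ast}(\mathcal{K}_{n+1}(n-q))=0$ for all $q\geq 1$. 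This says $\mathcal{K}_{n+1}(n)$ is regular; combined with $\pi_{\ast}(\mathcal{K}_{n+1}(n))=0$ and property (5), $\mathcal{K}_{n+1}(n)$ is a quotient of $\pi^{\ast}(0)$, hence $\mathcal{K}_{n+1}=0$. So it is property (5), applied at a twist where regularity and the $H^{0}$-vanishing coincide, that converts the vanishing of pushforwards into vanishing of the sheaf. Second, in deducing the sequence computing $R^{0}\pi_{\ast}(\mathcal{F}(k))$, the connecting maps land in $R^{1}\pi_{\ast}(\mathcal{K}_{i+1}(k))$ for $k\geq i$, and these do not vanish for reasons of the cohomology of $\mathcal{O}(\ell)$ alone (indeed $R^{1}\pi_{\ast}(\mathcal{K}_{i+1}(k))$ is exactly the obstruction to surjectivity of $\mathcal{G}_{i}\otimes\Sym^{k-i}V^{\vee}\to\pi_{\ast}(\mathcal{K}_{i}(k))$); they vanish because $\mathcal{K}_{i+1}(i+1)$ is regular and regularity is preserved by positive twists (property (4)). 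With these two clarifications your splicing and the uniqueness argument go through.
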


\begin{corollary} \label{cor:4}
Let $X$ be a noetherian scheme and ${\mathcal{F}}$ a coherent
sheaf on $\PP^n_X$.  Then, for $d$ large enough, we have a resolution
$$\gamma_d({\mathcal{F}})\ :\ 0\rightarrow {\mathcal{G}}_n(-n-d)\rightarrow
{\mathcal{G}}_{n-1}(-n-d+1)\rightarrow \dots \rightarrow
{\mathcal{G}}_0(-d)\rightarrow {\mathcal{F}} \rightarrow 0$$ where
${\mathcal{G}}_i$, $i=0,\dots,n$ are coherent sheaves on $X$.
\end{corollary}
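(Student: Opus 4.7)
The plan is to reduce Corollary~\ref{cor:4} to Theorem~\ref{thm:2} by a Serre twist. Since $X$ is noetherian and $\mathcal{F}$ is coherent, property~(2) in the list of properties of regular sheaves guarantees that there exists $d\ge 0$ such that $\mathcal{F}(d)$ is regular. Fixing such a $d$, I would apply Theorem~\ref{thm:2} to the regular quasi-coherent sheaf $\mathcal{F}(d)$ to obtain a canonical resolution
\begin{displaymath}
0\rightarrow \mathcal{G}_n(-n)\rightarrow \mathcal{G}_{n-1}(-n+1)\rightarrow \cdots \rightarrow \mathcal{G}_0 \rightarrow \mathcal{F}(d)\rightarrow 0,
\end{displaymath}
with $\mathcal{G}_i$ quasi-coherent sheaves on $X$. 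Tensoring this exact sequence with $\mathcal{O}_{\PP^n_X}(-d)$, which is flat, yields the desired shape
\begin{displaymath}
0\rightarrow \mathcal{G}_n(-n-d)\rightarrow \mathcal{G}_{n-1}(-n-d+1)\rightarrow \cdots \rightarrow \mathcal{G}_0(-d) \rightarrow \mathcal{F}\rightarrow 0.
\end{displaymath}

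The only thing left is to upgrade quasi-coherence of the $\mathcal{G}_i$ to coherence, and this is where the noetherian hypothesis comes in. The second assertion of Theorem~\ref{thm:2} identifies $\mathcal{G}_0$ with $R^0\pi_{\ast}\mathcal{F}(d)$, which is coherent on the noetherian base $X$ because $\mathcal{F}(d)$ is coherent on $\PP^n_X$ and $\pi$ is proper. More generally, I would argue by induction on $k$: from the exact sequence
\begin{displaymath}
0\rightarrow \mathcal{G}_k\rightarrow \mathcal{G}_{k-1}\otimes\Sym^1V^{\vee}\rightarrow \cdots\rightarrow \mathcal{G}_{0}\otimes\Sym^kV^{\vee}\rightarrow R^0\pi_{\ast}(\mathcal{F}(d+k))\rightarrow 0
\end{displaymath}
provided by Theorem~\ref{thm:2}, the right-hand term is coherent by proper pushforward on the noetherian scheme $X$, the locally free sheaves $\Sym^jV^{\vee}$ are coherent, and by induction $\mathcal{G}_0,\dots,\mathcal{G}_{k-1}$ are coherent; hence $\mathcal{G}_k$, being the kernel of a map between coherent sheaves on a noetherian scheme, is coherent as well.

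There is essentially no obstacle here: the work has been done in Theorem~\ref{thm:2}, and the corollary is a formal twist together with a bookkeeping argument using noetherian hypotheses to pass from quasi-coherence to coherence. The only point requiring a slight care is the inductive verification of coherence of the $\mathcal{G}_k$, which is nonetheless standard.
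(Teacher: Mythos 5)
Your proof is correct and is essentially the argument the paper intends: twist so that $\mathcal{F}(d)$ becomes regular (Serre vanishing, property (ii)), apply Theorem \ref{thm:2}, untwist by $\mathcal{O}(-d)$, and deduce coherence of the $\mathcal{G}_i$ from the auxiliary exact sequences of Theorem \ref{thm:2} together with coherence of $R^0\pi_{\ast}(\mathcal{F}(d+k))$ on the noetherian base. The inductive bookkeeping for coherence is exactly the standard point the paper leaves implicit.
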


\begin{example}\label{exm:1}
  The sheaf $\mathcal{O}(1)$ is regular. Its canonical resolution is
  \begin{displaymath}
    0\to
    \Lambda^{n+1}V^{\vee}(-n)\to\Lambda ^{n}V^{\vee}(-n+1)\to\dots\to
    \Lambda ^{2}V^{\vee}(-1)\to V^{\vee}\to \mathcal{O}(1)\to 0.
  \end{displaymath}
  Twisting this exact sequence by $\mathcal{O}(-1)$ we obtain the
  Koszul exact sequence
  \begin{displaymath}
    0\to
    \Lambda^{n+1}V^{\vee}(-n-1)\to\Lambda ^{n}V^{\vee}(-n)\to\dots\to
    \Lambda ^{2}V^{\vee}(-2)\to V^{\vee}(-1)\to \mathcal{O}\to 0,
  \end{displaymath}
  that we denote $K$. We will denote by $K(k)$ its twist by $\mathcal{O}(k)$.
\end{example}

\begin{theorem}[{\cite{Zha99:_rieman_roch}}]\label{thm:14}
\begin{enumerate}
\item \label{item:9} Let ${\mathcal{F}}$ be a regular coherent sheaf
  on $\PP^{n}_{X}$, and let $\gamma_{\can}({\mathcal{F}})$ be the canonical
  resolution of ${\mathcal{F}}$ as in Theorem \ref{thm:2}. Let
\begin{displaymath}
\varepsilon_1\ :\ 0\rightarrow {\mathcal{F}}_{n+k}(-n-k)\rightarrow \dots
\rightarrow {\mathcal{F}}_1(-1)\rightarrow {\mathcal{F}}_0\rightarrow
{\mathcal{F}}\rightarrow 0
\end{displaymath}
be an exact sequence of coherent sheaves, where the
${\mathcal{F}}_{i}$ are sheaves on $X$. Then there exist natural
surjective morphisms of sheaves ${\mathcal{F}}_i\rightarrow
{\mathcal{G}}_i$ on $X$, $0\leq i\leq n$ making commutative the
diagram
\begin{displaymath}
  \xymatrix{
    {\mathcal{F}}_{n+1}(-n-1)\ar[d] \ar[r] & {\mathcal{F}}_n(-n)
    \ar@{->>}[d] \ar[r] & \dots
    \ar[r] & {\mathcal{F}}_0 \ar@{->>}[d]
    \ar[r] & {\mathcal{F}} \ar@{=}[d] \ar[r] & 0\\
    0 \ar[r] & {\mathcal{G}}_n(-n) \ar[r] & \dots \ar[r] &
    {\mathcal{G}}_0 \ar[r] &
    {\mathcal{F}} \ar[r] & 0.
  }
\end{displaymath}
\item \label{item:11} Let ${\mathcal{F}}$ be a regular coherent sheaf on $X$, and
  $\gamma_{\can}({\mathcal{F}})$ the canonical resolution.
There exists a resolution of
${\mathcal{F}}(1)$ of the form
\begin{displaymath}
\varepsilon_{2}\ :\ 0 \rightarrow {\mathcal{S}}_{n+k}(-n-k)\rightarrow
\dots\rightarrow
       {\mathcal{S}}_1(-1) \rightarrow {\mathcal{S}}_0 \rightarrow {\mathcal{F}}(1)\rightarrow 0
\end{displaymath}
such that ${\mathcal{S}}_0\dots,{\mathcal{S}}_{n+k}$ are coherent
sheaves on $X$ and the
following diagram of exact sequences with surjective vertical arrows
is commutative:
\begin{displaymath}
    \xymatrix{
        {\mathcal{S}}_{n+1}(-n-1)\ar[d] \ar[r] &
        {\mathcal{S}}_n(-n) \ar@{->>}[d] \ar[r] & \dots
        \ar[r] &
        {\mathcal{S}}_0 \ar@{->>}[d] \ar[r] &
        {\mathcal{F}}(1) \ar@{=}[d] \ar[r] & 0\\
        0 \ar[r] & {\mathcal{G}}_n(-n+1) \ar[r] & \dots \ar[r] &
        {\mathcal{G}}_0(1) \ar[r] & {\mathcal{F}}(1) \ar[r] & 0.
    }
\end{displaymath}
\end{enumerate}
\end{theorem}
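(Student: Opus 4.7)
The plan is to prove both parts by aligning the given or constructed resolutions with the canonical one through inductive diagram chases.

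\textbf{Part (i).} I proceed by induction on $k$ (the position in the resolution). The key input is the second exact sequence in Theorem~\ref{thm:2}:
\begin{equation*}
0 \to \mathcal{G}_k \to \mathcal{G}_{k-1}\otimes \Sym^1 V^\vee \to \dots \to \mathcal{G}_0 \otimes \Sym^k V^\vee \to R^0\pi_\ast \mathcal{F}(k) \to 0.
\end{equation*}
I claim the analogous exact sequence holds with $\mathcal{F}_j$ in place of $\mathcal{G}_j$. Indeed, twisting $\varepsilon_1$ by $\mathcal{O}(k)$, the term $\mathcal{F}_i(-i+k)$ has relative twist in $[-n,-1]$ whenever $k+1 \leq i \leq k+n$, so all $R^q\pi_\ast$ vanish on it; for $0 \leq i \leq k$, only $R^0\pi_\ast$ survives and equals $\mathcal{F}_i\otimes\Sym^{k-i}V^\vee$. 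The hypercohomology spectral sequence collapses, so the naive pushforward is a resolution of $R^0\pi_\ast\mathcal{F}(k)$. Comparing the two resolutions and using the surjections $\mathcal{F}_j \twoheadrightarrow \mathcal{G}_j$ produced at steps $j<k$, a diagram chase on the left kernels yields a surjective map $\mathcal{F}_k \twoheadrightarrow \mathcal{G}_k$. For $k=n+1$, since $\mathcal{G}_{n+1}=0$, the composition $\mathcal{F}_{n+1}(-n-1)\to\mathcal{F}_n(-n)\twoheadrightarrow\mathcal{G}_n(-n)$ vanishes, because its image lies in $\Img(\mathcal{G}_{n+1}(-n-1)\to\mathcal{G}_n(-n))=0$ by exactness of the canonical resolution.

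\textbf{Part (ii).} I construct $\varepsilon_2$ as the total complex of a bicomplex. Tensor $\gamma_{\can}(\mathcal{F})$ with the Koszul--Euler resolution of $\mathcal{O}(1)$,
\begin{equation*}
0 \to \det V^\vee \otimes \mathcal{O}(-n) \to \dots \to \Lambda^2 V^\vee\otimes\mathcal{O}(-1) \to V^\vee \to \mathcal{O}(1) \to 0.
\end{equation*}
The interior terms are $\mathcal{G}_i\otimes \Lambda^{j+1}V^\vee \otimes \mathcal{O}(-i-j)$, and the total complex has $p$-th piece of the form $\mathcal{S}_p(-p)$ with $\mathcal{S}_p=\bigoplus_{i+j=p}\mathcal{G}_i\otimes\Lambda^{j+1}V^\vee$, which is coherent on $X$. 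Both factor complexes are resolutions and the Koszul--Euler terms are locally free, so the standard spectral sequence argument shows the total complex resolves $\mathcal{F}(1)$. Projecting $\mathcal{S}_p(-p)$ onto its $(i,j)=(p,0)$ summand $\mathcal{G}_p\otimes V^\vee\otimes\mathcal{O}(-p)$ and composing with the twisted Euler surjection $V^\vee\otimes\mathcal{O}(-p)\twoheadrightarrow\mathcal{O}(-p+1)$ produces the required surjection to $\mathcal{G}_p(-p+1)$, and commutativity with the differentials is built into the bicomplex construction. At $p=n+1$ the canonical resolution term vanishes, matching the zero in the bottom row.

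\textbf{Main obstacle.} In part (i), the delicate point is propagating surjectivity through the induction: given $g\in\mathcal{G}_k$ one must lift it step by step through $\mathcal{F}_j\otimes\Sym^{k-j}V^\vee$ for decreasing $j$, exploiting exactness of both rows together with the inductive surjections $\alpha_j\otimes\Id$. In part (ii), the technical point is to verify that the projection from the total complex to $\gamma_{\can}(\mathcal{F})(1)$ is genuinely a morphism of complexes, which reduces to the compatibility between the Koszul differential and the Euler map, and to check that the length of the resulting resolution matches the stated form for some~$k$.
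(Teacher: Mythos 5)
In part~(i) there is a genuine gap, and it sits exactly where the paper's proof does its real work. A secondary inaccuracy first: the spectral sequence does not collapse as you claim, because when you produce the surjection at position $m$ the twisted complex $\varepsilon_1(m)$ contains terms $\mathcal{F}_i(m-i)$ with $i>m+n$ (present as soon as the resolution is longer than $m+n+1$ terms), and these are twisted below $-n$, so their $R^{n}\pi_{\ast}$ does not vanish; this only destroys exactness of the pushed-forward complex at its leftmost term, which you could live with. The serious problem is the claimed diagram chase. Write $A_{\bullet}$ for the pushed-forward complex $\mathcal{F}_m\to\mathcal{F}_{m-1}\otimes\Sym^{1}V^{\vee}\to\dots\to\mathcal{F}_{0}\otimes\Sym^{m}V^{\vee}\to R^{0}\pi_{\ast}\mathcal{F}(m)$, $B_{\bullet}$ for the exact sequence of Theorem \ref{thm:2}, and $\alpha_{j}\otimes\Id$ for the vertical surjections already constructed for $j<m$. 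The long exact homology sequence of $0\to K_{\bullet}\to A_{\bullet}\to B_{\bullet}\to 0$, with $K_{j}=\ker(\alpha_{j})\otimes\Sym^{m-j}V^{\vee}$, identifies the cokernel of the induced map $\mathcal{F}_{m}\to\mathcal{G}_{m}$ with a homology sheaf of the kernel complex $K_{\bullet}$. Exactness of both rows at positions $<m$, surjectivity of the $\alpha_{j}$ and the identity on the augmentation do \emph{not} force that homology to vanish, so the chase is circular: surjectivity at step $m$ requires control of the kernels $\ker\alpha_{j}$ themselves, which is not among your ingredients. This is precisely what the paper's induction supplies: it shows that the successive kernels $\mathcal{K}_{j}(j+1)$ of $\varepsilon_{1}$ and the kernels $\mathcal{P}_{j}(1)$ of $\mathcal{K}_{j}(j+1)\to\mathcal{N}_{j}(j+1)$ are regular, and then uses the two standard consequences of regularity (vanishing of the relevant $R^{1}\pi_{\ast}$, and surjectivity of $\pi_{\ast}(\mathcal{P})\otimes\OO_{\PP^{n}_{X}}\to\mathcal{P}$ for $\mathcal{P}$ regular) to propagate surjectivity one step at a time. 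Without an input of this kind your argument does not yield $\mathcal{F}_{m}\twoheadrightarrow\mathcal{G}_{m}$.

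Part~(ii), by contrast, is correct and takes a genuinely different route from the paper. The paper builds $\varepsilon_{2}$ term by term, setting $\mathcal{S}_{k+1}=\pi_{\ast}(\mathcal{T}_{k+1})$ for a fibre product $\mathcal{T}_{k+1}$ of kernels, with the same regularity bookkeeping as in (i). Your construction --- the total complex of $\gamma_{\can}(\mathcal{F})$ tensored with the resolution of $\mathcal{O}(1)$ of Example \ref{exm:1} --- gives in one stroke a resolution of length $2n$ (so $k=n$ in the statement) with terms $\mathcal{S}_{p}(-p)$, $\mathcal{S}_{p}=\bigoplus_{i+j=p}\mathcal{G}_{i}\otimes\Lambda^{j+1}V^{\vee}$; exactness of the total complex is clear because the Koszul terms are locally free and $\mathcal{O}(1)$ is invertible, and the augmentation $V^{\vee}\to\mathcal{O}(1)$ induces the termwise surjective chain map onto $\gamma_{\can}(\mathcal{F})(1)$, the commutativity (including the zero at position $n+1$) being automatic for a chain map. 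This is shorter than the paper's argument and avoids regularity altogether; but it does not repair (i), which is used independently (for instance in the proof of Lemma \ref{lemm:2}) and still needs the regularity induction.
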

\begin{proof}
  We introduce the sheaves ${\mathcal{N}}_{j}$ and
  ${\mathcal{K}}_{j}$ defined as the kernels at each term of the sequences
  $\gamma_{\can}$ and $\varepsilon_{1}$, respectively. Hence, there
  are exact sequences
\begin{align*}
  &0\to {\mathcal{N}}_{j+1}(j+1)\to {\mathcal{G}}_{j+1}\to
  {\mathcal{N}}_{j}(j+1)\to 0,\\
  &0\to {\mathcal{K}}_{j+1}(j+1)\to {\mathcal{F}}_{j+1}\to
  {\mathcal{K}}_{j}(j+1)\to 0.
\end{align*}
With these notations, observe that
${\mathcal{N}}_{-1}={\mathcal{K}}_{-1}={\mathcal{F}}$. By induction,
starting from the left hand side of the long exact sequences, it
is easily checked that ${\mathcal{N}}_{j}(j+1)$ and
${\mathcal{K}}_{j}(j+1)$ are regular
sheaves, for $j\geq -1$. Also, by Theorem \ref{thm:2}, we find that
${\mathcal{G}}_{j+1}=\pi_{\ast}({\mathcal{N}}_{j}(j+1))$ for $j\geq
-1$. We next prove by
induction that, for each $k\geq -1$,
there is a commutative diagram of exact sequences
\begin{equation}\label{eq:78}
  \xymatrix{
    &0\ar[d] &0\ar[d] &0\ar[d] &\\
    0\ar[r] &{\mathcal{P}}_{k+1}\ar[r]\ar[d]
    &{\mathcal{H}}_{k+1}\ar[r]\ar[d] &{\mathcal{P}}_{k}(1)\ar[d]&\\
    0\ar[r] &{\mathcal{K}}_{k+1}(k+1)\ar[r]\ar[d]
    &{\mathcal{F}}_{k+1}\ar[r]\ar[d]
    &{\mathcal{K}}_{k}(k+1)\ar[r]\ar[d] &0\\
    0\ar[r] &{\mathcal{N}}_{k+1}(k+1)\ar[r]
    &{\mathcal{G}}_{k+1}\ar[r]\ar[d]
    &{\mathcal{N}}_{k}(k+1)\ar[r]\ar[d] &0\\
    & &0 &0, &
  }
\end{equation}
where ${\mathcal{H}}_{k+1}$, ${\mathcal{P}}_{k}$ and
${\mathcal{P}}_{k+1}$ are defined as the kernels of the corresponding
morphisms, and that ${\mathcal{P}}_{k}(1)$ is
regular. Assume that this is true for a fixed $k\ge 1$. In order to
proceed with the induction, we need to prove: (a)
the map ${\mathcal{K}}_{k+1}(k+2)\to {\mathcal{N}}_{k+1}(k+2)$ is
surjective, (b) the sheaf ${\mathcal{P}}_{k+1}(1)$ is regular, and (c)
there is an induced surjective map
${\mathcal{F}}_{k+2} \to {\mathcal{G}}_{k+2}$.

We first prove (a). If we apply $\pi_{\ast}$ to the last two
columns of diagram \eqref{eq:78}. Observing that
${\mathcal{F}}_{k+1}$, ${\mathcal{G}}_{k+1}$ and ${\mathcal{H}}_{j+1}$
are actually sheaves on $X$ and recalling that
${\mathcal{K}}_{k+1}(k+2)$ is regular (so that
$R^{1}\pi_{\ast}{\mathcal{K}}_{k+1}(k+1)=0$), we find a commutative
diagram of exact sequences
\begin{displaymath}
  \xymatrix{
    0\ar[r]		&
    {\mathcal{H}}_{k+1}\ar[r]\ar[d]	&
    {\mathcal{F}}_{k+1}\ar[r]\ar[d]	&
    {\mathcal{G}}_{k+1}\ar@{=}[d]\ar[r]	&
    0	\\
    0\ar[r]	&
    \pi_{\ast}({\mathcal{P}}_{k}(1))\ar[r]	&
    \pi_{\ast}({\mathcal{K}}_{k}(k+1))\ar[d]\ar[r] &
    \pi_{\ast}({\mathcal{N}}_{k}(k+1))\ar[r]	&
    0\\
    &&0&&
  }
\end{displaymath}
It follows that the map
${\mathcal{H}}_{k+1}\twoheadrightarrow\pi_{\ast}({\mathcal{P}}_{k}(1))$ is a
surjection. Since ${\mathcal{P}}_{k}(1)$ is
regular, we have that
$\pi_{\ast}({\mathcal{P}}_{k}(1))\otimes\OO_{\PP^{n}_{X}}\twoheadrightarrow
{\mathcal{P}}_{k}(1)$ is also a surjection. Thus the map
$\mathcal{H}_{k+1}\to \mathcal{P}_{k}(1)$ is surjective. The diagram
\eqref{eq:78} implies that the map
\begin{math}
	{\mathcal{K}}_{k+1}(k+1)\to {\mathcal{N}}_{k+1}(k+1)
\end{math}
 is also surjective. Twisting by ${\mathcal{O}}(1)$, we obtain (a).

 Now the regularity of ${\mathcal{H}}_{k+1}$ and
 ${\mathcal{P}}_{k}(1)$, and
 the surjectivity of
 ${\mathcal{H}}_{k+1}\twoheadrightarrow\pi_{\ast}({\mathcal{P}}_{k}(1))$
  ensure the regularity of ${\mathcal{P}}_{k+1}(1)$.
 In its turn, this shows that the sequence
\begin{equation}\label{eq:11}
  0\to\pi_{\ast}({\mathcal{P}}_{k+1}(1))\to\pi_{\ast}({\mathcal{K}}_{k+1}(k+2))
  \to\pi_{\ast}({\mathcal{N}}_{k+1}(k+2))\to 0
\end{equation}
is exact. Finally, we observe that there is a surjective map
\begin{equation}\label{eq:77}
  \xymatrix{
    {\mathcal{F}}_{k+2}\ar@{->>}[r]	&\pi_{\ast}({\mathcal{K}}_{k+1}(k+2)),
  }
\end{equation}
by the regularity of ${\mathcal{K}}_{k+2}(k+3)$. From
\eqref{eq:11} and \eqref{eq:77}, we obtain a surjection
\begin{displaymath}
  \xymatrix{
    {\mathcal{F}}_{k+2}\ar@{->>}[r]
    &\pi_{\ast}({\mathcal{N}}_{k+1}(k+2))={\mathcal{G}}_{k+2}.
  }
\end{displaymath}
This completes the proof of the inductive step. Note that the first
step of the induction ($k=-1$) is part of the data. From the existence
of the diagrams we deduce (i).

To prove the second item we construct the resolution
$\mathcal{S}_{\ast}$ inductively. We will denote by
${\mathcal{K}}_{k}$ the kernel of any map ${\mathcal{S}}_{k}(-k)\to
{\mathcal{S}}_{k-1}(-k+1)$ already defined and by ${\mathcal{N}}_{k}$ the
successive kernels of the canonical resolution of ${\mathcal{F}}$ as
in the proof of the first statement.

Assume that we have constructed the
sequence $\varepsilon _{2}$ up to ${\mathcal{S}}_{k}(-k)$ with the
further conditions that
${\mathcal{K}}_{k}(k+1)$ is regular and that there is an exact sequence
\begin{displaymath}
  0\to {\mathcal{P}}_{k}(1)\to {\mathcal{K}}_{k}(k+1)\to
  {\mathcal{N}}_{k}(k+2)\to 0
\end{displaymath}
with ${\mathcal{P}}_{k}(1)$
regular. We have to show that we can extend the resolution one step
satisfying the same conditions.
Recall that we already know that ${\mathcal{N}}_{k}(k+1)$ is regular.
We consider as well the surjection
\begin{math}
  \xymatrix{
    {\mathcal{G}}_{k+1}(1)\ar@{->>}[r] &{\mathcal{N}}_{k}(k+2).
  }
\end{math}
We form the fiber product
\begin{displaymath}
  {\mathcal{T}}_{k+1}:=\Ker({\mathcal{K}}_{k}(k+1)\oplus
  {\mathcal{G}}_{k+1}(1)\to {\mathcal{N}}_{k}(k+2)).
\end{displaymath}
Observe that ${\mathcal{T}}_{k+1}$ is regular, because both
${\mathcal{N}}_{k}(k+1)$,
${\mathcal{K}}_{k}(k+1)\oplus {\mathcal{G}}_{k+1}(1)$ are regular and
the morphism
\begin{displaymath}
  \xymatrix{
    \pi_{\ast}({\mathcal{K}}_{k}(k)\oplus
    {\mathcal{G}}_{k+1})\ar@{->>}[r]
    &{\mathcal{G}}_{k+1}=\pi_{\ast}({\mathcal{N}}_{k}(k+1))
    }
\end{displaymath}
is surjective. So are the arrows ${\mathcal{T}}_{k+1}\to
{\mathcal{G}}_{k+1}(1)$ and
${\mathcal{T}}_{k+1}\to {\mathcal{K}}_{k}(k+1)$. Therefore, if we define
${\mathcal{S}}_{k+1}=\pi_{\ast} ({\mathcal{T}}_{k+1})$,
we have a commutative diagram of exact sequences
\begin{displaymath}
  \xymatrix{
    &0\ar[d] &0\ar[d] &0\ar[d] &\\
    0\ar[r] &{\mathcal{P}}_{k+1}\ar[r]\ar[d]
    &{\mathcal{H}}_{k+1}\ar[r]\ar[d] &{\mathcal{P}}_{k}(1)\ar[d]&\\
    0\ar[r] &{\mathcal{K}}_{k+1}(k+1)\ar[r]\ar[d]
    &{\mathcal{S}}_{k+1}\ar[r]\ar[d]
    &{\mathcal{K}}_{k}(k+1)\ar[r]\ar[d] &0\\
    0\ar[r] &{\mathcal{N}}_{k+1}(k+2)\ar[r] &{\mathcal{G}}_{k+1}(1)\ar[r]\ar[d]
    &{\mathcal{N}}_{k}(k+2)\ar[r]\ar[d] &0\\
    & &0 &0, &
  }
\end{displaymath}
where ${\mathcal{H}}_{k+1}$ and ${\mathcal{P}}_{k+1}$ are defined as
the kernels of the corresponding morphisms. Thus we have been able to
extend the resolution one step further. We still need to show that
this extension satisfies the extra properties.
We observe that, by the definition of ${\mathcal{S}}_{k+1}$ and the left
exactness of direct images, the map $\pi _{\ast}({\mathcal{S}}_{k+1})\to \pi
_{\ast}({\mathcal{G}}_{k+1}(1))$ is surjective. Therefore
${\mathcal{H}}_{k+1}(1)$ is
regular. Moreover, one can check that ${\mathcal{S}}_{k+1}$ is the fiber product
\begin{displaymath}
  {\mathcal{S}}_{k+1}=\Ker\left(\pi
_{\ast}({\mathcal{G}}_{k+1}(1))\oplus \pi _{\ast}({\mathcal{K}}_{k}(k+1))\to \pi
_{\ast}({\mathcal{N}}_{k}(k+2))\right).
\end{displaymath}
This implies easily that
$\pi _{\ast}({\mathcal{H}}_{k+1})=\pi
_{\ast}({\mathcal{P}}_{k}(1))$. We also observe that, by definition of
fiber product,
 ${\mathcal{P}}_{k}(1)=\Ker({\mathcal{T}}_{k+1}\to
 {\mathcal{G}}_{k+1}(1))$. Since ${\mathcal{S}}_{k+1}$ surjects onto
 ${\mathcal{T}}_{k+1}$, we deduce that the morphism
 ${\mathcal{H}}_{k+1}\to {\mathcal{P}}_{k}(1)$ is
 surjective. From this we conclude that the morphism
 ${\mathcal{K}}_{k+1}(k+2)\to
 {\mathcal{N}}_{k+1}(k+3)$ is surjective and that
the sheaf ${\mathcal{P}}_{k+1}(1)$ is regular. Since
${\mathcal{N}}_{k+1}(k+3)$ is regular, we
deduce that ${\mathcal{K}}_{k+1}(k+2)$ is regular. Therefore
$\mathcal{S_{k+1}}$ satisfies all the required properties, concluding
the proof of \ref{item:11}.
\end{proof}

We end this section recalling the notion of generating class of a
triangulated category.

\begin{definition}\label{def:gen_class}
  Let $\mathbf{D}$ be a triangulated category. A \emph{generating
    class} is a subclass $\mathbf{C}$ of $\mathbf{D}$ such that the
  smallest triangulated subcategory of $\mathbf{D}$ that contains
  $\mathbf{C}$ is equivalent to $\mathbf{D}$ via the inclusion.
\end{definition}

A well-known direct consequence of Theorem \ref{thm:2} is the following result.

\begin{corollary} \label{cor:6}
  The class of objects of the form $\mathcal{G}(k)$, with
  $\mathcal{G}$ a coherent sheaf in $X$ and $-n\leq k\leq 0$, is a
  generating class of
  $\Db(\PP^{n}_{X})$.
\end{corollary}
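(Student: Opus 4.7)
The plan is to reduce to coherent sheaves and then climb down from large twists using the Koszul resolution. Denote by $\mathcal{T}$ the triangulated subcategory of $\Db(\PP^n_X)$ generated by the class in question. By the truncation triangles $\tau^{\leq i}E\to E\to \tau^{>i}E\to \tau^{\leq i}E[1]$, every object of $\Db(\PP^n_X)$ is an iterated cone of shifts of its cohomology sheaves, so it suffices to prove $\mathcal{F}\in\mathcal{T}$ for every coherent sheaf $\mathcal{F}$ on $\PP^n_X$.

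For such $\mathcal{F}$, Serre vanishing furnishes $d_0$ with $\mathcal{F}(d)$ regular for all $d\geq d_0$. For each such $d$, the canonical resolution of Theorem~\ref{thm:2} produces an exact sequence whose non-terminal terms are all of the form $\pi^{\ast}\mathcal{G}_i\otimes\mathcal{O}(-i)$ with $0\leq i\leq n$, hence in the generating class. Splitting it into short exact sequences, which become distinguished triangles in $\Db(\PP^n_X)$, shows $\mathcal{F}(d)\in\mathcal{T}$ for all $d\geq d_0$.

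The remaining step is descending induction on $d$. Since the Koszul sequence of Example~\ref{exm:1} consists of locally free sheaves, it stays exact after tensoring with $\mathcal{F}(d+n+1)$, producing
\[
0\to \Lambda^{n+1}V^{\vee}\otimes\mathcal{F}(d)\to\Lambda^{n}V^{\vee}\otimes\mathcal{F}(d+1)\to\cdots\to V^{\vee}\otimes\mathcal{F}(d+n)\to\mathcal{F}(d+n+1)\to 0.
\]
Because $V$ is the trivial bundle of rank $n+1$, each $\Lambda^{j}V^{\vee}$ is a trivial vector bundle, so each term $\Lambda^{j}V^{\vee}\otimes\mathcal{F}(d+i)$ is a finite direct sum of copies of $\mathcal{F}(d+i)$, and hence lies in $\mathcal{T}$ as soon as $\mathcal{F}(d+i)$ does. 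Breaking the displayed sequence into short exact sequences and applying the two-out-of-three property for distinguished triangles, the hypothesis $\mathcal{F}(d+1),\ldots,\mathcal{F}(d+n+1)\in\mathcal{T}$ forces $\Lambda^{n+1}V^{\vee}\otimes\mathcal{F}(d)\in\mathcal{T}$; the triviality of $\Lambda^{n+1}V^{\vee}$ then gives $\mathcal{F}(d)\in\mathcal{T}$. Combined with the base case, this descending induction yields $\mathcal{F}(d)\in\mathcal{T}$ for every integer $d$, and taking $d=0$ concludes the proof.

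The main technical obstacle is exactly the gap between the range of twists $-n\leq k\leq 0$ allowed in the generating class and the fact that a general coherent $\mathcal{F}$ becomes regular only after a large positive twist by $\mathcal{O}(d)$; the Koszul trick above is precisely what bridges this gap, letting us transport membership in $\mathcal{T}$ from $n+1$ consecutive high twists back down one twist at a time.
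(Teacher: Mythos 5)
Your proof is correct and is essentially the argument the paper has in mind: the paper gives no proof beyond calling the corollary a well-known direct consequence of Theorem \ref{thm:2}, and your combination of truncation triangles, the canonical resolution of a regular twist $\mathcal{F}(d)$, and the Koszul sequence of Example \ref{exm:1} to descend one twist at a time from large $d$ back to $d=0$ is the standard way to fill that in. The only point worth making explicit is that the sheaves $\mathcal{G}_i$ appearing in the canonical resolution of the coherent sheaf $\mathcal{F}(d)$ are coherent and not merely quasi-coherent (so that the resolution terms really lie in the stated generating class); this follows from the exact sequences in Theorem \ref{thm:2}, each $\mathcal{G}_k$ being a kernel of a morphism of coherent sheaves, and is the content of Corollary \ref{cor:4}.
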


\section{Analytic torsion for projective spaces}
\label{sec:projsp}

Let $n$ be a non-negative integer, $V$ the $n+1$ dimensional
vector space
  $\CC^{n+1}$ and
$\PP^n:=\PP^n(V)$ the projective space of lines in $V$. We write
$\overline V$ for the vector space $V$ together with the trivial
metric.
We
will denote by $V$ the trivial vector bundle of fiber $V$ over any
base.

We may
construct natural relative hermitian complexes that arise by
considering the sheaves $\mathcal{O}(k)$, their
cohomology and the Fubini-Study metric.
If we endow the trivial sheaf with the trivial metric and
$\mathcal{O}(1)$ with the Fubini-Study metric, then the tangent bundle
$T_{\pi}$ carries a quotient hermitian structure via the short exact
sequence
\begin{equation} \label{eq:4}
	0\to\mathcal{O}_{\PP^{n}_{\CC}}\to
	\mathcal{O}(1)^{n+1}\to T_{\pi}\to 0.
\end{equation}
We will denote the resulting hermitian vector bundle by
$\ov{T}_{\pi}^{\FS}$ and call it the Fubini-Study metric of $T_{\pi
}$.  The arrow $(\pi,
\ov{T}_{\pi}^{\FS})$ in $\ov\Sm_{\ast/\CC}$ will be written
$\ov{\pi}^{\FS}$.
We endow the invertible sheaves $\OO(k)$ with the $k$-th tensor power
of the Fubini-Study metric on $\OO(1)$. We refer to them by
$\overline{\OO(k)}$.

We now describe natural hermitian structures on the complexes $\Rd \pi
_{\ast} \mathcal{O}(k)$.
First assume $k\geq 0$. The sheaf $\OO(k)$ is $\pi$-acyclic, hence
\begin{math}
  \Rd\pi_{\ast}\mathcal{O}(k)=\text{H}^{0}(\PP^{n}_{\CC},\mathcal{O}(k))
\end{math}
as a complex concentrated in degree 0. This space is naturally
equipped with the $L^{2}$ metric with respect to the Fubini-Study
metric on $\OO(k)$ and the volume form
$\mu=c_{1}(\overline{\OO(1)})^{\land n}/n!$ on $\PP^{n}_{\CC}$. Namely, given
global sections $s,t$ of $\OO(k)$,
\begin{displaymath}
  \langle s,t\rangle_{L^{2}}=\int_{\PP^{n}_{\CC}} \langle
  s(x),t(x)\rangle_{x}\mu(x).
\end{displaymath}

If $-n\leq k<0$, then $\Rd\pi_{\ast}\mathcal{O}(k)=0$
and we put the trivial metric on it.
Finally, let $k\leq -n-1$. Then the cohomology of
$\Rd\pi_{\ast}\mathcal{O}(k)$ is concentrated in degree $n$ and there
is an isomorphism,
\begin{displaymath}
  \Rd\pi_{\ast}\mathcal{O}(k)\cong
  \text{H}^{0}(\PP^{n}_{\CC},\OO(-k-n-1))^{\vee}[-n].
\end{displaymath}
Notice that this
isomorphism is canonical due to Grothendieck duality and to the
natural identification
$\omega_{\PP^{n}_{\CC}}=\mathcal{O}(-n-1)$. Hence we may endow
$\Rd\pi_{\ast}\mathcal{O}(k)$ with the dual of the $L^{2}$ metric on $\text{H}^{0}(\PP^{n}_{\CC},\OO(-k-n-1))$.

\begin{notation} \label{def:7}
For every integer $k$, we introduce the relative metrized complex
\begin{equation}\label{eq:5}
  \ov{\xi_{n}}(k)=(\ov{\pi}^{\FS}, \ov{\mathcal{O}(k)},
  \ov{\Rd\pi_{\ast}\mathcal{O}(k)}).
\end{equation}
If $X$ is a smooth complex variety, we will also denote by
$\ov{\xi}_{n}(k)$ its
pull-back to $\PP^{n}_{X}$.
  Let $\overline {\mathcal{F}}$ be a metrized coherent sheaf on
  $X$. Then we define $\overline {\mathcal{F}}(k)$ and $\ov {\Rd \pi
    _{\ast}{\mathcal{F}}(k)}$ by the equality
  \begin{displaymath}
    \ov \xi_{n}(k)\otimes \ov {\mathcal{F}}=(\ov \pi ^{\FS}, \overline
    {\mathcal{F}}(k), \ov {\Rd \pi
    _{\ast}{\mathcal{F}}(k)}).
  \end{displaymath}
\end{notation}

\begin{definition}\label{def:2}
  Let $X$ be a complex smooth variety and $\pi\colon \PP^{n}_{X}\to X$ the
  projection. An \emph{analytic torsion class} for the relative
  hermitian complex $\ov{\xi}=(\ov{\pi},
  \ov{\mathcal{F}},\ov{\Rd\pi_{\ast}\mathcal{F}})$ is a class
  $\widetilde \eta\in \bigoplus_{p} \widetilde
  {\mathcal{D}}^{2p-1}(X,p)$ such that
  \begin{equation}\label{eq:1}
    \dd_{\mathcal{D}} \widetilde
    \eta=\ch(\ov{\Rd\pi_{\ast}\mathcal{F}})-\ov
    \pi_{\ttwist}
    [\ch(\ov{\mathcal{F}})].
  \end{equation}
\end{definition}

The existence of this class is guaranteed by the
Grothendieck-Riemann-Roch theorem, which implies that the two currents
at the right hand side of equation \eqref{eq:1} are
cohomologous. Since the map $\pi $ is smooth, the analytic torsion
class is the class of a smooth form.

\begin{definition}\label{def:3}
  Let $n$ be a non-negative integer.  A \emph{theory of analytic
    torsion classes for projective spaces of dimension $n$} is an
  assignment that, to each relative metrized complex
\begin{math}
  \overline{\xi}=(\ov{\pi}\colon \PP^n_X\rightarrow
  X,\ov{\mathcal{F}},\ov{\Rd\pi_{\ast}\mathcal{F}})
\end{math}
of relative dimension $n$, assigns a class of differential forms
\begin{displaymath}
  T(\overline \xi)\in \bigoplus_{p} \widetilde
  {\mathcal{D}}^{2p-1}(X,p),
\end{displaymath}
satisfying the following properties.
\begin{enumerate}
\item \label{item:1} (Differential equation)
  \begin{math}
      \dd_{\mathcal{D}} T(\overline
      {\xi})=\ch(\ov{\Rd\pi_{\ast}\mathcal{F}})-\ov \pi_{\ttwist}
      [\ch(\ov{\mathcal{F}})].
    \end{math}
  \item \label{item:2}(Functoriality) Given a morphism
    $f\colon Y\longrightarrow X$, we have
    \begin{math}
    	T(f^{\ast}\ov{\xi})=f^{\ast} T(\ov{\xi}).
    \end{math}
  \item \label{item:3}(Additivity and normalization) If
    $\overline{\xi}_1$ and $\overline{\xi}_2$ are relative metrized
    complexes on $X$, then
    $T(\overline{\xi}_1\oplus\overline{\xi}_2)
    =T(\overline{\xi}_1)+T(\overline{\xi}_2).$
  \item \label{item:4}(Projection formula) For any
    hermitian vector bundle $\ov{G}$ on $X$, and any integer $k\in[-n,0]$, we have
    \begin{math}
      T(\ov{\xi}_{n}(k)\otimes\ov{G})=
      T(\ov{\xi}_{n}(k))\bullet\ch(\ov{G}).
    \end{math}
  \end{enumerate}

  A \emph{theory of analytic torsion classes for projective spaces} is
  an assignment as
  before, for all non-negative integers $n$.
\end{definition}

\begin{definition}\label{def:char_numbers}
  Let $T$ be a theory of analytic torsion classes for projective spaces
  of dimension $n$. Fix as base space the point $\Spec \CC$. The
  \emph{characteristic numbers} of $T$ are
  \begin{equation}\label{eq:3}
    t_{n,k}(T):=T(\overline{\xi}_{n}(k))\in \widetilde
    {\mathcal{D}}^{1}(\Spec \CC, 1)=\RR, \
    k\in\mathbb{Z}.
  \end{equation}
  The numbers $t_{n,k}(T)$, $-n\le k \le 0$ will be
  called the \emph{main characteristic numbers} of $T$.
\end{definition}

The central result of this section is the following classification theorem.

\begin{theorem}\label{thm:1}
Let $n$ be a non-negative integer and let
$\mathfrak{t}=(t_{n,k})_{k=-n,\dots,0}$ be a family of arbitrary real
numbers. Then there exists a unique theory $T_{\mathfrak{t}}$ of
analytic torsion classes for projective spaces of  dimension $n$, such that
$t_{n,k}(T_{\mathfrak{t}})=t_{n,k}$.
\end{theorem}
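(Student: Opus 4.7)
The plan is to prove uniqueness and existence separately. In both parts the key organizing principle is that, by Corollary \ref{cor:6}, the objects $\mathcal{O}(k)\otimes\pi^{\ast}\mathcal{G}$ with $-n\le k\le 0$ and $\mathcal{G}$ coherent on $X$ form a generating class of $\Db(\PP^{n}_{X})$, and on this generating class the value of any theory is completely pinned down by the main characteristic numbers $t_{n,k}$.

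For uniqueness, let $T$ and $T'$ be two theories with the same main characteristic numbers. By functoriality applied to the structure morphism $X\to\Spec\CC$, one has $T(\overline{\xi}_{n}(k)) = T'(\overline{\xi}_{n}(k)) = t_{n,k}$ (viewed as the pull-back of a constant) on every base $X$ and $k\in[-n,0]$. The projection formula then yields $T(\overline{\xi}_{n}(k)\otimes\overline{G}) = T'(\overline{\xi}_{n}(k)\otimes\overline{G}) = t_{n,k}\bullet\ch(\overline{G})$ for every hermitian vector bundle $\overline{G}$ on $X$. To propagate this equality to arbitrary relative metrized complexes, I would derive from the four axioms a compatibility with distinguished triangles: for $\overline{\xi}_{j}=(\overline{\pi},\overline{\mathcal{F}}_{j},\overline{\Rd\pi_{\ast}\mathcal{F}_{j}})$, $j=0,1,2$, fitting into triangles $\overline{\tau}$ on $\PP^{n}_{X}$ and $\overline{\Rd\pi_{\ast}\tau}$ on $X$,
$$\sum_{j}(-1)^{j}T(\overline{\xi}_{j}) \;=\; \widetilde{\ch}(\overline{\Rd\pi_{\ast}\tau}) \;-\; \overline{\pi}_{\ttwist}\bigl(\widetilde{\ch}(\overline{\tau})\bigr).$$
The proof closely mirrors Proposition \ref{prop:2bis}: represent both triangles by short exact sequences of hermitian vector bundles, run the standard $\PP^{1}$-deformation on $\PP^{n}_{X}\times\PP^{1}$ producing an orthogonal splitting at $\infty$, apply $T$ to the resulting family over $X\times\PP^{1}$, and integrate the differential equation along the $\PP^{1}$-fiber using additivity and functoriality. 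Combined with Corollary \ref{cor:6} and a straightforward induction on the triangulated depth of an object, this forces $T=T'$.

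For existence, fix $\mathfrak{t}$. On the generating class, declare
$$T_{\mathfrak{t}}(\overline{\xi}_{n}(k)\otimes\overline{G}) := t_{n,k}\bullet\ch(\overline{G})$$
for $-n\le k\le 0$ and $\overline{G}\in\Ob\oDb(X)$. To extend to an arbitrary $\overline{\xi}$, I would use Corollary \ref{cor:4} to reduce the underlying coherent sheaf $\mathcal{F}$, after a sufficiently large twist, to a regular sheaf, apply the canonical resolution of Theorem \ref{thm:2} to express it as an iterated cone of objects of the generating class, and invoke Theorem \ref{thm:14}(ii) to compare resolutions across twists in a canonical way. The triangle compatibility formula derived in the uniqueness part then \emph{defines} $T_{\mathfrak{t}}(\overline{\xi})$ as the unique combination of the prescribed values on the generators with the secondary Bott--Chern classes produced by the resolution and by the chosen hermitian structures. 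Bounded complexes reduce to coherent sheaves via the b\^ete filtration.

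The technical heart of the argument, and the main obstacle, is the well-definedness of this extension: independence of the twist $d$, of the canonical resolution, of the hermitian metrics on its terms, and of the representatives of the hermitian structures on $\overline{\mathcal{F}}$ and $\overline{\Rd\pi_{\ast}\mathcal{F}}$. Each such independence I would reduce to an application of Lemma \ref{lemm:uniquenes_phi}, adapted to the smooth projection $\pi$, combined with the anomaly formulas forced by the differential equation and, for the twist independence, with Theorem \ref{thm:14}(ii). Once well-definedness is granted, the four axioms of Definition \ref{def:3} follow directly from the construction: the differential equation from how $T_{\mathfrak{t}}$ is assembled out of $\widetilde{\ch}$, functoriality from the compatibility of canonical resolutions with flat base change, additivity by construction, and the projection formula from the definition on the generating class.
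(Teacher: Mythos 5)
Your skeleton coincides with the paper's own strategy: uniqueness via the anomaly formulas, the triangle compatibility of Proposition \ref{prop:2}, the projection formula and the generating class of Corollary \ref{cor:6}; existence via resolutions coming from Quillen's regularity theory (Theorem \ref{thm:2}, Corollary \ref{cor:4}) and Zha's comparison Theorem \ref{thm:14}. The problem is precisely at the step you call the technical heart, where your plan as written does not go through. First, the value you want to assign to an arbitrary sheaf uses the resolution $\gamma_d(\mathcal{F})$, whose terms are $\mathcal{G}_i(-i-d)$ with $d\gg 0$, so the twists lie far outside $[-n,0]$; you therefore need characteristic numbers $t_{n,k}$ for all $k\in\ZZ$, which are not part of your declared data and whose consistent definition is not addressed in your proposal. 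The paper pins them down by the Koszul recursion \eqref{eq:79} (forced by Corollary \ref{cor:1} applied to the twisted Koszul complexes), and the compatibility of the final construction with the prescribed main numbers hinges on exactly this recursion.

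Second, the independence of the chosen resolution cannot be reduced to Lemma \ref{lemm:uniquenes_phi}. That lemma governs assignments defined on \emph{arbitrary} acyclic complexes of hermitian vector bundles and functorial under smooth base change; it is the tool for the anomaly formulas, i.e.\ for metric dependence, not for resolution dependence. Here the discrepancy between two resolutions is only defined for acyclic complexes of the special shape $\ov{\mathcal{M}}_m(-m-d)\to\dots\to\ov{\mathcal{M}}_l(-l-d)$ with the $\mathcal{M}_i$ coming from the base, and it involves the numbers $t_{n,k}$ themselves. What is actually needed is the alternating-sum identity of Lemma \ref{lemm:5}, namely $\sum_i(-1)^i t_{n,-d-i}\ch(\ov{\mathcal{M}}_i)=\cht(\ov{\Rd \pi_{\ast}\mu})-\ov\pi^{\FS}_{\ttwist}(\cht(\ov\mu))$ for such acyclic $\ov\mu$, whose proof is an induction on the length using Theorem \ref{thm:14}~(i) (every resolution of the standard shape surjects onto the canonical one); part (ii), the only part you cite, enters solely in comparing $\gamma_d$ with $\gamma_{d+1}$. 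Without an input of this kind, two iterated-cone presentations of the same object are not known to yield the same value, so your ``declare on the generating class and extend by triangles'' definition is not yet well posed --- in a triangulated category such extensions are never automatic, and the paper avoids the issue by giving the explicit formula \eqref{eq:33} and proving its independence (Lemma \ref{lemm:2}). A smaller point: in the uniqueness argument you must also dispose of the arbitrary hermitian structure on $T_{\pi}$, which requires the anomaly formula of Proposition \ref{prop:1}~(ii) (itself proved with Lemma \ref{lemm:uniquenes_phi}), and does not follow from the triangle compatibility you derive.
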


Before proving Theorem \ref{thm:1}, we show some
consequences of the definition of the analytic torsion classes.
First we state some anomaly formulas
that determine the dependence of the analytic torsion classes with
respect to different choices of metrics.
\begin{proposition} \label{prop:1}
  Let $T$ be a theory of analytic torsion classes for projective
  spaces of dimension $n$. Let
  \begin{math}
    \overline{\xi}= (\ov{\pi}\colon \PP^n_X\rightarrow
    X,\ov{\mathcal{F}}, \ov{\Rd\pi_{\ast}\mathcal{F}})
  \end{math}
  be a relative metrized complex.
  \begin{enumerate}
  \item \label{item:6}
    If $\ov{\mathcal{F}}'$ is another choice
    of metric on $\mathcal{F}$ and $\overline{\xi}_1=(\ov{\pi}\colon
    \PP^n_X\rightarrow
    X,\ov{\mathcal{F}}', \ov{\Rd\pi_{\ast}\mathcal{F}})$, then
    \begin{displaymath}
      T(\overline{\xi}_1)=T(\overline{\xi})+\ov \pi
      _{\ttwist}[\cht(\ov{\mathcal{F}}',\ov{\mathcal{F}})].
    \end{displaymath}
  \item \label{item:7}
    If $\ov{\pi}'$ is another hermitian structure on $\pi$ and
    $\overline{\xi}_{2}= (\ov{\pi}'\colon \PP^n_X\rightarrow
    X,\ov{\mathcal{F}}, \ov{\Rd\pi_{\ast}\mathcal{F}})$, then
    \begin{equation}\label{eq:14}
      T(\overline{\xi}_{2})=T(\overline{\xi})+\ov \pi'
      _{\ttwist}[\ch(\overline {\mathcal{F}})\bullet
      \widetilde{\Td}_{m}(\ov{\pi}', \ov{\pi})].
    \end{equation}
  \item \label{item:8}
    If $\ov{\Rd\pi_{\ast}\mathcal{F}}'$ is another
    choice of metric on $\Rd\pi_{\ast}\mathcal{F}$, and $\overline
    {\xi}_{3}= (\ov{\pi}\colon \PP^n_X\rightarrow
    X,\ov{\mathcal{F}}, \ov{\Rd\pi_{\ast}\mathcal{F}}')$, then
    \begin{displaymath}
      T(\overline{\xi}_{3})= T(\overline{\xi})-
      \cht(\ov{\Rd\pi_{\ast}\mathcal{F}}',
      \ov{\Rd\pi_{\ast}\mathcal{F}}).
    \end{displaymath}
  \end{enumerate}
\end{proposition}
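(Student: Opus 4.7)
The proof follows the pattern of Proposition \ref{prop:1bis} and consists of three applications of Lemma \ref{lemm:uniquenes_phi}, with $f=\pi\colon\PP^{n}_{X}\to X$ playing the role of the morphism in that lemma. Since $\pi$ is smooth, $N_{\pi}=\emptyset$; hence the classes involved live in $\widetilde{\mathcal{D}}^{\ast}(X,\ast)$, no wave-front considerations arise, and the transversality condition on pullbacks reduces to plain smoothness. For each part, we introduce an auxiliary assignment $\widetilde{\varphi}$ that records the discrepancy between the two sides of the anomaly formula, parametrized by a bounded acyclic complex of hermitian vector bundles that encodes the change of hermitian structure via the action of $\KA$ described in \cite[Thm.~3.13]{BurgosFreixasLitcanu:HerStruc}.

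For (i), express the two hermitian structures on $\mathcal{F}$ via $\ov{\mathcal{F}}'=\ov{\mathcal{F}}+[\ov{A}]$ for a bounded acyclic complex $\ov{A}$ on $\PP^{n}_{X}$. For every smooth $g\colon Y\to X$ with base change $g'\colon\PP^{n}_{Y}\to\PP^{n}_{X}$ and every bounded acyclic complex $\ov{A}$ on $\PP^{n}_{Y}$, set
\begin{displaymath}
\widetilde{\varphi}_{1}(\ov{A})=T(g^{\ast}\ov{\xi})-T\bigl(g^{\ast}\ov{\pi}^{\FS},\;{g'}^{\ast}\ov{\mathcal{F}}+[\ov{A}],\;\Ld g^{\ast}\ov{\Rd\pi_{\ast}\mathcal{F}}\bigr)-\ov{\pi}_{\ttwist}\bigl[\cht(\ov{A})\bigr].
\end{displaymath}
The differential equation follows from Definition \ref{def:3}(i) and from $\dd_{\mathcal{D}}\cht(\ov{A})=\ch(\ov{A})=0$; functoriality is inherited from $T$ and from $\cht$; normalization holds because for orthogonally split $\ov{A}$ one has both $[\ov{A}]=0$ in $\KA$ and $\cht(\ov{A})=0$. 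By Lemma \ref{lemm:uniquenes_phi}, $\widetilde{\varphi}_{1}=0$; specializing to $g=\Id_{X}$ yields (i). Part (iii) is proved by the same template: write $\ov{\Rd\pi_{\ast}\mathcal{F}}'=\ov{\Rd\pi_{\ast}\mathcal{F}}+[\ov{C}]$ for a bounded acyclic complex $\ov{C}$ on $X$, modify the third slot of $T$ accordingly, and use the correction term $+\cht(\ov{C})$ in place of the pushforward of $\cht(\ov{A})$.

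For (ii), write $\ov{\pi}'=\ov{\pi}+[\ov{B}]$ in $\oSm_{\ast/\CC}$ for a bounded acyclic complex $\ov{B}$ on $\PP^{n}_{X}$, and define
\begin{displaymath}
\widetilde{\varphi}_{2}(\ov{B})=T(g^{\ast}\ov{\xi})-T\bigl((\pi',\Ld{g'}^{\ast}T_{\ov{\pi}}+[\ov{B}]),\;{g'}^{\ast}\ov{\mathcal{F}},\;\Ld g^{\ast}\ov{\Rd\pi_{\ast}\mathcal{F}}\bigr)+R(\ov{B}),
\end{displaymath}
where $R(\ov{B})={\pi'}_{\ast}\bigl[\ch({g'}^{\ast}\ov{\mathcal{F}})\bullet\widetilde{\Td}_{m}(\Ld{g'}^{\ast}T_{\ov{\pi}}+[\ov{B}],\Ld{g'}^{\ast}T_{\ov{\pi}})\bullet\Td(\Ld{g'}^{\ast}T_{\ov{\pi}}+[\ov{B}])\bigr]$. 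The principal obstacle is the verification of the differential equation for $\widetilde{\varphi}_{2}$: it relies on the identity $\dd_{\mathcal{D}}\widetilde{\Td}_{m}(\ov{\pi}',\ov{\pi})=\Td(\ov{\pi}')-\Td(\ov{\pi})$ from \cite{BurgosFreixasLitcanu:HerStruc} together with Proposition \ref{prop:12}(ii) to commute the pushforward with the secondary Todd correction. The remaining hypotheses of Lemma \ref{lemm:uniquenes_phi} are formal, so the lemma gives $\widetilde{\varphi}_{2}=0$, completing the proof.
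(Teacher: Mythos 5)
Your proposal is correct and is essentially the paper's own argument: the paper proves Proposition \ref{prop:1} by simply referring to the proof of Proposition \ref{prop:1bis}, which is exactly the three applications of Lemma \ref{lemm:uniquenes_phi} with the discrepancy assignments $\widetilde{\varphi}$ you construct (the case $f=\pi$ smooth, $N_{\pi}=\emptyset$, so everything lives in smooth forms). Only cosmetic slips: in part (i) the second slot should carry $g^{\ast}\ov{\pi}$ rather than $g^{\ast}\ov{\pi}^{\FS}$, since $\ov{\pi}$ is an arbitrary hermitian structure, and the correction term should be pushed forward by the base-changed morphism $g^{\ast}\ov{\pi}$; also the precise differential identity for $\widetilde{\Td}_{m}$ is the multiplicative one (relating $\Td(\ov{\pi})$ and $\Td(\ov{\pi}')$ up to a factor of $\Td(\ov{\pi}')$), not the additive difference as written, though this does not affect the argument.
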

\begin{proof}
The proof is the same as the proof of Proposition
\ref{prop:1bis}.
\end{proof}

Next we state the behavior of analytic torsion classes for projective
spaces with respect to
distinguished triangles.

\begin{proposition} \label{prop:2}
  Let $T$ be a theory of analytic torsion classes for projective spaces of
  dimension $n$. Let $X$ be a smooth complex variety and
  $\pi\colon \PP^{n}_{X}\to X$ the projection. Consider distinguished
  triangles in $\oDb(\PP^{n}_{X})$ and $\oDb(X)$ respectively:
\begin{displaymath}
	(\ov{\tau}):\ 
        \ov{\mathcal{F}}_{2}\to\ov{\mathcal{F}}_{1}
        \to\ov{\mathcal{F}}_{0}\to\ov{\mathcal{F}}_{2}[1]\ \text{ and
        }\ 
	(\ov{\Rd\pi_{\ast}\tau}):\ 
        \ov{\Rd\pi_{\ast}\mathcal{F}}_{2}\to
        \ov{\Rd\pi_{\ast}\mathcal{F}}_{1}
	\to\ov{\Rd\pi_{\ast}\mathcal{F}}_{0}\to
        \ov{\Rd\pi_{\ast}\mathcal{F}}_{2}[1],
\end{displaymath}
and define relative metrized complexes
$\ov{\xi}_{i}=(\ov{\pi},\ov{\mathcal{F}}_i,
\ov{\Rd\pi_{\ast}\mathcal{F}}_i),$ $i=0,1,2$.
Then
\begin{displaymath}
	\sum_{j}(-1)^{j}T(\ov{\xi}_{j})
	=\widetilde{\ch}(\ov{\Rd\pi_{\ast}\tau})
	-\ov \pi_{\ttwist}(\widetilde{\ch}(\ov{\tau })).
\end{displaymath}
\end{proposition}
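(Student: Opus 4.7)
The plan is to mimic the proof of Proposition \ref{prop:2bis} (the analogous statement for closed immersions) almost verbatim, using the deformation-to-the-normal-cone / $\mathbb{P}^1$-trick combined with the differential equation, functoriality and additivity axioms of Definition \ref{def:3}.

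First I would reduce to the case in which the distinguished triangles $\ov{\tau}$ and $\ov{\Rd\pi_{\ast}\tau}$ are represented by short exact sequences of bounded complexes of hermitian vector bundles
\begin{align*}
\ov\varepsilon:\quad & 0\to \ov E_2\to \ov E_1\to \ov E_0\to 0,\\
\ov\nu:\quad & 0\to \ov V_2\to \ov V_1\to \ov V_0\to 0,
\end{align*}
on $\PP^n_X$ and $X$ respectively. This is possible thanks to the definition of hermitian structures on objects of $\oDb$ given in \cite{BurgosFreixasLitcanu:HerStruc}: every distinguished triangle in $\oDb$ is equivalent to one coming from a short exact sequence of complexes of vector bundles together with a choice of hermitian metrics. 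The anomaly formulas in Proposition \ref{prop:1} show that both sides of the formula to be proved depend on the choices of hermitian metrics only through the secondary characteristic classes $\widetilde{\ch}$, so that we are free to make such a replacement.

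Next I would apply the standard first-row/first-column construction from the beginning of the proof of \cite[Thm.~2.3]{BurgosLitcanu:SingularBC} to each row of $\ov\varepsilon$ and $\ov\nu$. This produces short exact sequences of hermitian complexes $\widetilde \varepsilon$ on $\PP^n_X\times \PP^1$ and $\widetilde \nu$ on $X\times \PP^1$ with the properties that (a) their restriction to $t=0$ agrees with $\ov \varepsilon$ and $\ov \nu$; (b) their restriction to $t=\infty$ is orthogonally split (as exact sequences of complexes); and (c) there are canonical isomorphisms $\widetilde V_j\dra \Rd(\pi\times\Id_{\PP^1})_{\ast}\widetilde E_j$ compatible with the sequences. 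Put $\widetilde \xi_j=(\ov\pi^{\FS}\times\Id_{\PP^1},\widetilde E_j,\widetilde V_j)$; by functoriality (axiom \ref{item:2}), the restrictions of $T(\widetilde \xi_j)$ to $t=0$ and $t=\infty$ recover $T(\ov\xi_j)$ and (by additivity, axiom \ref{item:3}, applied to the orthogonally split fibre) the class $T(\ov\xi_0\oplus \ov\xi_2)$ respectively.

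Then I would compute $\dd_{\mathcal{D}}$ of the integral
\begin{displaymath}
\frac{1}{2\pi i}\int_{\PP^1}\tfrac{-1}{2}\log t\bar t\,\bullet
\sum_{j}(-1)^{j}T(\widetilde \xi_j),
\end{displaymath}
which vanishes in $\bigoplus_p \widetilde{\mathcal{D}}^{2p-1}(X,p)$. Using the differential equation (axiom \ref{item:1}), the two boundary contributions at $t=0,\infty$ give $T(\ov\xi_1)-T(\ov\xi_0\oplus\ov\xi_2)$, while the interior term produces, by the definition of the secondary classes $\widetilde\ch$ in \cite{BurgosFreixasLitcanu:HerStruc}, exactly $\widetilde\ch(\ov{\Rd\pi_{\ast}\tau})-\ov\pi_{\ttwist}(\widetilde\ch(\ov\tau))$. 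Finally, an application of axiom \ref{item:3} turns $T(\ov\xi_0\oplus\ov\xi_2)$ into $T(\ov\xi_0)+T(\ov\xi_2)$, which rearranged gives the stated identity. The main obstacle I anticipate is purely bookkeeping: making sure the deformation $\widetilde{\varepsilon}$, $\widetilde\nu$ is compatible with taking $\Rd\pi_{\ast}$ on a relative $\PP^n$ (i.e.\ that $\widetilde V_j$ indeed represents $\Rd(\pi\times\Id_{\PP^1})_{\ast}\widetilde E_j$ with the right induced hermitian structure), but since the construction is an affine combination of fixed metrics and since $\pi$ is flat, the interchange of cohomology and the deformation parameter is harmless.
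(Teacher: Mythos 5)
Your proposal is correct and follows essentially the same route as the paper: the paper's proof of this proposition is simply a reference to the proof of Proposition \ref{prop:2bis}, i.e.\ reduction to short exact sequences of hermitian complexes, the $\PP^{1}$-deformation from the proof of \cite[Thm.~2.3]{BurgosLitcanu:SingularBC}, and the computation of $\dd_{\mathcal{D}}$ of the $\log t\bar t$ integral combined with the differential equation, functoriality and additivity, exactly as you describe. The only point to tidy is that at $t=\infty$ only the middle term degenerates to $\ov{\mathcal{F}}_{0}\oplus\ov{\mathcal{F}}_{2}$, so the two boundary contributions combine to $T(\ov\xi_{1})-T(\ov\xi_{0}\oplus\ov\xi_{2})$, which is what your calculation uses.
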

\begin{proof}
 The proof is similar to that of \ref{prop:2bis}.
\end{proof}

In view of this proposition, we see that the additivity axiom is
equivalent to the apparently stronger statement of the next corollary.

\begin{corollary} \label{cor:2}
  With the assumptions of Proposition \ref{prop:2}, if $\ov \tau $ and
  $\ov{\Rd\pi_{\ast}\tau}$ are tightly distinguished, then
  \begin{math}
    T(\ov \xi_{1})=T(\ov \xi _{0})+T(\ov \xi _{2}).
  \end{math}
\end{corollary}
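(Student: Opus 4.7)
The plan is to deduce the corollary as an immediate consequence of Proposition~\ref{prop:2} combined with the defining property of tightly distinguished triangles, namely that their Bott-Chern secondary characteristic classes vanish. Recall from \cite[Thm.~4.18]{BurgosFreixasLitcanu:HerStruc} that the Bott-Chern class $\widetilde{\ch}$ of a distinguished triangle in $\oDb$ is characterized axiomatically, and a distinguished triangle is called tightly distinguished precisely when this class is zero.

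Concretely, I would argue as follows. By hypothesis $\ov\tau$ and $\ov{\Rd\pi_{\ast}\tau}$ are tightly distinguished, so
\begin{displaymath}
  \widetilde{\ch}(\ov\tau)=0 \quad\text{and}\quad \widetilde{\ch}(\ov{\Rd\pi_{\ast}\tau})=0.
\end{displaymath}
Substituting these vanishings into the identity provided by Proposition~\ref{prop:2} gives
\begin{displaymath}
  \sum_{j}(-1)^{j}T(\ov{\xi}_{j})=\widetilde{\ch}(\ov{\Rd\pi_{\ast}\tau})-\ov\pi_{\ttwist}(\widetilde{\ch}(\ov\tau))=0.
\end{displaymath}
Expanding the alternating sum yields $T(\ov\xi_{0})-T(\ov\xi_{1})+T(\ov\xi_{2})=0$, which is exactly the asserted relation $T(\ov\xi_{1})=T(\ov\xi_{0})+T(\ov\xi_{2})$.

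Since the proof is a one-line deduction once Proposition~\ref{prop:2} is in hand, there is no real obstacle; the only thing to be checked is that the vanishing of $\widetilde{\ch}$ on tightly distinguished triangles, as formulated in \cite{BurgosFreixasLitcanu:HerStruc}, applies to both $\ov\tau$ (a distinguished triangle in $\oDb(\PP^{n}_{X})$) and $\ov{\Rd\pi_{\ast}\tau}$ (a distinguished triangle in $\oDb(X)$). Both live in the setting of that reference, so the vanishing holds verbatim.
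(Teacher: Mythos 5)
Your proposal is correct and matches the argument the paper intends: Corollary \ref{cor:2} is meant to follow immediately from Proposition \ref{prop:2} together with the vanishing of $\widetilde{\ch}$ on tightly distinguished triangles (the same fact the paper invokes elsewhere, e.g.\ in the proof of Theorem \ref{thm:17}). Your one-line deduction $T(\ov\xi_{0})-T(\ov\xi_{1})+T(\ov\xi_{2})=0$ is exactly the computation the paper leaves to the reader.
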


\begin{corollary}
  Let $\overline{\xi}= (\ov{\pi},\ov{\mathcal{F}},
  \ov{\Rd\pi_{\ast}\mathcal{F}})$ be a relative metrized complex and
  let $\overline{\xi}[i]= (\ov{\pi},\ov{\mathcal{F}}[i],
  \ov{\Rd\pi_{\ast}\mathcal{F}}[i])$ be the shifted relative metrized
  complex. Then
  \begin{math}
    T(\ov{\xi})=(-1)^{i}T(\ov{\xi}[i]).
  \end{math}
\end{corollary}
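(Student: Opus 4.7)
The plan is to deduce this shift identity from Proposition \ref{prop:2} by choosing the auxiliary distinguished triangles in a particularly degenerate way, and then to induct on $i$. It is enough to treat the case $i=1$, after which iterating gives $T(\ov{\xi})=(-1)^{i}T(\ov{\xi}[i])$ for all $i\in\ZZ$ (negative $i$ follows by applying the shift-by-one statement to $\ov{\xi}[i]$).

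First I would observe that $T(\ov 0)=0$, where $\ov 0$ is the relative metrized complex with zero sheaf and zero direct image. This is immediate from the additivity axiom \ref{item:3}: since $\ov \xi \oplus \ov 0=\ov \xi$, additivity forces $T(\ov 0)=0$.

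Next I would apply Proposition \ref{prop:2} to the two distinguished triangles
\begin{displaymath}
  \ov \tau:\quad \ov{\mathcal{F}}\to 0 \to \ov{\mathcal{F}}[1]\to\ov{\mathcal{F}}[1],
  \qquad
  \ov{\Rd \pi_{\ast}\tau}:\quad \ov{\Rd \pi_{\ast}\mathcal{F}}\to 0
  \to \ov{\Rd \pi_{\ast}\mathcal{F}}[1]\to \ov{\Rd \pi_{\ast}\mathcal{F}}[1],
\end{displaymath}
with metrized complexes $\ov{\xi}_{2}=\ov{\xi}$, $\ov{\xi}_{1}=\ov 0$, $\ov{\xi}_{0}=\ov{\xi}[1]$. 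The hermitian structure on $\ov{\mathcal{F}}[1]$ is, by definition, the shifted one induced from $\ov{\mathcal{F}}$, and likewise on the direct image side; with this choice both triangles are tightly distinguished, so the Bott-Chern classes $\widetilde{\ch}(\ov{\tau})$ and $\widetilde{\ch}(\ov{\Rd \pi_{\ast}\tau})$ vanish. Plugging these data into the formula of Proposition \ref{prop:2} (equivalently invoking Corollary \ref{cor:2}) gives
\begin{displaymath}
  T(\ov{\xi}[1])-T(\ov 0)+T(\ov{\xi})=0,
\end{displaymath}
and hence $T(\ov{\xi}[1])=-T(\ov{\xi})$ after using $T(\ov 0)=0$.

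The only point requiring real care is the claim that the two triangles above are tightly distinguished, i.e.\ that the associated secondary Bott-Chern characters vanish; everything else is bookkeeping. I expect to verify this by representing $\ov{\mathcal{F}}$ by a bounded complex of hermitian vector bundles $\ov E^{\bullet}$ and noting that its shift $\ov E^{\bullet}[1]$ carries the tautologically inherited metrics, so that the triangle $\ov E^{\bullet}\to 0\to \ov E^{\bullet}[1]$ comes from the split short exact sequence of hermitian complexes $0\to \ov E^{\bullet}\to \ov E^{\bullet}\oplus \ov E^{\bullet}[1]\to \ov E^{\bullet}[1]\to 0$ (with the middle term acyclic with orthogonal metric), and similarly for the direct image side using the chosen representative of $\ov{\Rd \pi_{\ast}\mathcal{F}}$. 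Once this tight distinction is in place, the induction on $i$ is routine.
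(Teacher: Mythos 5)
Your argument is correct and is essentially the paper's own proof: the paper applies Corollary \ref{cor:2} to the tightly distinguished triangle $\ov{\mathcal{F}}\dra\ocone(\Id_{\ov{\mathcal{F}}})\dra\ov{\mathcal{F}}[1]\dra$ (and its direct-image analogue), whose middle term is meager and hence has vanishing torsion by the anomaly formulas and additivity, which is the same mechanism as your triangle with $\ov 0$ in the middle, since $\ocone(\ov{\mathcal{F}}\to\ov 0)$ is precisely $\ov{\mathcal{F}}[1]$ with the shifted hermitian structure. The only small slip is in your tightness verification: the relevant acyclic middle term is the cone of the identity, i.e.\ $\ov E^{\bullet}[1]\oplus\ov E^{\bullet}$ with the orthogonal metric but the twisted cone differential, not the literal direct sum complex (which is not acyclic); with that reading your check is exactly the paper's argument.
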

\begin{proof}
  It is enough to treat the case $i=1$. We consider the tightly
  distinguished triangle
  \begin{displaymath}
    \ov{\mathcal{F}}\dra \ocone(\Id_{\ov{\mathcal{F}}})\dra \ov{\mathcal{F}}[1]\dra
  \end{displaymath}
  and the analogous triangle for direct images. Since
  $\ocone(\Id_{\ov{\mathcal{F}}})$ and
  $\ocone(\Id_{\ov{\Rd \pi _{\ast}\mathcal{F}}})$ are meager,
  we have, by the anomaly formulas and the additivity axiom,
  \begin{displaymath}
    T(\ov \pi , \ocone(\Id_{\ov{\mathcal{F}}}), \ocone(\Id_{\ov{\Rd
        \pi _{\ast}\mathcal{F}}}))=
    T(\ov \pi ,\ov 0,\ov 0)=0.
  \end{displaymath}
  Hence, the result follows from Corollary \ref{cor:2}.
\end{proof}

Next we rewrite Proposition \ref{prop:2} in the language of
complexes of metrized coherent sheaves.
Let
\begin{displaymath}
  \overline{\varepsilon }:\quad
  0\to \overline{\mathcal{F}}_{m}\to \dots \to \overline{\mathcal{F}}_{l}\to 0
\end{displaymath}
be a bounded complex of metrized coherent sheaves on
$\PP^{n}_{X}$ and assume that
hermitian structures on the complexes
$\Rd \pi _{\ast}\mathcal{F}_{j}$, $j=l,\dots,m$ are chosen. Let $[\ov \varepsilon
],
[\overline{\Rd \pi _{\ast}\varepsilon }]\in \Ob \oDb(\PP^{n}_{X})$ be
the associated objects as in
\cite[Def. 3.37, Def. 3.39]{BurgosFreixasLitcanu:HerStruc}.

\begin{corollary} \label{cor:1}
  With the above hypothesis,
  \begin{displaymath}
    T(\ov \pi ,[\ov \varepsilon ],[\overline{\Rd \pi
      _{\ast}\varepsilon }]) =\sum_{j=l}^{m}(-1)^{j}T(\ov \pi ,\ov
    {\mathcal{F}}_{j},\ov{\Rd \pi _{\ast}\mathcal{F}_{j}}).
  \end{displaymath}
Moreover, if $\varepsilon $ is acyclic, then
\begin{math}
  T(\ov \pi ,[\ov \varepsilon ],[\overline{\Rd \pi
      _{\ast}\varepsilon }]) =\widetilde{\ch}(\overline{\Rd \pi
      _{\ast}\varepsilon }) -\ov \pi _{\ttwist}[\widetilde{\ch}
    (\ov \varepsilon )].
\end{math}
\end{corollary}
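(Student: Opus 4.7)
The plan is to argue by induction on the length $N := m - l + 1$ of the complex $\ov\varepsilon$, the inductive step being a direct application of Proposition~\ref{prop:2} to the bête filtration of $\ov\varepsilon$.

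For the base case $N=1$, the complex $\ov\varepsilon$ reduces to a single metrized coherent sheaf $\ov{\mathcal{F}}_l$ placed in degree $l$, so that $[\ov\varepsilon] = \ov{\mathcal{F}}_l[-l]$ and $[\overline{\Rd\pi_*\varepsilon}] = \ov{\Rd\pi_*\mathcal{F}_l}[-l]$ by construction. The preceding shift corollary $T(\ov\xi) = (-1)^i T(\ov\xi[i])$ then delivers the desired alternating sum formula immediately.

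For the inductive step, let $d$ be the smallest integer with $\ov{\mathcal{F}}_d \neq 0$ and consider the short exact sequence of complexes
\begin{displaymath}
  0 \to \sigma^{>d}\ov\varepsilon \to \ov\varepsilon \to \ov{\mathcal{F}}_d[-d] \to 0.
\end{displaymath}
Since the associated objects in $\oDb(\PP^n_X)$ are formed by direct sum of termwise hermitian structures on locally free resolutions, the induced triangle $(\ov\tau)$ between $[\sigma^{>d}\ov\varepsilon]$, $[\ov\varepsilon]$, and $\ov{\mathcal{F}}_d[-d]$ is tightly distinguished, so $\widetilde{\ch}(\ov\tau) = 0$. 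The same termwise-direct-sum structure on the chosen hermitian metrics of the $\ov{\Rd\pi_*\mathcal{F}_j}$ makes the direct image triangle $(\overline{\Rd\pi_*\tau})$ between $[\overline{\Rd\pi_*\sigma^{>d}\varepsilon}]$, $[\overline{\Rd\pi_*\varepsilon}]$, and $\ov{\Rd\pi_*\mathcal{F}_d}[-d]$ tightly distinguished as well, so $\widetilde{\ch}(\overline{\Rd\pi_*\tau}) = 0$. Applying Proposition~\ref{prop:2} to these two triangles gives
\begin{displaymath}
  T(\ov\pi, [\ov\varepsilon], [\overline{\Rd\pi_*\varepsilon}]) = T(\ov\pi, [\sigma^{>d}\ov\varepsilon], [\overline{\Rd\pi_*\sigma^{>d}\varepsilon}]) + T(\ov\pi, \ov{\mathcal{F}}_d[-d], \ov{\Rd\pi_*\mathcal{F}_d}[-d]).
\end{displaymath}
The shift corollary converts the last term into $(-1)^d T(\ov\pi, \ov{\mathcal{F}}_d, \ov{\Rd\pi_*\mathcal{F}_d})$, and the inductive hypothesis applied to $\sigma^{>d}\ov\varepsilon$ (of length $N-1$) closes the argument.

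For the second statement, suppose $\varepsilon$ is acyclic. Then $[\ov\varepsilon]$ and $[\overline{\Rd\pi_*\varepsilon}]$ represent hermitian structures on zero objects; by the very definition of the Bott-Chern secondary classes for acyclic complexes in \cite{BurgosFreixasLitcanu:HerStruc}, $\widetilde{\ch}(\ov\varepsilon)$ and $\widetilde{\ch}(\overline{\Rd\pi_*\varepsilon})$ equal the Bott-Chern classes comparing these hermitian structures with the trivial hermitian structure on $0$. Applying the anomaly formulas of Proposition~\ref{prop:1}~\ref{item:6} and~\ref{item:8} to move from $(\ov\pi, [\ov\varepsilon], [\overline{\Rd\pi_*\varepsilon}])$ to $(\ov\pi, \ov 0, \ov 0)$, then using the normalization $T(\ov\pi, \ov 0, \ov 0) = 0$ (which follows from additivity with $\ov{\xi}_1 = \ov{\xi}_2 = (\ov\pi, \ov 0, \ov 0)$), yields exactly
\begin{displaymath}
  T(\ov\pi, [\ov\varepsilon], [\overline{\Rd\pi_*\varepsilon}]) = \widetilde{\ch}(\overline{\Rd\pi_*\varepsilon}) - \ov\pi_\flat[\widetilde{\ch}(\ov\varepsilon)].
\end{displaymath}
The main obstacle I anticipate is the careful bookkeeping to verify that the direct image of the bête filtration triangle is genuinely tightly distinguished under the specified assembly of the $\ov{\Rd\pi_*\mathcal{F}_j}$ into $[\overline{\Rd\pi_*\varepsilon}]$; once that is granted, both halves of the corollary reduce to bookkeeping on top of Propositions~\ref{prop:1} and \ref{prop:2}.
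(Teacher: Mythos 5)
Your proof is correct and is essentially the argument the paper intends (the corollary is stated there without proof): the objects $[\ov \varepsilon ]$ and $[\overline{\Rd \pi _{\ast}\varepsilon }]$ are by construction assembled from their terms through tightly distinguished b\^ete-filtration triangles, so Proposition~\ref{prop:2} (via Corollary~\ref{cor:2}) together with the shift corollary gives the alternating-sum formula by exactly your induction, and the acyclic case follows from the anomaly formulas of Proposition~\ref{prop:1} plus the normalization $T(\ov \pi ,\ov 0,\ov 0)=0$, as you say. The only point you flag as a possible obstacle — tightness of the direct-image triangles — is precisely the content of the construction in \cite[Def.~3.37, Def.~3.39]{BurgosFreixasLitcanu:HerStruc}, so nothing further is needed.
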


Finally, we show that the projection formula holds in greater generality:
\begin{proposition}\label{prop:3}
  Let $T$ be a theory of analytic torsion classes for projective
  spaces of dimension $n$. Let $X$ be a smooth complex variety, let
  $\ov \xi=(\ov \pi ,\ov{\mathcal{F}},\ov{\Rd \pi _{\ast}
    \mathcal{F}})$ be a relative metrized complex and let
  $\ov{\mathcal{G}}$ be an object in $\oDb(X)$. Then
\begin{equation} \label{eq:20}
  T(\ov{\xi}\otimes\ov{\mathcal{G}})=T(\ov{\xi})\bullet\ch(\ov{\mathcal{G}}).
\end{equation}
\end{proposition}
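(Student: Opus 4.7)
The plan is to reduce to axiom~\ref{item:4} of Definition~\ref{def:3} by a double induction on the structure of $\ov{\mathcal{G}}$ and $\ov{\mathcal{F}}$, using that both sides of the identity are governed by the same secondary characteristic classes under distinguished triangles and metric changes. The base case is the projection formula axiom applied twice.

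\textbf{First reduction (on $\ov{\mathcal{G}}$).} Since $X$ is smooth, any $\ov{\mathcal{G}}\in\oDb(X)$ admits a representative as a bounded complex of hermitian vector bundles. Iterating the b\^ete filtration yields distinguished triangles in $\oDb(X)$ whose associated Bott-Chern classes are controlled. Since $\pi$ is smooth and the derived tensor product with $\ov{\mathcal{F}}$ on $\PP^{n}_X$ preserves triangles, we obtain distinguished triangles of relative metrized complexes $\ov{\xi}\otimes\ov{\mathcal{G}}_{j}$ and, on $X$, of their direct images. Proposition~\ref{prop:2} rewrites $\sum_{j}(-1)^{j}T(\ov{\xi}\otimes\ov{\mathcal{G}}_{j})$ as a combination of Bott-Chern classes; on the other hand, the additivity of $\ch$ modulo $\dd_{\mathcal{D}}$-exact terms expresses $\sum_{j}(-1)^{j}T(\ov{\xi})\bullet\ch(\ov{\mathcal{G}}_{j})$ as the product of $T(\ov\xi)$ with the same Bott-Chern class of $\ch$. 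A direct computation using the differential equation satisfied by $T(\ov\xi)$ and the Leibniz rule for $\bullet$ shows that these two correction terms agree. Hence it suffices to treat $\ov{\mathcal{G}}=\ov{G}$ a hermitian vector bundle in degree $0$.

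\textbf{Second reduction (on $\ov{\mathcal{F}}$).} With $\ov{G}$ fixed, the same mechanism (Proposition~\ref{prop:2} applied to triangles in $\ov{\mathcal{F}}$) shows that both sides behave identically under distinguished triangles in $\oDb(\PP^{n}_{X})$. By Corollary~\ref{cor:6}, the class of objects $\mathcal{H}(k)$ with $\mathcal{H}$ coherent on $X$ and $-n\leq k\leq 0$ generates $\Db(\PP^{n}_{X})$, and locally free resolutions on $X$ further reduce to $\ov{\mathcal{H}}$ a hermitian vector bundle. At each step, the anomaly formulas of Proposition~\ref{prop:1}, combined with Proposition~\ref{prop:12}\ref{item:prop_12_3}, ensure that the dependence on the chosen hermitian structures on $\mathcal{F}$ and on $\Rd\pi_{\ast}\mathcal{F}$ matches on both sides: in particular, the metric anomaly
\begin{displaymath}
 \ov{\pi}_{\ttwist}[\cht(\ov{\mathcal{F}}',\ov{\mathcal{F}})\bullet\pi^{\ast}\ch(\ov{G})]
 = \ov{\pi}_{\ttwist}[\cht(\ov{\mathcal{F}}',\ov{\mathcal{F}})]\bullet\ch(\ov{G})
\end{displaymath}
follows from the projection formula for the Todd-twisted direct image. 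Thus we reduce to $\ov{\mathcal{F}}$ of the form $\pi^{\ast}\ov{\mathcal{H}}\otimes\overline{\mathcal{O}(k)}$ with $-n\leq k\leq 0$.

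\textbf{Base case.} In this situation, for the canonical choice of hermitian structure on $\Rd\pi_{\ast}\mathcal{F}$, the relative metrized complex $\ov{\xi}$ coincides with $\ov{\xi}_{n}(k)\otimes\ov{\mathcal{H}}$ in the sense of Notation~\ref{def:7}. The identity to be proven becomes
\begin{displaymath}
 T(\ov{\xi}_{n}(k)\otimes(\ov{\mathcal{H}}\otimes\ov{G}))
 = T(\ov{\xi}_{n}(k)\otimes\ov{\mathcal{H}})\bullet\ch(\ov{G}),
\end{displaymath}
which follows from applying axiom~\ref{item:4} of Definition~\ref{def:3} twice, together with the multiplicativity $\ch(\ov{\mathcal{H}}\otimes\ov{G})=\ch(\ov{\mathcal{H}})\bullet\ch(\ov{G})$ for hermitian vector bundles.

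The main obstacle will be the bookkeeping in the first two reductions: one must verify that the secondary terms arising from Proposition~\ref{prop:2} on the left-hand side cancel against the $\dd_{\mathcal{D}}$-exact defect of additivity of $\ch$ on the right-hand side, which requires the Leibniz rule for $\bullet$ applied to $T(\ov\xi)$ (whose $\dd_{\mathcal{D}}$ is precisely $\ch(\ov{\Rd\pi_{\ast}\mathcal{F}})-\ov{\pi}_{\ttwist}\ch(\ov{\mathcal{F}})$) together with the projection formula of Proposition~\ref{prop:12}.
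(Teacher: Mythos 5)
Your proposal is correct and takes essentially the same route as the paper: the anomaly formulas together with Proposition~\ref{prop:2} give a two-out-of-three property in each variable, the generating classes (Corollary~\ref{cor:6} on $\PP^{n}_{X}$, vector bundles concentrated in a single degree on $X$) reduce everything to the objects $\ov{\xi}_{n}(k)$ twisted by hermitian vector bundles, and the projection formula axiom \ref{item:4} of Definition~\ref{def:3} closes the argument. The only cosmetic difference is the order of the reductions: you fix $\ov{\mathcal{G}}$ as a hermitian vector bundle first and then reduce $\ov{\mathcal{F}}$, applying the axiom twice together with multiplicativity of $\ch$, whereas the paper reduces in $\ov{\mathcal{F}}$ first and then in $\ov{\mathcal{G}}$, applying the axiom once.
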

\begin{proof}
  By the anomaly formulas, if equation \eqref{eq:20} holds for a
  particular choice of hermitian structures on $\pi $, $\mathcal{F}$
  and $\Rd \pi _{\ast} \mathcal{F}$ then it holds for any other
  choice. Moreover, if we are in the situation of Proposition
  \ref{prop:2} and equation \eqref{eq:20} holds for two of $\ov \xi
  _{0}$, $\ov \xi _{1}$, $\ov \xi _{2}$, then it holds for the third.
  Using that the objects of the form $\mathcal{H}(k)$, where
  $\mathcal{H}$ is a coherent sheaf on $X$ and $k=-n,\dots,0$,
  constitute a
  generating class of $\Db(\PP^{n}_{X})$, we are reduced to prove that
  \begin{displaymath}
    T(\ov{\xi}_{n}(k)\otimes\ov{\mathcal{G}})=
    T(\ov{\xi}_{n}(k))\bullet\ch(\ov{\mathcal{G}}).
  \end{displaymath}
  for $k=-n,\dots,0$.
 Now, if
  \begin{displaymath}
    \ov{\mathcal{G}}_{2}\dra \ov{\mathcal{G}}_{1} \dra \ov{\mathcal{G}}_{0} \dra
  \end{displaymath}
  is a distinguished triangle in $\oDb(X)$ and equation \eqref{eq:20}
  is satisfied for two of $\ov{\mathcal{G}}_{2}$,
  $\ov{\mathcal{G}}_{1}$, $\ov{\mathcal{G}}_{0}$, then it is satisfied
  also by the
  third. Therefore, since the complexes of vector bundles concentrated in
  a single degree constitute a generating class of $\Db(X)$, the
  projection formula axiom implies the proposition.
\end{proof}

\begin{proof}[Proof of Theorem \ref{thm:1} ]
  To begin with, we prove the uniqueness assertion.
 Assume a theory of
  analytic torsion classes $T$, with main characteristic numbers
  $t_{n,k}$, $-n\le k\le 0$, exists. Then, the anomaly formulas (Proposition \ref{prop:1}) imply that, if
$T(\ov{\pi}, \ov{\mathcal{F}},
\ov{\Rd\pi_{\ast}\mathcal{F}})$ is known for a particular choice of hermitian
structures on $\pi $, $\mathcal{F}$ and $\Rd \pi _{\ast} \mathcal{F}$
then the value of $T(\ov{\pi}', \ov{\mathcal{F}}',
\ov{\Rd\pi_{\ast}\mathcal{F}}')$ for any other choice of hermitian
structures is fixed.
By Proposition \ref{prop:2}, if we know the value of $T(\ov{\pi},
\ov{\mathcal{F}},
\ov{\Rd\pi_{\ast}\mathcal{F}})$, for $\mathcal{F}$ in a generating
class, then $T$ is determined.
By the projection formula (Proposition \ref{prop:3}), the
characteristic numbers determine the values of $T(\ov \xi(k)\otimes
\mathcal{G})$, $k=-n,\dots,0$. Finally, since by Corollary \ref{cor:6},
the objects of the form $\mathcal{G}(k)$, $k=-n,\dots,0$ form a
generating class, we
deduce that the characteristic numbers determine the theory $T$.
Thus, if it exists,
the theory $T_{\mathfrak{t}}$ is unique.

In particular, from the above discussion we see that
  the main characteristic numbers determine all the characteristic
  numbers. We now derive an explicit inductive formula for them.
 Consider the metrized Koszul resolution
\begin{equation}\label{eq:38}
  \overline{K}: 0\to
  \Lambda ^{n+1}\overline V^{\vee}(-n-1)\to \dots \to
  \Lambda ^{1} \overline V^{\vee}(-1)\to \ov{\mathcal{O}}_{\PP^{n}_{\CC}}\to 0,
\end{equation}
where $\mathcal{O}(k)$, for $k\not =0$, has the Fubini-Study metric
and $\ov{\mathcal{O}}_{\PP^{n}_{\CC}}$ has the trivial metric.  We
will denote by $\overline{K}(k)$ the above exact sequence twisted by
$\ov{\mathcal{O}(k)}$, $k\in\mathbb{Z}$, again with the Fubini-Study
metric. Recall the definition of the relative metrized complexes
$\ov{\xi}_{n}(k)$ \eqref{eq:5}. In particular, for every $k$, we have
fixed natural hermitian structures on the objects
$\Rd\pi_{\ast}\OO(k-j)$. According to \cite[Def. 3.37,
Def. 3.39]{BurgosFreixasLitcanu:HerStruc}, we may consider the classes
$[\ov{K}(k)]$ and
$[\ov{\Rd\pi_{\ast}K(k)}]$ in $\oDb(\PP^{n}_{\CC})$ and
$\oDb(\Spec\CC)$, respectively. By Corollary \ref{cor:1}, for each
$k\in \mathbb{Z}$ we find
\begin{displaymath}
    \sum_{j=0}^{n+1}(-1)^{j}T(\ov{\xi}_{n}(k-j)
    \otimes\Lambda^{j}\ov{V}^{\vee})=
    \widetilde{\ch}(\ov{\Rd\pi_{\ast}K(k)})
    -\ov\pi_{\ttwist} ^{\FS}[\widetilde{\ch}(\ov{K}(k))].
\end{displaymath}
Because $\Lambda^{j}\ov{V}^{\vee}$ is isometric to
$\CC^{\binom{n+1}{j}}$ with the trivial metric, the additivity axiom
for the theory $T$ and the definition of the characteristic numbers
$t_{n,k-j}$ provide
\begin{displaymath}
  T(\ov{\xi}_{n}(k-j)\otimes\Lambda^{j}\ov{V}^{\vee})=
  t_{n,k-j}\binom{n+1}{j}.
\end{displaymath}
Therefore we derive
\begin{equation}\label{eq:79}
  \sum_{j=0}^{n+1}(-1)^{j}\binom{n+1}{j}t_{n,k-j}=
  \widetilde{\ch}(\ov{\Rd\pi_{\ast}K(k)})
  -\ov \pi^{\FS}_{\ttwist}[\widetilde{\ch}(\ov{K}(k))].
\end{equation}
This equation gives us an inductive formula for all the characteristic
numbers $t_{n,k}$ once we have fixed $n+1$ consecutive characteristic
numbers and, in particular, once we have fixed the main characteristic
numbers.

To prove the existence, we follow the proof of the uniqueness to obtain a
formula for $T(\overline \xi)$. We start with the main characteristic
numbers $\mathfrak{t}=(t_{n,k})_{-n\le k \le 0}$. We define the characteristic
numbers $t_{n,k}$ for $k\in \mathbb{Z}$ inductively using equation
(\ref{eq:79}).

We will need the following results.
\begin{lemma}\label{lemm:6}
  Let
  \begin{displaymath}
    \overline \eta:
    0\to \overline {\mathcal{F}}_{2}\to
    \overline {\mathcal{F}}_{1}\to
    \overline {\mathcal{F}}_{0}\to
    0
  \end{displaymath}
  be a short exact sequence of metrized coherent sheaves on $X$. Let
  $k$ be an integer, and $\overline {\mathcal{F}}(k)$ and $\ov {\Rd \pi
    _{\ast}{\mathcal{F}}(k)}$ be as in Notation \ref{def:7}. Thus we
  have an exact sequence $\overline
  \eta(k)$ of metrized coherent sheaves on $\PP^{n}_{X}$ and a
  distinguished triangle $\overline {\Rd \pi _{\ast}\eta(k)}$. Then
  \begin{equation}
    \label{eq:44}
    \cht(\overline {\Rd \pi _{\ast}\eta(k)})=\ov \pi^{\FS}
    _{\ttwist}(\cht(\overline \eta(k))).
  \end{equation}
\end{lemma}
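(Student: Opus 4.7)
The plan is to exploit the multiplicative structure of Bott--Chern classes together with the fact that the projective bundle $\pi\colon\PP^{n}_{X}\to X$ and the hermitian line bundle $\ov{\OO(k)}$ are pulled back from the absolute situation $\PP^{n}\to\Spec\CC$. Unwinding Notation~\ref{def:7} and the definition of the tensor product of a relative metrized complex by an object of $\oDb(X)$, we have canonical identifications $\ov{\mathcal{F}}_{j}(k)=\pi^{\ast}\ov{\mathcal{F}}_{j}\otimes\ov{\OO(k)}$ on $\PP^{n}_{X}$ and, via the projection formula isomorphism with its canonical hermitian structure, $\ov{\Rd\pi_{\ast}\mathcal{F}_{j}(k)}=\ov{\mathcal{F}}_{j}\otimes\ov{\Rd\pi_{\ast}\OO(k)}$ on $X$. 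Consequently $\ov{\eta}(k)$ is identified with $\pi^{\ast}\ov{\eta}\otimes\ov{\OO(k)}$ and the distinguished triangle $\ov{\Rd\pi_{\ast}\eta(k)}$ with $\ov{\eta}\otimes\ov{\Rd\pi_{\ast}\OO(k)}$.

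Next I would invoke the multiplicativity of Bott--Chern classes under tensor product with a hermitian complex: choosing a representative of $\ov{\Rd\pi_{\ast}\OO(k)}$ by a bounded complex of hermitian vector bundles and applying degree-wise the classical identity $\cht(\ov{\eta}\otimes\ov{V})=\cht(\ov{\eta})\bullet\ch(\ov{V})$ for a single hermitian vector bundle~$\ov{V}$, this yields
\begin{equation*}
\cht(\ov{\Rd\pi_{\ast}\eta(k)})=\cht(\ov{\eta})\bullet\ch(\ov{\Rd\pi_{\ast}\OO(k)}),\qquad
\cht(\ov{\eta}(k))=\pi^{\ast}\cht(\ov{\eta})\bullet\ch(\ov{\OO(k)}).
\end{equation*}
By Proposition~\ref{prop:12}~(ii), applied to the smooth morphism $\ov{\pi}^{\FS}$, the smooth form $\cht(\ov{\eta})$ (pulled back from $X$) can be pulled out of the Todd-twisted pushforward, giving $\ov{\pi}^{\FS}_{\ttwist}(\cht(\ov{\eta}(k)))=\cht(\ov{\eta})\bullet\ov{\pi}^{\FS}_{\ttwist}(\ch(\ov{\OO(k)}))$. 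The lemma thus reduces to the identity of currents on $X$
\begin{equation*}
\ch(\ov{\Rd\pi_{\ast}\OO(k)})=\ov{\pi}^{\FS}_{\ttwist}(\ch(\ov{\OO(k)})).
\end{equation*}

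Finally I would verify this last identity by direct computation. Since all data is pulled back from the absolute morphism $\PP^{n}\to\Spec\CC$, both sides are constant $(0,0)$-forms on $X$, so it suffices to match their values. The right-hand side equals $\int_{\PP^{n}}\ch(\OO(k),\FS)\bullet\Td(T_{\PP^{n}},\FS)=\chi(\PP^{n},\OO(k))$ by the standard Hirzebruch--Riemann--Roch computation for projective space, the integral of the top-degree part depending only on the cohomology class of the integrand. The left-hand side is the Chern character of: a trivial vector bundle with constant $L^{2}$ metric of rank $\binom{n+k}{n}$ (for $k\geq 0$), the zero object (for $-n\leq k\leq -1$), or the shift by $n$ of the Serre dual of $\mathrm{H}^{0}(\PP^{n},\OO(-k-n-1))$ with its dual constant $L^{2}$ metric (for $k\leq -n-1$); in all three cases the curvature vanishes and the Chern character reduces to the constant $\chi(\PP^{n},\OO(k))$. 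The main obstacle is establishing the multiplicativity $\cht(\ov{\tau}\otimes\ov{V^{\bullet}})=\cht(\ov{\tau})\bullet\ch(\ov{V^{\bullet}})$ for distinguished triangles and objects of $\oDb$; once this is granted from the formalism of \cite{BurgosFreixasLitcanu:HerStruc}, the remainder is a routine Riemann--Roch computation on $\PP^{n}$.
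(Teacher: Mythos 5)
Your proposal is correct and follows essentially the same route as the paper: reduce, via the multiplicativity of Bott--Chern classes for the tensor-product hermitian structures of Notation \ref{def:7} and the projection formula for $\ov\pi^{\FS}_{\ttwist}$, to the identity $\ch(\ov{\Rd\pi_{\ast}\mathcal{O}(k)})=\ov\pi^{\FS}_{\ttwist}(\ch(\ov{\mathcal{O}(k)}))$, which is exactly equation \eqref{eq:42}. The only difference is that you verify \eqref{eq:42} explicitly (both sides are constants, and the constant $L^{2}$ metric has vanishing curvature, so it is the Hirzebruch--Riemann--Roch number $\chi(\PP^{n},\OO(k))$), whereas the paper simply cites the Riemann--Roch theorem for $\PP^{n}_{\CC}\to\Spec\CC$.
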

\begin{proof}
  By
  the Riemann-Roch theorem for the map $\PP^{n}_{\mathbb{C}}\to
  \Spec\mathbb{C}$ we have
  \begin{equation}
    \label{eq:42}
    \ch(\overline{\Rd\pi _{\ast}\mathcal{O}(k)})
    =\pi _{\ast}(\ch(\overline{\mathcal{O}(k)})\Td(\ov \pi^{\FS} )).
  \end{equation}
  Hence, by the properties of Bott-Chern classes and the choice of metrics
\begin{align*}
  \cht(\overline {\Rd\pi _{\ast}\eta(k)})&=
  \cht(\overline \eta)\bullet \ch(\overline{\Rd\pi_{\ast}
  \mathcal{O}(k)})\\
  &=\cht(\overline \eta)\bullet \pi
    _{\ast}(\ch(\ov{\mathcal{O}(k)})\Td(\ov \pi ^{\FS}))\\
    &=\pi
    _{\ast}\left(\cht(\overline \eta(k))\bullet \Td(\ov \pi
      ^{\FS})\right)\\
    &=\ov \pi^{\FS}
    _{\ttwist}(\cht(\overline \eta(k))).\qedhere
\end{align*}
\end{proof}

\begin{lemma}\label{lemm:5}
  Let
  \begin{equation}
    \label{eq:36}
    \overline \mu:
    0\to \overline {\mathcal{M}}_{m}(-m-d)\to\dots\to \overline
    {\mathcal{M}}_{l}(-l-d)
    \to 0
  \end{equation}
be an exact sequence of metrized coherent sheaves on $\PP^{n}_{X}$, where, for
each $i=l,\dots, m$, $\ov {\mathcal{M}}_{i}$ is a
metrized coherent sheaf on $X$, and $\ov {\mathcal{M}}_{i}(k)$ is as
in Notation \ref{def:7}. On $\Rd \pi _{\ast}{\mathcal{M}}_{i}(k)$ we
consider the hermitian structures given also by Notation
\ref{def:7}. Then
\begin{equation}
  \label{eq:37}
  \sum_{i=l}^{m} (-1)^{i}t_{n,-d-i}\ch(\overline {\mathcal{M}}_{i})=
  \cht(\overline{\Rd \pi
  _{\ast}\mu} )-  \ov \pi^{\FS} _{\ttwist}(\cht(\overline \mu )).
\end{equation}
\end{lemma}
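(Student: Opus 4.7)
The plan is to prove the identity \eqref{eq:37} by induction on the length $N = m-l$ of the acyclic sequence $\overline{\mu}$, reducing at each step to Lemma \ref{lemm:6} and to the defining recursion \eqref{eq:79} of the characteristic numbers.

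As a preliminary reduction I would replace each $\overline{\mathcal{M}}_{i}$ by a finite hermitian locally free resolution on $X$: since both sides of \eqref{eq:37} depend multiplicatively on the $\ch(\overline{\mathcal{M}}_{i})$ and on Bott-Chern secondary classes, and since Lemma \ref{lemm:6} together with the anomaly formulas controls how $\cht(\overline{\Rd\pi_{\ast}\mu})$ and $\overline{\pi}^{\FS}_{\ttwist}(\cht(\overline{\mu}))$ change when such a resolution is inserted, this reduction is legitimate and one may assume each $\overline{\mathcal{M}}_{i}$ is a hermitian vector bundle on $X$. After this step, $\overline{\mu}$ is a bounded acyclic complex of hermitian vector bundles on $\PP^{n}_{X}$ whose terms are of the shape $\pi^{\ast}\overline{\mathcal{M}}_{i}\otimes\overline{\mathcal{O}(-d-i)}$.

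For the induction, the cases $N\leq 1$ are vacuous. In the inductive step I would fix an arbitrary hermitian structure $\overline{\mathcal{K}}$ on the image $\mathcal{K} = \Img\bigl(\mathcal{M}_{l+1}(-d-l-1) \to \mathcal{M}_{l}(-d-l)\bigr)$ and split $\overline{\mu}$ as the concatenation of the short exact sequence
\begin{displaymath}
  \overline{\nu}\colon\ 0 \to \overline{\mathcal{K}} \to \overline{\mathcal{M}}_{l+1}(-d-l-1) \to \overline{\mathcal{M}}_{l}(-d-l) \to 0
\end{displaymath}
with a shorter acyclic sequence $\overline{\mu}'$ of length $N-1$ ending in $\overline{\mathcal{K}}$. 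The additivity of $\cht$ with respect to concatenation of distinguished triangles (Section~4 of \cite{BurgosFreixasLitcanu:HerStruc}) splits both $\cht(\overline{\mu})$ and $\cht(\overline{\Rd\pi_{\ast}\mu})$ into the corresponding contributions of $\overline{\mu}'$ and $\overline{\nu}$. Applying Lemma \ref{lemm:6} to $\overline{\nu}$ and using the recursion \eqref{eq:79} (equivalently, the known form of the identity on Koszul pieces) isolates the terms $(-1)^{l+1}t_{n,-d-l-1}\ch(\overline{\mathcal{M}}_{l+1})$ and $(-1)^{l}t_{n,-d-l}\ch(\overline{\mathcal{M}}_{l})$, at the cost of a Bott-Chern term depending only on $\overline{\mathcal{K}}$. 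The inductive hypothesis applied to $\overline{\mu}'$ then takes care of the remaining summands and produces a matching $\overline{\mathcal{K}}$-contribution that cancels.

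The hard part will be the inductive step: the intermediate kernel $\overline{\mathcal{K}}$ is not of the form $\overline{\mathcal{N}}(-d-i)$ with $\overline{\mathcal{N}}$ pulled back from $X$, so the hypothesis as literally stated does not close on itself. I would resolve this by strengthening the inductive statement to allow the rightmost term of $\overline{\mu}$ to be an arbitrary metrized coherent sheaf on $\PP^{n}_{X}$, contributing nothing to the left-hand side of \eqref{eq:37}; checking that the auxiliary metric on $\overline{\mathcal{K}}$ enters symmetrically in the two pieces of the split then closes the induction. This manoeuvre is in the same spirit as the handling of intermediate hermitian choices in the inductive construction of Theorem \ref{thm:17} and in \cite{BurgosLitcanu:SingularBC}.
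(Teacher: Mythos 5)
Your inductive step is where the argument breaks down, and the fix you yourself flag as necessary is based on a false statement. First, Lemma \ref{lemm:6} does not apply to your short exact sequence $\ov\nu\colon 0\to\ov{\mathcal K}\to\ov{\mathcal M}_{l+1}(-d-l-1)\to\ov{\mathcal M}_{l}(-d-l)\to 0$: that lemma is about sequences of the form $\ov\eta(k)$ with $\ov\eta$ a short exact sequence of sheaves on $X$ twisted by one common $\ov{\mathcal O(k)}$, whereas the terms of $\ov\nu$ carry different twists and $\mathcal K$ need not be a pullback at all; likewise \eqref{eq:79} is a statement about the full Koszul complex, not about such truncations. Second, the strengthened induction hypothesis you propose --- allow one end of the sequence to be an arbitrary metrized coherent sheaf on $\PP^{n}_{X}$ ``contributing nothing to the left-hand side of \eqref{eq:37}'' --- is false. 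Take the two-term exact sequence $0\to\ov{\mathcal M}(-k)\to\ov{\mathcal Q}\to 0$ with $\mathcal Q=\mathcal M(-k)$, the identity map and identical metrics (and the same induced metrics on direct images): both Bott--Chern terms on the right of \eqref{eq:37} vanish, while your left-hand side is $\pm t_{n,-k}\ch(\ov{\mathcal M})\neq 0$ in general. The structural reason is that an arbitrary sheaf $\mathcal K$ on $\PP^{n}_{X}$ carries its own would-be torsion contribution $T_{\mathfrak t}(\ov\pi,\ov{\mathcal K},\ov{\Rd\pi_{\ast}\mathcal K})$, which is exactly the quantity the whole existence proof is trying to define; declaring it zero is wrong, and making the two $\mathcal K$-contributions from $\ov\nu$ and $\ov\mu'$ ``cancel symmetrically'' presupposes that this quantity is well defined independently of the presentation, which is precisely what Lemma \ref{lemm:5} (together with Lemma \ref{lemm:2}) is needed to establish --- so that route is circular.

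The paper's induction is arranged so that no such $\mathcal K$ ever appears. For $r=m-l>n$ one does not truncate $\mu$; instead, by Theorem \ref{thm:14}~\ref{item:9} one maps the whole complex $\mu$ surjectively onto the twisted Koszul resolution $K(-l-d)\otimes\mathcal M_{l}$ of its last term, and the kernel of this surjection of complexes is again an exact sequence $\mu'$, one step shorter, \emph{all} of whose terms are of the special form $\mathcal M'_{i}(-i-d)$ with $\mathcal M'_{i}$ on $X$. The Koszul piece satisfies \eqref{eq:37} because the characteristic numbers outside the main range are defined by the recursion \eqref{eq:79}, and a three-term ``claim'' (the columns of the resulting diagram are short exact sequences on $X$ twisted by a common $\mathcal O(-i-d)$, so Lemma \ref{lemm:6} does apply there, combined with the observation that $t\bullet\dd_{\mathcal D}\cht=0$ in the quotient group) transfers the identity from $\mu'$ and the Koszul piece to $\mu$. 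Note also that the base case is not ``$N\leq 1$ vacuous'': the paper's base case is $r\leq n$, where uniqueness of the canonical resolution forces all $\mathcal M_{i}=0$ and one still has to handle non-trivial hermitian structures via the same claim. Finally, your preliminary reduction to locally free $\mathcal M_{i}$ is unnecessary for the argument.
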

\begin{proof}
  We consider a commutative diagram of exact
  sequences
  \begin{displaymath}
    \xymatrix{
      & & &0\ar[d] & &0\ar[d] & \\
      &\ov \mu' &0\ar[r] &\ov{\mathcal{M}}_{m}'(-m-d)\ar[r]\ar[d] &\dots\ar[r]
      &\ov{\mathcal{M}}_{l}'(-l-d)\ar[r]\ar[d] &0
      \\
      &\ov \mu &0\ar[r] &\ov{\mathcal{M}}_{m}(-m-d)\ar[r]\ar[d] &\dots\ar[r]
      &\ov{\mathcal{M}}_{l}(-l-d)\ar[r]\ar[d] &0
      \\
      &\ov \mu'' &0\ar[r] &\ov{\mathcal{M}}_{m}''(-m-d)\ar[r]\ar[d] &\dots\ar[r]
      &\ov{\mathcal{M}}_{l}''(-l-d)\ar[r]\ar[d] &0
      \\
      & & &0 & &0 & \\
      & & &\ov \xi_{m} &\dots &\ov \xi_{l}. &
    }
  \end{displaymath}
  \emph{Claim.} If equation \eqref{eq:37} holds for two of $\ov \mu
  $, $\ov \mu
  '$ and $\ov \mu ''$, then it holds for the third.\\
  \emph{Proof of the claim.} On the one hand we have
  \begin{displaymath}
    \sum_{i=l}^{m} (-1)^{i}t_{n,-d-i}\left(\ch(\overline
      {\mathcal{M}}'_{i})-\ch(\overline {\mathcal{M}}_{i})
      +\ch(\overline {\mathcal{M}}''_{i})\right)=
    \sum_{i=l}^{m} (-1)^{i}t_{n,-d-i}\dd_{\mathcal{D}}\cht(\ov \xi _{i}).
  \end{displaymath}
  But, if $t\in \mathcal{D}^{1}(\Spec
  \mathbb{C},1)=\mathbb{R}$ is a real number, in the group
  $\bigoplus_{p}\widetilde{\mathcal{D}}^{2p-1}(X,p)$, we have
  $$ t \dd_{\mathcal{D}}\cht(\ov \xi _{i})=
  -\dd_{\mathcal{D}}(t\bullet \cht(\ov \xi _{i})
  )=0.$$
On the other hand, by Lemma \ref{lemm:6}
\begin{displaymath}
  \cht(\overline{\Rd \pi
  _{\ast}\mu'} )- \cht(\overline{\Rd \pi
  _{\ast}\mu} )+ \cht(\overline{\Rd \pi
  _{\ast}\mu''} )=
\pi _{\ttwist}^{\FS}(\cht(\overline \mu ')) -\pi
_{\ttwist}^{\FS}(\cht(\overline \mu ))+\pi
_{\ttwist}^{\FS}(\cht(\overline \mu'' )).
\end{displaymath}

  The proof of the lemma is done by induction on the length
  $r=m-l$ of the complex. If $r\le n$ then $\mu(d+l)$
  has the same shape as the canonical resolution of the zero
  sheaf. By the uniqueness of the canonical resolution,
  we have ${\mathcal{M}}_{i}=0$,  for $i=l,\dots,m$. Using
  the above claim when ${\mathcal{M}}_{i}=0$ has a non-trivial metric,
  we obtain the lemma for~$r\le n$.

Assume now that $r>n$. Let $K$ be
the Koszul exact sequence (\ref{eq:38}). Then $K(1)\otimes
{\mathcal{M}}_{l}$
is the canonical resolution of the regular coherent sheaf
${\mathcal{M}}_{l}(1)$. By Theorem \ref{thm:14}~\ref{item:9} there is a
surjection of exact sequences $\mu \to K(-l-d)\otimes {\mathcal{M}}_{l}$ whose
kernel is an exact sequence
 \begin{equation*}
    \mu':
    0\to {\mathcal{M}}'_{m}(-m-d)\to\dots\to {\mathcal{M}}'_{l+1}(-d-l-1)
    \to 0.
  \end{equation*}
We consider on $K$ the metrics of \eqref{eq:38}, for  $i=l+1,\dots,m$,
we choose arbitrary metrics on ${\mathcal{M}}'_{i}$ and denote by $\overline \mu
'$ the corresponding exact sequence of metrized coherent sheaves.

By induction hypothesis, $\ov \mu '$ satisfies equation
\eqref{eq:37}. Moreover, since the characteristic
numbers $t_{n,k}$ for $k\not \in [0,n]$ are defined using equation
\eqref{eq:79}, the exact sequence $\ov K(-l-d)\otimes \ov
{\mathcal{M}}_{l}$ also satisfies equation \eqref{eq:37}. Hence the
lemma follows from the previous claim.
\end{proof}

We now treat the case of complexes concentrated in a single
degree. Let $\ov{\mathcal{F}}$ be a coherent sheaf on $\PP^{n}_{X}$
with a hermitian structure and let $\ov{\Rd \pi _{\ast} \mathcal{F}}$
be a choice of a hermitian structure on the direct image complex. Write
$\overline{\xi}=(\ov \pi^{\FS},\overline{\mathcal{F}}, \ov
{\Rd\pi_{\ast}\mathcal{F}})$
for the corresponding
relative metrized complex.

Choose an integer $d$ such
that $\mathcal{F}(d)$ is regular. Then we have the
resolution $ \gamma _{d}(\mathcal{F})$ of Corollary \ref{cor:4}. More
generally, let $\mu $ be an exact sequence of the form
\begin{displaymath}
  0\to \mathcal{S}_{m}(-d-m)\to \dots \to \mathcal{S}_{1}(-d-1)\to
  \mathcal{S}_{0}(-d)\to \mathcal{F}\to 0,
\end{displaymath}
where the $\mathcal{S}_{i}$, $i=0,\dots,m$ are coherent sheaves on
$X$. Assume that we have chosen hermitian structures on the sheaves
$\mathcal{S}_{i}$. Using Notation \ref{def:7} and \cite[Def. 3.37,
Def. 3.39]{BurgosFreixasLitcanu:HerStruc} we have objects $[\ov \mu ]$ in
$\KA(\PP^{n}_{X})$ and $[\ov{\Rd
\pi _{\ast}\mu }]$ in $\KA(X)$.
Then we write
\begin{equation}\label{eq:33}
  T_{\mathfrak{t},\ov \mu
  }(\overline{\xi})=\sum_{j=0}^{m}(-1)^{j}t_{n,j-d}\ch(\overline{\mathcal{S}}_{j})
  -\cht(\ov{\Rd \pi _{\ast}\mu })
  +\ov \pi ^{\FS}_{\ttwist}(\cht{(\ov \mu )})
\end{equation}

\begin{lemma}\label{lemm:2}   Given any choice of metrics
  on the sheaves $\mathcal{G}_{i}$, (respectively $
  {\mathcal{G}}_{i}'$) $i=0,\dots,n$, that
  appear in the resolution $\gamma _{d}(\mathcal{F})$
  (respectively $\gamma _{d+1}(\mathcal{F})$), denote by $\ov \gamma
  _{d}$ and $\ov \gamma_{d+1} $ the corresponding exact sequences of
  metrized coherent sheaves. Then
  \begin{displaymath}
    T_{\mathfrak{t},\ov \gamma
      _{d+1}}(\overline{\xi})=T_{\mathfrak{t},\ov \gamma
      _d}(\overline{\xi}).
  \end{displaymath}
  In particular, $T_{\mathfrak{t},\ov \gamma _d}(\overline{\xi})$ does not depend
  on the choice of metrics on the sheaves $\mathcal{G}_{i}$.
\end{lemma}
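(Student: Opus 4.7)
The strategy is to exhibit a common refinement of both $\gamma_d(\mathcal{F})$ and $\gamma_{d+1}(\mathcal{F})$ via an auxiliary resolution $\mu$ of $\mathcal{F}$, and to use Lemma~\ref{lemm:5} to show that $T_{\mathfrak{t},\ov\mu}(\ov\xi)$ coincides with both $T_{\mathfrak{t},\ov{\gamma_d}}(\ov\xi)$ and $T_{\mathfrak{t},\ov{\gamma_{d+1}}}(\ov\xi)$. The ``in particular'' independence of metrics on the $\mathcal{G}_i$ will then follow as a formal consequence: once the main equality is proved for arbitrary metric choices on both sides, fixing metrics on one side forces $T_{\mathfrak{t}}$ on the other side to be constant in its own metric choices.

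I will construct $\mu$ as follows. Since $\mathcal{F}(d)$ is regular, Theorem~\ref{thm:14}(ii) applied to $\mathcal{F}(d)$ furnishes a resolution $\varepsilon_2$ of $\mathcal{F}(d+1)$ together with a surjection onto $\gamma_{\can}(\mathcal{F}(d))(1)$. Since $\mathcal{F}(d+1)$ is also regular, applying Theorem~\ref{thm:14}(i) to $\varepsilon_2$ viewed as a resolution of $\mathcal{F}(d+1)$ produces a second surjection from $\varepsilon_2$ onto $\gamma_{\can}(\mathcal{F}(d+1))$. Twisting by $\mathcal{O}(-d-1)$, the resolution $\mu := \varepsilon_2 \otimes \mathcal{O}(-d-1)$ of $\mathcal{F}$ then admits surjections $\mu \twoheadrightarrow \gamma_d(\mathcal{F})$ and $\mu \twoheadrightarrow \gamma_{d+1}(\mathcal{F})$, sitting in short exact sequences of acyclic complexes of coherent sheaves on $\PP^{n}_X$ with respective kernels $\kappa$ and $\kappa'$.

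Endowing all terms with compatible hermitian metrics, the additivity of the Chern character and of the Bott-Chern class $\cht$ along short exact sequences---both on $\PP^{n}_X$ (for $\ov\mu$) and on $X$ (for $\ov{\Rd\pi_{\ast}\mu}$, using Lemma~\ref{lemm:6})---rewrites each difference $T_{\mathfrak{t},\ov\mu}(\ov\xi) - T_{\mathfrak{t},\ov{\gamma_d}}(\ov\xi)$ and $T_{\mathfrak{t},\ov\mu}(\ov\xi) - T_{\mathfrak{t},\ov{\gamma_{d+1}}}(\ov\xi)$ as the analogous expression \eqref{eq:33} applied to $\ov\kappa$ and $\ov{\kappa'}$ respectively, now viewed as purely acyclic complexes with no ``$\mathcal{F}$'' term. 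By Lemma~\ref{lemm:5}, such expressions vanish in $\bigoplus_p \widetilde{\mathcal{D}}^{2p-1}(X,p)$, so that
\begin{displaymath}
T_{\mathfrak{t},\ov{\gamma_d}}(\ov\xi) = T_{\mathfrak{t},\ov\mu}(\ov\xi) = T_{\mathfrak{t},\ov{\gamma_{d+1}}}(\ov\xi).
\end{displaymath}

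The main obstacle will be matching the kernels $\kappa,\kappa'$ to the hypotheses of Lemma~\ref{lemm:5}, which requires the terms of the acyclic complex to be uniformly of the form $\overline{\mathcal{M}}_i(-i-d)$ or $\overline{\mathcal{M}}_i(-i-d-1)$. Since $\kappa,\kappa'$ arise from surjections between sheaves of different $\mathcal{O}(k)$-twists at each position of the diagrams of Theorem~\ref{thm:14}, a careful bookkeeping is needed to decompose them into short exact sequences of the correct shape, using the inductive extension \eqref{eq:79} of the characteristic numbers $t_{n,k}$ to all $k\in\mathbb{Z}$. Once this reduction is carried out, the required cancellations are precisely those encoded by the defining relation \eqref{eq:79}, after which the ``in particular'' clause follows immediately from the main equality.
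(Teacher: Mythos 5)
Your construction of the auxiliary resolution $\mu$ is exactly the paper's: apply Theorem \ref{thm:14}~\ref{item:11} to the regular sheaf $\mathcal{F}(d)$, then Theorem \ref{thm:14}~\ref{item:9} to the resulting resolution of $\mathcal{F}(d+1)$, and twist by $\mathcal{O}(-d-1)$ to get surjections $\mu\twoheadrightarrow\gamma_{d}$ and $\mu\twoheadrightarrow\gamma_{d+1}$. The comparison of $T_{\mathfrak{t},\ov\mu}$ with $T_{\mathfrak{t},\ov\gamma_{d+1}}$ along the lines you indicate is sound and is what the paper does: here $\mu$ and $\gamma_{d+1}$ carry the \emph{same} twist $\mathcal{O}(-i-d-1)$ in each degree, so the kernel $\kappa'$ has terms $\mathcal{M}_{i}(-i-d-1)$ with $\mathcal{M}_{i}$ a sheaf on $X$, Lemma \ref{lemm:5} applies to it, and the $\cht$-terms are handled row by row together with Lemma \ref{lemm:6} (the paper's equations \eqref{eq:45}--\eqref{eq:47}).

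The genuine gap is in the other half, the comparison of $\mu$ with $\gamma_{d}$. There the surjection in degree $i$ is $\mathcal{S}_{i}(-i-d-1)\to\mathcal{G}_{i}(-i-d)$, a map between \emph{different} twists, so the kernel $\kappa$ is in general not of the form (sheaf pulled back from $X$)$\,\otimes\,\mathcal{O}(k)$ in any degree; Lemma \ref{lemm:5} simply does not apply to it, and moreover the difference $T_{\mathfrak{t},\ov\mu}(\ov\xi)-T_{\mathfrak{t},\ov\gamma_{d}}(\ov\xi)$ is not ``the expression \eqref{eq:33} for $\ov\kappa$'': the first sums carry mismatched coefficients $t_{n,j-d-1}$ versus $t_{n,j-d}$, and this mismatch is not absorbed by the recursion \eqref{eq:79}, which relates $t_{n,k}$ to binomially weighted sums of $n+1$ consecutive values, not to a degreewise comparison of two resolutions. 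Your ``careful bookkeeping'' is therefore not a routine deferral; it is precisely the missing idea. The paper's device is to replace the kernel by the hermitian cone $\cone(\mu,\gamma_{d})$: since $\mu^{i+1}$ and $\gamma_{d}^{i}$ are both pullbacks twisted by the same $\mathcal{O}(i-d)$, the cone has terms $\overline{\mathcal{S}}_{-i-1}(i-d)\oplus\overline{\mathcal{G}}_{-i}(i-d)$, i.e.\ exactly the uniform shape required by Lemma \ref{lemm:5}, and it is acyclic because the surjection extends the identity on $\mathcal{F}$; combined with the additivity of $\cht$ for cones (\cite[Cor.~3.42]{BurgosFreixasLitcanu:HerStruc}, the paper's equations \eqref{eq:7}--\eqref{eq:8}) this yields $T_{\mathfrak{t},\ov\mu}(\ov\xi)=T_{\mathfrak{t},\ov\gamma_{d}}(\ov\xi)$, which is \eqref{eq:10}. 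With that substitution your argument closes, and the ``in particular'' clause follows as you say.
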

\begin{proof}
By Theorem \ref{thm:14}~\ref{item:11}, there is an exact sequence
\begin{equation}
  \label{eq:35}
  \overline \mu:
  0\to \overline {\mathcal{S}}_{n+k}(-n-k-d-1)\to\dots\to \overline {\mathcal{S}}_{0}(-d-1)\to
  \overline {\mathcal{F}} \to 0,
\end{equation}
and a surjection of exact sequences $f\colon\overline {\mu}\to
\overline \gamma _{d}$ extending the identity on $\overline {\mathcal{F}}$.
Here
$\overline {\mathcal{S}}_{i}$, $i=0,\dots,n+k$ are coherent sheaves on
$X$ with hermitian structures.

By Theorem \ref{thm:14}~\ref{item:9} there is a surjection of
exact sequences $\overline \mu\longrightarrow \overline
\gamma _{d+1}$ extending the identity on $\overline {\mathcal{F}}$, whose kernel
is an exact sequence
\begin{equation}\label{eq:55}
  \overline \varepsilon:
  0\to \overline {\mathcal{M}}_{n+k}(-n-k-d-1)\to\dots\to \overline
  {\mathcal{M}}_{0}(-d-1)
  \to 0,
\end{equation}
where $\overline {\mathcal{M}}_{i}$, $i=0,\dots,n+k$ are coherent sheaves
on $X$, and we have chosen arbitrarily an hermitian structure on them.
Denote by $\overline \eta_{i}$ the rows of the exact sequence
\begin{displaymath}
  0\to \overline \varepsilon \to
  \overline \mu\to
  \overline \gamma _{d+1}\to 0.
\end{displaymath}
Observe that $\overline \eta_{i}=\overline \eta'_{i}(-i-d-1)$ for some
short exact sequence $\overline \eta_{i}'$ on $X$.
When $j\ge n$ we denote $\overline{\mathcal{G}}'_{j}=\overline 0$.
Then, we have
\begin{multline}
  \label{eq:45}
  \sum_{j=0}^{n+k}(-1)^{j}t_{n,j-d-1}
  \left(\ch(\overline{\mathcal{G}}'_{j})-\ch(\overline{\mathcal{S}}_{j})
    +\ch(\overline {\mathcal{M}}_{j})\right)=\\
  \sum_{j=0}^{n+k}(-1)^{j}t_{n,j-d-1}\dd_{\mathcal{D}}\cht(\overline \eta_{i}')
=0.
\end{multline}
By \cite[Prop.~3.41]{BurgosFreixasLitcanu:HerStruc}, we have
\begin{align}
  \label{eq:46}
  \cht(\overline {\Rd \pi _{\ast}\gamma_{d+1}})-\cht(\overline {\Rd \pi
    _{\ast}\mu})
  +\cht(\overline {\Rd \pi
    _{\ast}\varepsilon })&=\sum_{j=0}^{n+k}(-1)^{j}\cht(\overline
  {\Rd \pi _{\ast} \eta_{j}}),\\
  \label{eq:47}
  \cht(\overline{\gamma }_{d+1})-
  \cht(\overline \mu)+
  \cht(\overline \varepsilon)&=
  \sum_{j=0}^{n+k}(-1)^{j}\cht(\overline
  \eta_{j}).
\end{align}
Combining equations (\ref{eq:45}), (\ref{eq:46})
and (\ref{eq:47}) and lemmas \ref{lemm:6} and \ref{lemm:5} we obtain
\begin{equation}
  \label{eq:48}
  T_{\mathfrak{t},\ov \mu }(\ov \xi )= T_{\mathfrak{t},\ov \gamma _{d+1}}(\ov \xi ).
\end{equation}

We consider now $\cone(\mu,\gamma _{d})$. On it we put the obvious
hermitian structure induced by $\ov{\mu}$ and $\ov{\gamma_{d}}$,
$\ov{\cone(\mu ,\gamma _{d})}$. On $\Rd\pi _{\ast}\cone(\mu,\gamma
_{d})$, we put the obvious family of hermitian metrics induced by
$\ov{\Rd\pi_{\ast}\mu}$ and $\ov{\Rd\pi_{\ast}\gamma_{d}}$, and denote
it as $\ov{\Rd\pi _{\ast}\cone(\mu,\gamma _{d})}$. By
\cite[Cor.~3.42]{BurgosFreixasLitcanu:HerStruc} we have
\begin{align}
  \label{eq:7}
  \cht(\ov{\cone(\mu,\gamma _{d})})&=\cht(\ov \gamma _{d})-\cht(\ov
  \mu ),\\
  \label{eq:8}
  \cht(\ov {\Rd \pi _{\ast}\cone(\mu,\gamma _{d})})&=\cht(\ov{\Rd\pi
    _{\ast} \gamma _{d}})-\cht(\ov
  {\Rd \pi _{\ast}\mu }).
\end{align}
Observe that $\ov{\cone(\mu,\gamma _{d})}^{i}= \overline
  {\mathcal{S}}_{-i-1}(i-d) \oplus \overline
{\mathcal{G}}_{-i}(i-d) $. Combining Lemma \ref{lemm:5} for
$\ov{\cone(\mu,\gamma _{d})}$ with equations \eqref{eq:7} and
\eqref{eq:8}, we obtain
\begin{equation}
  \label{eq:10}
  T_{\mathfrak{t},\ov \mu }(\ov \xi )=T_{\mathfrak{t},\ov \gamma _{d}}(\ov \xi ),
\end{equation}
Together with equation \eqref{eq:48} this proves the lemma.
\end{proof}

Now we are in position to prove the existence of
$T_{\mathfrak{t}}$. Let $n$ and
$\mathfrak{t}$ be as in Theorem~\ref{thm:1}. We define the numbers
$t_{n,k}$, for $k< 0$
and $k> n$ by equation \eqref{eq:79}.
Let $\ov \xi
  =(\ov \pi^{\FS}, \ov {\mathcal{F}},\ov {\Rd \pi_{ \ast}
    \mathcal{F}})$ be a relative metrized complex. We construct
  $T_{\mathfrak{t}}(\ov \xi)$ by induction on the length of the
  cohomology of $\mathcal{F}$.
  If it has at most a single non zero coherent sheaf
  $\mathcal{H}$ sitting at degree $j$, then $\ov {\mathcal{F}}$ and
  $\ov {\Rd \pi_{ \ast} \mathcal{F}}$ determine hermitian structures
  on $\mathcal{H}[-j]$ and $\Rd \pi _{\ast}\mathcal{H}[-j]$
  respectively. We choose an integer $d$ such that $\mathcal{H}(d)$ is
  regular and we write
  \begin{equation}\label{eq:29}
    T_{\mathfrak{t}}(\ov \xi)=
    (-1)^{j}T_{\mathfrak{t},\ov \gamma _{d}(\mathcal{H})}(\ov
    \pi^{\FS}, \ov {\mathcal{H}},\ov
    {\Rd \pi_{ \ast} \mathcal{H}}).
  \end{equation}
  By Lemma \ref{lemm:2}, this does not depend on the choice of $d$ nor
  on the choice of metrics on~$\ov \gamma _{d}(\mathcal{H})$.

  Assume
  that we have defined the analytic torsion classes for all
  complexes whose cohomology has length less than $l$ and that the
  cohomology of
  $\mathcal{F}$ has length $l$. Let $\mathcal{H}$ be the
  highest cohomology sheaf of $\mathcal{F}$, say of degree
  $j$. Choose
  auxiliary hermitian structures on $\mathcal{H}[-j]$ and $\Rd \pi
  _{\ast}\mathcal{H}[-j]$. There is a unique natural map
  $\mathcal{H}[-j]\dra
  \mathcal{F}$. Then we define
  \begin{multline} \label{eq:30}
    T_{\mathfrak{t}}(\ov \xi)=
    T_{\mathfrak{t}}(\ov \pi ^{\FS},\ov{ \mathcal{H}[-j]},\ov
    {\Rd \pi_{ \ast} \mathcal{H}[-j]})\\+
    T_{\mathfrak{t}}(\ov \pi ^{\FS},\ocone(\ov{ \mathcal{H}[-j]},\ov{
      \mathcal{F}}),
    \ocone(\ov
    {\Rd \pi_{ \ast} \mathcal{H}[-j]},\ov
    {\Rd \pi_{ \ast} \mathcal{F}})).
  \end{multline}
  It follows from \cite[Thm.~2.27~(iv)]{BurgosFreixasLitcanu:HerStruc}
  that the right hand side of this equality
  does not depend on the choice of the auxiliary hermitian
  structures.

  Finally, we consider the case when $\ov{\pi}$ has a metric
  different from the Fubini-Study
  metric. Thus, let $\ov \xi=(\ov \pi, \ov {\mathcal{F}},\ov {\Rd \pi_{
      \ast} \mathcal{F}})$ and write $\ov \xi'=(\ov \pi^{\FS}, \ov
  {\mathcal{F}},\ov {\Rd \pi_{
      \ast} \mathcal{F}})$. Then we put
  \begin{equation} \label{eq:6}
      T_{\mathfrak{t}}(\overline{\xi})=T_{\mathfrak{t}}(\overline{\xi}')+\ov
      \pi
      _{\ttwist}[\ch(\overline F)\bullet
      \widetilde{\Td}_{m}(\ov{\pi},\ov{\pi}^{\FS})].
    \end{equation}
\begin{definition}\label{def:8} Let $n$ and
$\mathfrak{t}$ be as in Theorem \ref{thm:1}. Then $T_{\mathfrak{t}}$
is the assignment that to each
relative metrized complex $\ov \xi$ associates  $T_{\mathfrak{t}}(\ov
\xi )$ given by equations \eqref{eq:29}, \eqref{eq:30} and \eqref{eq:6}.
\end{definition}

It remains to prove that $T_{\mathfrak{t}}$ satisfies axioms
(i) to (iv).
Axiom (i) follows from the differential equations satisfied by the
Bott-Chern classes.
Axiom (ii) follows from the functoriality of the canonical resolution,
the Chern forms and the Bott-Chern classes. Axiom (iii) follows from the
additivity of the canonical resolution and of the Chern
character. Finally Axiom (iv) follows from the multiplicativity of the Chern
character. This concludes the proof of Theorem \ref{thm:1}.
\end{proof}

We finish this section showing the compatibility of analytic torsion classes
with the composition of projective bundles.
Let $X$ be a smooth complex variety. Consider the commutative diagram
with cartesian square
\begin{displaymath}
  \xymatrix{& \PP^{n_{1}}_{X}\underset{X}{\times}\PP^{n_{2}}_{X}
    \ar[dl]_{p_{1}}\ar[dr]^{p_{2}}\ar[dd]^{p}&\\
    \PP^{n_{1}}_{X} \ar[dr]_{\pi _{1}} &&  \PP^{n_{2}}_{X}\ar[dl]^{\pi _{2}}\\
    &X&
  }
\end{displaymath}
On $\pi _{1}$ and $\pi_{2}$ we introduce arbitrary hermitian
structures and on $p_{1}$ and $p_{2}$ the hermitian structures induced
by the cartesian diagram.
\begin{proposition}\label{prop:comp_proj}
  Let $\ov{\mathcal{F}}$ be an object of
  $\oDb(\PP^{n_{1}}_{X}\underset{X}{\times}\PP^{n_{2}}_{X})$. Put
  arbitrary hermitian structures on $\Rd (p_{1})_{\ast} \mathcal{F}$,
  $\Rd (p_{2})_{\ast} \mathcal{F}$, and $\Rd p_{\ast} \mathcal{F}$. Then
  \begin{equation} \label{eq:15}
    T(\ov \pi _{1})+
    (\ov \pi _{1})_{\ttwist}(
      T(\ov p_{1}))
    =T(\ov \pi _{2})+
    (\ov \pi _{2})_{\ttwist}(
      T(\ov p_{2})),
  \end{equation}
   where we are using the convention at the end of Definition \ref{def:19}.
\end{proposition}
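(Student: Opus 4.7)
The plan is to verify the identity \eqref{eq:15} by combining transitivity of $\ttwist$ with the axioms of Definition \ref{def:3}, the projection formula (Proposition \ref{prop:3}), and a reduction to a generating class.

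First I would check that both sides of \eqref{eq:15} have the same differential. Applying $\dd_{\mathcal{D}}$ to either side, using axiom \ref{item:1} of Definition \ref{def:3} and Proposition \ref{prop:12}\ref{item:prop_12_2} (transitivity of Todd-twisted pushforward; note that $\pi_{1}, \pi_{2}$ are smooth and $\Td(\ov{\pi}_{i})$ is closed, so $(\ov{\pi}_{i})_{\ttwist}$ commutes with $\dd_{\mathcal{D}}$), one obtains in both cases
\begin{displaymath}
\ch(\ov{\Rd p_{\ast}\mathcal{F}}) - \ov{p}_{\ttwist} \ch(\ov{\mathcal{F}}).
\end{displaymath}
Hence the difference $\Phi := \text{LHS} - \text{RHS}$ is $\dd_{\mathcal{D}}$-closed.

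Next I would use the anomaly formulas (Proposition \ref{prop:1}) to show that $\Phi$ is independent of the choice of hermitian structures on $\ov{\Rd(p_{1})_{\ast}\mathcal{F}}$ and on $\ov{\Rd(p_{2})_{\ast}\mathcal{F}}$: the contribution from Proposition \ref{prop:1}\ref{item:6} applied to the input of $T(\ov{\pi}_{1})$ is cancelled by the contribution from Proposition \ref{prop:1}\ref{item:8} applied to the output of $T(\ov{p}_{1})$, once $(\ov{\pi}_{1})_{\ttwist}$ has been applied, and similarly on the right. Combining the additivity axiom with Proposition \ref{prop:2} applied to both projections, $\Phi$ is compatible with distinguished triangles in $\oDb(\PP^{n_{1}}_{X}\underset{X}{\times}\PP^{n_{2}}_{X})$. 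Iterating Corollary \ref{cor:6} on each factor, the objects of the form $p_{1}^{\ast}\mathcal{O}(a) \otimes p_{2}^{\ast}\mathcal{O}(b) \otimes \pi^{\ast}\mathcal{G}$ with $-n_{i}\le a, b\le 0$ and $\mathcal{G}$ a coherent sheaf on $X$ form a generating class, and Proposition \ref{prop:3} eliminates the $\pi^{\ast}\mathcal{G}$ factor, reducing us to $\ov{\mathcal{F}} = \ov{\mathcal{O}(a,b)}$ with its Fubini-Study metric.

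On this basic case I would compute both sides explicitly. Flat base change gives canonical identifications $\Rd(p_{i})_{\ast}\mathcal{O}(a,b) \cong \pi_{i}^{\ast}\Rd(\pi_{j})_{\ast}\mathcal{O}(\bullet) \otimes \mathcal{O}(\bullet)$ for $\{i,j\}=\{1,2\}$, and $\Rd p_{\ast}\mathcal{O}(a,b) \cong \Rd(\pi_{1})_{\ast}\mathcal{O}(a) \otimes \Rd(\pi_{2})_{\ast}\mathcal{O}(b)$. Endowing these with the induced hermitian structures, the functoriality axiom \ref{item:2} of Definition \ref{def:3} (applied to the smooth base changes $\pi_{1}, \pi_{2}$), the projection formula (Proposition \ref{prop:3}), and Proposition \ref{prop:12}\ref{item:prop_12_3} together collapse each side of \eqref{eq:15} to the same symmetric expression
\begin{displaymath}
T(\ov{\xi}_{n_{1}}(a)) \bullet \ch(\ov{\Rd(\pi_{2})_{\ast}\mathcal{O}(b)})
+ \ch(\ov{\Rd(\pi_{1})_{\ast}\mathcal{O}(a)}) \bullet T(\ov{\xi}_{n_{2}}(b)),
\end{displaymath}
proving \eqref{eq:15}.

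The main obstacle is the bookkeeping in the last step: the two natural hermitian structures on $\Rd p_{\ast}\mathcal{O}(a,b)$ obtained by composing via $p_{1}$ or via $p_{2}$ a priori differ, and the two natural hermitian structures on $\ov{p}$ similarly arise from Proposition \ref{prop:12}\ref{item:prop_12_2} applied in the two orders. Controlling the resulting Bott-Chern discrepancies via Proposition \ref{prop:1}\ref{item:7}, \ref{item:8} and the multiplicativity of $\cht$ (built into Proposition \ref{prop:3}) is exactly what matches the two sides; the first two steps above ensure that these universal discrepancies are the only things to check.
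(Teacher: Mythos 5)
Your proposal is correct and follows essentially the same route as the paper: use the anomaly formulas to remove the dependence on the hermitian structures, use Proposition \ref{prop:2} and additivity to reduce along distinguished triangles to the generating class $p^{\ast}\mathcal{G}\otimes p_{1}^{\ast}\mathcal{O}(k)\otimes p_{2}^{\ast}\mathcal{O}(l)$, and then verify the basic case by functoriality and the projection formula (your explicit symmetric expression, which also uses the Riemann--Roch identity \eqref{eq:42}, just spells out what the paper leaves as "follows easily"). The preliminary check that the difference of the two sides is $\dd_{\mathcal{D}}$-closed is harmless but not needed.
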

\begin{proof}
  By the anomaly formulas (Proposition \ref{prop:1}), if equation
  \eqref{eq:15} holds for a particular choice of hermitian structures
  on $\mathcal{F}$, $\Rd (p_{1})_{\ast} \mathcal{F}$, $\Rd
  (p_{2})_{\ast} \mathcal{F}$, and $\Rd p_{\ast} \mathcal{F}$, then it
  holds for any other choice.
  Let
  \begin{displaymath}
    \ov {\mathcal{F}}_{2}\dra
    \ov {\mathcal{F}}_{1}\dra
    \ov {\mathcal{F}}_{0}\dra
  \end{displaymath}
  be a distinguished triangle and put hermitian structures on the
  direct images as before. Then Proposition \ref{prop:2} implies that,
  if equation \eqref{eq:15} holds for two of them, then it also holds for the
  third. Since the objects of the form
  $\mathcal{G}(k,l):=p^{\ast}\mathcal{G}\otimes
  p_{1}^{\ast}\mathcal{O}(k)\otimes p_{2}^{\ast}\mathcal{O}(l)$ are a
  generating class of
  $\Db(\PP^{n_{1}}_{X}\underset{X}{\times}\PP^{n_{2}}_{X})$, the
  previous discussion shows that it is enough to prove the case
  $\mathcal{F}=\mathcal{G}(k,l)$, with the hermitian structure of
  $\mathcal{F}$ induced
  by a hermitian structure of $\mathcal{G}$ and the Fubini-Study
  metric on $\mathcal{O}(k)$ and $\mathcal{O}(l)$, and the hermitian
  structures on the direct images defined as in \eqref{eq:5}. In this case the
  result follows easily from the functoriality and the projection formula.
\end{proof}

\section{Compatible analytic torsion classes}
\label{sec:compat}

In this section we study the compatibility between analytic torsion
classes for closed immersions and analytic torsion classes for
projective spaces. It turns out that, once the compatibility between
the diagonal embedding of $\PP^{n}$ into $\PP^{n}\times \PP^{n}$ and
the second projection of $\PP^{n}\times \PP^{n}$ onto $\PP^{n}$ is
established, then all the other possible compatibilities
follow. Essentially this
observation can be traced back to~\cite{Bost:immersion}.

Let $n$,  $V$, $\ov V$  and
$\PP^n(V)$ be as in the previous section.
We
consider the diagram
\begin{displaymath}
    \xymatrix{
        \PP^n \ar[rd]_{\Id} \ar[r]^{\Delta\ \ \ }   & \PP^n
        \times\PP^n \ar[r]^{p_1}\ar[d]^{p_2} & \PP^n \ar[d]^{\pi}\\
        & \PP^n \ar[r]_{\pi_{1}} & \Spec\CC\ .}
\end{displaymath}
On $\PP^n$ we have the tautological short exact sequence
\begin{displaymath}
0\rightarrow \mathcal{O}(-1)\rightarrow V\rightarrow Q
\rightarrow 0\ .
\end{displaymath}
This induces on $\PP^n \times\PP^n$ the exact sequence
\begin{displaymath}
0\rightarrow p_2^{\ast}\mathcal{O}(-1) \rightarrow
V \rightarrow p_2^{\ast}Q\rightarrow 0\ .
\end{displaymath}
By composition with the injection
\begin{math}
p_1^{\ast}\mathcal{O}(-1)\hookrightarrow V
\end{math},
we obtain a morphism
\begin{math}
p_1^{\ast}\mathcal{O}(-1)\to p_2^{\ast}Q,
\end{math}
hence a section of $p_2^{\ast}Q\otimes p_1^{\ast}\mathcal{O}(1)$.
The zero locus of this section is the image of the diagonal. Moreover,
the associated Koszul complex is quasi-isomorphic to
$\Delta_{\ast}\mathcal{O}_{\PP^n}$. That is, the sequence
\begin{multline}\label{eq:15bis}
0\rightarrow \Lambda^n(p_2^{\ast}Q^\vee)\otimes
p_1^{\ast}\mathcal{O}_{\PP^n}(-n)\rightarrow\dots\\ \dots \rightarrow
\Lambda^1(p_2^{\ast}Q^\vee)\otimes
p_1^{\ast}\mathcal{O}_{\PP^n}(-1)\rightarrow
\mathcal{O}_{\PP^n\times\PP^n} \rightarrow
\Delta_{\ast}\mathcal{O}_{\PP^n}\rightarrow 0
\end{multline}
is exact.

On $T_{\PP^{n}}$ and $T_{\PP^{n}\times \PP^{n}}$ we consider the
Fubini-Study metrics. We denote by $\ov \Delta $ and $\ov p_{2}$ the
morphisms of $\oSm_{\ast/\CC}$ determined by these metrics. As in
\cite[Ex.~5.7]{BurgosFreixasLitcanu:HerStruc}, we have that $\ov
{p_{2}}\circ \ov \Delta =\ov \Id_{\PP^{n}}$, where
$T_{\ov{\Id_{\PP^{n}}}}=\ov 0$.
The Fubini-Study metric on $\mathcal{O}(-1)$ and the metric induced by
the tautological exact sequence on $Q$ induce a metric $\ov K(\Delta )$ on the Koszul
complex. This is a hermitian structure
on $\Rd \Delta _{\ast}\mathcal{O}_{\PP^{n}}$.
Finally on $\mathcal{O}_{\PP^{n}}$ we consider the trivial
metric. This is a hermitian structure on~$\Rd (p_{2})_{\ast}K(\Delta)$.

Fix a real additive genus $S$ and denote by $T_S$ the theory of
analytic torsion classes for closed immersions that is compatible with
the projection formula and transitive, associated to $S$
(
Theorem \ref{thm:17}). Moreover, fix a family of real numbers
$\mathfrak{t}=\{t_{nk}\ |\ n\geq 0,\ -n\leq k\leq 0\}$ and denote
$T_{\mathfrak{t}}$ the theory of generalized analytic torsion classes
for projective spaces associated to this family.

Compatible analytic torsion classes for
closed immersions and for projective spaces should combine to provide
analytic torsion classes for arbitrary projective morphisms, and these
classes should be transitive. The transitivity condition for
the composition $\Id_{\PP^{n}}=p_{2}\circ \Delta $ should give us
\begin{displaymath}
  0=T(\ov \Id_{\PP^{n}},\ov {\mathcal{O}}_{\PP^{n}},\ov
  {\mathcal{O}}_{\PP^{n}}) =
  T_{\mathfrak{t}}(\ov{p_{2}},\ov K(\Delta ),\ov{\mathcal{O}}_{\PP^{n}})+
  (\ov p_{2})_{\ttwist}(T_{S}(\ov \Delta ,\ov{\mathcal{O}}_{\PP^{n}}, \ov
  K(\Delta ))).
\end{displaymath}
In general we define

\begin{definition}\label{compatprojsp}
The theories of analytic torsion classes $T_S$ and  $T_{\mathfrak{t}}$
are called
\emph{compatible} if
\begin{equation}
  \label{eq:12}
  T_{\mathfrak{t}}(\ov{p_{2}},\ov K(\Delta ),\ov{\mathcal{O}}_{\PP^{n}})
  +(\ov p_{2})_{\ttwist}(T_{S}(\ov \Delta ,\ov{\mathcal{O}}_{\PP^{n}}, \ov
  K(\Delta )))=0.
\end{equation}
\end{definition}

\begin{theorem} \label{thm:15}
Let $S$ be a real additive genus. Then there exists a unique family
of real numbers $\mathfrak{t}=\{t_{n,k}\ |\ n\geq 0,\
-n\leq k\leq 0\}$ such that the theories of analytic torsion classes
$T_S$ and $T_{\mathfrak{t}}$
are compatible. The theory  $T_{\mathfrak{t}}$ will also be denoted $T_{S}$.
\end{theorem}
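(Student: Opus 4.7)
For each $n \geq 0$ separately, the strategy is to read \eqref{eq:12} as an equation in a finite-dimensional Deligne cohomology space with $n+1$ unknowns $(t_{n,-n},\ldots,t_{n,0})$, and then to solve a square linear system.

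First I would check that both sides of \eqref{eq:12} have matching differential equations. Applying $\dd_{\mathcal{D}}$ to each term and using the defining differential equations of $T_{\mathfrak{t}}$ (Definition \ref{def:3}) and $T_{S}$ (Definition \ref{def:1}) together with the identity $\ov{p_{2}}\circ\ov{\Delta}=\ov{\Id}_{\PP^{n}}$, the contributions cancel: $\dd_{\mathcal{D}}T_{\mathfrak{t}}=\ch(\ov{\mathcal{O}}_{\PP^{n}})-(\ov p_{2})_{\ttwist}\ch(\ov K(\Delta))$, while $(\ov p_{2})_{\ttwist}\dd_{\mathcal{D}}T_{S}=(\ov p_{2})_{\ttwist}\ch(\ov K(\Delta))-\ch(\ov{\mathcal{O}}_{\PP^{n}})$. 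Hence the sum in \eqref{eq:12} is $\dd_{\mathcal{D}}$-closed and the equation makes sense in $\bigoplus_{p}H^{2p-1}_{\mathcal{D}}(\PP^{n},\RR(p))$, which by the standard computation of the Deligne cohomology of projective space is isomorphic to $\RR^{n+1}$ (one copy of $\RR$ for each $p=1,\ldots,n+1$).

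Next I would expand the left-hand side by applying Proposition \ref{prop:2} iteratively to the distinguished triangles arising from the truncation filtration of $\ov K(\Delta)$, Proposition \ref{prop:3} to extract the pulled-back external factor $\Lambda^{j}(p_{2}^{\ast}\ov Q^{\vee})$, and the base change identification $\Rd(p_{2})_{\ast}p_{1}^{\ast}\mathcal{O}(-j)=\pi_{1}^{\ast}\Rd\pi_{\ast}\mathcal{O}(-j)$, together with the anomaly formulas of Proposition \ref{prop:1} to handle auxiliary metrics on the intermediate direct images. This yields
\begin{equation*}
  T_{\mathfrak{t}}(\ov{p_{2}},\ov K(\Delta),\ov{\mathcal{O}}_{\PP^{n}})
  =\sum_{j=0}^{n}(-1)^{j}t_{n,-j}\bullet\ch(\Lambda^{j}\ov Q^{\vee})+\Gamma_{n},
\end{equation*}
where $\Gamma_{n}$ depends on $S$ and auxiliary metric choices but is independent of $\mathfrak{t}$. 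Substitution into \eqref{eq:12} reduces the problem to a linear equation $L(\mathfrak{t})=\gamma_{n}$, where $L\colon\RR^{n+1}\to\bigoplus_{p}H^{2p-1}_{\mathcal{D}}(\PP^{n},\RR(p))\cong\RR^{n+1}$ is linear and $\gamma_{n}$ is determined by $S$.

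Finally, the matrix of $L$ is $((-1)^{j}a_{j,k})_{j,k=0}^{n}$, where $\ch(\Lambda^{j}Q^{\vee})=\sum_{k}a_{j,k}x^{k}$ with $x=c_{1}(\ov{\mathcal{O}(1)})$. This matrix is invertible because, via the Euler sequence, $\Lambda^{j}Q^{\vee}=\Omega^{j}_{\PP^{n}}(j)$, and $\{[\Omega^{j}(j)]\}_{j=0}^{n}$ is a $\ZZ$-basis of $K_{0}(\PP^{n})$ by Beilinson's resolution of the diagonal; since the Chern character induces an isomorphism $K_{0}(\PP^{n})\otimes\QQ\cong H^{\text{even}}(\PP^{n},\QQ)=\bigoplus_{k=0}^{n}\QQ\,x^{k}$, the change-of-basis matrix $(a_{j,k})$ is invertible. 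Thus $L$ is a bijection, so the equation admits a unique solution $\mathfrak{t}$, giving both existence and uniqueness.

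\textbf{Main obstacle.} The hard part will be the expansion step: carefully organizing the Bott-Chern corrections coming from Proposition \ref{prop:1} and the intermediate metrics on each $\Rd(p_{2})_{\ast}(\Lambda^{j}(p_{2}^{\ast}Q^{\vee})\otimes p_{1}^{\ast}\mathcal{O}(-j))$ into a single $\mathfrak{t}$-independent class $\Gamma_{n}$, so that the linear dependence on $\mathfrak{t}$ is explicit. Once the equation is reduced to the finite-dimensional linear system, the invertibility follows from the standard $K$-theoretic identification of $\{\Lambda^{j}Q^{\vee}\}_{j=0}^{n}$ as a basis.
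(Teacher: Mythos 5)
Your proposal follows essentially the same route as the paper's proof: it likewise expands $T_{\mathfrak{t}}(\ov{p_{2}},\ov K(\Delta),\ov{\mathcal{O}}_{\PP^{n}})$ as $\sum_{i}(-1)^{i}t_{n,-i}\ch(\Lambda^{i}\ov Q^{\vee})$ (the paper's choice of hermitian structures makes the correction term $\Gamma_{n}$ disappear, and note $\Gamma_{n}$ is in any case independent of $S$ as well as of $\mathfrak{t}$), verifies that the $S$-dependent side is $\dd_{\mathcal{D}}$-closed, and solves the resulting equation in $\bigoplus_{p}H^{2p-1}_{\mathcal{D}}(\PP^{n},\RR(p))$ using that the classes $\mathbf{1}_{1}\bullet\ch(\Lambda^{i}Q^{\vee})$, $i=0,\dots,n$, form a basis. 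The only cosmetic difference is that the paper obtains this basis property from the explicit triangular relation $\ch(\Lambda^{k}Q^{\vee})=\sum_{i=0}^{k}(-1)^{i}\binom{n+1}{k-i}\ch(\mathcal{O}(i))$ rather than from Beilinson's $K$-theoretic basis $\{\Omega^{j}(j)\}$.
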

\begin{proof}
  The first step is to make explicit equation \eqref{eq:12} in terms
  of the main characteristic numbers $\mathfrak{t}$. To this end,
  first observe that, since the exact sequence
  \begin{equation}\label{eq:17}
    0\to T_{p_{2}}\to T_{\PP^{n}\times \PP^{n}}\to
    p_{2}^{\ast}T_{\PP^{n}}\to 0
  \end{equation}
  is split and the hermitian metric on $T_{\PP^{n}\times \PP^{n}}$ is
  the orthogonal direct sum metric, $\ov {p_{2}}=\pi
  ^{\ast}_{1}(\ov \pi ^{\FS})$. Next, we denote by $\ov K(\Delta
  )_{i}$ the component of degree $i$ of the Koszul complex, and we
  define
  \begin{displaymath}
    \ov {\Rd (p_{2})_{\ast}K(\Delta
      )_{i}}=
    \begin{cases}
      \ov {\mathcal{O}}_{\PP^{n}},&\text{for }i=0, \\
      \ov 0,&\text{for }i>0.
    \end{cases}
  \end{displaymath}
  Finally using Corollary \ref{cor:1}, functoriality and the
  compatibility with the projection formula, we derive
  \begin{align*}
    T_{\mathfrak{t}}(\ov{p_{2}},\ov K(\Delta
    ),\ov{\mathcal{O}}_{\PP^{n}})&=
    \sum_{i=0}^{n}(-1)^{i}T_{\mathfrak{t}}(\ov{p_{2}},\ov K(\Delta )_{i},\ov {\Rd (p_{2})_{\ast}K(\Delta
      )_{i}})\\
    &=\sum_{i=0}^{n}(-1)^{i}T_{\mathfrak{t}}(\pi _{1}^{\ast}\ov \xi
    _{n}(-i)\otimes \Lambda ^{i}\ov Q^{\vee})\\
    &=\sum_{i=0}^{n}(-1)^{i}t_{n,-i}\ch(\Lambda ^{i}\ov Q^{\vee}).
  \end{align*}

  Thus, the second and last step is to solve the equation
  \begin{equation}
    \label{eq:13}
    \sum_{i=0}^{n}(-1)^{i}t_{n,-i}\ch(\Lambda ^{i}\ov Q^{\vee})=
    -(p_{2})_{\ast}(T_{S}(\ov \Delta ,\ov{\mathcal{O}}_{\PP^{n}}, \ov
  K(\Delta ))\bullet \Td(\ov{p_{2}})).
  \end{equation}
  Since the left hand side of equation \eqref{eq:13} is closed, in
  order to be able to solve this equation we have to show that the
  right hand side is also closed. We compute
  \begin{align*}
    \dd_{\mathcal{D}}(p_{2})_{\ast}&(T_{S}(\ov \Delta
    ,\ov{\mathcal{O}}_{\PP^{n}}, \ov
  K(\Delta ))\bullet \Td(\ov{p_{2}}))\\
  &=(p_{2})_{\ast}\left(
    \sum_{i=0}^{n}(-1)^{i}\ch(\ov K(\Delta )_{i})\Td(\ov p_{2})-
    \Delta _{\ast}(\ch(\ov {\mathcal{O}}_{\PP^{n}}) \Td(\ov \Delta ))\Td(\ov p_{2})
    \right)\\
    &=(p_{2})_{\ast}\left(
    \sum_{i=0}^{n}(-1)^{i}p_{2}^{\ast}(\ch(\Lambda ^{i}\ov Q^{\vee}))
    p_{1}^{\ast}(\ch(\ov{\mathcal{O}}(-i)))\Td(\ov p_{2})\right)-1\\
    &=
    \sum_{i=0}^{n}(-1)^{i}\ch(\Lambda ^{i}\ov Q^{\vee})(p_{2})_{\ast}\left(
    p_{1}^{\ast}(\ch(\ov{\mathcal{O}}(-i)))\Td(\ov p_{2})\right)-1\\
    &=
    \sum_{i=0}^{n}(-1)^{i}\ch(\Lambda ^{i}\ov Q^{\vee})\pi _{1}^{\ast}\pi _{\ast}\left(
    \ch(\ov{\mathcal{O}}(-i))\Td(\ov \pi )\right)-1\\
  &=1-1=0.
  \end{align*}
  In the first equality we have used the differential equation of
  $T_{S}$. In the second one we have used the definition of the
  Koszul complex, the equation $\ch(\ov
  {\mathcal{O}}_{\PP^{n}})=1$ and the fact that, by the choice of
  hermitian structures on $T_{\ov \Delta } $ and $T_{\ov p_{2}}$ we
  have $\Td(\ov \Delta )\bullet \Delta ^{\ast}(\Td(\ov p_{2}))=1$. The third
  equality is the projection formula and the fourth is base change for
  cohomology. For the last equality we have used
  equation~\eqref{eq:42}.

  Both sides of equation \eqref{eq:13} are closed and
  defined up to boundaries, hence this is an equation in
  cohomology classes. The tautological exact sequence induces
  exact sequences
  \begin{displaymath}
    0\to \Lambda ^{k}Q^{\vee}\to \Lambda ^{k}V^{\vee}\to
    \Lambda ^{k-1}Q^{\vee}\otimes \mathcal{O}(1)\to 0,
  \end{displaymath}
  that give us equations
  \begin{displaymath}
    \ch(\Lambda ^{k}Q^{\vee})=\binom{n+1}{k}-\ch(\Lambda
    ^{k-1}Q^{\vee})\ch(\mathcal{O}(1)).
  \end{displaymath}
  Hence
  \begin{displaymath}
    \ch(\Lambda
    ^{k}Q^{\vee})=\sum^{k}_{i=0}(-1)^{i}\binom{n+1}{k-i}\ch(\mathcal{O}(i)).
  \end{displaymath}
  Since the classes $\ch(\mathcal{O}(i))$, $i=0,\dots,n$, form a
  basis of $\bigoplus _{p}H^{2p}_{\mathcal{D}}(\PP^{n},\RR(p))$, the
  same is true for the classes $\ch(\Lambda ^{i}Q^{\vee})$,
  $i=0,\dots,n$. Therefore, if $\mathbf{1}_{1}\in
  H^{1}_{\mathcal{D}}(\PP^{n},\RR(1))$ is the class represented by the
  constant function $1$, the classes $\mathbf{1}_{1}\bullet \ch(\Lambda
  ^{i}Q^{\vee})$, $i=0,\dots,n$ form a basis of $\bigoplus
  _{p=1}^{n+1}H^{2p-1}_{\mathcal{D}}(\PP^{n},\RR(p))$. This implies
  that equation \eqref{eq:13} has a unique solution.
 \end{proof}

 \begin{remark}
   Given a theory $T$ of analytic torsion classes for projective spaces,
   obtained from an arbitrary choice of
   characteristic numbers, in general, it does not exist an additive
   genus such that the associated theory of
   singular Bott-Chern classes is compatible with $T$. It would be
   interesting to characterize the collections of characteristic
   numbers that arise from Theorem \ref{thm:15}.
 \end{remark}

 By definition, compatible analytic torsion classes for closed immersions and
 projective spaces satisfy a compatibility condition for the
 trivial vector bundle and the diagonal embedding. When adding the
 functoriality and the projection formula, we obtain compatibility
 relations for arbitrary sections of the trivial projective bundle and
 arbitrary objects.

 Let $X$ be a smooth complex variety, let $\pi \colon \PP_{X}^{n}\to
 X$ be the projective space over $X$ and let $s\colon X\to
 \PP_{X}^{n}$ be a section. Choose any hermitian structure on $T_{\pi
 }$. Since we have an isomorphism $T_{s}\dra \Ld s^{\ast} T_{\pi
 }[-1]$, this hermitian structure induces a hermitian structure on
 $s$. Denote by $\ov \pi $ and $\ov s$ the corresponding morphisms in
 $\ov\Sm_{\ast/\CC}$. With this choice of hermitian structures, we
 have
 \begin{displaymath}
  \ov \pi \circ \ov s=(\pi\circ s,\ocone(\Ld s^{\ast} T_{\ov \pi
 }[-1],\Ld s^{\ast} T_{\ov \pi
 }[-1]))=(\Id_{X},\ov 0),
 \end{displaymath}
because the cone of the identity is meager.

 \begin{proposition}
   Let $S$ be a real additive genus. Let $T_{S}$ denote both, the
   theory of analytic torsion classes for closed immersions determined
   by $S$, and the theory of analytic torsion classes for projective
   spaces compatible with it.
   Let $\ov{\mathcal{F}}$ be an object of $\oDb(X)$. Put a hermitian
   structure on $\Rd s_{\ast} \mathcal{F}$. Then
   \begin{equation}
     \label{eq:19}
     T_{S}(\ov \pi , \ov{\Rd s_{\ast} \mathcal{F}}, \ov {\mathcal{F}})
     +\ov \pi _{\ttwist}(T_{S}(\ov s,\ov {\mathcal{F}}, \ov{\Rd s_{\ast}
         \mathcal{F}}))=0.
   \end{equation}
 \end{proposition}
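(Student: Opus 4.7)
The plan is to reduce \eqref{eq:19} to the universal compatibility identity \eqref{eq:12} of Definition~\ref{compatprojsp} by a base-change argument along an explicit map $\Phi\colon\PP^{n}_{X}\to \PP^{n}\times\PP^{n}$, and then to pass from the case $\ov{\mathcal F}=\ov{\mathcal O}_{X}$ to a general $\ov{\mathcal F}$ by the standard anomaly-plus-triangles-plus-projection-formula machinery used throughout Section~\ref{sec:projsp}.

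For the reduction, observe first that by the anomaly formulas (Propositions~\ref{prop:1bis} and~\ref{prop:1}) combined with Proposition~\ref{prop:12}(ii) for $\ov\pi_{\ttwist}$, a change of hermitian structure on $\Rd s_{\ast}\mathcal F$ contributes $\pm\ov\pi_{\ttwist}\cht(\cdot)$ to the two summands of \eqref{eq:19}, and these cancel; a simultaneous change $\ov\pi\leadsto\ov{\pi'}$ (which, via $T_{s}\dra \Ld s^{\ast}T_{\pi}[-1]$, also changes $\ov s\leadsto\ov{s'}$) produces Todd anomalies whose sum vanishes because $\ov{\pi'}\circ\ov{s'}=\ov{\Id}_{X}$. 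Next, by Propositions~\ref{prop:2bis} and~\ref{prop:2} the class of $\ov{\mathcal F}$ for which \eqref{eq:19} holds is stable under distinguished triangles, so we may take $\ov{\mathcal F}=\ov{\mathcal G}$ a hermitian vector bundle placed in degree $0$. Finally, using $s^{\ast}\pi^{\ast}\mathcal G=\mathcal G$ and $s_{\ast}\mathcal O_{X}\otimes\pi^{\ast}\mathcal G=s_{\ast}\mathcal G$, the projection formula for the projective-space theory (Proposition~\ref{prop:3}), the projection formula for $T_{S}$ on closed immersions, and Proposition~\ref{prop:12}(ii) jointly imply that both summands of \eqref{eq:19} are multiplied by $\ch(\ov{\mathcal G})$ when one tensors with $\ov{\mathcal G}$, so it suffices to treat $\ov{\mathcal F}=\ov{\mathcal O}_{X}$ with the trivial metric.

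Let $g\colon X\to \PP^{n}$ classify the section $s$ (so $s(x)=(x,g(x))$) and set $\Phi(x,y)=(y,g(x))$. Both squares
\[
  \xymatrix{
    X\ar[r]^{s}\ar[d]_{g} & \PP^{n}_{X}\ar[d]^{\Phi} & & \PP^{n}_{X}\ar[r]^{\Phi}\ar[d]_{\pi} & \PP^{n}\times\PP^{n}\ar[d]^{p_{2}} \\
    \PP^{n}\ar[r]^{\Delta} & \PP^{n}\times\PP^{n} & & X\ar[r]^{g} & \PP^{n}
  }
\]
are cartesian, and $\Phi$ is transverse to $\Delta$ since the fibre product is smooth of the expected codimension $n$. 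Equip $T_{\pi}$ with $\Phi^{\ast}T_{\ov{p_{2}}}$ (which, via $T_{s}\dra \Ld s^{\ast}T_{\pi}[-1]$, induces $g^{\ast}T_{\ov\Delta}$ on $T_{s}$) and $s_{\ast}\mathcal O_{X}$ with the hermitian structure coming from the tor-independent pull-back $\Phi^{\ast}\ov{K(\Delta)}$, which is still a Koszul resolution of $s_{\ast}\mathcal O_{X}$ on $\PP^{n}_{X}$. Then functoriality of $T_{S}$ for closed immersions (Definition~\ref{def:1}(ii)) gives $T_{S}(\ov s,\ov{\mathcal O}_{X},\ov{s_{\ast}\mathcal O_{X}})=\Phi^{\ast}T_{S}(\ov\Delta,\ov{\mathcal O}_{\PP^{n}},\ov{K(\Delta)})$; functoriality of the projective-space theory (Definition~\ref{def:3}(ii)) gives $T(\ov\pi,\ov{s_{\ast}\mathcal O_{X}},\ov{\mathcal O}_{X})=g^{\ast}T(\ov{p_{2}},\ov{K(\Delta)},\ov{\mathcal O}_{\PP^{n}})$; and Proposition~\ref{prop:11} applied to the right-hand square (whose base-change arrow $\Phi$ is transverse to $\Delta$, hence disjoint from the wave-front set $N_{\Delta}$ of $T_{S}(\ov\Delta,\dots)$) yields $g^{\ast}\,(\ov{p_{2}})_{\ttwist}=\ov\pi_{\ttwist}\,\Phi^{\ast}$ on the relevant class. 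Pulling back \eqref{eq:12} by $g^{\ast}$ then produces exactly \eqref{eq:19} in the case $\ov{\mathcal F}=\ov{\mathcal O}_{X}$. The main technical point is the transversality of $\Phi$ with $\Delta$ and the identification of $\Phi^{\ast}\ov{K(\Delta)}$ as a valid representative of a hermitian structure on $s_{\ast}\mathcal O_{X}$; once these compatibilities are recorded, the conclusion is a straightforward pull-back of a single known identity.
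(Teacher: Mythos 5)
Your overall route is the paper's own: you normalize the hermitian structure on $\Rd s_{\ast}\mathcal{F}$ to the Koszul one, use the projection formulas to reduce to $\ov{\mathcal{F}}=\ov{\mathcal{O}}_{X}$, and then pull back the defining compatibility \eqref{eq:12} along the classifying map (your $\Phi$ is the paper's $\Id_{\PP^{n}}\times f$ after swapping factors), using functoriality of both theories together with Proposition \ref{prop:11}. Those parts are correct and coincide with the paper's argument; the detour through distinguished triangles is unnecessary but harmless, since the projection formulas already hold for arbitrary objects of $\oDb$.

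The genuine gap is the clause asserting that a simultaneous change $\ov\pi\leadsto\ov\pi{}'$, $\ov s\leadsto\ov s{}'$ ``produces Todd anomalies whose sum vanishes because $\ov\pi{}'\circ\ov s{}'=\ov\Id_{X}$''. This step cannot be skipped: the pullback argument forces the specific structure $\Phi^{\ast}T_{\ov{p}_{2}}$ on $\pi$, whereas the proposition allows an arbitrary one, so the independence of \eqref{eq:19} under change of $\ov\pi$ must be proved. And it is not formal: no transitivity for the composition $\pi\circ s=\Id$ is available at this stage (that is essentially what one is building), so invoking $\ov\pi{}'\circ\ov s{}'=\ov\Id_{X}$ begs the question. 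Concretely, the change of the second summand is $\pi_{\ast}(T_{S}(\ov s{}',\cdots)\Td(\ov\pi{}'))-\pi_{\ast}(T_{S}(\ov s,\cdots)\Td(\ov\pi))$, which mixes the anomaly of $T_{S}(\ov s,\cdots)$ with the change of the operator $\ov\pi_{\ttwist}$ itself; to cancel it against the first summand's anomaly $\ov\pi{}'_{\ttwist}[\ch(\ov{\Rd s_{\ast}\mathcal{F}})\bullet\widetilde{\Td}_{m}(\ov\pi{}',\ov\pi)]$ one needs the differential equation for $T_{S}(\ov s,\cdots)$ (to rewrite $T_{S}(\ov s,\cdots)\bullet\dd_{\mathcal{D}}(\widetilde{\Td}_{m}(\ov\pi{}',\ov\pi)\bullet\Td(\ov\pi{}'))$ in terms of $\ch(\ov K(s))-s_{\ast}\Td(\ov s)$), the projection formula for currents, and the identity $\Td(\ov s)\bullet s^{\ast}\widetilde{\Td}_{m}(\ov\pi{}',\ov\pi)=-\widetilde{\Td}_{m}(\ov s{}',\ov s)\bullet\Td(\ov s{}')$ expressing that the structures on $s,s'$ are induced from those on $\pi,\pi'$. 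This is exactly the computation \eqref{eq:21}--\eqref{eq:25}, which is the bulk of the paper's proof and is absent from yours. Two smaller points: transversality of $\Phi$ with $\Delta$ should be verified as the conormal condition $N_{\Phi}\cap N_{\Delta}=\emptyset$ (an immediate direct check here), not inferred from smoothness of the fibre product; and your parenthetical claim that $\Phi^{\ast}T_{\ov{p}_{2}}$ induces $g^{\ast}T_{\ov\Delta}$ on $T_{s}$ relies on the fact that the Fubini--Study metric makes \eqref{eq:17} orthogonally split, so that $\ov{p}_{2}\circ\ov\Delta=\ov\Id_{\PP^{n}}$; this compatibility deserves to be recorded explicitly.
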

 \begin{proof}
   By the anomaly formulas Proposition \ref{prop:1bis} and Proposition
   \ref{prop:1}, if equation \eqref{eq:19} holds for a particular
   choice of hermitian structure on $\Rd s_{\ast} \mathcal{F}$ then it
   holds for any other choice. Therefore we can assume that the
   hermitian structure on $\Rd s_{\ast} \mathcal{F}$ is given by $\ov
   K(s)\otimes \pi ^{\ast}\ov{\mathcal{F}}$, where $\ov K(s)$ is the
   Koszul complex associated to the section~$s$.
   By the projection formulas, if \eqref{eq:19} holds for the
   trivial bundle $\mathcal{O}_{X}$ then it holds for arbitrary
   objects of~$\oDb(X)$.

   We now prove that, if equation \eqref{eq:19} holds for a particular
   choice of hermitian structure $\ov \pi $, then it holds for any
   other choice. Thus, assume that equation \eqref{eq:19} is satisfied
   for $\ov \pi $ and $\ov s$. Let $\ov \pi '$ be another choice of
   hermitian structure on $\pi $ and $\ov s'$ be the hermitian
   structure induced on $s$. On one hand, we have
   \begin{equation}
     \label{eq:21}
     T_{S}(\ov \pi ',\ov K(s),\ov {\mathcal{O}}_{X})=
     T_{S}(\ov \pi ,\ov K(s),\ov  {\mathcal{O}}_{X})+
     \pi _{\ast}\left(
       \ch(\ov K(s)\bullet \widetilde \Td_{m}(\ov \pi ',\ov \pi
       )\bullet \Td(\ov \pi '))
     \right).
   \end{equation}
   On the other hand, we have
   \begin{align}
     &T_{S}(\ov s',\ov {\mathcal{F}}, \ov{\Rd s_{\ast}
         \mathcal{F}})
       \bullet \Td(\ov \pi' )\\
       &\phantom{A}=
     \left(T_{S}(\ov s,\ov {\mathcal{F}}, \ov{\Rd s_{\ast}
         \mathcal{F}}) +s_{\ast}(\widetilde{\Td}_{m}(\ov s',\ov
       s)\Td(\ov s'))\right)\bullet
       \left(\Td(\ov \pi )-\dd_{\mathcal{D}}(\widetilde \Td_{m}(\ov \pi ',\ov \pi
       )\bullet \Td(\ov \pi '))
     \right) \notag\\
     &\phantom{A}= T_{S}(\ov s,\ov {\mathcal{F}}, \ov{\Rd s_{\ast}
         \mathcal{F}})\bullet \Td(\ov \pi )+s_{\ast}(\widetilde{\Td}_{m}(\ov s',\ov
       s)\Td(\ov s'))\bullet \Td(\ov \pi ')\notag \\
       &\phantom{AAAAAAAAAAAAAAA}
       -T_{S}(\ov s,\ov {\mathcal{F}}, \ov{\Rd s_{\ast}
         \mathcal{F}})\bullet \dd_{\mathcal{D}}(\widetilde \Td_{m}(\ov \pi ',\ov \pi
       )\bullet \Td(\ov \pi '))\notag
   \end{align}
   In the group $\bigoplus_{p}\widetilde
   D_{D}^{2p-1}(\PP^{n}_{X},N_{s},p)$ we have
   \begin{multline}
     \label{eq:23}
     T_{S}(\ov s,\ov {\mathcal{F}}, \ov{\Rd s_{\ast}
         \mathcal{F}})\bullet \dd_{\mathcal{D}}(\widetilde \Td_{m}(\ov \pi ',\ov \pi
       )\bullet \Td(\ov \pi '))\\=
       \left(\ch(\ov K(s))-s_{\ast}(\Td(\ov s))\right)\bullet
       \left(\widetilde \Td_{m}(\ov \pi ',\ov \pi
       )\bullet \Td(\ov \pi ')\right).
   \end{multline}
   Observe that, by the definition of the hermitian structure
   of $\ov s$ and $\ov s'$ we have
   \begin{equation}
     \label{eq:24}
     \Td(\ov s)\bullet s^{\ast}\widetilde {\Td}_{m}(\ov \pi ',\ov \pi
     )=
     -\widetilde {\Td}_{m}(\ov s',\ov s)\bullet\Td(\ov s').
   \end{equation}
   By combining equations \eqref{eq:19}, \eqref{eq:21}, \eqref{eq:23}
   and \eqref{eq:24} we obtain
   \begin{equation}
     \label{eq:25}
     T_{S}(\ov \pi' , \ov{\Rd s_{\ast} \mathcal{F}}, \ov {\mathcal{F}})=
     -\pi _{\ast}\left(T_{S}(\ov s',\ov {\mathcal{F}}, \ov{\Rd s_{\ast}
         \mathcal{F}})
       \bullet \Td(\ov \pi' )
     \right).
   \end{equation}

   We now prove \eqref{eq:19} for a particular choice of
   hermitian structures. Let $f\colon X\to \PP^{n}$ denote
   the composition of
   $s$ with the projection $\PP^{n}_{X}\to \PP^{n}$. Then
   we have a commutative diagram with cartesian squares
   \begin{displaymath}
     \xymatrix{
     \PP^{n}\times X\ar[rrr]^{\Id\times f}\ar[dd]_{\pi }
     &&& \PP^{n}\times \PP^{n} \ar[dd]^{p_{2}}\\
     & X \ar[ul]^{s}\ar[dl]_{\Id}\ar[r]_{f}&
     \PP^{n}\ar[ru]_{\Delta }\ar[rd]^{\Id} &\\
     X\ar[rrr]_{f} &&& \PP^{n}
     }
   \end{displaymath}
   Let $\ov \Delta $ and $\ov p_{2}$ be as in Definition
   \ref{compatprojsp}.
   On $\ov \pi $ and $\ov s$ we put the hermitian structures induced
   by $\ov \Delta $. Since the Koszul complex $\ov
   K(s)=(\Id_{\PP^{n}}\times f )^{\ast}\ov K(\Delta )$, by
   Proposition \ref{prop:11} and functoriality, equation
   \eqref{eq:19} in this case follows from equation \eqref{eq:12}.
 \end{proof}

 We now study another compatibility between analytic torsion classes
 for closed immersions and projective spaces. Let $\iota\colon X\to Y$
 be a closed immersion of smooth complex varieties. Consider the
 cartesian square
 \begin{displaymath}
   \xymatrix{
     \PP^{n}_{X} \ar[d]_{\pi _{1}}\ar[r]^{\iota _{1}}&
     \PP^{n}_{Y} \ar[d]^{\pi }\\
     X\ar[r]_{\iota }& Y
}
 \end{displaymath}
 Choose hermitian structures on $\pi $ and $\iota $ and put on $\pi
 _{1}$ and $\iota _{i}$ the induced ones.

 \begin{proposition}
   \label{prop:5}
   Let $S$ be a real additive genus. Let $T_{S}$ denote both, the
   theory of analytic torsion classes for closed immersions determined
   by $S$, and  the theory of analytic torsion classes for projective
   spaces compatible with it.
   Let $\ov{\mathcal{F}}$ be and object of $\oDb(\PP^{n}_{X})$. Put hermitian
   structures on $\Rd (\pi _{1})_{\ast} \mathcal{F}$, $\Rd (\iota
   _{1})_{\ast} \mathcal{F}$ and $\Rd (\pi \circ \iota _{1})_{\ast}
   \mathcal{F}$. Then
   \begin{equation}
     \label{eq:26}
     T_{S}(\ov \pi )+ \ov \pi _{\ttwist}(T_{S}(\ov {\iota _{1}}))
     =
     T_{S}(\ov \iota)+ \ov \iota  _{\ttwist}(T_{S}(\ov {\pi _{1}})).
   \end{equation}
 \end{proposition}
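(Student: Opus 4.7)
\medskip

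The plan is to follow the template of the proof of Proposition~\ref{prop:comp_proj}, reducing equation~\eqref{eq:26} stepwise to the compatibility identity~\eqref{eq:12} of Definition~\ref{compatprojsp}. There are four reduction steps: independence of metric choices, reduction to a generating class, reduction via the projection formula, and the base case.

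First, one applies the anomaly formulas: Proposition~\ref{prop:1bis} for the terms involving the closed immersions $\ov\iota$ and $\ov{\iota_1}$, and Proposition~\ref{prop:1} for the terms involving the projective bundles $\ov\pi$ and $\ov{\pi_1}$. A direct computation shows that changing the hermitian structure on any of $\Rd(\pi_1)_{\ast}\mathcal{F}$, $\Rd(\iota_1)_{\ast}\mathcal{F}$ or $\Rd(\pi\circ\iota_1)_{\ast}\mathcal{F}$ modifies both sides of~\eqref{eq:26} by matching Bott--Chern correction terms (using Proposition~\ref{prop:12} to commute the Todd-twisted direct image with these corrections). Hence one may choose these hermitian structures at will.

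Next, Propositions~\ref{prop:2bis} and~\ref{prop:2} imply that, given a distinguished triangle $\ov{\mathcal{F}}_{2}\to \ov{\mathcal{F}}_{1}\to\ov{\mathcal{F}}_{0}\to$ in $\oDb(\PP^{n}_{X})$ with compatible hermitian structures on direct images, if \eqref{eq:26} is valid for two out of three of the $\ov{\mathcal{F}}_{j}$ then it is valid for the third. By Corollary~\ref{cor:6}, the objects of the form $\pi_{1}^{\ast}\mathcal{G}\otimes \mathcal{O}(k)$ with $\mathcal{G}$ a coherent sheaf on $X$ and $-n\le k\le 0$ form a generating class of $\Db(\PP^{n}_{X})$, so we may assume $\mathcal{F}$ has this form. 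Then the projection formula (Theorem~\ref{thm:17} for the closed-immersion theory, Proposition~\ref{prop:3} for the projective-space theory), combined with base change along the cartesian square for the canonical isomorphism $\Rd(\iota_{1})_{\ast}\pi_{1}^{\ast}\mathcal{G}\otimes \mathcal{O}(k)\dra \pi^{\ast}\Rd \iota_{\ast}\mathcal{G}\otimes \mathcal{O}(k)$, reduces \eqref{eq:26} to the case $\mathcal{G}=\mathcal{O}_{X}$, i.e.\ $\mathcal{F}=\overline{\mathcal{O}(k)}$ with its Fubini--Study metric, equipping $\ov{\pi_{1}}$ and $\ov{\iota_{1}}$ with hermitian structures pulled back from $\ov\pi$ and $\ov\iota$ via base change.

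Finally, in this base case, the functoriality axioms (Definition~\ref{def:1}(ii) for the closed-immersion theory, Definition~\ref{def:3}(ii) for the projective-space theory), together with Proposition~\ref{prop:11} which commutes Todd-twisted direct images with pullback along transverse morphisms, identify $T_{S}(\ov{\iota_{1}}, \overline{\mathcal{O}(k)},-)$ with the pullback by $\pi$ of the corresponding class for $\ov\iota$, and similarly $T_{S}(\ov{\pi_{1}}, \overline{\mathcal{O}(k)},-)$ with the pullback by $\iota$ of the corresponding class for $\ov\pi$. Applying $\iota^{\ast}$ (respectively $\pi^{\ast}$) to the compatibility identity~\eqref{eq:12} and rearranging, together with the projection formula used once more, yields~\eqref{eq:26}. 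The main obstacle is the bookkeeping of hermitian structures across the base change in the last step: one must verify that the natural choices of metrics on $\Rd\pi_{\ast}\mathcal{O}(k)$, $\Rd(\pi_{1})_{\ast}\mathcal{O}(k)$, $\Rd\iota_{\ast}\mathcal{O}_{X}$ and $\Rd(\iota_{1})_{\ast}\mathcal{O}_{\PP^{n}_{X}}$ match coherently along the two routes $\pi\circ \iota_{1}=\iota\circ\pi_{1}$, so that the functoriality and projection-formula reductions combine without leaving residual Bott--Chern error terms.
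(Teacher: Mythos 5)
Your reduction scaffolding (anomaly formulas to free the choice of hermitian structures, the generating class $\mathcal{G}(k)$, $-n\le k\le 0$, together with Propositions \ref{prop:2bis} and \ref{prop:2} for distinguished triangles) is the same as the paper's and is fine. The problem is the endgame. First, the further reduction ``to $\mathcal{G}=\mathcal{O}_X$'' is not available: $\mathcal{G}$ lives on $X$, not on $Y$, so the projection formula for the morphisms to $Y$ cannot strip it off the terms $T_{S}(\ov\pi)$ and $T_{S}(\ov\iota)$ — their dependence on $\mathcal{G}$ enters through $\Rd\iota_{\ast}\mathcal{G}$ and $\ch(\ov{\Rd\iota_{\ast}\mathcal{G}})$, which is not of the form $\ch$ of an object pulled back from $Y$ times something independent of $\mathcal{G}$. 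Second, and more seriously, the base case does \emph{not} reduce to the diagonal compatibility \eqref{eq:12}: that identity concerns only the pair $(\Delta,p_{2})$ on $\PP^{n}\times\PP^{n}$ with the Koszul resolution of the diagonal, while \eqref{eq:26} involves an \emph{arbitrary} closed immersion $\iota\colon X\to Y$ of the base, whose singular Bott--Chern classes cannot be produced by pulling back \eqref{eq:12} along $\iota$ or $\pi$. In fact the paper's proof of this proposition never invokes \eqref{eq:12} at all; the statement is a formal consequence of the axioms of the two theories.

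What actually closes the argument, and what is missing from your proposal, is an explicit computation. With $\mathcal{F}=\mathcal{G}(k)$ and the natural metrics of Notation \ref{def:7}, the canonical isomorphisms $\Rd(\pi_{1})_{\ast}\mathcal{F}\cong\mathcal{G}\otimes\Rd(\pi_{1})_{\ast}\OO(k)$, $\Rd(\iota_{1})_{\ast}\mathcal{F}\cong\Ld\pi^{\ast}(\Rd\iota_{\ast}\mathcal{G})\otimes\OO(k)$ and $\Rd(\pi\circ\iota_{1})_{\ast}\mathcal{F}\cong\Rd\iota_{\ast}\mathcal{G}\otimes\Rd\pi_{\ast}\OO(k)$, together with functoriality and the two projection formulas, give
$T_{S}(\ov\pi)=t_{n,k}\bullet\ch(\ov{\Rd\iota_{\ast}\mathcal{G}})$,
$\ov\pi_{\ttwist}(T_{S}(\ov{\iota_{1}}))=T_{S}(\iota,\ov{\mathcal{G}},\ov{\Rd\iota_{\ast}\mathcal{G}})\bullet\pi_{\ast}(\ch(\ov{\OO(k)})\Td(\ov\pi))$,
$T_{S}(\ov\iota)=T_{S}(\iota,\ov{\mathcal{G}},\ov{\Rd\iota_{\ast}\mathcal{G}})\bullet\ch(\ov{\Rd\pi_{\ast}\OO(k)})$ and
$\ov\iota_{\ttwist}(T_{S}(\ov{\pi_{1}}))=t_{n,k}\bullet\iota_{\ast}(\ch(\ov{\mathcal{G}})\Td(\ov\iota))$.
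The difference of the two sides of \eqref{eq:26} is then
$t_{n,k}\bullet\dd_{\mathcal{D}}T_{S}(\iota,\ov{\mathcal{G}},\ov{\Rd\iota_{\ast}\mathcal{G}})-T_{S}(\iota,\ov{\mathcal{G}},\ov{\Rd\iota_{\ast}\mathcal{G}})\bullet\dd_{\mathcal{D}}t_{n,k}
=-\dd_{\mathcal{D}}\bigl(t_{n,k}\bullet T_{S}(\iota,\ov{\mathcal{G}},\ov{\Rd\iota_{\ast}\mathcal{G}})\bigr)$,
which vanishes in $\bigoplus_{p}\widetilde{\mathcal{D}}_{D}^{2p-1}(Y,N_{\iota},p)$. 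This Leibniz-rule cancellation, driven by the two differential equations, is the heart of the proof and is absent from your plan.
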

 \begin{proof}
   By the anomaly formulas, if equation \eqref{eq:26} holds for a
   particular choice of metrics on $\Rd (\pi _{1})_{\ast}
   \mathcal{F}$, $\Rd (\iota _{1})_{\ast} \mathcal{F}$ and $\Rd (\pi
   \circ \iota _{1})_{\ast} \mathcal{F}$, then it holds for any
   choice. Because the sheaves $\mathcal{G}(k)$, with $\mathcal{G}$ a
   coherent sheaf on $X$, constitute a generating class of
   $\Db(\PP^{n}_{X})$ and by propositions \ref{prop:2bis} and
   \ref{prop:2}, we reduce to the case when $\mathcal{F}$ is of the
   form $\mathcal{G}(k)$. We choose arbitrary hermitian structures on
   $\mathcal{G}$ and $\Rd\iota_{\ast}\mathcal{G}$. Furthermore, we
   assume $\OO(k)$, $\Rd(\pi_{1})_{\ast}\OO(k)$ and
   $\Rd\pi_{\ast}\OO(k)$ endowed with the hermitian structures of
   Notation \ref{def:7}. From these choices and the projection
   formula, the objects $\Rd(\pi_{1})_{\ast}\mathcal{F}$,
   $\Rd(\iota_{1})_{\ast}\mathcal{F}$ and $\Rd (\pi \circ \iota
   _{1})_{\ast}\mathcal{F}$ automatically inherit hermitian
   structures. Indeed, it is enough to observe the natural
   isomorphisms
\begin{align}
  &\Rd(\pi_{1})_{\ast}\mathcal{F}\cong\mathcal{G}\otimes\Rd(\pi_{1})_{\ast}
  \OO(k)\label{eq:26_1}\\
  &\Rd(\iota_{1})_{\ast}(\pi_{1}^{\ast}\mathcal{G}\otimes\iota_{1}^{\ast}\OO(k))
  \cong\Ld\pi^{\ast}(\Rd\iota_{\ast}\mathcal{G})\otimes\OO(k)\label{eq:26_2}\\
  &\Rd(\pi\circ\iota_{1})_{\ast}\mathcal{F}\cong\Rd\pi_{\ast}(\Ld\pi^{\ast}\Rd\iota_{\ast}\mathcal{G}\otimes\OO(k))\cong\Rd\iota_{\ast}\mathcal{G}\otimes\Rd\pi_{\ast}\OO(k).\label{eq:26_3}
\end{align}
We now work out the left hand side of equation \eqref{eq:26}. Using
the projection formula for the theory $T_{S}$ for projective spaces,
and equations \eqref{eq:26_1}--\eqref{eq:26_3}, we find
\begin{equation}\label{eq:26_4}
	T_{S}(\ov{\pi})=t_{n,k}\bullet\ch(\ov{\Rd\iota_{\ast}\mathcal{G}}).
\end{equation}
Using the functoriality of $T_{S}$ for closed immersions and the
projection formula we have
\begin{align}
  T_{S}(\ov{\iota}_{1})=&\pi^{\ast}T_{S}(\iota,\ov{\mathcal{G}},\ov{\Rd\iota_{\ast}\mathcal{G}})\bullet\ch(\ov{\OO(k)})
  \notag\\
  \ov{\pi}_{\ttwist}(T_{S}(\ov\iota_1))=&T_{S}(\iota,\ov{\mathcal{G}},\ov{\Rd\iota_{\ast}\mathcal{G}})
  \bullet\pi_{\ast}(\ch(\ov{\OO(k)})\bullet\Td(\ov{\pi})).\label{eq:26_5}
\end{align}
Now for the right hand side of \eqref{eq:26}.
The projection formula for $T_{S}$ for closed immersions implies
\begin{equation}\label{eq:26_6}
	T_{S}(\ov\iota)=T_{S}(\iota,\ov{\mathcal{G}},\ov{\Rd\iota\mathcal{G}})
	\bullet\ch(\ov{\Rd\pi_{\ast}\OO(k)}).
\end{equation}
Similarly, we obtain
\begin{math}
	T_{S}(\ov\pi_1)=t_{n,k}\bullet\ch(\ov{\mathcal{G}}),
\end{math}
and hence
\begin{equation}\label{eq:26_7}
	\ov\iota_{\ttwist}(T_{S}(\ov\pi_{1}))=t_{n,k}\bullet\iota_{\ast}(\ch(\ov{\mathcal{G}})\bullet\Td(\ov\iota)).
\end{equation}
Using \eqref{eq:26_4}--\eqref{eq:26_7}, the difference of the two sides of \eqref{eq:26} becomes
\begin{displaymath}
	t_{n,k}\bullet\dd_{\mathcal{D}}T_{S}(\iota,\ov{\mathcal{G}},\ov{\Rd\iota_{\ast}\mathcal{G}})
	-T_{S}(\iota,\ov{\mathcal{G}},\ov{\Rd\iota_{\ast}\mathcal{G}})\bullet\dd_{\mathcal{D}}t_{n,k}=
	-\dd_{\mathcal{D}}(t_{n,k}\bullet T_{S}(\iota,\ov{\mathcal{G}},\ov{\Rd\iota_{\ast}\mathcal{G}}))=0
\end{displaymath}
in the group $\oplus_{p}\widetilde{\mathcal{D}}_{D}^{2p-1}(Y,N_{\iota},p)$.
\end{proof}

\section{Generalized analytic torsion classes}
\label{sec:gener-analyt-tors}

In this section we will extend the definition of analytic torsion
classes to arbitrary morphisms of smooth complex varieties. Our
construction is based on the construction of analytic torsion classes
by Zha in \cite{Zha99:_rieman_roch}.

\begin{definition}\label{def:genAT}
A \textit{theory of generalized analytic torsion classes} is an
assignment that, to each morphism $\ov f\colon X\to Y$ in
$\ov\Sm_{\ast/\CC}$ and relative metrized complex
$\overline{\xi}=(\ov f,\ov{\mathcal{F}},\ov{\Rd
    f_{\ast}\mathcal{F}}),$
 assigns a class of currents
$$T(\overline{\xi})\in \bigoplus_{p=1}^{n+1} \widetilde
  {\mathcal{D}}^{2p-1}_{D}(Y,N_{f},p)$$
satisfying the following properties:
\begin{enumerate}
  \item \label{item:1AT} (Differential equation) For any current $\eta\in T(\overline{\xi})$, we have
    \begin{equation}\label{eq:16}
      \dd_{\mathcal{D}} \eta=
    \ch(\overline {\Rd
    f_{\ast}\mathcal{F}})-\ov f_{\ttwist}
    [\ch(\overline {\mathcal{F}})].
  \end{equation}

  \item \label{item:2AT}(Functoriality) If $g\colon
    Y'\to Y$ is a morphism transverse to $f$, then
    \begin{displaymath}
      g^{\ast} T(\overline \xi)=T(g^{\ast} \overline
      \xi).
    \end{displaymath}
  \item \label{item:3AT}(Additivity and normalization) If $\overline{\xi}_1$,
    $\overline{\xi}_2$ are relative metrized complexes on $X$,
    then
    $$T(\overline{\xi}_1\oplus\overline{\xi}_2)
    =T(\overline{\xi}_1)+T(\overline{\xi}_2).$$
\item \label{item:4AT}(Projection formula) If $\overline{\xi}$ is a
relative metrized complex, and $\overline{\mathcal{G}}\in \Ob \oDb(Y)$,
then
$$T(\overline{\xi}\otimes
\overline{\mathcal{G}})=T(\overline{\xi})\bullet
\ch(\overline{\mathcal{G}}).
$$
\item \label{item:5AT}(Transitivity) If $\ov f\colon X\to Y$, $\ov
  g\colon Y\to Z$ are morphisms in $\ov\Sm_{\ast/\CC}$, and
  $(\ov f,\ov{\mathcal{F}},\ov{\Rd f_{\ast}\mathcal{F}})$
  and $(\ov g,\ov{\Rd f_{\ast}\mathcal{F}},\ov{\Rd
    (g\circ f)_{\ast}\mathcal{F}})$ are relative metrized complexes, then
  \begin{equation}
    \label{eq:18}
    T(\ov g\circ \ov f) =
  T(\ov g)+\ov g_{\ttwist}(T(\ov
  f)).
  \end{equation}
  \end{enumerate}
\end{definition}

Propositions \ref{prop:1tris} and \ref{prop:2tris} below contain several
anomaly and compatibility formulas satisfied by an arbitrary theory of generalized analytic torsion classes.
They follow from properties \ref{item:1AT}--\ref{item:3AT} and are analogous
to those in propositions \ref{prop:1bis} and \ref{prop:1}, \ref{prop:2bis} and
\ref{prop:2} respectively. The proofs are omitted, as they are similar to those of
the analogous statements referred to hereinbefore.

\begin{proposition}\label{prop:1tris}
  Let $T$ be a theory of generalized analytic torsion classes. Let
  \begin{math}
    \overline{\xi}= (\ov{f},\ov{\mathcal{F}}, \ov{\Rd f_{\ast}\mathcal{F}})
  \end{math}
  be a relative metrized complex.
  \begin{enumerate}
  \item \label{item:6tris}
    If $\ov{\mathcal{F}}'$ is another choice
    of metric on $\mathcal{F}$ and
    $\overline{\xi}_1= (\ov{f},\ov{\mathcal{F}}', \ov{\Rd
      f_{\ast}\mathcal{F}})$, then
    \begin{displaymath}
      T(\overline{\xi}_1)=T(\overline{\xi})+\ov f
      _{\ttwist}[\cht(\ov{\mathcal{F}}',\ov{\mathcal{F}})].
    \end{displaymath}
  \item \label{item:7tris}
    If $\ov{f}'$ is another choice of hermitian structure on $f$ and
    $\overline{\xi}_{2}= (\ov{f}',\ov{\mathcal{F}}, \ov{\Rd f_{\ast}\mathcal{F}})$, then
    \begin{equation}\label{eq:14tris}
      T(\overline{\xi}_{2})=T(\overline{\xi})+\ov f'
      _{\ttwist}[\ch(\overline {\mathcal{F}})\bullet
      \widetilde{\Td}_{m}(\ov{f}', \ov{f})].
    \end{equation}
  \item \label{item:8tris}
    If $\ov{\Rd f_{\ast}\mathcal{F}}'$ is a different
    choice of metric on $\Rd f_{\ast}\mathcal{F}$, and $\overline
    {\xi}_{3}= (\ov{f},\ov{\mathcal{F}}, \ov{\Rd f_{\ast}\mathcal{F}}')$, then
    \begin{displaymath}
      T(\overline{\xi}_{3})= T(\overline{\xi})-
      \cht(\ov{\Rd f_{\ast}\mathcal{F}}',
      \ov{\Rd f_{\ast}\mathcal{F}}).
    \end{displaymath}
  \end{enumerate}
\end{proposition}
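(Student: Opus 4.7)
The plan is to imitate verbatim the strategy of Proposition \ref{prop:1bis}: each anomaly formula is reduced to the vanishing of an auxiliary assignment $\widetilde{\varphi}$ that satisfies the three hypotheses of Lemma \ref{lemm:uniquenes_phi}. The key inputs are the differential equation \eqref{eq:16}, the functoriality axiom \ref{item:2AT}, and the additivity axiom \ref{item:3AT}, which together ensure that on orthogonally split data the defect vanishes.

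For part \ref{item:7tris}, I would fix a representative $\ov E \dra T_{f}$ of the hermitian structure $T_{\ov f}$ and, by \cite[Thm.~3.13~(ii)]{BurgosFreixasLitcanu:HerStruc}, assume $T_{\ov f'}$ is represented by $\ov E \oplus \ov A \overset{\pr_1}{\to} \ov E \dra T_f$ for a bounded acyclic hermitian complex $\ov A$. For each smooth $g \colon Y' \to Y$ and each such $\ov A$ on $X' = X \times_Y Y'$, define
\begin{displaymath}
\begin{split}
\widetilde{\varphi}(\ov A) = {}& T({g'}^{\ast}\ov\xi) - T\bigl((f',\Ld {g'}^{\ast}T_{\ov f} + [\ov A]),\Ld {g'}^{\ast}\ov{\mathcal{F}},\Ld g^{\ast}\ov{\Rd f_{\ast}\mathcal{F}}\bigr)\\
&+ f'_{\ast}\bigl[\ch(\Ld {g'}^{\ast}\ov{\mathcal{F}})\,\widetilde{\Td}_{m}(\Ld {g'}^{\ast}T_{\ov f}+[\ov A],\,\Ld {g'}^{\ast}T_{\ov f})\,\Td(\Ld {g'}^{\ast}T_{\ov f}+[\ov A])\bigr].
\end{split}
\end{displaymath}
Using the differential equation of $T$, the differential equation satisfied by $\widetilde{\Td}_{m}$, and the multiplicativity of $\Td$, one checks $d_{\mathcal{D}}\widetilde{\varphi}(\ov A) = 0$. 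Functoriality under further smooth pullback $h \colon Y'' \to Y'$ comes from axiom \ref{item:2AT} and the analogous pullback property of $\widetilde{\Td}_{m}$. When $\ov A$ is orthogonally split, the additivity axiom \ref{item:3AT} plus the vanishing of $\widetilde{\Td}_{m}$ on orthogonally split data yields $\widetilde{\varphi}(\ov A)=0$. Lemma \ref{lemm:uniquenes_phi} then gives $\widetilde\varphi \equiv 0$, which is \eqref{eq:14tris} evaluated at $g = \Id_{Y}$.

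For part \ref{item:6tris}, write $\ov{\mathcal{F}}' = \ov{\mathcal{F}} + [\ov A]$ for a bounded acyclic $\ov A$ on $X$. With $\ov f' = g^{\ast}\ov f$, set
\begin{displaymath}
\widetilde{\varphi}_{1}(\ov A) = T({g'}^{\ast}\ov\xi) - T\bigl(\ov f',\Ld {g'}^{\ast}\ov{\mathcal{F}} + [\ov A],\Ld g^{\ast}\ov{\Rd f_{\ast}\mathcal{F}}\bigr) - \ov{f'}_{\ttwist}[\cht(\ov A)].
\end{displaymath}
The differential equation of $T$ together with $d_{\mathcal{D}}\cht(\ov A) = \ch(\ov A) - 0 = \ch(\ov A)$ gives $d_{\mathcal{D}}\widetilde{\varphi}_{1} = 0$; functoriality is inherited from axioms \ref{item:2AT}, and orthogonal splitness of $\ov A$ implies $\cht(\ov A)=0$ and that adding $[\ov A]$ does not alter $\ov{\mathcal{F}}$, so the normalization hypothesis holds. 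Apply Lemma \ref{lemm:uniquenes_phi}. For part \ref{item:8tris} one proceeds symmetrically on the target side: with $\ov{\Rd f_{\ast}\mathcal{F}}' = \ov{\Rd f_{\ast}\mathcal{F}} + [\ov B]$ for a bounded acyclic $\ov B$ on $Y$, define
\begin{displaymath}
\widetilde{\varphi}_{2}(\ov B) = T({g'}^{\ast}\ov\xi) - T\bigl(\ov f',\Ld {g'}^{\ast}\ov{\mathcal{F}},\Ld g^{\ast}\ov{\Rd f_{\ast}\mathcal{F}} + [\ov B]\bigr) + \cht(\ov B),
\end{displaymath}
and verify the three hypotheses as before. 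The main obstacle is bookkeeping: making sure the Bott--Chern secondary classes $\cht$ and $\widetilde{\Td}_{m}$ enter with the correct signs so that in each case the differential equation for $\widetilde{\varphi}_{i}$ collapses to zero; once this is settled the three parts are essentially formal consequences of Lemma \ref{lemm:uniquenes_phi}.
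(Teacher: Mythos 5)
Your proposal is correct and follows essentially the same approach as the paper: the paper omits the proof of Proposition \ref{prop:1tris}, declaring it analogous to Proposition \ref{prop:1bis}, and your three auxiliary assignments $\widetilde{\varphi}$, $\widetilde{\varphi}_{1}$, $\widetilde{\varphi}_{2}$ coincide with the ones used there, with Lemma \ref{lemm:uniquenes_phi} supplying the vanishing. The only point worth making explicit is that for part \ref{item:8tris}, since $\ov B$ lives on $Y'$ rather than on $X'$, the lemma is invoked with $\Id_{Y}$ in place of $f$, exactly as in the proof of Proposition \ref{prop:1bis}.
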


\begin{proposition} \label{prop:2tris}
  Let $T$ be a theory of generalized analytic torsion classes. Let
  $\ov f\colon X\to Y$ be a morphism in $\ov\Sm_{\ast/\CC}$. Consider
  the distinguished
  triangles in $\oDb(X)$ and $\oDb(Y)$ respectively:
\begin{displaymath}
	(\ov{\tau}):\
        \ov{\mathcal{F}}_{2}\to\ov{\mathcal{F}}_{1}
        \to\ov{\mathcal{F}}_{0}\to\ov{\mathcal{F}}_{2}[1],\ \text{ and
        }\  
	(\ov{\Rd f_{\ast}\tau}):\
        \ov{\Rd f_{\ast}\mathcal{F}}_{2}\to
        \ov{\Rd f_{\ast}\mathcal{F}}_{1}
	\to\ov{\Rd f_{\ast}\mathcal{F}}_{0}\to
        \ov{\Rd f_{\ast}\mathcal{F}}_{2}[1],
\end{displaymath}
and the relative metrized complexes
$\ov{\xi}_{i}=(\ov{f},\ov{\mathcal{F}}_i,
        \ov{\Rd f_{\ast}\mathcal{F}}_i)$, $i=0,1,2$.
Then we have:
\begin{displaymath}
	\sum_{j=0,1,2}(-1)^{j}T(\ov{\xi}_{j})
	=\widetilde{\ch}(\ov{\Rd\pi_{\ast}\tau})
	-\ov f_{\ttwist}(\widetilde{\ch}(\ov{\tau })).
\end{displaymath}
\end{proposition}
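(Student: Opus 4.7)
The plan is to adapt the $\PP^{1}$-deformation argument from the proof of Proposition~\ref{prop:2bis} to the generalized setting, using only the axioms of Definition~\ref{def:genAT} together with Proposition~\ref{prop:1tris}. First, choose representatives of the distinguished triangles by short exact sequences of bounded complexes of hermitian vector bundles
\begin{displaymath}
\ov\varepsilon : 0\to \ov E_{2}\to \ov E_{1}\to \ov E_{0}\to 0,\qquad
\ov\nu : 0\to \ov V_{2}\to \ov V_{1}\to \ov V_{0}\to 0,
\end{displaymath}
on $X$ and $Y$ respectively, together with isomorphisms $\ov V_{j}\dra \Rd f_{\ast}\ov E_{j}$ compatible with the connecting arrows. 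Applying row by row the construction at the beginning of the proof of \cite[Thm.~2.3]{BurgosLitcanu:SingularBC}, extend the two sequences to exact sequences of complexes $\widetilde\varepsilon$ and $\widetilde\nu$ of hermitian vector bundles on $X\times\PP^{1}$ and $Y\times\PP^{1}$ whose restrictions to the fiber over $0$ agree with $\ov\varepsilon$ and $\ov\nu$, and whose restrictions to the fiber over $\infty$ are orthogonally split as sequences of complexes.

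Endow $f\times\Id_{\PP^{1}}$ with the hermitian structure obtained by pulling back $T_{\ov f}$ along the smooth projection $X\times\PP^{1}\to X$, and form the relative metrized complexes $\widetilde\xi_{j}=(\ov f\times\Id_{\PP^{1}},\widetilde E_{j},\widetilde V_{j})$. The inclusions of the fibers over $0$ and $\infty$ are sections of the smooth projection to $\PP^{1}$, hence are transverse to $f\times\Id_{\PP^{1}}$, so functoriality (axiom \ref{item:2AT}) applies: pullback to the fiber over $0$ recovers $\ov\xi_{j}$, while pullback to the fiber over $\infty$ gives relative metrized complexes which are orthogonally split and therefore, by additivity and the normalization built into axiom \ref{item:3AT} together with Proposition~\ref{prop:1tris}, can be compared to the direct sum of the outer terms.

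The key computation is to apply $\dd_{\mathcal{D}}$ to
\begin{displaymath}
\frac{1}{2\pi i}\int_{\PP^{1}}\!-\tfrac{1}{2}\log(t\bar t)\bullet\sum_{j=0,1,2}(-1)^{j}T(\widetilde\xi_{j}),
\end{displaymath}
which vanishes as an element of $\bigoplus_{p}\widetilde{\mathcal{D}}_{D}^{2p-1}(Y,N_{f},p)$. Using the differential equation axiom \ref{item:1AT} to expand $\dd_{\mathcal{D}}T(\widetilde\xi_{j})$, the identity $\dd_{\mathcal{D}}\bigl(-\tfrac{1}{2}\log(t\bar t)\bigr)=\delta_{0}-\delta_{\infty}$ on $\PP^{1}$, and the fiber integral identification on $\PP^{1}$, the result reduces to
\begin{displaymath}
T(\ov\xi_{1})-T(\ov\xi_{0}\oplus\ov\xi_{2})
=\widetilde\ch(\ov{\Rd f_{\ast}\tau})-\ov f_{\ttwist}\bigl(\widetilde\ch(\ov\tau)\bigr),
\end{displaymath}
just as in the closed immersion case. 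Axiom \ref{item:3AT} then replaces $T(\ov\xi_{0}\oplus\ov\xi_{2})$ by $T(\ov\xi_{0})+T(\ov\xi_{2})$, yielding the claimed formula.

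The main technical point, rather than an obstacle, is ensuring that the deformation and the fiber integral manipulations, which in \ref{prop:2bis} took place with smooth forms and closed immersions, remain valid for the currents appearing in the generalized setting. This is guaranteed by the wave front set conditions implicit in the definition of $T(\ov\xi)\in\widetilde{\mathcal{D}}_{D}^{\ast}(Y,N_{f},\ast)$ and the transversality of the fiber inclusions to $f\times\Id_{\PP^{1}}$, which ensure that all the pullbacks, products and fiber integrals involved are well defined and behave continuously as in the smooth case.
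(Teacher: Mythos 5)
Your proposal is correct and follows essentially the same route as the paper: the paper omits the proof of this proposition, stating it is obtained by the same argument as Proposition~\ref{prop:2bis}, which is exactly the $\PP^{1}$-deformation of the representing short exact sequences, the $\dd_{\mathcal{D}}$-computation with $-\tfrac{1}{2}\log(t\bar t)$, and the final appeal to additivity that you carry out. Your added remark that the wave front set conditions and transversality of the fiber inclusions justify the current-level manipulations is precisely the point the paper relies on implicitly, so there is nothing to correct.
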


The main result of this section is the following classification
theorem.

\begin{theorem}\label{thm:gen_anal_tor}
  Let $S$ be a real additive genus. Then there exists a unique theory
  of generalized analytic torsion classes that agrees with $T_{S}$
  when restricted to the class of closed immersions. Moreover, if $T$
  is a theory of generalized analytic torsion classes, then there
  exists a real additive genus $S$ such that $T=T_{S}$.
\end{theorem}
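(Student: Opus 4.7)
The plan is to construct $T_{S}$ on all projective morphisms by gluing the two theories from Theorems \ref{thm:17} and \ref{thm:15} via the transitivity axiom, and then to deduce uniqueness and classification from the existing classification results.

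First I would factor a morphism $\ov f\colon X\to Y$ in $\oSm_{\ast/\CC}$ with $f$ projective as $f=\pi\circ\iota$, with $\iota\colon X\hookrightarrow\PP^{n}_{Y}$ a closed immersion and $\pi\colon\PP^{n}_{Y}\to Y$ the projection; lift the factorization by choosing hermitian structures on $T_{\iota}$ and $T_{\pi}$ fitting the triangle $T_{\iota}\to T_{f}\to \Ld\iota^{\ast}T_{\pi}\to T_{\iota}[1]$ with the given $T_{\ov f}$. For $\ov{\mathcal{F}}\in\oDb(X)$ and $\ov{\Rd f_{\ast}\mathcal{F}}\in\oDb(Y)$, pick an auxiliary hermitian structure on $\iota_{\ast}\mathcal{F}$ and set
\begin{equation*}
T_{S}(\ov f,\ov{\mathcal{F}},\ov{\Rd f_{\ast}\mathcal{F}}):=T_{S}(\ov\pi,\ov{\iota_{\ast}\mathcal{F}},\ov{\Rd f_{\ast}\mathcal{F}})+\ov\pi_{\ttwist}(T_{S}(\ov\iota,\ov{\mathcal{F}},\ov{\iota_{\ast}\mathcal{F}})),
\end{equation*}
the two summands being supplied by Theorems \ref{thm:15} and \ref{thm:17} respectively.

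The technical core is to show this definition is independent of all choices. Independence of $\ov{\iota_{\ast}\mathcal{F}}$ is immediate from the anomaly formulas \ref{prop:1bis}~(iii) and \ref{prop:1}~(i), whose contributions cancel. Independence of the lift $(\ov\iota,\ov\pi)$ compatible with $T_{\ov f}$ follows by combining the anomaly formulas \ref{prop:1bis}~(ii) and \ref{prop:1}~(ii) with the multiplicativity of the secondary Todd class along the triangle above, using that $T_{\ov f}$ is held fixed. The hard part is independence of the factorization: given $f=\pi_{1}\iota_{1}=\pi_{2}\iota_{2}$, form the diagonal immersion $\iota=(\iota_{1},\iota_{2})\colon X\hookrightarrow\PP^{n_{1}}_{Y}\times_{Y}\PP^{n_{2}}_{Y}$, and denote by $p_{i}$ the projections of the fiber product so that $\iota_{i}=p_{i}\circ\iota$. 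Applying Proposition \ref{prop:5} to the cartesian squares obtained by pulling $p_{i}$ back along $\iota_{i}$, the transitivity of $T_{S}$ for closed immersions from Theorem \ref{thm:17}, and Proposition \ref{prop:comp_proj} for the equality $\pi_{1}p_{1}=\pi_{2}p_{2}$, both candidate values reduce to the single expression associated with the factorization $f=(\pi_{1}p_{1})\circ\iota$. The main obstacle is the careful bookkeeping of hermitian structures on the various tangent complexes, so that the anomaly contributions telescope exactly.

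With well-definedness established, the five axioms of Definition \ref{def:genAT} would follow: the differential equation adds correctly along the factorization; functoriality along transverse pull-backs reduces to the functoriality of the two component theories plus base change for trivial projective bundles; additivity and the projection formula transfer componentwise using Proposition \ref{prop:3}; transitivity for a composite $\ov g\circ\ov f$ is reduced, by factorizing the composite as a single closed immersion into a product projective bundle followed by a projection, to iterated uses of Propositions \ref{prop:comp_proj}, \ref{prop:5} and the transitivity of the component theories.

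Uniqueness would be proven as follows. If $T'$ is another theory agreeing with $T_{S}$ on closed immersions, the restriction of $T'$ to projections $\pi\colon\PP^{n}_{Y}\to Y$ is a theory of analytic torsion classes for projective spaces in the sense of Definition \ref{def:3}; applying axiom \ref{item:5AT} to $\Id_{\PP^{n}}=p_{2}\circ\Delta$ exhibits this restriction as compatible with $T_{S}$ on closed immersions in the sense of Definition \ref{compatprojsp}, so Theorem \ref{thm:15} forces $T'=T_{S}$ on projective bundles, hence on all projective morphisms by transitivity. Finally, for an arbitrary theory $T$, the restriction to closed immersions is transitive, additive and compatible with the projection formula, so by Theorem \ref{thm:17} coincides with $T_{S}$ for a unique real additive genus $S$; the uniqueness just proven then gives $T=T_{S}$ on all of $\oSm_{\ast/\CC}$.
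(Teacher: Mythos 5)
Your overall route is the same as the paper's: you define $T_{S}$ on a general $\ov f$ via a factorization $f=\pi\circ\iota$ through $\PP^{n}_{Y}$ (the paper keeps arbitrary hermitian structures on $\iota,\pi$ and adds the correction $\ov f_{\ttwist}[\ch(\ov{\mathcal{F}})\bullet\widetilde{\Td}_{m}(\ov f,\ov\pi\circ\ov\iota)]$, you normalize $\ov f=\ov\pi\circ\ov\iota$, which is equivalent by the anomaly formulas), you compare two factorizations through the fiber product $\PP^{n_{1}}_{Y}\times_{Y}\PP^{n_{2}}_{Y}$, you verify transitivity via products of projective bundles and Propositions \ref{prop:comp_proj} and \ref{prop:5}, and your uniqueness and classification arguments via Theorems \ref{thm:15} and \ref{thm:17} are exactly the paper's.

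There is, however, one concrete hole in the step you call the hard part. Let $j_{1}=(\Id,\iota_{2})\colon X\to X\underset{Y}{\times}\PP^{n_{2}}_{Y}$, and let $q_{1}\colon X\underset{Y}{\times}\PP^{n_{2}}_{Y}\to X$, $k_{1}\colon X\underset{Y}{\times}\PP^{n_{2}}_{Y}\to \PP^{n_{1}}_{Y}\times_{Y}\PP^{n_{2}}_{Y}$ be the maps of the cartesian square obtained by pulling $p_{1}$ back along $\iota_{1}$, so that $\iota=k_{1}\circ j_{1}$. Using transitivity for closed immersions and Proposition \ref{prop:5} exactly as you propose, the difference between your first candidate $T_{S}(\ov\pi_{1})+(\ov\pi_{1})_{\ttwist}(T_{S}(\ov\iota_{1}))$ and the expression attached to the factorization through $\iota$ reduces to
$(\ov\pi_{1})_{\ttwist}(\ov\iota_{1})_{\ttwist}\bigl[T_{S}(\ov q_{1},\ov{\Rd j_{1\ast}\mathcal{F}},\ov{\mathcal{F}})+(\ov q_{1})_{\ttwist}\bigl(T_{S}(\ov j_{1},\ov{\mathcal{F}},\ov{\Rd j_{1\ast}\mathcal{F}})\bigr)\bigr]$,
and the vanishing of the bracket is not a consequence of Propositions \ref{prop:5}, \ref{prop:comp_proj} and the transitivity of the component theories, which is all you cite: it is precisely equation \eqref{eq:19} of the (unnumbered) proposition following Theorem \ref{thm:15}, i.e.\ the compatibility for an arbitrary section of a trivial projective bundle, which is where the compatibility condition \eqref{eq:12} of Definition \ref{compatprojsp} — the reason one must take the projective-space theory supplied by Theorem \ref{thm:15} rather than an arbitrary $T_{\mathfrak t}$ — actually enters the existence proof. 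As stated, your toolkit never uses that compatibility, and indeed without it well-definedness already fails for the two factorizations of $\Id_{Y}$ through $\PP^{0}_{Y}$ and through a section of some $\PP^{n}_{Y}\to Y$. Since \eqref{eq:19} is proved in the paper, the gap is easily repaired by invoking it (this is also the first equality in the paper's own chain of identities); with that ingredient added, the rest of your argument goes through.
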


We will denote the theory associated to the additive genus $S$, whose existence is guaranteed by the preceding theorem, by
  $T_{S}$. In particular, there is a unique theory of generalized
  analytic torsion classes that agrees with $T^{h}$ when restricted to
  the class of closed immersions. This theory will be called
  homogeneous.

\begin{proof}
  We first prove the uniqueness. Let $T$ be a theory of analytic
  torsion classes that agrees with $T_{S}$ for
  the class of closed immersions. Since the
  restriction of $T$ to projective spaces, by the transitivity axiom,
  is compatible with $T_{S}$,  by Theorem \ref{thm:15}, it also agrees
  with $T_{S}$. Finally, the transitivity axiom implies that $T$ is determined
  by its values for closed immersions and projective spaces.

  We now prove the existence. For the moment, let $T_{S}$ be the
  theory of analytic torsion classes for closed immersions and
  projective spaces determined by $S$.
  Let $\ov f\colon X\to Y$ be a morphism
  in $\ov\Sm_{\ast/\CC}$, and let $\ov \xi =(\ov
  f,\ov{\mathcal{F}},\ov{\Rd f_{\ast}\mathcal{F}})$ be a relative
  metrized complex. Since $f$ is assumed to be projective, there is a
  factorization $f=\pi \circ \iota $, where $\iota \colon X\to
  \PP^{n}_{Y}$ is a closed immersion and $\pi \colon \PP_{Y}^{n}\to Y$
  is the projection. Choose auxiliary hermitian structures on $\iota
  $, $\pi $ and $\Rd \iota _{\ast} \mathcal{F}$. Then we define
  \begin{equation}
    \label{eq:27}
    T_{S}(\ov \xi )= T_{S}(\ov \pi )
    + \ov \pi_{\ttwist}(
      T_{S}(\ov \iota ))
      +\ov f_{\ttwist}\left[
        \ch(\ov{\mathcal{F}})\bullet \widetilde {\Td}_{m}(\ov f,\ov
        \pi \circ \ov \iota )
        \right]
  \end{equation}
 To simplify the notations, in the sequel we will also refer to it
 simply by $T(\ov\xi)$. The anomaly formulas easily imply that this
 definition does not
  depend on the choice of hermitian structures on $\iota
  $, $\pi $ and $\Rd \iota _{\ast} \mathcal{F}$. We next show that
  this definition is independent of the factorization of $f$. Let
  $f=\pi _{1}\circ \iota _{1}=\pi _{2}\circ \iota
  _{2}$ be two different factorizations, being $\PP^{n_{i}}$, the
  target of $\iota _{i}$, $i=1,2$. Since equation  \eqref{eq:27} is
  independent of
  the choice of auxiliary hermitian structures, by
  \cite[Lem.~5.12]{BurgosFreixasLitcanu:HerStruc},
  we may assume that $
  \ov f=\ov \pi _{1}\circ \ov \iota _{1}=\ov \pi _{2}\circ \ov \iota
  _{2}$.

  We consider the commutative diagram with cartesian square
  \begin{displaymath}
    \xymatrix{
      X \ar[r]^{j_{1}}\ar[dr]_{\Id_{X}}&
      X\underset{Y}{\times} \PP_{Y}^{n_{2}} \ar[r]^{k_{1}}\ar[d]^{q_{1}} &
      \PP_{Y}^{n_{1}}\underset{Y}{\times} \PP_{Y}^{n_{2}} \ar[d]^{p_{1}}\\
      & X  \ar[r]^{\iota _{1}}\ar[dr]_{f}&
      \PP_{Y}^{n_{1}}\ar[d]^{\pi _{1}}\\
      & & Y
    }
  \end{displaymath}
  where $j_{1}(x)=(x,\iota _{2}(x))$, $p_{1}$ is the first projection
  and $q_{1}$ and $k_{1}$ are defined by the cartesian square. The
  hermitian structure of $\ov \pi _{2} $ induces a hermitian structure
  on $p_{1}$ that, in turn, induces a hermitian structure on
  $q_{1}$. The hermitian structure of $\iota _{1}$ induces a hermitian
  structure on $k_{1}$ and the hermitian structure of $\iota _{2}$
  induces one on $j_{1}$. We will denote the corresponding morphisms
  of $\ov\Sm_{\ast/\CC}$ by $\ov p_{1}$, $\ov q_{1}$, $\ov k_{1}$ and
  $\ov j_{1}$. We consider also the analogous diagram obtained
  swapping 1 and 2. Finally, we write
  $\ov p=\ov \pi _{1}\circ \ov p_{1}=\ov \pi _{2}\circ \ov p_{2}$ and
  $\ov j=\ov k_{1}\circ \ov j_{1}= \ov k_{2}\circ \ov j_{2}$.
  Then we have
  \begin{align*}
    T(\ov \pi _{1})+(\ov \pi _{1})_{\ttwist}(T(\ov \iota _{1}))&=
    T(\ov \pi _{1})+(\ov \pi _{1})_{\ttwist}(T(\ov \iota _{1}))+
    \ov f_{\ttwist}\left(T(\ov q_{1})+(\ov q_{1})_{\ttwist}(T(\ov
    j_{1}))\right)\\
    &=
    T(\ov \pi _{1})+(\ov \pi _{1})_{\ttwist}\big(T(\ov \iota _{1})+
    (\ov \iota_{1}) _{\ttwist}(T(\ov q_{1}))\big)+\ov p_{\ttwist}(\ov
    k_{1})_{\ttwist}(T(\ov j_{1}))\\
    &=
    T(\ov \pi _{1})+(\ov \pi _{1})_{\ttwist}\big(T(\ov p _{1})+
    (\ov p_{1}) _{\ttwist}(T(\ov k_{1}))\big)+\ov p_{\ttwist}(\ov
    k_{1})_{\ttwist}(T(\ov j_{1}))\\
    &=T(\ov p)+\ov p_{\ttwist}(T(\ov j)).
  \end{align*}
  Analogously, we obtain
  \begin{displaymath}
    T(\ov \pi _{2})+(\ov \pi _{2})_{\ttwist}(T(\ov \iota _{2}))=
    T(\ov p)+\ov p_{\ttwist}(T(\ov j)).
  \end{displaymath}
  Hence $T_{S}$ is well defined for all relative metrized
  complexes.

  It remains to prove that it satisfies the properties of
  a theory of analytic torsion classes. The properties \ref{item:1AT}
  to \ref{item:4AT} are clear. We thus focus on property \ref{item:5AT}.
  Let $\ov{f}\colon X\to Y$ and $\ov{g}\colon Y\to Z$ be morphisms in
  $\ov{\Sm}_{\ast/\CC}$. We choose factorizations of
  $\ov{g}\circ\ov{f}$ and $\ov{g}$:
  \begin{align*}
    &\xymatrix{
      X\ar@{^{(}->}[r]^{\ov{i}}\ar[rd]_{\ov{g}\circ\ov{f}}
      &\PP^{m}_{Z}\ar[d]^{\ov{p}}\\
      &Z
    }
    &\xymatrix{
      Y\ar@{^{(}->}[r]^{\ov{\ell}}\ar[rd]_{\ov{g}}
      &\PP^{n}_{Z}\ar[d]^{\ov{r}}\\
      &Z,
    }
  \end{align*}
  where the hermitian
  structures on
  $\ov{p}$ and $\ov{r}$ come from fixed hermitan structures on the
  tangent bundles
  $T_{\PP^{m}_{\CC}}$ and $T_{\PP^{n}_{\CC}}$, and the hermitian
  structures $\ov i$ and $\ov {\ell }$ are obtained by using
  \cite[Lem.~5.12]{BurgosFreixasLitcanu:HerStruc}.
  We define
  $\varphi:X\to\PP^{m}_{\CC}$ to be the morphism obtained from $i$ by
  composing with the projection to $\PP^{m}_{\CC}$. Then we see that
  the morphism $j:=(\varphi,f)\colon X\to \PP^{m}_{Y}$ is a closed
  immersion. Indeed, it is
  enough to realize that the composition
\begin{displaymath}
	\xymatrix{
		X\ar[r]^{\hspace{-0.1cm}(\varphi,f)}	&\PP^{m}_Y\ar[r]^{(\Id,g)}	&\PP^{m}_Z
	}
\end{displaymath}
agrees with the closed immersion $i$ and that $G:=(\Id,g)$ is separated (since proper). We can thus decompose $\ov{f}$ as
\begin{displaymath}
	\xymatrix{
			X\ar@{^{(}->}[r]^{\ov{j}}\ar[rd]_{\ov{f}}		&\PP^{m}_{Y}\ar[d]^{\ov{q}}\\
				&Y.
		}
\end{displaymath}
Again, in this factorization the hermitian structure $\ov{q}$ comes
from the previously fixed hermitian structure on $T_{\PP^{m}_{\CC}}$
and the hermitian structure $\ov j$ is obtained by using
\cite[Lem.~5.12]{BurgosFreixasLitcanu:HerStruc}. Because
$\ov{g}\circ\ov{f}=\ov{p}\circ\ov{i}$ and by the very construction of
$T$ for arbitrary projective morphisms \eqref{eq:27}, we have
\begin{equation}\label{eq:ma_1}
	T(\ov{g}\circ\ov{f})=T(\ov{p})+\ov{p}_{\ttwist}(T(\ov{i})).
\end{equation}
We proceed to work on $T(\ov{i})$. For this we write the commutative diagram
\begin{displaymath}
	\xymatrix{
		X\ar@{^{(}->}[r]^{j}\ar@{^{(}->}[rd]_{i}	&\PP^{m}_{Y}\ar@{^{(}->}[r]^{k}\ar[d]	^{G}	 &\PP^{m}_{\PP^{n}_{Z}}\ar@{=}[r]		 &\PP^{n}_{\PP^{m}_{Z}}\ar[d]^{\pi}\\
		&\PP^{m}_{Z}\ar[rr]^{\Id}	&	&\PP^{m}_{Z}.
	}
\end{displaymath}
We recall that $G=(\Id,g)$ and $k=(\Id,\ell)$. Below, $G$, $k$ and
$\pi$ will be endowed with the obvious hermitian structures. With
these choices, we observe that $\ov{i}=\ov{G}\circ\ov{j}$ and
$\ov{G}=\ov{\pi}\circ\ov{k}$. Taking also into account the
construction of~$T$ and the fact that $T=T_{S}$ is transitive for
compositions of closed immersions, we find
\begin{equation}\label{eq:ma_2}
		T(\ov{i})=T(\ov{\pi}\circ\ov{k}\circ\ov{j})
		=T(\ov{\pi})+\ov{\pi}_{\ttwist}(T(\ov{k}))+\ov{G}_{\ttwist}(T(\ov{j}))
		=T(\ov{G})+\ov{G}_{\ttwist}(T(\ov{j})).
\end{equation}
Therefore, from equations \eqref{eq:ma_1}, \eqref{eq:ma_2} and the identity $\ov{p}_{\ttwist}\ov{G}_{\ttwist}=\ov{g}_{\ttwist}\ov{q}_{\ttwist}$ we derive
\begin{equation}\label{eq:ma_3}
		T(\ov{g}\circ\ov{f})=T(\ov{p})+\ov{p}_{\ttwist}(T(\ov{G}))+\ov{g}_{\ttwist}\ov{q}_{\ttwist}(T(\ov{j})).
\end{equation}
We claim that
\begin{equation}\label{eq:ma_4}
		T(\ov{p})+\ov{p}_{\ttwist}(T(\ov{G}))=T(\ov{g})+\ov{g}_{\ttwist}(T(\ov{q})).
\end{equation}
Assuming this for a while, we combine \eqref{eq:ma_3} and \eqref{eq:ma_4} into
\begin{equation}
		T(\ov{g}\circ\ov{f})=T(\ov{g})+\ov{g}_{\ttwist}(T(\ov{q})+\ov{q}_{\ttwist}(T(\ov{j})))\\
		=T(\ov{g})+\ov{g}_{\ttwist}(T(\ov{f})).
\end{equation}
Hence we are lead to prove \eqref{eq:ma_4}. For this we construct the commutative diagram with cartesian squares
\begin{displaymath}
	\xymatrix{
		\PP^{m}_{Y}\ar@{^{(}->}[r]^{\hspace{-0.5cm}\tilde{\ell}}\ar[d]^{\ov{q}}	 &\PP^{m}_{Z}\times_{Z}\PP^{n}_{Z}\ar[r]^{\hspace{0.5cm}\tilde{r}}\ar[d]^{\tilde{p}}	&\PP^{m}_{Z}\ar[d]^{\ov{p}}\\
		Y\ar@{^{(}->}[r]^{\ov{\ell}}	&\PP^{n}_{Z}\ar[r]^{\ov{r}}	&Z.
	}
\end{displaymath}
Observe that $\ov{G}=\tilde{r}\circ\tilde{\ell}$. Recall now
Proposition \ref{prop:comp_proj} and Proposition \ref{prop:5}. We then
have the chain of equalities
\begin{multline*}
  T(\ov{p})+\ov{p}_{\ttwist}(T(\ov{G}))
  =T(\ov{p})+\ov{p}_{\ttwist}(T(\tilde{r})+
  \tilde{r}_{\ttwist}(T(\tilde{\ell})))
  =T(\ov{r})+\ov{r}_{\ttwist}(T(\tilde{p})+
  \tilde{p}_{\ttwist}(T(\tilde{\ell}))\\ 
  =T(\ov{r})+\ov{r}_{\ttwist}(T(\ov{\ell})+
  \ov{\ell}_{\ttwist}(T(\ov{q}))) 
  =T(\ov{g})+\ov{g}_{\ttwist}(T(\ov{q})).
\end{multline*}
This proves the claim.

The last assertion of the statement of the theorem follows from the uniqueness.
\end{proof}

\begin{theorem}\label{thm:comparison_genus}
\begin{enumerate}
\item Let $T$ be a theory of generalized analytic torsion
  cla\-sses. Then there is a unique real additive genus $S$ such that,
  for any relative metrized
    complex $\ov \xi :=(\ov f, \ov{\mathcal{F}}, \ov{\Rd
      f_{\ast}\mathcal{F}})$, we have
    \begin{equation}
      \label{eq:70bis}
      T(\ov \xi )-T^{h}(\ov \xi )=-f_{\ast}[\ch(\mathcal{F})\bullet
      \Td(T_{f}) \bullet S(T_{f})\bullet \mathbf{1}_{1}].
    \end{equation}
  \item
    Conversely, any real additive
    genus $S$ defines, by means of equation \eqref{eq:70bis}, a
    unique
    theory of generalized analytic torsion classes $T_S$.
\end{enumerate}
\end{theorem}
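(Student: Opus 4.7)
The plan is to reduce Theorem~\ref{thm:comparison_genus} to the classification Theorem~\ref{thm:gen_anal_tor} and to the already established comparison for closed immersions (Theorem~\ref{thm:17}~\ref{item:56}). For each real additive genus $S$ I would introduce the closed correction current
\begin{displaymath}
  D_S(\ov \xi) := -f_{\ast}[\ch(\ov{\mathcal{F}})\bullet \Td(\ov T_{f})\bullet S(\ov T_{f})\bullet \mathbf{1}_{1}]
\end{displaymath}
and prove that $T^{h}+D_S$ satisfies all five axioms of Definition~\ref{def:genAT}. Then the restriction of $T^{h}+D_S$ to closed immersions agrees with $T_{S}$ by Theorem~\ref{thm:17}, and the uniqueness part of Theorem~\ref{thm:gen_anal_tor} forces $T^{h}+D_S=T_{S}$ globally; this yields both assertions of the theorem. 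Uniqueness of $S$ in (i) is immediate: if the formula holds for $T$, restricting to embedded metrized complexes and invoking Theorem~\ref{thm:17}~\ref{item:56} determines $S$ uniquely.

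The verification of the axioms for $T^{h}+D_S$ is routine for (i)--(iv). Since $\ch(\ov{\mathcal{F}})$, $\Td(\ov T_{f})$, $S(\ov T_{f})$ and $\mathbf{1}_{1}$ are closed in their respective Deligne complexes, $D_S(\ov \xi)$ is a closed current, so $T^{h}+D_S$ satisfies the same differential equation as $T^{h}$. Functoriality follows from base change for transverse pull-back together with the compatibility of characteristic forms with restriction of hermitian structures. Additivity is the additivity of $\ch$, and the projection formula is a combined consequence of the multiplicativity of $\ch$ and the ordinary projection formula $f_{\ast}(\alpha\bullet f^{\ast}\beta)=f_{\ast}(\alpha)\bullet\beta$ (Proposition~\ref{prop:12}~\ref{item:prop_12_3}).

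The main work is the transitivity axiom~\ref{item:5AT}. For composable morphisms $\ov f\colon X\to Y$ and $\ov g\colon Y\to Z$ with composite $\ov h$, I would expand $D_S(\ov g)+\ov g_{\ttwist}(D_S(\ov f))$ using Proposition~\ref{prop:12}~\ref{item:prop_12_2} and the projection formula, substitute the Grothendieck--Riemann--Roch identity
\begin{displaymath}
  \ch(\ov{\Rd f_{\ast}\mathcal{F}})\equiv f_{\ast}(\ch(\ov{\mathcal{F}})\bullet \Td(\ov T_{f})) \pmod{\dd_{\mathcal{D}}},
\end{displaymath}
which comes from the differential equation for any theory (say $T^{h}$), and combine the result with the multiplicativity of the Todd form modulo exact, $\Td(\ov T_{h})\equiv \Td(\ov T_{f})\bullet f^{\ast}\Td(\ov T_{g})$, and the additivity of $S$ on the distinguished triangle $f^{\ast}T_{g}\to T_{h}\to T_{f}\to$, which yields $S(\ov T_{h})\equiv S(\ov T_{f})+f^{\ast}S(\ov T_{g})$. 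The error produced by the $\dd_{\mathcal{D}}$-exact term in the substitution above is $\dd_{\mathcal{D}}$ of the explicit current $-g_{\ast}[T^{h}(\ov f)\bullet \Td(\ov T_{g})\bullet S(\ov T_{g})\bullet \mathbf{1}_{1}]$, which vanishes in $\widetilde{\mathcal{D}}_{D}^{\ast}$. The total simplifies to $D_S(\ov h)$, establishing transitivity.

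The hard part will be bookkeeping the bidegrees and wave-front sets in the transitivity computation: all identities must be interpreted in $\bigoplus_{p}\widetilde{\mathcal{D}}_{D}^{2p-1}(Z,N_{h},p)$, which requires checking that the intermediate currents $f_{\ast}(\ch(\ov{\mathcal{F}})\bullet \Td(\ov T_{f}))$ and $T^{h}(\ov f)$ have the appropriate wave front condition to allow the subsequent pull-back and cup-product with $\Td(\ov T_{g})$ and $S(\ov T_{g})$. This is ensured by Proposition~\ref{prop:11}; once the identity $T^{h}(\ov h)+D_S(\ov h)\equiv T^{h}(\ov g)+D_S(\ov g)+\ov g_{\ttwist}(T^{h}(\ov f)+D_S(\ov f))$ holds modulo $\dd_{\mathcal{D}}$, the proof of (i) and (ii) is concluded by the uniqueness and existence part of Theorem~\ref{thm:gen_anal_tor}.
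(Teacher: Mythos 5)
Your proposal is correct in outline, but it takes a genuinely different route from the paper. The paper fixes the genus $S$ attached to $T$ by Theorem~\ref{thm:gen_anal_tor}, notes that \eqref{eq:70bis} is known for embedded metrized complexes, and then uses transitivity of $T$ and $T^{h}$ to reduce to the case of a trivial projective bundle $\pi\colon\PP^{n}_{X}\to X$; there it further reduces, via the anomaly formulas, generating classes and the projection formula, to an identity of characteristic numbers, $t_{n,-i}-t^{h}_{n,-i}=-\pi_{\ast}(\ch(\ov{\OO}(-i))\Td(\ov{\pi})S(T_{\ov{\pi}}))$, which it proves cohomologically by pulling back to $\PP^{n}\times\PP^{n}$, expanding against the basis $\ch(\Lambda^{i}Q^{\vee})$, feeding in the Koszul resolution \eqref{eq:15bis} of the diagonal and the compatibility condition \eqref{eq:12}, and using additivity of $S$ in the form $S(T_{\ov{\Delta}})=-S(T_{\PP^{n}})$ together with the known closed-immersion case. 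You instead verify directly that $T^{h}+D_{S}$ satisfies all five axioms of Definition~\ref{def:genAT} — with the substantive work in transitivity, where $D_{S}(\ov g)$ must be evaluated on the triple carrying $\ch(\ov{\Rd f_{\ast}\mathcal{F}})$, so the differential equation of $T^{h}$ is used to trade $f_{\ast}(\ch(\ov{\mathcal{F}})\Td(\ov T_{f}))$ for $\ch(\ov{\Rd f_{\ast}\mathcal{F}})$ at the cost of the exact term $\dd_{\mathcal{D}}g_{\ast}[T^{h}(\ov f)\bullet\Td(\ov T_{g})S(\ov T_{g})\bullet\mathbf{1}_{1}]$, which dies in $\widetilde{\mathcal{D}}_{D}$ — and then you conclude by the uniqueness clause of Theorem~\ref{thm:gen_anal_tor}, since $T^{h}+D_{S}$ restricts on closed immersions to the theory $T_{S}$ of Theorem~\ref{thm:17}. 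Your route buys a cleaner derivation of part (ii) (the genus-correction of any generalized theory is again a generalized theory) and avoids the characteristic-number computation, at the price of spelling out the cocycle property of $D_{S}$; the paper's route leans on the compatibility machinery of Theorem~\ref{thm:15} and keeps that cocycle-type GRR manipulation implicit in its reduction step, which your write-up in effect makes explicit. Do make sure, when writing it up, that the modulo-exact identities $\Td(\ov T_{h})\equiv\Td(\ov T_{f})f^{\ast}\Td(\ov T_{g})$ and $S(\ov T_{h})\equiv S(\ov T_{f})+f^{\ast}S(\ov T_{g})$ are only ever multiplied by closed forms and pushed forward, so the discrepancies remain exact; as you note, the wave-front bookkeeping is covered by Propositions~\ref{prop:12} and~\ref{prop:11} because the integrand of $D_{S}$ is a smooth form.
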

\begin{proof}
  We prove the first item, the second being immediate. Let $S$ be the
  real additive genus corresponding to $T$, provided by Theorem
  \ref{thm:gen_anal_tor}. Then \eqref{eq:70bis} holds for embedded
  metrized complexes. Because $T$ and $T^{h}$ are both transitive, it
  suffices to show that \eqref{eq:70bis} holds whenever
  $f\colon\PP^{n}_{X}\to X$ is a trivial projective bundle. Observe
  $T$ and $T^{h}$ satisfy the same anomaly formulas. Then, since the
  sheaves $\mathcal{G}(k)$, $k=-n,\dots,0$ form a generating class for
  $\Db(\PP^{n}_{X})$, and by the projection formula for $T$ and
  $T^{h}$, we easily reduce to the case $\ov{\xi}=\ov{\xi}(k)$. Let
  $t_{n,k}$, $t_{n,k}^{h}$ be the characteristic numbers of $T$,
  $T^{h}$ respectively. We have to establish the equality
  \begin{equation}\label{eq:70_1}
    t_{n,-i}-t^{h}_{n,-i}=-\pi_{\ast}(\ch(\ov{\OO}(-i))\Td(\ov{\pi})
    S(T_{\ov{\pi}})),\quad
    i=-n,\dots,0.
  \end{equation}
  This is an equation of real numbers. By functoriality, this equation
  is equivalent to the analogous equation in $\oplus_{p}
  H^{2p-1}_{\mathcal{D}}(\PP^{n}_{\CC},\RR(p))$, for the second
  projection $p_{2}:\PP^{n}_{\CC}\times\PP^{n}_{\CC}\to\PP^{n}_{\CC}$
  instead of $\pi$. Because the classes $\ch(\Lambda^{i}Q^{\vee})$
  constitute a basis for $\oplus_{p}
  H^{2p-1}_{\mathcal{D}}(\PP^{n}_{\CC},\RR(p))$, \eqref{eq:70_1} is
  equivalent to the equation in cohomology
  \begin{multline}\label{eq:70_2}
    \sum_{i}(-1)^{i}(t_{n,-i}-t^{h}_{n,-i})\ch(\Lambda^{i}\ov{Q}^{\vee})
    =\\
    - p_{2\ast}(\sum_{i}(-1)^{i}\ch(p_{1}^{\ast}\ov{\OO}(-i)\otimes
    \Lambda^{i}p_{2}^{\ast}\ov{Q}^{\vee})\Td(\ov{p}_{2})
    S(T_{\ov{p}_{2}})\bullet\mathbf{1}_{1}).
  \end{multline}
  Recalling the exact sequence \eqref{eq:15bis}, minus the right hand side
  of \eqref{eq:70_2} becomes
  \begin{multline*}
    p_{2\ast}(\ch(\ov{\Delta_{\ast}\OO_{\PP^{n}}})\Td(\ov{p}_{2})
    S(T_{\ov{p}_{2}})\bullet\mathbf{1}_{1})
    =\\
    p_{2\ast}(\Delta_{\ast}(\ch(\ov{\OO}_{\PP^{n}})\Td(\ov{\Delta}))\Td(\ov{p}_{2})
    S(T_{\ov{p}_{2}})\bullet\mathbf{1}_{1})
    =S(T_{\PP^{n}})\bullet\mathbf{1}_{1}.
  \end{multline*}
  On the other hand, using the compatibility condition (Definition
  \ref{compatprojsp}), the left hand side of \eqref{eq:70_2} can be
  equivalently written as
  \begin{multline}\label{eq:70_3}
    T(\ov{p}_{2},\ov{\Delta_{\ast}\OO_{\PP^{n}}},\ov{\OO_{\PP^n}})
    -T^{h}(\ov{p}_{2},\ov{\Delta_{\ast}\OO_{\PP^{n}}},\ov{\OO_{\PP^n}})
    =\\
    -p_{2\ttwist}(T(\Delta,\ov{\OO}_{\PP^{n}},\ov{\Delta_{\ast}\OO_{\PP^{n}}})
    -T^{h}(\Delta,\ov{\OO}_{\PP^{n}},\ov{\Delta_{\ast}\OO_{\PP^{n}}})).
  \end{multline}
  The genus $S$ is additive, so in Deligne cohomology
  we have the relation
  \begin{displaymath}
    S(T_{\ov{\Delta}})=S(T_{\PP^{n}})-\Delta^{\ast}S(T_{\PP^{n}\times\PP^{n}})
    =
    S(T_{\PP^{n}})-\Delta^{\ast}p_{1}^{\ast}S(T_{\PP^{n}})
    -\Delta^{\ast}p_{2}^{\ast}S(T_{\PP^{n}}) 
    =-S(T_{\PP^{n}}).
  \end{displaymath}
  Hence, since the statement is known for closed immersions, the right hand
  side of \eqref{eq:70_3} becomes
  \begin{displaymath}
    p_{2\ast}(\Delta_{\ast}(\ch(\ov{\OO_{\PP^{n}}})\Td(T_{\ov{\Delta}})
    S(T_{\ov{\Delta}})\bullet\mathbf{1}_{1})\Td(\ov{p}_{2}))=
    -S(T_{\PP^{n}})\bullet\mathbf{1}_{1}.
  \end{displaymath}
This concludes the proof.
\end{proof}

\section{Higher analytic torsion forms of Bismut and K\"ohler}
\label{sec:high-analyt-tors}

We now explain the relationship  between the theory of analytic torsion
forms of Bismut-K\"ohler \cite{Bismut-Kohler} and the theory of
generalized analytic torsion classes developed so far.

Let $\pi \colon X\to Y$ be a smooth projective morphism (a projective
submersion) of smooth
complex varieties. Let
$\omega $ be a closed $(1,1)$-form on $X$ that induces a K\"ahler metric
on the fibers of $\pi $. Then $(\pi,\omega )$ is called a K\"ahler
fibration. The form $\omega $ defines a hermitian structure on
$T_{\pi}$, and we will abusively write $\ov{\pi}=(\pi,\omega)$ for the corresponding morphism in $\ov{\Sm}_{\ast/\CC}$.

Let $\ov F$ be a hermitian vector bundle on $X$ such that
for every $i\ge 0$,
$R^{i}\pi _{\ast} F$ is locally free. We consider on $R^{i}\pi _{\ast}
F$ the $L^{2}$ metric obtained using Hodge theory on the fibers of
$\pi $. Using \cite[Def.~3.47]{BurgosFreixasLitcanu:HerStruc} we
obtain a hermitian structure on $\pi _{\ast}F$,
denoted by $\ov{\pi _{\ast} F}_{L^{2}}$. Then $\ov{\xi}=(\ov \pi
,\ov F, \ov{\pi _{\ast} F}_{L^{2}})$  is a relative
metrized complex.  The relative metrized complexes that arise in this
way will be said to be \emph{K\"ahler}.

In the paper \cite{Bismut-Kohler}, Bismut and K\"ohler associate to
every K\"ahler relative metrized complex $\ov{\xi}$ a
differential form, that we temporarily denote by $\tau (\ov \xi
)$. Since in \cite{Bismut-Kohler} the authors use real valued
characteristic classes, while we use characteristic classes in the
Deligne complex, we have to change the normalization of this form.
To this end,
if $\tau(\ov{\xi})^{(p-1,p-1)}$ is the component of
degree $(p-1,p-1)$ of $\tau(\ov{\xi})$, then we put
\begin{displaymath}
  T^{BK}(\ov{\xi})^{(2p-1,p)}= \frac{1}{2}(2\pi
  i)^{p-1}[\tau(\ov{\xi})^{(p-1,p-1)}]
  \in\widetilde{\mathcal{D}}_{D}^{2p-1}(Y,\emptyset,p).
\end{displaymath}
We recall that $[\cdot]$ converts differential forms into currents
according with the conventions in \cite[\S1]{BurgosLitcanu:SingularBC}
(compare with equation \eqref{eq:88}). We
define
\begin{displaymath}
  T^{BK}(\ov{\xi})=\sum_{p\geq 1}T^{BK}(\ov{\xi})^{(2p-1,p)}.
\end{displaymath}
The first main result of \cite{Bismut-Kohler} is that this class
satisfies the differential equation
\begin{displaymath}
  d_{\mathcal{D}}T^{BK}(\ov{\xi})=\ch(\ov{\pi _{\ast} F}_{L^{2}})-
  \ov {\pi}_{\ttwist}[\ch(\ov F)].
\end{displaymath}
Thus, $T^{BK}(\ov \xi )$ is an example of analytic torsion class.

Let now $\omega '$ be another closed $(1,1)$-form on $X$ that induces
a K\"ahler metric on the fibers of $\pi $. We denote $\ov \pi
'=(\pi,\omega ')$. Let $\ov F'$ be the vector bundle $F$ with another
choice of metric and define $\ov{\pi _{\ast}F}'_{L^{2}}$ to be the object
$\pi _{\ast}F$ with the $L^{2}$ metric induced by $\omega '$ and $\ov
F'$. We write $\ov \xi '$ for the K\"ahler relative metrized complex
$(\ov \pi ',\ov F ',\ov{\pi
  _{\ast}F}'_{L^{2}})$.

The second main result of \cite{Bismut-Kohler} is the following anomaly formula.

\begin{theorem}[\cite{Bismut-Kohler} Theorem 3.10]\label{thm:22}  The following
  formula holds:
  \begin{displaymath}
    T^{BK}(\ov \xi ')-T^{BK}(\ov \xi )= \cht(\ov{\pi
  _{\ast}F}_{L^{2}},\ov{\pi
  _{\ast}F}'_{L^{2}}) +\ov \pi' _{\ttwist}\left[\ch(\ov F)\bullet \widetilde
{\Td}_{m}(\ov \pi' ,\ov \pi ) -\cht(\ov F,\ov
F')\right].
  \end{displaymath}
\end{theorem}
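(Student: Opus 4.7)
The plan is to reduce the claim to the anomaly formula proved in Bismut--Köhler \cite{Bismut-Kohler}, and check that the change of normalizations described in the introduction converts one formula into the other. In \cite[Thm.~3.10]{Bismut-Kohler}, the anomaly formula is stated for real-valued classes using the normalizations $dd^{c}=\frac{1}{4\pi i}\partial\bar\partial$ and with secondary Bott-Chern forms satisfying $dd^{c}\widetilde{\ch}=\ch-\ch$; the formula there has the shape
\begin{equation*}
  \tau(\ov\xi')-\tau(\ov\xi) = \widetilde{\ch}_{BGS}(\ov{\pi_{\ast}F}_{L^{2}},\ov{\pi_{\ast}F}'_{L^{2}})
  + \pi_{\ast}\left[\ch_{BGS}(\ov F)\wedge\widetilde{\Td}_{BGS}(\ov\pi',\ov\pi) - \widetilde{\ch}_{BGS}(\ov F,\ov F')\wedge\Td_{BGS}(\ov\pi')\right],
\end{equation*}
modulo $\Im\partial+\Im\bar\partial$, with all characteristic and secondary classes in the real-valued convention of \cite{BismutGilletSoule:MR1086887, Bismut-Kohler}.

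First, I would apply the degree-by-degree normalization change: in bidegree $(p-1,p-1)$ the coefficient $\frac{1}{2}(2\pi i)^{p-1}$ converts a real-valued $(p-1,p-1)$-form into an element of $\mathcal{D}^{2p-1}(Y,p)$, as in the definition of $T^{BK}$. Under this renormalization, the operator $dd^{c}$ becomes $-\frac{1}{2}d_{\mathcal{D}}$, so the factor $1/2$ is absorbed consistently by both the primary and secondary classes once one checks that all the characteristic and Bott-Chern classes used here (namely $\ch$, $\Td$, $\widetilde{\ch}$ and $\widetilde{\Td}_{m}$) transform by multiplying the $(p-1,p-1)$-component by exactly $\frac{1}{2}(2\pi i)^{p-1}$; this is the content of the comparison carried out in \cite{Burgos:CDB} and used already in the passage from $\tau$ to $T^{BGS}$ in \eqref{eq:88}.

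Next, the direct image in \cite{Bismut-Kohler} is the usual integration along the fiber $\pi_{\ast}$, while here we use $\ov\pi_{\ttwist}(\cdot)= \pi_{\ast}(\cdot\bullet\Td(\ov\pi))$ together with the normalization $\pi_{\ast}=\frac{1}{(2\pi i)^{e}}\int_{\pi}$ (where $e$ is the relative dimension). Combining these, the term $\pi_{\ast}[\widetilde{\ch}_{BGS}(\ov F,\ov F')\wedge\Td_{BGS}(\ov\pi')]$ becomes $\ov\pi'_{\ttwist}[\cht(\ov F,\ov F')]$; similarly for the Todd secondary class term. The sign on $\cht(\ov F,\ov F')$ comes precisely from the reversal of arguments between $\widetilde{\ch}_{BGS}(\ov F,\ov F')$ in Bismut--Köhler (which measures the change from $\ov F'$ to $\ov F$) and the convention of \cite[Thm.~4.11]{BurgosFreixasLitcanu:HerStruc}.

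The main obstacle is bookkeeping rather than any new idea: one must verify that every normalization convention, every sign, and every twist matches correctly under the dictionary between \cite{Bismut-Kohler} and \cite{BurgosFreixasLitcanu:HerStruc}. In particular I would check separately on the two sides that the differential equation $d_{\mathcal{D}}(T^{BK}(\ov\xi')-T^{BK}(\ov\xi))$ agrees with $d_{\mathcal{D}}$ applied to the right-hand side of the stated formula; since both sides of that check are closed classes and the formula is linear in secondary classes, this will already pin down the constants. Once the scalar constants are fixed, Bismut--Köhler's formula implies ours.
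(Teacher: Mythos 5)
Your proposal is correct and coincides with the paper's own treatment: Theorem \ref{thm:22} is not proved in the paper at all, but is quoted directly from \cite{Bismut-Kohler} (Theorem 3.10) and restated in the Deligne-complex conventions via the same factor $\tfrac{1}{2}(2\pi i)^{p-1}$ that defines $T^{BK}$, together with the passage from fiber integration to $\ov \pi'_{\ttwist}$ — exactly the dictionary you describe. The only caveat is that your final ``differential-equation check'' determines the renormalization constants only up to closed classes, so it is a consistency check rather than a proof; but since the substance is imported from \cite{Bismut-Kohler}, nothing more is required.
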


In the book \cite{Bismut:Asterisque}, Bismut studies the compatibility
of higher analytic torsion forms with complex immersions. Before
stating his result we have to recall the definition of the $R$-genus
of Gillet and Soul\'e \cite{GSATAT}. It is the additive genus
  attached to the power series
  \begin{equation}\label{eq:81}
    R(x)=\sum_{\substack{m \text{ odd}\\ m\geq
        1}}\left(2\zeta'(-m)+\left(1+\frac{1}{2}+\dots+\frac{1}{m}\right)
      \zeta(-m)\right)\frac{x^{m}}{m!}.
  \end{equation}
Let $T_{-R/2}$ be the theory of analytic torsion classes for closed
immersions associated to~$\frac{-1}{2}R$.

\begin{remark}\label{rem:10} The fact that we obtain the additive
  genus $-R/2$ instead of $R$ is due to two facts. The signs
  comes from the minus sign in equation \eqref{eq:70}, while the
  factor $1/2$ comes from the difference of the
  normalization of Green forms used in this paper and the one used in
  \cite{GilletSoule:ait}. Note however that the arithmetic
  intersection numbers computed using both normalizations agree,
  because the definition of arithmetic degree in \cite[\S
  3.4.3]{GilletSoule:ait} has a factor $1/2$ while the definition of
  arithmetic degree in \cite[(6.24)]{BurgosKramerKuehn:cacg} does not.
\end{remark}

Consider a commutative diagram of smooth complex varieties
\begin{displaymath}
  \xymatrix {X\ar[dr]_{f} \ar[r]^{\iota} & Y\ar[d]^{g} \\
    & Z}
\end{displaymath}
where $f$ and $g$ are projective submersions and $\iota $ is a closed immersion. Let
$\ov F$ be a hermitian vector bundle on $X$ such that the sheaves
$R^{i}f_{\ast}F$ are locally free and let
\begin{displaymath}
  0\to \ov E_{n}\to \dots \to \ov E_{0}\to \iota _{\ast} F\to 0
\end{displaymath}
be a resolution of $\iota_{\ast}F$ by hermitian vector bundles. We assume
that for all $i,j$, $R^{i}g_{\ast}E_{j}$ is locally free.  We will
denote by $E$ the complex $E_{n}\to\dots \to E_{0}$.
Let
$\omega ^{X}$ and $\omega ^{Y}$ be closed $(1,1)$ forms that define a
structure of K\"ahler fibration on $f$ and $g$ respectively.  As
before we write $\ov f=(f,\omega ^{X})$ and $\ov g=(g,\omega
^{Y})$. The exact sequence
\begin{displaymath}
  0\longrightarrow T_{f}\longrightarrow  f^{\ast}T_{g}\longrightarrow
  N_{X/Y}\longrightarrow 0
\end{displaymath}
induces a hermitian structure on $N_{X/Y}$.
We will denote $\ov \iota
$ the inclusion $\iota $ with this hermitian structure. Finally we
denote by $\ov{f_{\ast}F}_{E}$ the hermitian structure on $f_{\ast}F$
induced by the hermitian structures $\ov{g_{\ast} E_{j}}_{L^{2}}$,
$j=0,\dots,n$.

Then, adapted
to our language, the main result of \cite{Bismut:Asterisque} can be stated as
follows.

\begin{theorem}[\cite{Bismut:Asterisque} Theorems 0.1 and
  0.2] \label{thm:21} The
  following equation holds in the group $\bigoplus
  _{p}\widetilde{\mathcal{D}}_{D}^{2p-1}(Z,\emptyset,p)$:
  \begin{multline*}
    T^{BK}(\ov f,\ov F,\ov{f_{\ast} F}_{L_{2}})=
    \sum_{j=0}^{n}(-1)^{j}T^{BK}(\ov g,\ov E_{j},\ov{f_{\ast} E_{j}}_{L_{2}})\\
    +\ov g_{\ttwist}(T_{-R/2}(\ov \iota ,\ov F, \ov E))
    +\cht(\ov{f_{\ast}F}_{E},\ov{f_{\ast}F}_{L^{2}}).
  \end{multline*}
\end{theorem}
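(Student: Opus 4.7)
The plan is to derive the statement by a careful translation of Bismut's original immersion formula \cite{Bismut:Asterisque} into the Deligne-cohomology formalism and the normalizations adopted here. In \emph{loc. cit.}, the formula is stated as an equality of real-valued currents modulo boundaries on $Z$, involving: the Bismut-K\"ohler analytic torsion forms $\tau$ (in the real-valued normalization), the Bismut-Gillet-Soul\'e singular Bott-Chern forms $\tau^{BGS}$ attached to the resolution $\ov E\to \iota_{\ast}F$, an anomaly term comparing the Hodge-theoretic and resolution-induced metrics on $f_{\ast}F$, and an explicit correction $-g_{\ast}(\Td^{-1}(\ov N)\,R(\ov N)\,\ch(\ov F))$ in which the $R$-genus of Gillet-Soul\'e appears.

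First I would rewrite each $\tau$ as a $T^{BK}$ via the normalization $T^{BK,(2p-1,p)}=\tfrac{1}{2}(2\pi i)^{p-1}[\tau^{(p-1,p-1)}]$, and each $\tau^{BGS}$ as a $T^h$ via Theorem \ref{thm:18}; the sign and the factor $1/2$ built into \eqref{eq:88} are responsible for the clean form of the resulting equation and must be tracked on every term. The Todd-twisted push-forwards $\ov g_{\ttwist}$ then appear naturally because the Todd factors on $T_g$ in Bismut's expression are absorbed into $\ov g_{\ttwist}$ by its very definition; similarly the $L^2$ anomaly term reproduces $\cht(\ov{f_{\ast}F}_{E},\ov{f_{\ast}F}_{L^{2}})$ via the definition of the secondary Chern character in $\oDb(Z)$ (see \cite[\S~4]{BurgosFreixasLitcanu:HerStruc}).

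The key step, and the one that explains the appearance of $T_{-R/2}$ rather than $T^h$ on the right-hand side, is to absorb the $R$-genus correction into the singular Bott-Chern term. By Theorem \ref{thm:17}\,\ref{item:56} applied to the real additive genus $S=-R/2$, one has
\begin{displaymath}
T_{-R/2}(\ov \iota ,\ov F,\ov E)-T^{h}(\ov \iota ,\ov F,\ov E)
=-\iota_{\ast}\bigl[\ch(\ov F)\bullet\Td(T_{\ov \iota})\bullet(-R/2)(T_{\ov \iota})\bullet\mathbf{1}_{1}\bigr].
\end{displaymath}
Using $T_{\ov \iota}=\ov N_{X/Y}[-1]$ together with the additivity of the $R$-genus (so that $R(T_{\ov \iota})=-R(\ov N_{X/Y})$), projecting by $\ov g_{\ttwist}$, and taking into account the factor $1/2$ and sign explained in Remark \ref{rem:10}, this difference matches exactly the extra term $-g_{\ast}(\Td^{-1}(\ov N)\,R(\ov N)\,\ch(\ov F))$ in the original formula of Bismut. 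The principal obstacle is purely bookkeeping: ensuring that every conversion factor $2\pi i$, sign, and rescaling coming from the difference between real currents and Deligne complexes matches the conventions for $T^{BK}$, $T^{h}$, $\cht$ and $\ov g_{\ttwist}$ consistently on each of the four groups of terms; once this is done the stated equation follows.
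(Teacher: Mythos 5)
Your proposal is correct and coincides with the paper's treatment: Theorem \ref{thm:21} is not reproved there but stated as the translation of Bismut's Theorems 0.1 and 0.2 into the Deligne-complex normalization, which is exactly the conversion you describe — rescaling $\tau$ to $T^{BK}$, identifying the Bismut--Gillet--Soul\'e currents with $T^{h}$ via Theorem \ref{thm:18} and equation \eqref{eq:88}, and absorbing the explicit $R$-genus correction into $T_{-R/2}$ through Theorem \ref{thm:17}\,\ref{item:56} with $S(T_{\iota})=-S(N_{X/Y})$, as the paper itself explains in Remark \ref{rem:10}. Since the paper offers no further argument beyond this citation and normalization bookkeeping, your write-up captures essentially the same approach.
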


We can particularize the previous result to the case when $F=0$. Then
$E$ and
$g_{\ast}E$ are acyclic objects. The hermitian structures of
$\ov E_{j}$ and
$\ov {g_{\ast}E_{j}}_{L^{2}}$ induce hermitian structures on them. We
denote these hermitian structures as $\ov E$ and~$\ov
{g_{\ast}E}_{L^{2}}$.

\begin{corollary} \label{cor:8}
Let $\ov E$ be a bounded acyclic complex of hermitian vector
bundles on $Y$ such that the direct images $R^{i}g_{\ast}E_{j}$ are
locally free on $Z$. Then
\begin{displaymath}
  \sum_{j=0}^{n}(-1)^{j}T^{BK}(\ov{g},\ov{E_{j}},\ov{g_{\ast}E_{j}}_{L^{2}})=
  \cht(\ov{g_{\ast}E}_{L^{2}})
  -\ov{g}_{\ttwist}(\cht(\ov{E}))
\end{displaymath}
in $\bigoplus
  _{p}\widetilde{\mathcal{D}}_{D}^{2p-1}(Z,\emptyset,p)$.
\end{corollary}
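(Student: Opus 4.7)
The plan is to specialize Theorem \ref{thm:21} to the degenerate configuration $X=Y$, $\iota=\Id_{Y}$, $F=0$. Under this specialization, $\iota_{\ast}F=0$, so a ``resolution'' $0\to\ov{E}_{n}\to\cdots\to\ov{E}_{0}\to\iota_{\ast}F\to 0$ in the sense of Theorem \ref{thm:21} becomes exactly a bounded acyclic complex of hermitian vector bundles on $Y$ subject only to the condition that each $R^{i}g_{\ast}E_{j}$ be locally free, which is precisely the data of the corollary. Moreover $N_{X/Y}=0$, so the induced hermitian structure makes $\ov{\iota}$ trivial and $\ov{f}=\ov{g}$.

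With these choices I expect each ingredient of the immersion formula to collapse in a controlled way. The left-hand side $T^{BK}(\ov{f},\ov{F},\ov{f_{\ast}F}_{L^{2}})$ reduces to $T^{BK}(\ov{g},\ov{0},\ov{0})$, which vanishes since the fibrewise Dolbeault Laplacian with values in the zero bundle is identically zero. The Bott--Chern correction $\cht(\ov{f_{\ast}F}_{E},\ov{f_{\ast}F}_{L^{2}})$ becomes the Bott--Chern class of the isomorphism $\ov{g_{\ast}E}_{L^{2}}\dra\ov{0}$ of hermitian structures on the zero object of $\oDb(Z)$, which in the convention $\cht(\ov{H}):=\cht(\ov{0},\ov{H})$ equals $-\cht(\ov{g_{\ast}E}_{L^{2}})$. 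Finally, for the embedded term $T_{-R/2}(\ov{\iota},\ov{F},\ov{E})=T_{-R/2}(\ov{\Id_{Y}},\ov{0},\ov{E})$, I would apply the anomaly formula of Proposition \ref{prop:1bis}~\ref{item:8bis} to the change of metric from $\ov{0}$ to $\ov{E}$ on the zero direct image, starting from the normalized value $T_{-R/2}(\ov{\Id_{Y}},\ov{0},\ov{0})=0$; this produces $\cht(\ov{E})$.

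Substituting these three identifications into the statement of Theorem \ref{thm:21} and rearranging yields precisely the identity of the corollary. The only delicate point to manage will be bookkeeping of sign conventions for the Bott--Chern class of an acyclic complex (i.e.\ whether one reads $\cht(\ov{E})$ as $\cht(\ov{0},\ov{E})$ or $\cht(\ov{E},\ov{0})$), and confirming that Theorem \ref{thm:21} remains valid in the degenerate situation $\iota=\Id_{Y}$; this last issue can be bypassed, if desired, by working instead with the empty closed immersion $\emptyset\hookrightarrow Y$, which carries the same information as soon as $F=0$.
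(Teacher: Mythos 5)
Your proposal is correct and is essentially the paper's own argument: the paper proves Corollary~\ref{cor:8} simply by particularizing Theorem~\ref{thm:21} to $F=0$, which is exactly your specialization, and your identifications of the three remaining terms (vanishing of $T^{BK}(\ov g,\ov 0,\ov 0)$, $\cht(\ov{f_{\ast}F}_{E},\ov{f_{\ast}F}_{L^{2}})=-\cht(\ov{g_{\ast}E}_{L^{2}})$, and $T_{-R/2}(\ov\iota,\ov 0,\ov E)=\cht(\ov E)$ via Proposition~\ref{prop:1bis}~\ref{item:8bis} and additivity) carry the same signs the paper's statement requires. The only difference is that you make explicit the choice $X=Y$, $\iota=\Id_{Y}$ (with the fallback $X=\emptyset$), which the paper leaves implicit by keeping the ambient diagram of Theorem~\ref{thm:21} and merely setting the bundle $F$ to zero.
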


We will also need a particular case of functoriality and projection
formula for the higher analytic torsion forms of Bismut-K\"ohler
proved by R\"ossler \cite{Roessler:ARR}.

The relative metrized complexes $\ov \xi _{n}(k)$ of
Notation \ref{def:7} are K\"ahler. Therefore we can apply the
construction of Bismut-K\"ohler to them. We denote
\begin{equation}
  \label{eq:80}
  t^{BK}_{n,k}=T^{BK}(\ov \xi _{n}(k)).
\end{equation}
By Corollary \ref{cor:8}, the numbers $t^{BK}_{n,k}$ satisfy the
relation \eqref{eq:79}. Hence they are determined by the main
characteristic numbers $t^{BK}_{n,k}$ for $-n\le k \le 0$.

\begin{theorem}[\cite{Roessler:ARR} Lemma 7.15]\label{thm:20} Let
  $\pi\colon
  \PP^{n}_{X}\to X$ be a trivial projective bundle. Let $\ov G$ be a
  hermitian vector bundle on $X$. Then
  \begin{displaymath}
    T^{BK}(\ov \xi _{n}(k)\otimes \ov G)=t^{BK}_{n,k}\bullet \ch(\ov
    G).
  \end{displaymath}
\end{theorem}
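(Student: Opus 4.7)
The plan is to prove this projection formula by exploiting the product structure. Since $\PP^{n}_{X}=\PP^{n}\times X$ with the Fubini-Study metrics on $T_{\pi}$ and $\OO(k)$ both pulled back from $\PP^{n}$, the coupled Kähler fibration is a metric product, and the Bismut superconnection attached to the Dolbeault complex of $\OO(k)\otimes\pi^{\ast}G$ decomposes accordingly.

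Concretely, I would first write out the Bismut superconnection $\mathbb{B}_{t}$ of \cite{Bismut-Kohler}. Because the horizontal distribution is flat and $\pi^{\ast}\ov G$ inherits its connection from the Chern connection $\nabla^{G}$ on $\ov G$, the operator $\mathbb{B}_{t}^{2}$ splits as $t\,B_{\PP^{n},k}^{2}\otimes\mathrm{Id}_{G}+\mathrm{Id}\otimes (\nabla^{G})^{2}$, where $B_{\PP^{n},k}$ is the fiberwise Bismut operator attached to $(\PP^{n},\OO(k))$. Consequently the supertrace governing the Bismut-Köhler form factorizes as $\varphi\,\mathrm{Tr}_{s}[N_{V}\exp(-tB_{\PP^{n},k}^{2})]\wedge\exp(-(\nabla^{G})^{2}/2\pi i)$, the second factor being precisely the Chern-Weil representative of $\ch(\ov G)$.

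Next, I would apply the Mellin transform in $t$ and take the derivative at $s=0$ that defines $T^{BK}$. Since the $G$-factor is independent of $t$, it pulls out of the regularization and delivers $\ch(\ov G)$, while the $\PP^{n}$-factor yields exactly the characteristic number $t^{BK}_{n,k}$. After the normalization conversion recalled at the start of Section~\ref{sec:high-analyt-tors}, this produces the claimed identity in $\bigoplus_{p}\widetilde{\mathcal{D}}^{2p-1}(X,p)$.

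The main obstacle is the careful handling of the zeta-function regularization in this factorized setting: one has to separate the contribution of the finite-dimensional cohomology $H^{\ast}(\PP^{n},\OO(k))\otimes G$ from the genuine higher-torsion part, and verify that the Mellin transform and its analytic continuation at $s=0$ commute with multiplication by the $t$-independent Chern-Weil form. A cleaner alternative, which is the route actually followed by Rössler in \cite{Roessler:ARR}, bypasses the spectral computation by invoking the functoriality of Bismut-Köhler torsion under compositions of Kähler submersions (of Berthomieu-Bismut and Ma) applied to the tower $\PP^{n}\times X\to X\to\Spec\CC$, and then comparing with the absolute case $X=\Spec\CC$ to extract the factorization.
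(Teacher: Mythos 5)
Your argument is essentially correct, but it follows a genuinely different route from the paper. The paper does not redo any spectral analysis: it quotes R\"ossler's Lemma 7.15 of \cite{Roessler:ARR}, which gives the identity for $k\gg 0$, and then extends it to all $k\in\ZZ$ by the Koszul resolution \eqref{eq:38} together with Corollary \ref{cor:8} (the vanishing-type identity for acyclic complexes, itself a specialization of Bismut's immersion theorem, Theorem \ref{thm:21}, to $F=0$); this is the same inductive mechanism as in \eqref{eq:79}. You instead prove the statement for all $k$ at once by a direct superconnection computation: for the product fibration $\PP^{n}\times X\to X$ with $\omega=p_{1}^{\ast}\omega_{\FS}$ the torsion tensor of the fibration vanishes, the infinite-dimensional connection splits, $\mathbb{B}_{t}^{2}=t(D^{\PP^{n},k})^{2}\otimes\Id+\Id\otimes(\nabla^{G})^{2}$ with commuting summands, and the supertrace, the large-time limit on cohomology and the small-time subtractions all factor as (scalar function of $t$)$\,\cdot\,$(Chern--Weil form of $\ch(\ov G)$), so the zeta-regularization acts only on the scalar factor and yields $t^{BK}_{n,k}\bullet\ch(\ov G)$. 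What each approach buys: yours is self-contained and avoids both R\"ossler's lemma and the immersion formula, at the price of carefully re-deriving the product formula for Bismut--K\"ohler forms (including checking that the hermitian structure $\ov{\Rd\pi_{\ast}\OO(k)}\otimes\ov G$ of $\ov\xi_{n}(k)\otimes\ov G$ coincides with the $L^{2}$ structure of the product fibration, which for $k\leq -n-1$ uses the isometry of Serre duality for $L^{2}$ metrics); the paper's route stays entirely within the formal calculus of torsion classes and Bott--Chern classes, in keeping with its axiomatic strategy, but leans on the heavy external inputs \cite{Roessler:ARR} and \cite{Bismut:Asterisque}. Two small caveats: the ``main obstacle'' you single out (commuting the Mellin transform with multiplication by a $t$-independent form) is the easy part --- the real care is in the subtraction terms entering the definition of the torsion form --- and your closing aside attributing to \cite{Roessler:ARR} a proof via the composition formulas of \cite{BerthomieuBismut} and Ma is asserted without support and is in any case not the argument used in this paper.
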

\begin{proof}
  In \cite{Roessler:ARR} this result is proved for $k\gg 0$. Using
  Corollary \ref{cor:8} and the Koszul resolution \eqref{eq:38} one
  can extend the result to all $k\in \ZZ$.
\end{proof}

We have all the ingredients we need to prove the main result of this
section.

\begin{theorem}\label{thm:19}
  Let $T_{-R/2}$ be the theory of generalized analytic torsion classes
  associated to the additive genus $\frac{-1}{2}R$. Then, for every K\"ahler
  relative metrized complex $\ov \xi $, we have
  \begin{displaymath}
    T^{BK}(\ov \xi )=T_{-R/2}(\ov \xi ).
  \end{displaymath}
  In particular $T_{-R/2}$ extends the construction of Bismut-K\"ohler
  to arbitrary projective morphisms of smooth complex varieties and
  arbitrary smooth metrics.
\end{theorem}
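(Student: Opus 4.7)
The plan is to establish the equality in two stages: first, reduce the comparison to the case of trivial projective bundles by means of Bismut's immersion formula, and second, establish the equality on projective bundles via the compatibility condition characterizing the associated characteristic numbers.

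Given a Kähler relative metrized complex $\ov\xi = (\ov\pi, \ov F, \ov{\pi_{\ast}F}_{L^{2}})$ with $\pi: X \to Y$ a projective submersion, factor $\pi$ as $p \circ \iota$ with $\iota: X \hookrightarrow \PP^{n}_{Y}$ a closed immersion and $p: \PP^{n}_{Y} \to Y$ the trivial projective bundle, and choose a finite resolution $\ov E \to \iota_{\ast}F$ by hermitian vector bundles. Equip $\PP^{n}$ with a Kähler structure so that $(p,\ov p)$ is a Kähler fibration and, after a possible twist by $\mathcal{O}(k)$ with $k \gg 0$, such that the $R^{i}p_{\ast}E_{j}$ are locally free. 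Applying Bismut's immersion formula (Theorem \ref{thm:21}) on one hand, and on the other hand using the transitivity axiom (v) of Definition \ref{def:genAT} for $T_{-R/2}$ combined with Proposition \ref{prop:2tris} applied to the resolution $\iota_{\ast}E \to \iota_{\ast}F$, both $T^{BK}(\ov\xi)$ and $T_{-R/2}(\ov\xi)$ are written in parallel form: an alternating sum of torsion classes $T(\ov p, \ov E_{j}, \ov{p_{\ast}E_{j}}_{L^{2}})$, plus $\ov p_{\ttwist}(T_{-R/2}(\ov\iota, \ov F, \ov E))$, plus the same Bott-Chern correction $\cht(\ov{f_{\ast}F}_{E}, \ov{f_{\ast}F}_{L^{2}})$ relating the $L^{2}$ and resolution-induced hermitian structures on the direct image. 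Subtracting, the theorem reduces to proving $T^{BK}(\ov p, \ov G, \ov{p_{\ast}G}_{L^{2}}) = T_{-R/2}(\ov p, \ov G, \ov{p_{\ast}G}_{L^{2}})$ for trivial projective bundles and arbitrary hermitian vector bundles $G$ on $\PP^{n}_{Y}$.

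For trivial projective bundles, both $T^{BK}$ and $T_{-R/2}$ satisfy anomaly formulas (Theorem \ref{thm:22} and Proposition \ref{prop:1tris}), compatibility with distinguished triangles (Corollary \ref{cor:8} and Proposition \ref{prop:2tris}), functoriality under base change, and the projection formula (Theorem \ref{thm:20} and axiom (iv) of Definition \ref{def:genAT}). Combined with the generating class of Corollary \ref{cor:6}, this reduces the comparison to equality of the main characteristic numbers $t^{BK}_{n,k}$ and $t_{n,k}(T_{-R/2})$ for $-n \leq k \leq 0$ at $Y = \Spec \CC$. By Theorem \ref{thm:15}, $t_{n,k}(T_{-R/2})$ is the unique family of real numbers solving the compatibility equation \eqref{eq:12} relative to the closed immersion theory $T_{-R/2}$ applied to the diagonal $\Delta: \PP^{n} \to \PP^{n}\times\PP^{n}$.

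The concluding step is to verify that $t^{BK}_{n,k}$ also satisfies the compatibility equation \eqref{eq:12}, whence the theorem follows. I would apply Bismut's immersion formula (Theorem \ref{thm:21}) to the factorization $\Id_{\PP^{n}} = p_{2} \circ \Delta$ with $F = \mathcal{O}_{\PP^{n}}$ and the Koszul resolution \eqref{eq:15bis} in place of $\ov E$. The left-hand side vanishes because the Bismut-Köhler torsion of the identity map with $F = \mathcal{O}$ is zero, while the right-hand side, once the alternating sum is evaluated by means of the projection formula (Theorem \ref{thm:20}) and the anomaly formulas, is exactly the left-hand side of \eqref{eq:12} with $t^{BK}_{n,k}$ in place of $t_{n,k}$; the closed-immersion term $\ov{p_{2}}_{\ttwist}(T_{-R/2}(\ov\Delta, \ov{\mathcal{O}}_{\PP^{n}}, \ov K(\Delta)))$ appears on the right exactly as in \eqref{eq:12}, confirming that $t^{BK}_{n,k}$ is compatible with the closed immersion theory associated to $-R/2$. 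The main obstacle is the careful bookkeeping of anomaly corrections when moving between $L^{2}$ and resolution-induced hermitian structures on direct images, and of the sign and factor $-1/2$ distinguishing $R$ from $-R/2$ (as discussed in Remark \ref{rem:10}): it is precisely through this bookkeeping that the $R$-genus of Gillet-Soulé emerges from Bismut's immersion formula as the correct characteristic class of the comparison.
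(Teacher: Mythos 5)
Your proposal is correct and uses essentially the same ingredients and strategy as the paper's proof: Bismut's immersion formula (Theorem \ref{thm:21}) applied both to a factorization through $\PP^{n}_{Y}$ and to the diagonal $\Id_{\PP^{n}}=p_{2}\circ\Delta$, R\"ossler's projection formula (Theorem \ref{thm:20}), the Bismut--K\"ohler anomaly formula (Theorem \ref{thm:22}) together with Corollary \ref{cor:8}, and the uniqueness statements of Theorems \ref{thm:1} and \ref{thm:15} to pin down the characteristic numbers. The only difference is the order of the steps (you reduce the general K\"ahler case to projective bundles first, whereas the paper first identifies $T^{BK}$ with the axiomatic theory $T_{\mathfrak{t}^{BK}}$ on projective bundles, then checks compatibility via the diagonal, and treats the general case last), which does not change the substance of the argument.
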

\begin{proof}
  Let $\mathfrak{t}^{BK}=\{t^{BK}_{n,k}\mid n\ge 0, -n\le k \le
  0\}$ and let $T_{\mathfrak{t}^{BK}}$ be the theory of analytic
  torsion classes for projective spaces associated to it.
  Let $\pi\colon \PP^{n}_{X}\to X$ be a relative projective space and
  let $\ov \xi =(\ov \pi ,\ov E,\ov{\pi _{\ast}E}_{L^{2}})$ be a
  K\"ahler relative metrized complex. By choosing $d\gg 0$ we may
  assume that all the coherent sheaves of the resolution $\gamma
  _{d}(F)$  of Corollary
  \ref{cor:4} are locally free. Using theorems \ref{thm:20} and \ref{thm:22},
  Proposition \ref{prop:1}
  and corollaries \ref{cor:1} and \ref{cor:8} we obtain that
  \begin{displaymath}
    T^{BK}(\ov \xi)=T_{\mathfrak{t}^{BK}}(\ov \xi ).
  \end{displaymath}

  By Theorem \ref{thm:21}, the theories
  $T_{\mathfrak{t}^{BK}}$ and $T_{-R/2}$ are compatible in the sense of
  Definition \ref{compatprojsp}. Therefore, $T^{BK}=T_{-R/2}$ when
  restricted to projective spaces.

  Finally, by factoring a smooth projective morphism as a closed
  immersion followed by the projection of a relative projective space,
  Theorem \ref{thm:21} implies that $T^{BK}=T_{-R/2}$ for all smooth
  projective morphisms.
\end{proof}

\begin{remark}\label{rem:bismut_kohler}
  \begin{enumerate}
  \item The construction of Bismut-K\"ohler applies to a wider class
    of varieties and morphisms: complex analytic manifolds and proper
    K\"ahler submersions. However for the comparison we have to
    restrict to smooth algebraic varieties and smooth projective
    morphisms.
  \item The results of Bismut and his coworkers are more precise. Here
    the class $T^{BK}(\ov \xi )$ is well defined up to the image of
    $\dd_{\mathcal{D}}$. In contrast, the higher analytic torsion form
    of Bismut and K\"ohler is a well defined differential form, local
    on the base and whose class modulo $\dd_{\mathcal{D}}$ agrees with
    $T^{BK}(\ov \xi )$.
 \end{enumerate}
\end{remark}

As a consequence of Theorem \ref{thm:19}, we obtain the following
results that, although they should follow from the definition of
higher
analytic torsion classes, we have not been able to find them explicitly in
the literature.

\begin{corollary}\label{cor:9} Let $f\colon X\to Y$ be a
  smooth projective morphism of smooth complex varieties, and let $\ov \xi
  =(\ov f, \ov E, \ov{f_{\ast}E}_{L^{2}})$ be a K\"ahler relative
  metrized complex.
  \begin{enumerate}
  \item Let $g\colon Y'\to Y$ be a morphism of smooth
    complex varieties. Then
    \begin{displaymath}
      T^{BK}(g^{\ast}\ov \xi )=g^{\ast}T^{BK}(\ov \xi ).
    \end{displaymath}
  \item Let $\ov G$ be a hermitian vector bundle on $Y$. Then
    \begin{displaymath}
      T^{BK}(\ov \xi \otimes \ov G)=T^{BK}(\ov \xi )\bullet \ch(\ov G).
    \end{displaymath}
  \end{enumerate}
\end{corollary}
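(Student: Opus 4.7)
The plan is to deduce both statements from Theorem \ref{thm:19}, which identifies $T^{BK}$ with the restriction to K\"ahler relative metrized complexes of the theory of generalized analytic torsion classes $T_{-R/2}$, and then invoke the general axioms (functoriality and projection formula) of Definition \ref{def:genAT} for $T_{-R/2}$.

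For assertion (ii), I would first observe that $\ov\xi\otimes\ov G$ is again a K\"ahler relative metrized complex. Indeed, $E\otimes f^{\ast}G$ is endowed with the tensor product hermitian metric, and by the projection formula for derived direct images, there is a canonical isomorphism $f_{\ast}(E\otimes f^{\ast}G)\cong f_{\ast}E\otimes G$; moreover, the $L^{2}$ metric on $f_{\ast}(E\otimes f^{\ast}G)$ with respect to the K\"ahler fibration structure $(f,\omega)$ and the induced metric agrees, under this isomorphism, with the tensor product of the $L^{2}$ metric on $f_{\ast}E$ and the metric on $G$. Hence Theorem \ref{thm:19} applies to both $\ov\xi$ and $\ov\xi\otimes\ov G$, and the projection formula axiom \ref{item:4AT} for $T_{-R/2}$ yields
\begin{displaymath}
T^{BK}(\ov\xi\otimes\ov G)=T_{-R/2}(\ov\xi\otimes\ov G)=T_{-R/2}(\ov\xi)\bullet\ch(\ov G)=T^{BK}(\ov\xi)\bullet\ch(\ov G).
\end{displaymath}

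For assertion (i), the key observation is that since $f$ is smooth, its set of normal directions is empty: $N_{f}=\emptyset$. Consequently every morphism $g\colon Y'\to Y$ is automatically transverse to $f$, and the pullback $g^{\ast}\ov\xi$ is well-defined in the sense of Section \ref{sec:transv-morph}. To apply Theorem \ref{thm:19} to both sides of the claimed identity, I need to check that $g^{\ast}\ov\xi$ is itself K\"ahler: the pulled-back $(1,1)$-form $(g')^{\ast}\omega$ induces a K\"ahler metric on the fibers of $f'\colon X'=X\times_{Y}Y'\to Y'$ (those fibers being canonically identified with fibers of $f$), and by smooth base change the canonical isomorphism $\Ld g^{\ast}f_{\ast}E\dra f'_{\ast}(g')^{\ast}E$ carries the pulled-back $L^{2}$ metric to the $L^{2}$ metric computed on the fibers of $f'$. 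Once this is established, the functoriality axiom \ref{item:2AT} for $T_{-R/2}$ gives
\begin{displaymath}
T^{BK}(g^{\ast}\ov\xi)=T_{-R/2}(g^{\ast}\ov\xi)=g^{\ast}T_{-R/2}(\ov\xi)=g^{\ast}T^{BK}(\ov\xi).
\end{displaymath}

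The only real point to verify is the compatibility of $L^{2}$ metrics with base change and with tensoring by a bundle on the base; this is a standard fiberwise computation, since the Hodge-theoretic construction of the $L^{2}$ metric depends only on the data restricted to the individual (K\"ahler) fibers, and passes through the isomorphisms given by smooth base change and the projection formula. No obstacle beyond this bookkeeping is expected: the heavy lifting (anomaly formulas, compatibility with immersions, identification of the $R$-genus) has already been absorbed into Theorem \ref{thm:19}, and the corollary is essentially the statement that the general axioms of $T_{-R/2}$ specialize to the analytic case at hand.
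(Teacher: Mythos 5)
Your proposal is correct and follows exactly the route the paper intends: Corollary \ref{cor:9} is stated as an immediate consequence of Theorem \ref{thm:19}, i.e.\ one identifies $T^{BK}$ with $T_{-R/2}$ on K\"ahler relative metrized complexes and then invokes the functoriality and projection formula axioms of Definition \ref{def:genAT}. Your explicit checks (that $N_{f}=\emptyset$ makes every $g$ transverse, and that the $L^{2}$ structures are compatible with base change and with tensoring by $\ov G$, so both sides are again K\"ahler) are precisely the bookkeeping the paper leaves implicit.
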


The last consequence we want to discuss generalizes results already
proved by Ber\-thomieu-Bismut \cite[Thm 3.1]{BerthomieuBismut} and Ma
\cite[Thm. 0.1]{Ma:MR1765553}, \cite[Thm. 0.1]{Ma:MR1796698}. However
we note that while we stay within the algebraic category and
work with projective morphisms, these authors deal with proper
K\"ahler holomorphic submersions of complex manifolds. Let $\ov{g}\colon X\to
Y$ and $\ov{h}\colon Y\to Z$ be morphisms in the category
$\ov{\Sm}_{\ast/\CC}$, such that the composition
$\ov{f}=\ov{h}\circ\ov{g}$ is a smooth morphism. We choose a structure
of K\"ahler fibration on $f$, that we denote $\ov{f}'$. Let $\ov{E}$
be a hermitian vector bundle on $X$ and assume that the higher direct
images $R^{i}f_{\ast}E$ are locally free. Then we may consider the
analytic torsion $T^{BK}(\ov{f}')$ attached to the K\"ahler relative
metrized complex $(\ov{f}',\ov{E},\ov{f_{\ast}E}_{L^{2}})$. Also, we
choose an auxiliary hermitian structure on $g_{\ast}E$. We can
consider the torsion classes $T_{R/2}(\ov{g})$ and $T_{R/2}(\ov{h})$
of the relative metrized complexes $(\ov{g},\ov{E},\ov{g_{\ast}E})$
and $(\ov{h},\ov{g_{\ast}E},\ov{f_{\ast}E}_{L^{2}})$.

We make the
following additional assumption in some particular
situations:
\begin{description}
\item [({\bf *})] The morphisms $g$ and $h$ are K\"ahler fibrations, the higher
  direct images $R^{i}g_{\ast}E$ and $R^{j}h_{\ast}R^{i}g_{\ast}E$ are
  locally free and the auxiliary
  hermitian structure on $g_{\ast}E$ is the $L^{2}$ hermitian
  structure.
\end{description}

When the hypothesis {\bf(*)} is satisfied we denote by
$\ov{h_{\ast}g_{\ast}E}_{L^{2}}$ the $L^{2}$ hermitian structure
attached to the K\"ahler structure on $\ov{h}$ and the $L^{2}$ metric
on $\ov{g_{\ast}E}_{L^{2}}$. Observe that this last structure may
differ from the $L^{2}$ structure on $\ov{f_{\ast}E}_{L^{2}}$.
In this situation we can consider the
  torsion classes $T^{BK}(\ov{g})$ and
  $T^{BK}(\ov{h}')$ attached to
  $(\ov{g},\ov{E},\ov{g_{\ast}E}_{L^{2}})$ and
  $(\ov{h},\ov{g_{\ast}E}_{L^{2}},\ov{h_{\ast}g_{\ast}E}_{L^{2}})$.
  By Proposition \ref{prop:1}, we have the relation
  \begin{displaymath}
    T^{BK}(\ov{h}')=
    T_{R/2}(\ov{h})-
    \widetilde{\ch}(\ov{h_{\ast}g_{\ast}E}_{L^{2}},\ov{f_{\ast}E}_{L^{2}}).
  \end{displaymath}

  The properties of the generalized analytic torsion classes imply immediately:
\begin{corollary} \label{cor:comp_sub}
Under these assumptions, we have the equality
\begin{displaymath}
  T^{BK}(\ov{f}')=T_{-R/2}(\ov{h})
  +\ov{h}_{\ttwist}(T_{-R/2}(\ov{g}))+
  \ov{f}^{\prime}_{\ttwist}(\ch(\ov{E})
  \bullet\widetilde{\Td}_{m}(\ov{f}',\ov{f})).
\end{displaymath}
If in addition the hypothesis {\rm \bf (*)} is satisfied, then we have
\begin{displaymath}
  T^{BK}(\ov{f}')=T^{BK}(\ov{h}')+\ov{h}_{\ttwist}(T^{BK}(\ov{g}))
  +\ov{f}^{\prime}_{\ttwist}(\ch(\ov{E})\bullet
  \widetilde{\Td}_{m}(\ov{f}',\ov{f}))
  +\widetilde{\ch}(\ov{h_{\ast}g_{\ast}E}_{L^{2}},\ov{f_{\ast}E}_{L^{2}}).
\end{displaymath}
\end{corollary}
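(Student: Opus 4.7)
\medskip

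The plan is to transfer everything to the theory $T_{-R/2}$ via Theorem \ref{thm:19}, and then reduce to the axiomatic properties of a theory of generalized analytic torsion classes (the transitivity axiom \ref{item:5AT} and the anomaly formulas of Proposition \ref{prop:1tris}). The subtle point is to handle carefully the distinction between the K\"ahler hermitian structure $\ov{f}'$ on $f$ and the structure $\ov{f}=\ov{h}\circ\ov{g}$ coming from the factorization, which need not coincide.

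First I would observe that the K\"ahler relative metrized complex $(\ov{f}',\ov{E},\ov{f_{\ast}E}_{L^{2}})$ satisfies the hypotheses of Theorem \ref{thm:19}, so that $T^{BK}(\ov{f}')=T_{-R/2}(\ov{f}',\ov{E},\ov{f_{\ast}E}_{L^{2}})$. Next, using the anomaly formula Proposition \ref{prop:1tris}~\ref{item:7tris} for the change of hermitian structure on the relative tangent complex (from $\ov{f}'$ to $\ov{f}$), we get
\begin{displaymath}
  T_{-R/2}(\ov{f}',\ov{E},\ov{f_{\ast}E}_{L^{2}})
  =T_{-R/2}(\ov{f},\ov{E},\ov{f_{\ast}E}_{L^{2}})
  +\ov{f}'_{\ttwist}\bigl[\ch(\ov{E})\bullet\widetilde{\Td}_{m}(\ov{f}',\ov{f})\bigr].
\end{displaymath}
Then applying the transitivity axiom \ref{item:5AT} to the composition $\ov{f}=\ov{h}\circ\ov{g}$ with the intermediate object $(\ov{g},\ov{E},\ov{g_{\ast}E})$ endowed with the auxiliary hermitian structure, yields
\begin{displaymath}
  T_{-R/2}(\ov{f},\ov{E},\ov{f_{\ast}E}_{L^{2}})
  =T_{-R/2}(\ov{h},\ov{g_{\ast}E},\ov{f_{\ast}E}_{L^{2}})
  +\ov{h}_{\ttwist}\bigl(T_{-R/2}(\ov{g},\ov{E},\ov{g_{\ast}E})\bigr).
\end{displaymath}
Combining these two identities gives the first formula of the corollary.

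For the second formula, assume hypothesis \textbf{(*)}. Then the auxiliary structure on $g_{\ast}E$ is $\ov{g_{\ast}E}_{L^{2}}$, and both $(\ov{g},\ov{E},\ov{g_{\ast}E}_{L^{2}})$ and $(\ov{h}',\ov{g_{\ast}E}_{L^{2}},\ov{h_{\ast}g_{\ast}E}_{L^{2}})$ are K\"ahler relative metrized complexes. Apply Theorem \ref{thm:19} twice to obtain $T^{BK}(\ov{g})=T_{-R/2}(\ov{g},\ov{E},\ov{g_{\ast}E}_{L^{2}})$ and $T^{BK}(\ov{h}')=T_{-R/2}(\ov{h}',\ov{g_{\ast}E}_{L^{2}},\ov{h_{\ast}g_{\ast}E}_{L^{2}})$. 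As already noted in the text just before the statement of the corollary, the anomaly formula Proposition \ref{prop:1tris}~\ref{item:8tris} for the change of metric $\ov{f_{\ast}E}_{L^{2}}\leftrightarrow\ov{h_{\ast}g_{\ast}E}_{L^{2}}$ on the direct image gives
\begin{displaymath}
  T_{-R/2}(\ov{h},\ov{g_{\ast}E}_{L^{2}},\ov{f_{\ast}E}_{L^{2}})
  =T^{BK}(\ov{h}')+\widetilde{\ch}(\ov{h_{\ast}g_{\ast}E}_{L^{2}},\ov{f_{\ast}E}_{L^{2}}).
\end{displaymath}
(Up to the identification of $T_{-R/2}(\ov{h})$ with its value on whichever hermitian structure is at hand; here we use Theorem \ref{thm:19} for $\ov{h}'$.) Substituting these expressions into the first formula yields the second one.

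There is no substantial obstacle: all non-trivial content is packaged into Theorem \ref{thm:19} and the general axioms. The only point that requires care is the bookkeeping of which hermitian structure decorates which object, and the correct order of the arguments in $\widetilde{\Td}_{m}$ and $\widetilde{\ch}$; the signs and orders prescribed by Proposition \ref{prop:1tris} match exactly those appearing in the statement.
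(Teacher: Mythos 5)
Your proposal is correct and takes essentially the same route the paper intends: the paper's proof is the single remark that ``the properties of the generalized analytic torsion classes imply immediately'' the statement, and your steps (Theorem \ref{thm:19}, the anomaly formula of Proposition \ref{prop:1tris}~\ref{item:7tris} for the change of hermitian structure from $\ov{f}'$ to $\ov{f}=\ov{h}\circ\ov{g}$, the transitivity axiom, and the direct-image anomaly \ref{item:8tris} giving the term $\widetilde{\ch}(\ov{h_{\ast}g_{\ast}E}_{L^{2}},\ov{f_{\ast}E}_{L^{2}})$) are exactly the ones being invoked. The only slight imprecision is calling $(\ov{h},\ov{g_{\ast}E}_{L^{2}},\ov{h_{\ast}g_{\ast}E}_{L^{2}})$ a K\"ahler relative metrized complex — strictly it is not, since $\ov{g_{\ast}E}_{L^{2}}$ is a complex rather than a single hermitian vector bundle — but the identification $T^{BK}(\ov{h}')=T_{-R/2}(\ov{h}')$ still holds (by additivity over the bundles $R^{i}g_{\ast}E$, which is also how the paper implicitly treats it), so the argument is unaffected.
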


Since $T_{-R/2}$ extends the theory of analytic torsion classes $T^{BK}$,
we will denote $T_{-R/2}$ by $T^{BK}$ for
arbitrary relative metrized complexes.

\section{Grothendieck duality and analytic torsion}
\label{sec:groth-dual-analyt}

We will study now the compatibility of the analytic
torsion with Grothendieck duality.

\begin{definition}
  Let $\ov {\mathcal{F}}=(\mathcal{F},\ov E\dra \mathcal{F})$ be an
  object of $\oDb(X)$. Then the \emph{rank} of $\ov
  {\mathcal{F}}$ is
  \begin{displaymath}
    \rk(\ov{\mathcal{F}})=\sum_{i}(-1)^{i}\dim(E_{i}).
  \end{displaymath}
  This is just the Euler characteristic of the complex. \emph{The
    determinant}
  of $\ov {\mathcal{F}}$ is the complex
  \begin{displaymath}
    \det (\ov {\mathcal{F}})= \bigotimes _{i} \left(\Lambda ^{\dim E^{i}} \ov
      E^{i}\right)^{(-1)^{i}}[-\rk(\ov{\mathcal{F}})].
  \end{displaymath}
  It consists of a single line bundle concentrated in degree
  $\rk(\ov{\mathcal{F}})$.
\end{definition}

\begin{definition} \label{def:21}
  Let $\ov f\colon X\to Y$ be a morphism in $\ov\Sm_{\ast/\CC}$ of
  relative dimension $e$. The
  \emph{metrized dualizing complex}, is the
  complex  given by
  \begin{displaymath}
    \bomega _{\ov f} =(\det T_{\ov f})^{\vee}.
  \end{displaymath}
  This complex is concentrated in degree $-e$. The underlying object
  of $\Db(X)$ will be denoted by $\bomega _{f}$. If we are interested
  in the dualizing sheaf as a sheaf and not as an element of $\Db(X)$
  we will denote it by $\omega _{f}$ or $\omega _{X/Y}$. Finally, if
  $Y=\Spec \CC$, we will denote $\bomega_{f}$ (respectively
  $\omega _{f}$) by $\bomega_{X}$ (respectively
  $\omega _{X}$).
\end{definition}

\begin{definition} \label{def:20}
  Let $\mathcal{D}^{\ast}(\ast)$ be the Deligne complex associated to
  a Dolbeault complex. The \emph{sign operator} is
  \begin{displaymath}
    \sigma \colon \mathcal{D}^{\ast}(\ast)\longrightarrow
    \mathcal{D}^{\ast}(\ast),\quad \omega \in
  \mathcal{D}^{n}(p) \mapsto \sigma (\omega )=(-1)^{p}\omega.
  \end{displaymath}
 \end{definition}

 The sign operator satisfies the following compatibilities.
 \begin{proposition}\label{prop:13}
   \begin{enumerate}
   \item Let $(\mathcal{D}^{\ast}(\ast),\dd_{\mathcal{D}})$ be a
     Deligne algebra. Then the sign operator is a morphism of differential
     algebras. That is
     \begin{displaymath}
       \dd_{\mathcal{D}}\circ \,\sigma =\sigma \circ
     \dd_{\mathcal{D}},\quad
     \sigma (\omega \bullet \eta)=\sigma (\omega )\bullet\sigma ( \eta).
     \end{displaymath}
   \item Let $\ov {\mathcal{F}}$ be an object of $\oDb(X)$. Then the
     following equalities are satisfied
     \begin{align}
       \label{eq:39}
       \sigma \ch(\ov {\mathcal{F}})&= \ch(\ov
       {\mathcal{F}}^{\vee}),\\
       \label{eq:32}
       \sigma \ch(\det (\ov {\mathcal{F}}))&= \ch(\det (\ov
       {\mathcal{F}})^{\vee})= \ch(\det (\ov {\mathcal{F}}))^{-1},\\
       \label{eq:34}
       \sigma \Td(\ov{\mathcal{F}})&=
       (-1)^{\rk(\ov{\mathcal{F}})}\Td(\ov{\mathcal{F}})\bullet \ch(\det (\ov
       {\mathcal{F}})^{\vee}.
     \end{align}
   \end{enumerate}
 \end{proposition}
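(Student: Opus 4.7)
The plan is to dispatch the two items in turn, working from the explicit description of $\sigma$ as multiplication by $(-1)^p$ on the piece $\mathcal{D}^n(A,p)$.

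For the first item, I would simply chase the twist $p$ through the structure maps. Looking at the description of $\dd_{\mathcal{D}}$ given in Section~\ref{sec:transv-morph}, the differential sends $\mathcal{D}^{n}(A,p)$ to $\mathcal{D}^{n+1}(A,p)$, so it preserves the twist and therefore commutes with multiplication by $(-1)^p$. Similarly, the product $\bullet$ maps $\mathcal{D}^{n}(A,p)\otimes\mathcal{D}^{m}(A,q)$ to $\mathcal{D}^{n+m}(A,p+q)$, so for $\omega\in\mathcal{D}^{n}(A,p)$ and $\eta\in\mathcal{D}^{m}(A,q)$ one has $\sigma(\omega\bullet\eta)=(-1)^{p+q}\omega\bullet\eta=\sigma(\omega)\bullet\sigma(\eta)$. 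No calculation with the explicit formulas for $\bullet$ is required.

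For the second item, the key observation is the action of $\sigma$ on a Chern root: if $L$ is a hermitian line bundle, then $c_1(L)\in\mathcal{D}^{2}(X,1)$, and $\sigma(c_1(L))=-c_1(L)=c_1(L^{\vee})$. The identity \eqref{eq:39} for a hermitian vector bundle $E$ then follows, via the splitting principle, from expanding $\ch(E)=\sum e^{x_i}$ in Chern roots $x_i$; one passes to arbitrary objects of $\oDb(X)$ by the additivity of $\ch$ under distinguished triangles and the fact that $\sigma$ is linear and commutes with shifts (a degree shift by $1$ turns $\ch$ into $-\ch$, which on the right-hand side corresponds to dualizing a shift). For \eqref{eq:32}, apply \eqref{eq:39} to the line object $\det(\ov{\mathcal{F}})$, observing that $\ch(\det(\ov{\mathcal{F}})^{\vee})=e^{-c_1(\det\ov{\mathcal{F}})}=\ch(\det(\ov{\mathcal{F}}))^{-1}$.

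For \eqref{eq:34}, the main step is the power series identity
\begin{equation*}
    \frac{-x}{1-e^{x}}=\frac{x}{e^{x}-1}=e^{-x}\cdot\frac{x}{1-e^{-x}},
\end{equation*}
applied root by root. Writing $\Td(\ov E)=\prod_i x_i/(1-e^{-x_i})$ for Chern roots $x_i$ of a hermitian vector bundle $E$, one gets
\begin{equation*}
    \sigma\Td(\ov E)=\prod_{i}\frac{-x_i}{1-e^{x_i}}=e^{-\sum x_i}\Td(\ov E)=e^{-c_1(\ov E)}\,\Td(\ov E).
\end{equation*}
Now the shift $[-\rk(\ov{\mathcal{F}})]$ appearing in Definition~\ref{def:21} of $\det$ introduces precisely a factor $(-1)^{\rk(\ov{\mathcal{F}})}$ in the Chern character, so that $\ch(\det(\ov E)^{\vee})=(-1)^{\rk E}e^{-c_1(\ov E)}$. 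Comparing gives $\sigma\Td(\ov E)=(-1)^{\rk E}\Td(\ov E)\bullet\ch(\det(\ov E)^{\vee})$. Extension to a general $\ov{\mathcal{F}}\in\oDb(X)$ is then done by writing $\ov{\mathcal{F}}$ as a bounded complex of hermitian vector bundles and using the multiplicativity of $\Td$ and $\ch$ with respect to distinguished triangles, together with the additivity of both the rank and the determinant under such triangles. The only mildly delicate point is to keep track of all the degree shifts; this is the step I expect to require the most care, but it is purely bookkeeping once the line-bundle and vector-bundle cases are settled.
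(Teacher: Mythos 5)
Your proposal is correct, and at bottom it runs on the same engine as the paper's proof: both reduce \eqref{eq:39}--\eqref{eq:34} to the observation that $\sigma$ acts on a characteristic form by replacing the curvature (equivalently, each formal Chern root $x_{i}$) by its negative, together with the elementary power series identity $\frac{-x}{1-e^{x}}=e^{-x}\frac{x}{1-e^{-x}}$ and the sign $(-1)^{\rk}$ coming from the shift in Definition \ref{def:21}. The difference is organizational: the paper works locally and in one stroke with the curvature matrices $K^{\pm}$ of $\ov E^{+}=\bigoplus_{i \,\even}\ov E^{i}$, $\ov E^{-}=\bigoplus_{i\,\odd}\ov E^{i}$ for a representative $\ov E\dra\mathcal{F}$ of the hermitian structure, whereas you first treat line bundles and a single hermitian bundle via Chern roots and then extend to complexes using multiplicativity over the terms and the behaviour under shifts. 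That extension step is in fact immediate from the definition of the characteristic forms of an object of $\oDb(X)$ as alternating sums (resp.\ products) over the constituents of $\ov E$; no induction over distinguished triangles is really needed. One point of phrasing deserves care: the equalities in Proposition \ref{prop:13} are equalities of \emph{forms} in the Deligne complex (they are later fed into the differential equations in Theorem Definition \ref{thm-def:T_dual}), not merely of cohomology classes, so ``splitting principle'' must be understood as the invariant--power-series/formal-Chern-roots-of-the-curvature argument rather than a pullback-to-a-flag-bundle argument, which would only give the identities in cohomology; read this way your root-by-root computation is exactly the matrix computation the paper carries out with $K^{\pm}$, and the proof is complete.
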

 \begin{proof}
   The first statement is clear because if $\omega \in
   \mathcal{D}^{n}(p)$ and $\eta\in \mathcal{D}^{m}(q)$ then
   $\dd_{\mathcal{D}}\omega \in
   \mathcal{D}^{n+1}(p)$ and $\omega \bullet \eta \in
   \mathcal{D}^{n+m}(p+q)$.

   For the second statement, let $\ov E\dra \mathcal{F}$ be the
   hermitian structure of $\mathcal{F}$. Write
   \begin{displaymath}
     \ov E^{+}=\bigoplus_{i\text{ even}} \ov E^{i},\qquad
     \ov E^{-}=\bigoplus_{i\text{ odd}} \ov E^{i}.
   \end{displaymath}
   Since this statement is local on $X$, we can chose trivializations
   of $\ov E^{+}$ and $\ov E^{-}$ over an open subset $U$. Let $H^{+}$
   and $H^{-}$ be the
   matrices of the hermitian metrics on $\ov E^{+}$ and $\ov
   E^{-}$. The curvature matrices of $\ov E^{+}$ and $\ov
   E^{-}$, whose entries are elements of
   $\mathcal{D}^{2}(U,1)$, are
   \begin{displaymath}
     K^{\pm}=K^{\pm}(\ov {\mathcal{F}})= - \ov \partial
     (H^{\pm})^{-1}\partial H^{\pm}.
   \end{displaymath}
   The characteristic forms can be computed
   from the curvature matrix:
   \begin{align*}
     \ch(\ov {\mathcal{F}})&=\tr (\exp(K^{+}))-\tr (\exp(K^{-})),\\
     \ch(\det (\ov{\mathcal{F}}))&=(-1)^{\rk(\ov{\mathcal{F}})}
     \det(\exp(K^{+}))\bullet \det(\exp(K^{-}))^{-1},\\
     \Td(\ov{\mathcal{F}})&=
     \det\left(\frac {K^{+}}{1-\exp(-K^{+})}\right)\bullet
     \det\left(\frac {K^{-}}{1-\exp(-K^{-})}\right)^{-1}.
   \end{align*}
   The sign in the second equation comes from the fact that $\det
   (\ov{\mathcal{F}})$ is concentrated in degree $\rk(\ov{\mathcal{F}})$.
   Therefore, since $\sigma (K^{\pm})=-K^{\pm}=K^{\pm}(\ov
   {\mathcal{F}}^{\vee})$, we have
   \begin{align*}
      \sigma \ch(\ov {\mathcal{F}})&=\sigma \tr (\exp(K^{+}))-
      \sigma \tr (\exp(K^{-}))\\
      &=\tr
      (\exp(K^{+}(\ov{\mathcal{F}}^{\vee})))
      -\tr (\exp(K^{-}(\ov{\mathcal{F}}^{\vee})))=
      \ch(\ov {\mathcal{F}}^{\vee}),\\
     \sigma \ch(\det (\ov{\mathcal{F}}))&=
     \det(\exp(-K^{+}))\bullet \det(\exp(-K^{-}))^{-1}=
     \ch(\det (\ov{\mathcal{F}}))^{-1},\\
     \sigma \Td(\ov{\mathcal{F}})&=
     \det\left(\frac {-K^{+}}{1-\exp(K^{+})}\right)\bullet
     \det\left(\frac {-K^{-}}{1-\exp(K^{-})}\right)^{-1}\\
     &=     \det\left(\frac {K^{+}}{1-\exp(-K^{+})}\right)\bullet
     \det(\exp(-K^{+}))\\
     &\phantom{AAA}\bullet
     \det\left(\frac {K^{-}}{1-\exp(-K^{-})}\right)^{-1} \bullet
     \det(\exp(-K^{-}))^{-1}\\
     &=
       \Td(\ov{\mathcal{F}})\bullet \ch(\det (\ov
       {\mathcal{F}}))^{-1}.
   \end{align*}
 \end{proof}
 \begin{corollary}\label{cor:7}
 Let $[\ov{E}]\in\KA(X)$. Then
 \begin{math}
 	\widetilde{\ch}(\ov{E}^{\vee})=\sigma\widetilde{\ch}(\ov{E}).
 \end{math}
 \end{corollary}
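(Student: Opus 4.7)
The plan is to apply the uniqueness criterion of Lemma \ref{lemm:uniquenes_phi} in the trivial case $f=\Id_{X}$ (so $N_{f}=\emptyset$) to the assignment that, to every smooth morphism $g\colon X'\to X$ and every bounded acyclic complex $\ov A$ of hermitian vector bundles on $X'$, associates
\[
\widetilde\varphi(\ov A):=\widetilde{\ch}(\ov A^{\vee})-\sigma\widetilde{\ch}(\ov A)\in\bigoplus_{n,p}\widetilde{\mathcal{D}}^{n}_{D}(X',\emptyset,p).
\]
The target class is a representative of the same equivalence class modulo $d_{\mathcal{D}}$ on either side, so $\widetilde\varphi$ is well defined.

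First, I would check the differential equation. Using that $\sigma$ commutes with $d_{\mathcal{D}}$ (Proposition \ref{prop:13}~(i)) together with \eqref{eq:39}, one finds
\[
d_{\mathcal{D}}\widetilde\varphi(\ov A)=\ch(\ov A^{\vee})-\sigma\ch(\ov A)=0.
\]
Second, for functoriality, smooth pullback commutes with dualization of complexes of hermitian vector bundles, with the sign operator $\sigma$ (which only affects the twist), and with the Bott--Chern secondary class construction; hence $h^{\ast}\widetilde\varphi(\ov A)=\widetilde\varphi(h^{\ast}\ov A)$ for any further smooth map $h$. Third, the normalization is immediate: if $\ov A$ is orthogonally split, the dual complex $\ov A^{\vee}$ inherits an orthogonal splitting, so both $\widetilde{\ch}(\ov A)$ and $\widetilde{\ch}(\ov A^{\vee})$ vanish, and therefore $\widetilde\varphi(\ov A)=0$.

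Lemma \ref{lemm:uniquenes_phi} then forces $\widetilde\varphi\equiv 0$, which is the desired equality applied to any acyclic representative of the class $[\ov E]\in\KA(X)$. Since $\widetilde{\ch}$ and both operations $(-)^{\vee}$, $\sigma$ are additive, the identity descends to arbitrary classes in $\KA(X)$. I do not foresee any genuine obstacle here: the only thing to be slightly careful about is that the three verifications take place in the Deligne complex of currents with empty wave front set, so that pullbacks by smooth morphisms are automatically well defined and the hypotheses of Lemma \ref{lemm:uniquenes_phi} apply verbatim.
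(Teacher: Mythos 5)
Your proof is correct and follows essentially the same route as the paper: both arguments verify the differential equation (via Proposition \ref{prop:13}), functoriality, and vanishing on orthogonally split complexes, and then conclude by a uniqueness principle. The only cosmetic difference is that you apply Lemma \ref{lemm:uniquenes_phi} (with $f=\Id_X$) to the difference $\widetilde{\ch}(\ov A^{\vee})-\sigma\widetilde{\ch}(\ov A)$, whereas the paper invokes directly the characterizing properties of $\widetilde{\ch}$ on $\KA(X)$; the content of the verification is identical.
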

\begin{proof}
Due to Proposition \ref{prop:13}, the assignment sending $[\ov{E}]$ to
$\sigma\widetilde{\ch}(\ov{E})$ satisfies the characterizing
properties of $\widetilde{\ch}$.
\end{proof}

In the particular case of a projective
morphism between smooth complex varieties or, more generally, smooth
varieties over a field, Grothendieck duality takes a very simple form
(see for instance
\cite[\S 3.4]{Huybrechts:FM} and the references therein). If
$\mathcal{F}$ is an object of
$\Db(X)$ and $f\colon X\to Y$ is a projective morphism of  smooth
complex varieties, then there is a natural functorial isomorphism
\begin{equation}\label{eq:31}
  \Rd f_{\ast}(\mathcal{F}^{\vee}\Lotimes \bomega _{f})\cong(\Rd f_{\ast}
  \mathcal{F}) ^{\vee}.
\end{equation}

The compatibility between analytic torsion and Grothendieck duality is
given by the following result.

\begin{theorem-definition}\label{thm-def:T_dual}
  Let $T$ be a theory of generalized analytic torsion classes. Then
  the assignment
  that, to a relative metrized complex $\ov \xi =(\ov
  f,\ov{\mathcal{F}},\ov{\Rd f_{\ast} \mathcal{F}})$, associates the
  class
  \begin{displaymath}
    T^{\vee}(\ov \xi)=
    \sigma  T(\ov f,\ov{\mathcal{F}}^{\vee}\Lotimes \bomega _{\ov f},
    \ov {\Rd f_{\ast} \mathcal{F}}^{\vee})
  \end{displaymath}
  is a theory of generalized analytic torsion classes that we call the
  \emph{theory dual to} $T$.
\end{theorem-definition}
\begin{proof}
  We have to show that, if  $T$ satisfies the conditions of Definition
  \ref{def:genAT}, then the same is true for $T^{\vee}$.  We start with
  the differential equation. Let $e$ be the relative dimension of~$f$.
  \begin{align*}
    \dd_{\mathcal{D}} T^{\vee}(\ov \xi )&=
    \dd_{\mathcal{D}} \sigma  T(\ov
    f,\ov{\mathcal{F}}^{\vee}\Lotimes
    \bomega _{\ov f},
    \ov {\Rd f_{\ast} \mathcal{F}}^{\vee})\\
    &=\sigma  \dd_{\mathcal{D}}  T(\ov
    f,\ov{\mathcal{F}}^{\vee}\Lotimes \bomega _{\ov f},
    \ov {\Rd f_{\ast} \mathcal{F}}^{\vee})\\
    &=\sigma \ch(\ov {\Rd f_{\ast} \mathcal{F}}^{\vee})
    -\sigma f_{\ast}\left[\ch(\ov{\mathcal{F}}^{\vee}\Lotimes
      \bomega _{\ov
        f})\bullet \Td(\ov f)\right]\\
    &=\ch(\ov {\Rd f_{\ast} \mathcal{F}})-
    (-1)^{e}f_{\ast}\left[\sigma \ch(\ov{\mathcal{F}}^{\vee})\bullet
      \sigma (\ch(\det (T_{\ov f})^{\vee})\bullet \Td(T_{\ov
        f}))\right]\\
    &=\ch(\ov {\Rd f_{\ast} \mathcal{F}})-
    f_{\ast}\left[\ch(\ov{\mathcal{F}})\bullet
      \Td(\ov
        f)\right]
  \end{align*}
  The functoriality and the additivity are clear. We next check the
  projection formula. Let $\ov{\mathcal{G}}$ be an object of
  $\oDb(Y)$. Then
  \begin{multline*}
    T^{\vee}(\ov \xi \otimes \ov {\mathcal{G}}) =
    \sigma  T(\ov f,\ov{\mathcal{F}}^{\vee}\Lotimes\Ld
    f^{\ast}\ov
    {\mathcal{G}}^{\vee} \Lotimes \bomega _{\ov f},
    \ov {\Rd f_{\ast} \mathcal{F}}^{\vee}\Lotimes \ov
    {\mathcal{G}}^{\vee})\\
    =\sigma \left( T(\ov f,\ov{\mathcal{F}}^{\vee}\Lotimes
      \bomega _{\ov f},
    \ov {\Rd f_{\ast} \mathcal{F}}^{\vee})\bullet \ch(\ov{\mathcal{G}}^{\vee})
    \right)
    =T^{\vee}(\ov \xi )\bullet \ch(\ov {\mathcal{G}}).
  \end{multline*}
  Finally we check the transitivity. Let $\ov g\colon Y\to Z$ be
  another morphism in $\ov\Sm_{\ast/\CC}$.  By
  the definition of $\ov g\circ \ov f$ we have
  \begin{math}
    \bomega _{\ov g\circ \ov f}=\Ld f^{\ast}\bomega _{\ov g}\Lotimes
    \bomega _{\ov f}.
  \end{math}
  Therefore,
  \begin{multline*}
    \Rd f_{\ast}\left( \mathcal{F}^{\vee}\Lotimes
      \bomega _{g\circ f}
    \right)=
    \Rd f_{\ast}\left(  \mathcal{F}^{\vee}\Lotimes\Ld
      f^{\ast}\bomega _{g}\Lotimes
      \bomega _{f})\right)\\
    =\Rd f_{\ast}\left(  \mathcal{F}^{\vee}\Lotimes
      \bomega _{f})\right)\Lotimes\bomega _{g}
    =(\Rd f_{\ast} \mathcal{F})^{\vee}\Lotimes\bomega
    _{g}.
  \end{multline*}
  On $\Rd f_{\ast}\left( \mathcal{F}^{\vee}\Lotimes
      \bomega _{g\circ f}
    \right)$ we put the hermitian structure of $\ov{\Rd f_{\ast}
      \mathcal{F}}^{\vee}\Lotimes\bomega
    _{\ov g}$. Then we have
    \begin{align*}
      T^{\vee}(\ov g\circ \ov f)&=
      \sigma T(\ov g\circ \ov f,\ov {\mathcal{F}}^{\vee}\Lotimes
      \bomega _{\ov g\circ \ov f},\ov {\Rd (g\circ
        f)_{\ast} \mathcal{F}}^{\vee})\\
      &=\sigma T(\ov g,\ov{\Rd f_{\ast}\mathcal{F}}^{\vee}\Lotimes
      \bomega _{\ov g},\ov {\Rd (g\circ
        f)_{\ast} \mathcal{F}}^{\vee})\\
      &\phantom{AAA}+
      \sigma \ov g_{\ttwist} T(\ov f,\ov{\mathcal{F}}^{\vee}\Lotimes
      \bomega _{\ov f}\Lotimes
      \Ld f^{\ast}\bomega _{\ov g}, \ov{\Rd
        f_{\ast}\mathcal{F}}^{\vee}\Lotimes
      \bomega _{\ov g})\\
      &=T^{\vee}(\ov g,\ov{\Rd f_{\ast}\mathcal{F}},\ov {\Rd (g\circ
        f)_{\ast} \mathcal{F}})\\
      &\phantom{AAA}+\sigma g_{\ast}(T(\ov
      f,\ov{\mathcal{F}}^{\vee}\Lotimes
      \bomega _{\ov f},
    \ov {\Rd f_{\ast} \mathcal{F}}^{\vee})\bullet \ch(\bomega _{\ov g})
    \bullet \Td(\ov g))\\
    &=T^{\vee}(\ov g)+\ov g_{\ttwist} T^{\vee}(\ov f).
  \end{align*}
  Therefore, $T^{\vee}$ satisfies also the transitivity property. Hence it
  is a generalized theory of analytic torsion classes.
\end{proof}

\begin{definition}
  A theory of generalized analytic torsion classes $T$ is called \emph{self-dual} when $T^{\vee}=T$.
\end{definition}

We want to characterize the self-dual theories of generalized analytic
torsion classes.

\begin{theorem}
  The homogeneous theory of generalized analytic
  torsion classes is self-dual.
\end{theorem}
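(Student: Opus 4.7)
The plan is to deduce the self-duality of $T^h$ from the two uniqueness results at our disposal: Theorem \ref{thm:gen_anal_tor} (generalized theories are determined by their restriction to closed immersions) and Theorem \ref{thm:17}(i) (the homogeneous theory on closed immersions is characterized among transitive, projection-formula-compatible theories by Definition \ref{def:23}). By Theorem-Definition \ref{thm-def:T_dual}, $(T^h)^\vee$ is a theory of generalized analytic torsion classes, so to obtain $(T^h)^\vee = T^h$ it is enough to verify that the restriction of $(T^h)^\vee$ to embedded metrized complexes is homogeneous in the sense of Definition \ref{def:23}; uniqueness then forces this restriction to equal $T^h$, and a second application of Theorem \ref{thm:gen_anal_tor} yields equality globally.

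The core of the argument is therefore a direct computation of $(T^h)^\vee(\ov s, \ov F, K(\ov F, \ov N)) \bullet \Td(\ov Q)$ for $\ov s$ the zero section of $P = \PP(N \oplus \mathbf{1})$ associated with a hermitian bundle $\ov N$ of rank $r$ and an auxiliary hermitian bundle $\ov F$. By Theorem-Definition \ref{thm-def:T_dual},
\begin{displaymath}
(T^h)^\vee(\ov s, \ov F, K(\ov F, \ov N)) = \sigma\, T^h\bigl(\ov s,\, \ov F^\vee \Lotimes \bomega_{\ov s},\, K(\ov F, \ov N)^\vee\bigr),
\end{displaymath}
with $\bomega_{\ov s}$ given by $\det \ov N$ placed in degree $r$. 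The key geometric input is that $K(\ov F, \ov N)^\vee$ is naturally quasi-isomorphic in $\oDb(P)$ to $K(\ov F^\vee \otimes \det \ov N,\, \ov N)[-r] \otimes \ov{\mathcal{O}(1)}$: this comes from the canonical isomorphism $\Lambda^i Q \cong \Lambda^{r-i} Q^\vee \otimes \det Q$ combined with the tautological identification $\det Q \cong \pi_P^* \det \ov N \otimes \ov{\mathcal{O}(1)}$ read off from the exact sequence $0 \to \mathcal{O}(-1) \to \pi_P^*(N \oplus \mathbf{1}) \to Q \to 0$ on $P$, together with Grothendieck duality for the Koszul resolution.

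After using the anomaly formulas of Proposition \ref{prop:1bis} to account for the comparison of hermitian structures, the shift identity $T^h(\ov \xi[k]) = (-1)^k T^h(\ov \xi)$, and the projection-formula compatibility of $T^h$, I would reduce the computation to an expression built from $T^h(\ov s, \ov F^\vee \otimes \det \ov N, K(\ov F^\vee \otimes \det \ov N, \ov N))$. To this I would apply the homogeneity of $T^h$, obtaining a class $\widetilde e(\ov F^\vee \otimes \det \ov N, \ov N)$ of pure bidegree $(2r-1, r)$, and then apply $\sigma$ using Proposition \ref{prop:13}: $\sigma \ch(\ov F^\vee) = \ch(\ov F)$, $\sigma \ch(\det \ov N) = \ch(\det \ov N)^{-1}$, and (following the proof of \eqref{eq:34}) $\sigma \Td(\ov Q) = \Td(\ov Q)\bullet \ch(\det \ov Q)^{-1}$. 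The decisive cancellation is that the factor $\ch(\det \ov Q)^{-1}$ produced by $\sigma \Td(\ov Q)$ combines with the $\ch(\pi_P^* \det \ov N)$ coming from the homogeneity formula and the $\ch(\ov{\mathcal{O}(1)})$ coming from the Koszul-duality identification to produce exactly $\ch(\det \ov Q)^{-1} \ch(\pi_P^* \det \ov N) \ch(\ov{\mathcal{O}(1)}) = 1$, while the two signs $(-1)^r$ arising from shifting by $[-r]$ and from $\sigma$ acting on a class of bidegree $(2r-1, r)$ multiply to $+1$. The residue is a class of the form $\widetilde e'(\ov F, \ov N) \bullet \ch(\pi_P^* \ov F)$ with $\widetilde e'$ concentrated in bidegree $(2r-1, r)$, which is the homogeneity condition.

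The main obstacle is the meticulous bookkeeping of the Bott-Chern secondary classes produced along the way: the non-isometric identification of $K(\ov F, \ov N)^\vee$ with the twisted dual Koszul complex forces a $\widetilde{\ch}$-correction from Proposition \ref{prop:1bis}(iii), while the comparison $s^* \ov{\mathcal{O}(1)} \cong \OO_X$ (as hermitian bundles) introduces a correction via Proposition \ref{prop:1bis}(i). The plan is to choose the metrics compatibly so that $s^*\ov{\OO(1)}$ is in fact trivially metrized, after which the remaining corrections can be shown to live in the $(2r-1, r)$ piece either directly or by invoking Lemma \ref{lemm:uniquenes_phi} applied to the difference $(T^h)^\vee - T^h$ viewed as an assignment on acyclic complexes, so that they are absorbed into the definition of $\widetilde e'$ without spoiling the bidegree constraint.
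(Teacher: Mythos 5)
Your proposal is correct and follows essentially the same route as the paper: reduce via the uniqueness statements (Theorem \ref{thm:gen_anal_tor} and Theorem \ref{thm:17}) to showing that $(T^{h})^{\vee}$ is homogeneous, and verify homogeneity on the zero section of $\PP(N\oplus \mathbf{1})$ using the definition of the dual theory, Koszul duality, the projection formula and the $\sigma$-identities of Proposition \ref{prop:13}. The paper's computation is just leaner: it reduces to $\ov F=\ov{\mathcal{O}}_{X}$ and uses the isometric identity $\ov K(s)^{\vee}=\ov K(s)\otimes \det \ov Q$ directly, so the secondary-class corrections and sign cancellations you track (via $\det Q\cong \pi_{P}^{\ast}\det N\otimes \mathcal{O}(1)$ and Lemma \ref{lemm:uniquenes_phi}) never arise, and are in any case immaterial since homogeneity only requires a class of pure bidegree $(2r-1,r)$.
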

\begin{proof}
  By the uniqueness of the homogeneous theory, it is enough to prove
  that, if $T$ is homogeneous then $T^{\vee}$ is homogeneous. Let $X$
  be a smooth complex variety and let $\ov N$ be a hermitian vector
  bundle of rank $r$ on $X$. Put $P=\mathbb{P}(N\oplus 1)$ and let
  $s\colon X\to
  P$ be the zero section and $\pi \colon P\to X$ the projection. Let
  $\ov Q$ be the tautological quotient bundle with the induced metric
  and $\ov K(s)$ the
  Koszul resolution associated to the section $s$. Since the normal
  bundle $N_{X/P}$ can be identified with $N$, on the map $s$  we can
  consider the hermitian structure given by the hermitian metric on
  $N$. Then $\det \ov Q$ is a complex concentrated in degree $r$. Moreover
  \begin{displaymath}
    s^{\ast} \det \ov Q=\det \ov N=\bomega _{\ov s}.
  \end{displaymath}
  The Koszul resolution satisfies the duality property
  \begin{math}
    \ov K(s)^{\vee}=\ov K(s)\otimes \det \ov Q.
  \end{math}
  The theory $T$ is homogeneous if and only if the class
  \begin{displaymath}
    T(\ov s,\ov {\mathcal{O}}_{X},\ov K(s))\bullet \Td(\ov Q)
  \end{displaymath}
  is homogeneous of bidegree $(2r-1,r)$ in the Deligne complex. Then
  \begin{align*}
    T^{\vee}(\ov s,\ov {\mathcal{O}}_{X},\ov K(s))\bullet \Td(\ov Q)&=
    \sigma T(\ov s,\bomega _{\ov s},\ov K(s)^{\vee})\bullet \Td(\ov
    Q)\\
    &=\sigma T(\ov s,\Ld s^{\ast} \det \ov Q,\ov K(s)\otimes \det \ov
    Q)\bullet \Td(\ov Q)\\
    &=\sigma (T(\ov s,\ov {\mathcal{O}}_{X},\ov K(s))\bullet \ch(\det
    \ov Q))\bullet \Td(\ov Q)\\
    &=\sigma T(\ov s,\ov {\mathcal{O}}_{X},\ov K(s))\bullet \ch(\det
    \ov Q ^{\vee})\bullet \Td(\ov Q)\\
    &=(-1)^{r}\sigma (T(\ov s,\ov {\mathcal{O}}_{X},\ov K(s))\bullet
    \Td(\ov Q))
  \end{align*}
  is homogeneous of bidegree $(2r-1,r)$ in the Deligne complex.
\end{proof}

\begin{proposition}\label{prop:14}
Let
  \begin{displaymath}
    S(x)=\sum _{n=0}^{\infty}a_{n}x^{n}\in \RR[[x]]
  \end{displaymath}
  be a power series in one
  variable with real coefficients. Denote by $S$ the corresponding
  real additive genus and by $T_{S}$ the associated theory of analytic
  torsion classes. Then the dual theory $T^{\vee}_{S}$ has
  corresponding real additive genus $S^{\sigma}(x):=-S(-x)$.
\end{proposition}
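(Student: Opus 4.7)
My plan is to exploit the self-duality of the homogeneous theory $T^{h}$ (established in the preceding theorem) together with the uniqueness statement of Theorem~\ref{thm:comparison_genus}. Since $(T^{h})^{\vee}=T^{h}$, for any relative metrized complex $\ov{\xi}=(\ov{f},\ov{\mathcal{F}},\ov{\Rd f_{\ast}\mathcal{F}})$ with $\ov{f}$ of relative dimension $e$, the starting identity is
\begin{displaymath}
  (T_{S}^{\vee}-T^{h})(\ov{\xi})
  = \sigma\,(T_{S}-T^{h})(\ov{f},\ov{\mathcal{F}}^{\vee}\Lotimes\bomega_{\ov{f}},\ov{\Rd f_{\ast}\mathcal{F}}^{\vee}).
\end{displaymath}
By \eqref{eq:70bis}, the right-hand side equals $-\sigma f_{\ast}[\ch(\mathcal{F}^{\vee}\Lotimes\bomega_{f})\bullet\Td(T_{f})\bullet S(T_{f})\bullet\mathbf{1}_{1}]$, so the entire problem reduces to applying $\sigma$ factor by factor to this product and then commuting it past $f_{\ast}$.

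Next, I would distribute $\sigma$ over the product by its multiplicativity (Proposition~\ref{prop:13}), assembling four ingredients. First, $\sigma\ch(\mathcal{F}^{\vee})=\ch(\mathcal{F})$ by \eqref{eq:39}. Second, the telescoping identity $\sigma\ch(\bomega_{f})\bullet\sigma\Td(T_{f})=(-1)^{e}\Td(T_{f})$: indeed, \eqref{eq:39} applied to $(\det T_{f})^{\vee}$ gives $\sigma\ch(\bomega_{f})=\ch(\det T_{f})$, while \eqref{eq:34} gives $\sigma\Td(T_{f})=(-1)^{e}\Td(T_{f})\bullet\ch((\det T_{f})^{\vee})$, and the two determinant Chern character factors cancel because $\ch(\det T_{f})\bullet\ch((\det T_{f})^{\vee})=1$. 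Third, $\sigma S(T_{f})=S(-T_{f})$, since a Chern root $x_{i}$ lies in $\mathcal{D}^{2}(1)$ and hence satisfies $\sigma x_{i}=-x_{i}$, so the sign operator converts the additive genus with power series $S(x)$ into the one with power series $S(-x)$. Fourth, $\sigma\mathbf{1}_{1}=-\mathbf{1}_{1}$.

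Finally, combining these with the commutation relation $\sigma f_{\ast}=(-1)^{e}f_{\ast}\sigma$ coming from the weight shift in Deligne degrees, the two $(-1)^{e}$ factors cancel against one another, and I would arrive at
\begin{displaymath}
  (T_{S}^{\vee}-T^{h})(\ov{\xi}) = -f_{\ast}\bigl[\ch(\mathcal{F})\bullet\Td(T_{f})\bullet(-S(-T_{f}))\bullet\mathbf{1}_{1}\bigr].
\end{displaymath}
The uniqueness in Theorem~\ref{thm:comparison_genus} then identifies $T_{S}^{\vee}$ as $T_{S^{\sigma}}$ with $S^{\sigma}(x)=-S(-x)$. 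The main obstacle is the careful sign bookkeeping: one $(-1)^{e}$ from $\sigma\Td$ (after the determinant factors telescope with $\sigma\ch(\bomega_{f})$), one $(-1)^{e}$ from $\sigma f_{\ast}$, the sign from $\sigma\mathbf{1}_{1}$, and the overall minus in \eqref{eq:70bis} must conspire in precisely the right way to yield $-S(-x)$ rather than any of the plausible alternatives $\pm S(\pm x)$.
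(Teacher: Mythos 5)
Your proof is correct and follows essentially the same route as the paper, whose own proof simply states that the result follows from the definition of $T_{S}^{\vee}$, the self-duality of $T^{h}$, Proposition \ref{prop:13} and the relation $\sigma f_{\ast}=(-1)^{e}f_{\ast}\sigma$; you have merely made the sign bookkeeping explicit, and it checks out.
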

\begin{proof}
  Let $\ov{\xi}=(\ov{f},\ov{\mathcal{F}},\ov{\Rd
    f_{\ast}\mathcal{F}})$ be a relative metrized complex. If $e$ is
  the relative dimension of $f$, then we have $\sigma
  f_{\ast}=(-1)^{e}f_{\ast}\sigma$. Then the proposition readily
  follows from the definition of $T^{\vee}_{S}$, the self-duality of
  $T^{h}$ and Proposition \ref{prop:13}.
\end{proof}

We can now characterize the self-dual theories of analytic torsion classes.

\begin{corollary}\label{cor:char_self_dual}
  The theory of analytic torsion classes $T_{S}$ attached to the real
  additive genus $S(x)=\sum_{n\geq 0}a_{n}x^{n}$ is self-dual if and
  only if $a_{n}=0$ for $n$ even.
\end{corollary}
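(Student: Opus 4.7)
The plan is to combine Proposition \ref{prop:14} with the uniqueness part of Theorem \ref{thm:comparison_genus} and finish with a trivial coefficient comparison, so the real work has already been done.

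First, I would invoke Theorem \ref{thm:comparison_genus}, which asserts that the assignment $S \mapsto T_S$ is a bijection between real additive genera and theories of generalized analytic torsion classes. In particular, two theories of generalized analytic torsion classes coincide if and only if their associated real additive genera coincide. Hence $T_S = T_S^{\vee}$ in the sense of Theorem Definition \ref{thm-def:T_dual} if and only if the real additive genus attached to $T_S^{\vee}$ equals $S$.

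Second, I would apply Proposition \ref{prop:14}, which identifies the real additive genus of $T_S^{\vee}$ with $S^{\sigma}$, given by $S^{\sigma}(x) = -S(-x)$. Therefore the self-duality condition $T_S = T_S^{\vee}$ becomes the identity of power series $S(x) = -S(-x)$ in $\RR[[x]]$.

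Finally, writing $S(x) = \sum_{n \geq 0} a_n x^n$, one has
\begin{displaymath}
-S(-x) = -\sum_{n \geq 0} a_n (-x)^n = \sum_{n \geq 0} (-1)^{n+1} a_n x^n,
\end{displaymath}
so the equation $S(x) = -S(-x)$ is equivalent to $a_n = (-1)^{n+1} a_n$ for every $n \geq 0$. This is automatic for $n$ odd, and for $n$ even it forces $2 a_n = 0$, hence $a_n = 0$. This proves the desired characterization. There is no genuine obstacle: once Proposition \ref{prop:14} and the uniqueness in Theorem \ref{thm:comparison_genus} are in hand, the corollary reduces to an inspection of even and odd parts of a power series.
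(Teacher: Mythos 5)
Your proposal is correct and follows essentially the same route as the paper: the paper's proof also applies Proposition \ref{prop:14} to get $T_{S}^{\vee}=T_{S^{\sigma}}$ and then concludes $T_{S}$ is self-dual if and only if $S^{\sigma}=S$, which is exactly the coefficient comparison you carry out. The only difference is cosmetic: you make explicit the appeal to the uniqueness in Theorem \ref{thm:comparison_genus} that the paper leaves implicit in its "hence".
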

 \begin{proof}
   By the proposition, $T_{S}^{\vee}=T_{S^{\sigma}}$, hence $T$ is
   self-dual if, and only if, $S^{\sigma}=S$. The corollary follows.
\end{proof}

 In particular we recover the following fact, which is well
 known if we restrict to K\"ahler relative metrized complexes.

\begin{corollary}\label{cor:TBK_self-dual}
The theory of analytic torsion classes of Bismut-K\"ohler $T^{BK}$ is
self-dual.
\end{corollary}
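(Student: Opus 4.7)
The plan is to combine the identification $T^{BK} = T_{-R/2}$ from Theorem \ref{thm:19} with the characterization of self-duality via odd-parity of the associated genus (Corollary \ref{cor:char_self_dual}). Specifically, by Theorem \ref{thm:19}, $T^{BK}$ is the theory of generalized analytic torsion classes associated to the real additive genus $-R/2$, where $R$ is the Gillet--Soul\'e genus whose defining power series is given in \eqref{eq:81}. I would then note that the sum in \eqref{eq:81} is indexed only by \emph{odd} integers $m \geq 1$; hence the power series
\begin{displaymath}
  R(x) = \sum_{\substack{m\text{ odd}\\ m\geq 1}}\left(2\zeta'(-m)+H_{m}\zeta(-m)\right)\frac{x^{m}}{m!}
\end{displaymath}
has only odd-degree terms, and the same is therefore true of $-R(x)/2$. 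In the notation of Proposition \ref{prop:14}, this means $(-R/2)^{\sigma}(x) = -(-R(-x)/2) = -R(x)/2$.

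Applying Corollary \ref{cor:char_self_dual} (or equivalently Proposition \ref{prop:14}), the vanishing of the even coefficients of $-R/2$ is precisely equivalent to the equality $T_{-R/2}^{\vee} = T_{-R/2}$. Combining this with Theorem \ref{thm:19} yields $T^{BK,\vee} = T^{BK}$, which is the claim. There is no real obstacle here, as all ingredients have been established: the extension of $T^{BK}$ to arbitrary projective morphisms provided by Theorem \ref{thm:19} is what makes the statement meaningful for the dual theory (whose definition involves Grothendieck duality on possibly non-smooth direct images), and the parity observation about the $R$-genus is immediate from its explicit formula.
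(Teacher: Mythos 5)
Your proposal is correct and is essentially the paper's own argument: the paper likewise deduces the corollary from the identification $T^{BK}=T_{-R/2}$ (Theorem \ref{thm:19}) together with Corollary \ref{cor:char_self_dual}, simply remarking that the even coefficients of the $R$-genus vanish by \eqref{eq:81}. Your additional check via Proposition \ref{prop:14} that $(-R/2)^{\sigma}=-R/2$ is just a spelled-out version of the same parity observation.
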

\begin{proof}
We just remark that the even coefficients of the $R$-genus vanish
\eqref{eq:81}.
\end{proof}

We now elaborate on an intimate relation between self-duality
phenomena and the analytic torsion of de Rham complexes. Let $f\colon
X\to Y$ be a smooth projective morphism of smooth algebraic varieties,
of relative dimension $e$. Let $\ov{T}_{X/Y}$ denote the vertical
tangent bundle, endowed with a hermitian metric. Write $\ov{f}$
for the corresponding morphism in $\ov{\Sm}_{\ast/\CC}$. On the
locally free sheaves $\Omega_{X/Y}^{p}=\Lambda^{p}\Omega_{X/Y}$ we put
the induced hermitian structures. The metrized de
Rham complex is
\begin{displaymath}
  0\to\ov{\OO}_{X}\overset{0}{\to}\ov{\Omega}_{X/Y}\overset{0}{\to}
  \ov{\Omega}_{X/Y}^{2} \overset{0}{\to}\dots\overset{0}{\to}
  \ov{\Omega}_{X/Y}^{e}\to
  0
\end{displaymath}
with 0 differentials. In fact, we are really considering the de Rham
graded sheaf and converting it into a complex in a trivial way.
 We refer to the corresponding
object of $\oDb(X)$ by $\ov{\Omega}_{X/Y}^{\bullet}$
(\cite[Def.~3.37]{BurgosFreixasLitcanu:HerStruc}). The individual
terms $\ov{\Omega}_{X/Y}^{p}$ will be
considered as complexes concentrated in degree $p$. We then obviously have:

\begin{lemma}\label{lem:van_tor_1}
The objects
$(\ov{\Omega}_{X/Y}^{\bullet})^{\vee}\otimes\bomega_{\ov{f}}$ and
$\ov{\Omega}_{X/Y}^{\bullet}[2e]$ are tightly isomorphic.
\end{lemma}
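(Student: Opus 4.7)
The plan is to represent both sides by explicit bounded complexes of hermitian vector bundles with zero differentials, exhibit an isomorphism between them that is componentwise an isometry, and then invoke \cite[Thm.~4.11]{BurgosFreixasLitcanu:HerStruc} to conclude that this isomorphism is tight.

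First I unfold both sides. By construction $\ov{\Omega}_{X/Y}^{\bullet}$ has the locally free sheaf $\Omega_{X/Y}^{p}$ placed in degree $p$ (for $p=0,\dots,e$), with zero differentials; hence $\ov{\Omega}_{X/Y}^{\bullet}[2e]$ has $\Omega_{X/Y}^{p}$ placed in degree $p-2e$. On the other hand, since $T_{\ov f}$ is concentrated in degree $0$ and $\rk(T_{\ov f})=e$, Definition \ref{def:21} gives $\bomega_{\ov f}=(\Lambda^{e}T_{X/Y})^{\vee}[-e]=\omega_{X/Y}[e]$, that is, the line bundle $\omega_{X/Y}=\Omega_{X/Y}^{e}$ concentrated in degree $-e$. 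Dualizing $\ov{\Omega}_{X/Y}^{\bullet}$ places $(\Omega_{X/Y}^{p})^{\vee}$ in degree $-p$, so that $(\ov{\Omega}_{X/Y}^{\bullet})^{\vee}\otimes \bomega_{\ov f}$ has the component $(\Omega_{X/Y}^{p})^{\vee}\otimes \omega_{X/Y}$ placed in degree $-p-e$. Substituting $q=e-p$, this matches the indexing of $\ov{\Omega}_{X/Y}^{\bullet}[2e]$.

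Next, I use the canonical perfect pairing
\begin{displaymath}
\Omega_{X/Y}^{p}\otimes\Omega_{X/Y}^{e-p}\xrightarrow{\ \wedge\ }\omega_{X/Y}
\end{displaymath}
given by the wedge product, which induces a canonical isomorphism of coherent sheaves $\varphi_{p}\colon(\Omega_{X/Y}^{p})^{\vee}\otimes\omega_{X/Y}\xrightarrow{\sim}\Omega_{X/Y}^{e-p}$. The key verification is that $\varphi_{p}$ is an isometry when $\Omega_{X/Y}^{k}$ carries the metric induced from the chosen metric on $\ov T_{X/Y}$ (dual metric, then $k$-th exterior power) and $\omega_{X/Y}=\det\Omega_{X/Y}^{1}$ carries the determinant metric. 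This reduces to a pointwise linear algebra statement on a hermitian vector space $(V^{\vee},h)$ of dimension $e$: choosing an orthonormal basis $e_{1},\dots,e_{e}$, the indexed wedges $e_{I}:=e_{i_{1}}\wedge\cdots\wedge e_{i_{k}}$ form orthonormal bases of $\Lambda^{k}V^{\vee}$, and the pairing sends $e_{I}\otimes e_{J}$ to $\pm\,e_{1}\wedge\cdots\wedge e_{e}$ when $I\sqcup J=\{1,\dots,e\}$ and to $0$ otherwise, from which the isometry follows at once.

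Finally, assembling the $\varphi_{p}$ degree by degree gives an isomorphism of complexes of hermitian vector bundles (with zero differentials) representing the two given objects of $\oDb(X)$, that is an isometry in each degree. Such an isomorphism is tight by \cite[Thm.~4.11]{BurgosFreixasLitcanu:HerStruc} (its associated Bott-Chern class vanishes because it is an orthogonal sum of identities of shifted hermitian vector bundles). The only real step is the pointwise isometry of $\varphi_{p}$; the degree bookkeeping is routine and sign conventions cause no trouble, as the wedge pairing is a canonical isomorphism of line bundles once both sides are identified.
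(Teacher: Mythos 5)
Your proof is correct and is exactly the argument the paper leaves implicit (the lemma is stated there with no proof, prefaced by ``We then obviously have''): the wedge-product pairing identifies $(\ov{\Omega}_{X/Y}^{p})^{\vee}\otimes\bomega_{\ov f}$ isometrically with $\ov{\Omega}_{X/Y}^{e-p}$ in the matching degree, and a degree-wise isometry of bounded complexes with zero differentials is a tight isomorphism. Only a small notational slip: the intermediate expression $(\Lambda^{e}T_{X/Y})^{\vee}[-e]$ would sit in degree $+e$; the correct formula is $\bomega_{\ov f}=\omega_{X/Y}[e]$, concentrated in degree $-e$, which is what you actually use in the bookkeeping.
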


For every $p$, $q$, the cohomology sheaf $R^{q}
f_{\ast}\Omega_{X/Y}^{p}$ is locally free, because the Hodge numbers
$h^{p,q}$ of the fibers of $f$ (which are projective, hence K\"ahler)
are known to be locally constant. Every stalk of this sheaf is endowed
with the usual $L^2$ metric of Hodge theory. This family of $L^2$
metrics on $R^{q}f_{\ast}\Omega_{X/Y}^{p}$ glue into a smooth
metric. Because the Hodge star operators $\ast$ act by isometries, it
is easily shown that Serre duality becomes an isometry for the $L^2$
structures: the isomorphism
\begin{displaymath}
	(R^{q}f_{\ast}\Omega_{X/Y}^{p})^{\vee}\overset{\sim}{\longrightarrow}
	R^{e-q} f_{\ast}((\Omega_{X/Y}^{p})^{\vee}\otimes\bomega_{f})
	= R^{e-q}f_{\ast}\Omega_{X/Y}^{e-p}
\end{displaymath}
preserves the $L^2$ hermitian structures. For every $p$, let $\ov{\Rd
  f_{\ast}\Omega_{X/Y}^{p}}$ denote the object of $\oDb(Y)$ with the
metric induced by the $L^2$ metrics on its cohomology pieces
(\cite[Def.~3.47]{BurgosFreixasLitcanu:HerStruc}). Here $\Rd f_{\ast}$
stands for the derived direct image. By
\cite[Prop.~3.48]{BurgosFreixasLitcanu:HerStruc}, Grothendieck duality
\begin{displaymath}
  (\Rd f_{\ast}\Omega_{X/Y}^{p})^{\vee}\overset{\sim}{\longrightarrow}
  \Rd f_{\ast}\Omega_{X/Y}^{e-p}[2e]
\end{displaymath}
is a tight isomorphism. Finally, let $[\ov{\Rd
  f_{\ast}\Omega_{X/Y}^{\bullet}}]$ be the object of $\oDb(Y)$
provided by \cite[Def.~3.39]{BurgosFreixasLitcanu:HerStruc}. The next
lemma follows easily from the construction of
\cite[Def.~3.39]{BurgosFreixasLitcanu:HerStruc}.

\begin{lemma}\label{lem:van_tor_2}
Grothendieck duality defines a tight isomorphism  in $\oDb(Y)$ 
\begin{displaymath}
[\ov{\Rd f_{\ast}\Omega_{X/Y}^{\bullet}}]^{\vee}\cong [\ov{\Rd
  f_{\ast}\Omega_{X/Y}^{\bullet}}][2e].   
\end{displaymath}
\end{lemma}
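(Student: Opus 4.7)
The plan is to reduce the lemma to a componentwise application of Grothendieck--Serre duality, exploiting that the metrized de Rham complex $\ov{\Omega}_{X/Y}^{\bullet}$ has zero differentials. By the construction in \cite[Def.~3.39]{BurgosFreixasLitcanu:HerStruc}, the object $[\ov{\Rd f_{\ast}\Omega_{X/Y}^{\bullet}}]$ is assembled from the pieces $\ov{\Rd f_{\ast}\Omega_{X/Y}^{p}}$ using the differentials of the underlying complex. Since those differentials vanish, this assembly reduces to a direct sum, yielding a tight isomorphism
\begin{displaymath}
[\ov{\Rd f_{\ast}\Omega_{X/Y}^{\bullet}}]\;\cong\;\bigoplus_{p=0}^{e}\ov{\Rd f_{\ast}\Omega_{X/Y}^{p}},
\end{displaymath}
where each summand is placed in the degrees dictated by the position of $\Omega^{p}_{X/Y}$ in the graded de Rham complex.

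First I would make explicit this direct-sum decomposition by unwinding \cite[Def.~3.39]{BurgosFreixasLitcanu:HerStruc}: one picks a locally free hermitian resolution of each $\ov{\Rd f_{\ast}\Omega_{X/Y}^{p}}$ and forms the associated total complex; when all boundary maps of the underlying complex vanish, this total complex is literally the direct sum of the individual resolutions. Both $(\,\cdot\,)^{\vee}$ and the shift $[2e]$ commute with finite direct sums up to tight isomorphism, so the statement reduces to the componentwise relation
\begin{displaymath}
(\ov{\Rd f_{\ast}\Omega_{X/Y}^{p}})^{\vee}\;\cong\;\ov{\Rd f_{\ast}\Omega_{X/Y}^{e-p}}[2e],
\end{displaymath}
which is precisely the tight isomorphism stated, and justified via \cite[Prop.~3.48]{BurgosFreixasLitcanu:HerStruc} together with the isometry property of the Hodge star, in the paragraph preceding the lemma.

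Combining the two steps and reindexing $p\leftrightarrow e-p$ gives
\begin{displaymath}
[\ov{\Rd f_{\ast}\Omega_{X/Y}^{\bullet}}]^{\vee}
\;\cong\;\bigoplus_{p}\ov{\Rd f_{\ast}\Omega_{X/Y}^{e-p}}[2e]
\;\cong\;[\ov{\Rd f_{\ast}\Omega_{X/Y}^{\bullet}}][2e],
\end{displaymath}
which is the desired tight isomorphism in $\oDb(Y)$.

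The only point requiring genuine care is bookkeeping: one must verify that assembling the componentwise Serre duality isomorphisms through the construction of \cite[Def.~3.39]{BurgosFreixasLitcanu:HerStruc} recovers, up to tight equivalence, the global Grothendieck duality isomorphism after the shift by $[2e]$ and the reindexing $p\leftrightarrow e-p$. Since the differentials in the de Rham complex vanish, these naturality checks amount to chasing signs and shifts across the direct sum decomposition, and no substantive new ingredient beyond the tools already invoked in the preceding discussion is required.
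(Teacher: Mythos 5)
Your proposal is correct and follows essentially the same route as the paper, which simply asserts that the lemma ``follows easily from the construction of [Def.~3.39]'': since the de Rham complex has zero differentials, the object $[\ov{\Rd f_{\ast}\Omega_{X/Y}^{\bullet}}]$ decomposes (tightly) into the pieces $\ov{\Rd f_{\ast}\Omega_{X/Y}^{p}}$, and the componentwise tight isomorphisms $(\ov{\Rd f_{\ast}\Omega_{X/Y}^{p}})^{\vee}\cong\ov{\Rd f_{\ast}\Omega_{X/Y}^{e-p}}[2e]$ established just before the lemma assemble, after the reindexing $p\leftrightarrow e-p$, into the stated isomorphism. Your writeup just makes explicit the bookkeeping the paper leaves implicit.
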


\begin{theorem}\label{thm:char_van_dR}
Let $T$ be a theory of analytic torsion classes. The following assertions are equivalent:
\begin{enumerate}
\item the theory $T$ is self-dual;
\item for every $f$, $\ov{T}_{f}$,
  $\ov{\Omega}_{X/Y}^{\bullet}$ and $[\ov{\Rd
    f_{\ast}\Omega_{X/Y}^{\bullet}}]$ as above and for every
  odd integer $p\geq 1$, the part of bidegree $(2p-1,p)$ (in
  the Deligne complex) of
  $T(\ov{f},\ov{\Omega}_{X/Y}^{\bullet},[\ov{\Rd
    f_{\ast}\Omega_{X/Y}^{\bullet}}])$ vanishes.
\end{enumerate}
\end{theorem}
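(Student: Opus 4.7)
Both implications rest on the classification of theories of generalized analytic torsion classes by real additive genera: by Theorem \ref{thm:comparison_genus}, every $T$ is of the form $T_{S}$ for a unique real additive genus $S$, and by Proposition \ref{prop:14} one has $T_{S}^{\vee}=T_{S^{\sigma}}$ with $S^{\sigma}(x)=-S(-x)$. Hence self-duality is equivalent to $S=S^{\sigma}$, namely to the vanishing of every even-degree coefficient of the power series $S$.

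For (i) $\Rightarrow$ (ii), I would unwind the definition of $T^{\vee}$ applied to the relative de Rham complex. Lemmas \ref{lem:van_tor_1} and \ref{lem:van_tor_2} provide tight isomorphisms $(\ov{\Omega}_{X/Y}^{\bullet})^{\vee}\Lotimes\bomega_{\ov{f}}\cong\ov{\Omega}_{X/Y}^{\bullet}[2e]$ and $[\ov{\Rd f_{\ast}\Omega_{X/Y}^{\bullet}}]^{\vee}\cong[\ov{\Rd f_{\ast}\Omega_{X/Y}^{\bullet}}][2e]$. Since tight isomorphisms do not alter the analytic torsion (by the anomaly formulas of Proposition \ref{prop:1tris}) and a shift by $[2e]$ multiplies it by $(-1)^{2e}=1$, one obtains
\begin{displaymath}
T^{\vee}(\ov{f},\ov{\Omega}_{X/Y}^{\bullet},[\ov{\Rd f_{\ast}\Omega_{X/Y}^{\bullet}}])=\sigma\,T(\ov{f},\ov{\Omega}_{X/Y}^{\bullet},[\ov{\Rd f_{\ast}\Omega_{X/Y}^{\bullet}}]).
\end{displaymath}
Self-duality $T=T^{\vee}$ therefore forces the right-hand side to be $\sigma$-invariant, and since $\sigma$ acts by $(-1)^{p}$ on $\mathcal{D}^{2p-1}(\cdot,p)$, the components of odd $p$ must vanish.

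For (ii) $\Rightarrow$ (i), I would write $T=T_{S}$ and exploit that the homogeneous theory $T^{h}$ is self-dual and hence already satisfies (ii). Subtracting and applying Theorem \ref{thm:comparison_genus},
\begin{displaymath}
(T_{S}-T^{h})(\ov{f},\ov{\Omega}_{X/Y}^{\bullet},[\ov{\Rd f_{\ast}\Omega_{X/Y}^{\bullet}}])=-\ov{f}_{\ttwist}\bigl[\ch(\ov{\Omega}_{X/Y}^{\bullet})\bullet S(T_{f})\bullet\mathbf{1}_{1}\bigr],
\end{displaymath}
which by the Atiyah-Grothendieck identity $\sum_{p}(-1)^{p}\ch(\Lambda^{p}T_{f}^{\vee})\bullet\Td(T_{f})=c_{e}(T_{f})$ simplifies to $-f_{\ast}[c_{e}(T_{f})\bullet S(T_{f})\bullet\mathbf{1}_{1}]$. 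Writing $S=\sum_{k\geq 0}a_{k}x^{k}$, the coefficient $a_{k}$ contributes $-a_{k}\,f_{\ast}[c_{e}(T_{f})\bullet p_{k}(T_{f})\bullet\mathbf{1}_{1}]$ in bidegree $(2(k+1)-1,k+1)$, so odd $p$ corresponds exactly to even $k$. Hypothesis (ii) then says that for every even $k\geq 0$ and every smooth projective family $\ov{f}$,
\begin{displaymath}
a_{k}\cdot f_{\ast}[c_{e}(T_{f})\bullet p_{k}(T_{f})\bullet\mathbf{1}_{1}]=0 \text{ in }\widetilde{\mathcal{D}}^{2(k+1)-1}_{D}(Y,\emptyset,k+1).
\end{displaymath}

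To conclude $a_{k}=0$ for each even $k$, I would exhibit a specific family realizing a non-zero class. For $k=0$, the projection $\pi\colon\PP^{1}_{\CC}\to\Spec\CC$ gives $\pi_{\ast}[c_{1}(T_{\PP^{1}})\bullet\mathbf{1}_{1}]=\chi(\PP^{1})\mathbf{1}_{1}=2\,\mathbf{1}_{1}\neq 0$ in $\widetilde{\mathcal{D}}^{1}_{D}(\Spec\CC,\emptyset,1)=\RR$, forcing $a_{0}=0$. For each even $k\geq 2$, one chooses a non-trivial projective bundle $\PP(E)\to Y$ over a smooth projective base $Y$, with $E$ selected so that $f_{\ast}[c_{e}(T_{f})\bullet p_{k}(T_{f})]$ represents a non-zero real $(k,k)$-class on $Y$ whose image by cup-product with $\mathbf{1}_{1}$ is non-zero in $H^{2k+1}_{\mathcal{D}}(Y,\RR(k+1))$. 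Hence all even-$k$ coefficients of $S$ vanish, so $S=S^{\sigma}$ and $T=T^{\vee}$. The main obstacle is precisely this last construction: one must, for each even $k\geq 2$, produce a family realizing a non-trivial $(k,k)$-valued pushforward \emph{and} check that multiplication by $\mathbf{1}_{1}$ detects it inside Deligne cohomology; this is an accessible but delicate characteristic-class argument, for which projective bundles $\PP(E)$ with $E$ of sufficient rank over products of projective spaces provide enough flexibility to separate all the required power sums $p_{k}(T_{f})$ simultaneously.
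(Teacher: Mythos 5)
Your proof of (i) $\Rightarrow$ (ii) is correct and is exactly the paper's argument: lemmas \ref{lem:van_tor_1} and \ref{lem:van_tor_2}, invariance of $T$ under tight isomorphisms and under the even shift $[2e]$, and the fact that $\sigma$ acts by $(-1)^{p}$ on bidegree $(2p-1,p)$.

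For (ii) $\Rightarrow$ (i) you follow the same general strategy as the paper (compare $T=T_{S}$ with the self-dual homogeneous theory via Theorem \ref{thm:comparison_genus}, reduce to the vanishing of the even coefficients $a_{k}$, and try to detect them by exhibiting families with non-vanishing pushed-forward characteristic classes), but your argument has a genuine gap at precisely the decisive step, which you yourself flag as ``the main obstacle'': for even $k\geq 2$ you only assert, without construction or verification, that some projective bundle $\PP(E)\to Y$ realizes a non-zero class $f_{\ast}[c_{e}(T_{f})\bullet p_{k}(T_{f})]$. Nothing in your text shows such a family exists, so the coefficients $a_{k}$, $k\geq 2$ even, are not actually killed. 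The paper closes exactly this gap by an explicit and rather short computation: it restricts to relative curves ($e=1$), where $c_{e}(T_{f})\bullet p_{k}(T_{f})=c_{1}(T_{f})^{k+1}=-c_{1}(\omega_{X/Y})^{k+1}$ up to sign, so the whole issue becomes the non-vanishing of $f_{\ast}(c_{1}(\omega_{X/Y})^{p})$ for $p=k+1$ odd; taking $Y$ smooth projective of dimension $d=p-1$ (even), $L$ ample on $Y$ and $X=\PP(L\oplus\OO_{Y})$, the tautological sequence gives $c_{1}(\omega_{X/Y})^{d}=\pi^{\ast}c_{1}(L)^{d}$ (this step uses that $d$ is even, which is why the same construction fails for the odd coefficients, as it must), and since the fibers have genus $0$ one gets $f_{\ast}(c_{1}(\omega_{X/Y})^{p})=-2c_{1}(L)^{d}\neq 0$. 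If you want to keep your general-relative-dimension formulation, you would still need to produce such detecting families and check the Deligne-cohomology detection by $\mathbf{1}_{1}$; the simplest repair is to specialize to $e=1$ as the paper does, where the explicit $\PP(L\oplus\OO_{Y})$ computation settles every even $k$ at once.
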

\begin{proof}
  Assume first of all that $T$ is self-dual. We apply the
  definition of $T^{\vee}$, the self-duality assumption and lemmas
  \ref{lem:van_tor_1} and \ref{lem:van_tor_2}. We find the equality
  \begin{displaymath}
    \begin{split}
      T(\ov{f},\ov{\Omega}_{X/Y}^{\bullet},[\ov{\Rd
        f_{\ast}\Omega_{X/Y}^{\bullet}}])
      &=\sigma T(\ov{f},\ov{\Omega}_{X/Y}^{\bullet}[2e], [\ov{\Rd
        f_{\ast}\Omega_{X/Y}^{\bullet}}][2e])\\
      &=(-1)^{2e}\sigma T(\ov{f},\ov{\Omega}_{X/Y}^{\bullet}, [\ov{\Rd
        f_{\ast}\Omega_{X/Y}^{\bullet}}])\\
      &=\sigma T(\ov{f},\ov{\Omega}_{X/Y}^{\bullet}, [\ov{\Rd
        f_{\ast}\Omega_{X/Y}^{\bullet}}]).
    \end{split}
  \end{displaymath}
  The sign operator $\sigma$ changes the sign of the components of
  bidegree $(2p-1,p)$ for odd $p$. Hence
  $T(\ov{f},\ov{\Omega}_{X/Y}^{\bullet},[\ov{\Rd
    f_{\ast}\Omega_{X/Y}^{\bullet}}])^{(2p-1,p)}$ vanishes for $p\geq 1$
  odd.

  For the converse implication, let $S(x)=\sum_{n\geq 0}a_{n}x^{n}$ be
  the real additive genus attached to $T$ via Theorem
  \ref{thm:comparison_genus}. By Corollary \ref{cor:char_self_dual},
  we have to show that the coefficients $a_{n}$ with $n$ even
  vanish. Let us look at a smooth morphism $f\colon X\to Y$ of relative
  dimension $1$, with an arbitrary metric on $T_{f}$. Then, developing
  the power series of $\ch$ and $\Td$ and taking into account that
  $\Omega ^{1}_{X/Y}=T_{f}^{\vee}=\omega _{X/Y}$,  we compute
  \begin{displaymath}
    f_{\ast}[\ch(\Omega_{X/Y}^{\bullet})\Td(T_{f})S(T_{f})\bullet\mathbf{1}_{1}]
    =\sum_{n\geq
      0}(-1)^{n+1}a_{n}f_{\ast}[c_{1}(\omega_{X/Y})^{n+1}\bullet\mathbf{1}_{1}].
  \end{displaymath}
  Therefore, for $p\geq 1$ odd, we have
  \begin{multline}
    (-1)^{p}a_{p-1}f_{\ast}[c_{1}(\omega_{X/Y})^{p}\bullet\mathbf{1}_{1}]=
    \\(T(\ov{f},\ov{\Omega_{X/Y}^{\bullet}}, [\ov{\Rd
      f_{\ast}\Omega_{X/Y}^{\bullet}}])
    -T^{h}(\ov{f},\ov{\Omega_{X/Y}^{\bullet}}, [\ov{\Rd
      f_{\ast}\Omega_{X/Y}^{\bullet}}]))^{(2p-1,p,p)}= 0.
  \end{multline}
  Hence it is enough that for every odd integer $p\geq 1$, we find a
  relative curve $f\colon X\to Y$ such that
  $f_{\ast}(c_{1}(\omega_{X/Y})^{p})\neq 0$ in the cohomology group
  $H^{2p}(Y,\CC)$. Let $d=p-1$ and choose $Y$ to be a smooth
  projective variety of dimension $d$. Let $L$ be an ample line bundle
  on $Y$ and take $X=\PP(L\oplus\OO_{Y})$. Consider the tautological
  exact sequence
  \begin{displaymath}
    0\longrightarrow\OO(-1)\longrightarrow
    f^{\ast}(L\oplus\OO_{Y})\longrightarrow Q\longrightarrow 0.
  \end{displaymath}
  We easily derive the relations
  \begin{align}
    &\pi^{\ast}c_{1}(L)=c_{1}(Q)-c_{1}(\OO(1))\label{eq:van_dR_1}\\
    &c_{1}(\OO(-1))c_{1}(Q)=0.\label{eq:van_dR_2}
  \end{align}
  Moreover we have
  \begin{equation}\label{eq:van_dR_3}
    c_{1}(\omega_{X/Y})=-c_{1}(Q)-c_{1}(\OO(1)).
  \end{equation}
  From \eqref{eq:van_dR_1}--\eqref{eq:van_dR_3} and because $d=p-1$ is
  even, we compute
  \begin{displaymath}
    c_{1}(\omega_{X/Y})^{d}=c_{1}(Q)^{d}+c_{1}(\OO(1))^{d}=\pi^{\ast}c_{1}(L)^{d}.
  \end{displaymath}
  Therefore we find
  \begin{equation}\label{eq:van_dR_4}
    c_{1}(\omega_{X/Y})^{p}=\pi^{\ast}c_{1}(L)^{d} c_{1}(\omega_{X/Y}).
  \end{equation}
  Finally, $f$ is a fibration in curves of genus $0$, hence
  $f_{\ast}(c_{1}(\omega_{X/Y}))=-2$. We infer that
  \eqref{eq:van_dR_4} leads to
  \begin{displaymath}
    f_{\ast}(c_{1}(\omega_{X/Y})^{p})=-2c_{1}(L)^{d}.
  \end{displaymath}
  This class does not vanish, since $Y$ is projective of dimension $d$
  and $L$ is ample.
\end{proof}

We end with a characterization of the theory of analytic torsion
classes of Bismut-K\"ohler.

\begin{theorem} \label{thm:23}
  The theory of analytic torsion classes of Bismut-K\"ohler $T^{BK}$
  is the unique theory of generalized analytic torsion classes such
  that, for every $\ov{f}\colon X\to Y$, a K\"ahler fibration  in
  $\ov{\Sm}_{\ast/\CC}$, we have the vanishing
  \begin{displaymath}
    T^{BK}(\ov{f}, \ov{\Omega}_{X/Y}^{\bullet},
    [\ov{f_{\ast}\Omega_{X/Y}^{\bullet}}])=0.
  \end{displaymath}
\end{theorem}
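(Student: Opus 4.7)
Existence is immediate: the Bismut--K\"ohler theory satisfies the stated vanishing property by Bismut's theorem \cite{Bismut:deRham}, reformulated in the present framework via Theorem~\ref{thm:19}. The content of the theorem is therefore the uniqueness statement.

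Suppose $T$ is a second theory with the same vanishing property. Applying Theorem~\ref{thm:comparison_genus} to the pair $(T,T^{BK})$, we obtain a real additive genus $S(x)=\sum_{n\ge 0}a_n x^n\in\RR[[x]]$ with
\begin{displaymath}
T(\ov\xi)-T^{BK}(\ov\xi)=-f_\ast\left[\ch(\mathcal F)\bullet\Td(T_f)\bullet S(T_f)\bullet\mathbf 1_1\right]
\end{displaymath}
for every relative metrized complex $\ov\xi=(\ov f,\ov{\mathcal F},\ov{\Rd f_\ast\mathcal F})$, and the goal reduces to showing $S\equiv 0$. The plan is to test this identity on the relative de Rham complex of a K\"ahler fibration $f\colon X\to Y$ of relative dimension $e$, where both $T$ and $T^{BK}$ vanish by hypothesis. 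Using the Koszul-type identity
\begin{displaymath}
\sum_{p=0}^e(-1)^p\ch(\Omega^p_{X/Y})\bullet\Td(T_f)=c_e(T_f),
\end{displaymath}
the constraint collapses to $f_\ast(c_e(T_f)\bullet S(T_f)\bullet\mathbf 1_1)=0$ for every such fibration.

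I would then specialize to the case $e=1$, where $T_f$ is a line bundle and $S(T_f)=\sum_n a_n c_1(T_f)^n$. The contributions corresponding to distinct $n$ live in distinct bidegrees $(2n+1,n+1)$ of the Deligne complex, so they cannot cancel one another, and the above vanishing degenerates into
\begin{displaymath}
a_n\cdot f_\ast\left[c_1(T_f)^{n+1}\right]\bullet\mathbf 1_1=0\quad\text{in }\widetilde{\mathcal D}^{2n+1}_D(Y,n+1)
\end{displaymath}
for every smooth projective family of curves $f\colon X\to Y$ and every $n\ge 0$. Up to a sign, $f_\ast(c_1(T_f)^{n+1})$ is the tautological class $\kappa_n(f)\in H^{2n}(Y,\RR)$ of the family, and on a smooth projective base the $\partial\bar\partial$-lemma ensures that $\kappa_n(f)\ne 0$ in real cohomology implies $\kappa_n(f)\bullet\mathbf 1_1\ne 0$ in the target Deligne group.

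The main obstacle is the final step: producing, for each $n\ge 0$, a K\"ahler fibration of smooth projective curves $f\colon X\to Y$ over a smooth projective variety $Y$ for which $\kappa_n(f)\ne 0$. For $n=0$ this is trivial since $\kappa_0(f)=2g-2\ne 0$ as soon as the genus satisfies $g\ge 2$. For $n\ge 1$ the argument will rely on the non-vanishing of $\kappa_{g-2}$ on the moduli stack $\mathcal M_g$ for $g\ge 2$, applied with $g=n+2$, and on the descent of this non-vanishing to a smooth projective scheme dominating $\mathcal M_g$, obtained for instance by imposing a sufficiently fine level structure. Granting such families for every $n$, each coefficient $a_n$ must vanish, so $S\equiv 0$ and $T=T^{BK}$.
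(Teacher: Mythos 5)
Your overall strategy coincides with the paper's: existence is Bismut's theorem \cite{Bismut:deRham} via Theorem \ref{thm:19}, and uniqueness is obtained by encoding the difference of two theories in a real additive genus (Theorem \ref{thm:comparison_genus}), evaluating on relative de Rham complexes of K\"ahler fibrations of relative dimension one, and reducing to the non-vanishing of $f_{\ast}(c_{1}(\omega_{X/Y})^{n+1})$ for every $n\geq 0$. (That the paper compares each theory with $T^{h}$ instead of forming $T-T^{BK}$ directly, and expands $\ch(\Omega^{\bullet}_{X/Y})\Td(T_{f})$ by hand for curves rather than quoting the Borel--Serre identity, are cosmetic differences.) The problem lies in the step you yourself single out as the main obstacle, and your proposed resolution does not work: imposing a level structure yields the fine moduli scheme $\mathcal{M}_{g}^{n}$, which is smooth and quasi-projective but never projective --- $\mathcal{M}_{g}$ is not proper, and no finite cover of a non-proper variety is proper. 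If you insist on a smooth projective base, you would need, for each $g$, a complete subvariety of $\mathcal{M}_{g}$ of dimension at least $g-2$ on which $\kappa_{g-2}$ restricts nontrivially; by Diaz's theorem complete subvarieties of $\mathcal{M}_{g}$ have dimension at most $g-2$, and the existence of complete subvarieties of that maximal dimension is a well-known open problem, so this route is not available.

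The paper avoids projectivity altogether: it works over the quasi-projective fine moduli scheme $\mathcal{M}_{g}^{n}$ (curves with level-$n$ structure, \cite{Deligne-Mumford}), endows the universal curve $\pi\colon\mathcal{C}_{g}^{n}\to\mathcal{M}_{g}^{n}$ with a K\"ahler fibration structure coming from Teichm\"uller theory \cite{Wolpert:chern}, and invokes Faber--Pandharipande \cite{Faber-Pandharipande} for $\kappa_{g-2}=\pi_{\ast}(c_{1}(\omega_{\pi})^{g-1})\neq 0$ in $H^{2(g-2)}(\mathcal{M}_{g}^{n},\CC)$, taking $g=n+2$; the elementary projective construction of Theorem \ref{thm:char_van_dR} is only used in the degrees where it works ($n$ even). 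Your motivating worry --- that on a non-compact base the $\partial\bar\partial$-lemma is unavailable, so non-vanishing in de Rham cohomology does not formally imply non-vanishing modulo $\Im\partial+\Im\bar\partial$ --- is not unfounded (the paper is terse on exactly this passage, and on a general quasi-projective base the implication can fail); but as written your argument replaces that terse point by a hypothesis (a smooth projective base dominating $\mathcal{M}_{g}$ carrying the class) that your construction does not supply and that current knowledge of complete subvarieties of $\mathcal{M}_{g}$ does not provide. So the final step of your proof is not established.
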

\begin{proof}
  That the theory $T^{BK}$ vanishes for de Rham complexes of K\"ahler
  fibrations is a theorem of Bismut \cite{Bismut:deRham}. For the
  uniqueness, let $T$ be a theory of generalized analytic torsion
  classes vanishing on de Rham complexes of K\"ahler
  fibrations. Denote by $S(x)=\sum_{k\geq 0}a_{k}x^{k}$ its
  corresponding genus. If $\ov{f}$ is a relative curve with a
  structure of K\"ahler fibration, then by Theorem
  \ref{thm:comparison_genus}
\begin{equation}\label{eq:van_dR}
  T^{h}(\ov{f},\ov{\Omega}_{X/Y}^{\bullet},[\ov{f_{\ast}\Omega_{X/Y}^{\bullet}}])=
  \sum_{k\geq
    0}(-1)^{k}a_{k}f_{\ast}[c_{1}(\omega_{X/Y})^{k+1}\bullet\mathbf{1}_{1}].
\end{equation}
It is enough to find, for every $k\geq 0$, a relative curve $f$ such
that $f_{\ast}(c_{1}(\omega_{X/Y})^{k+1})$ does not vanish. The
elementary construction in the proof of Theorem \ref{thm:char_van_dR}
works whenever $k$ is even, but one easily sees it fails for $k$
odd. Fortunately, there is an alternative argument. Let $g\geq 2$ and
$n\geq 3$ be integers. Consider the fine moduli scheme of smooth
curves of genus $g$ with a Jacobi structure of level $n$
\cite[Def. 5.4]{Deligne-Mumford}, to be denoted
$\mathcal{M}_{g}^{n}$. Let
$\pi\colon\mathcal{C}_{g}^{n}\to\mathcal{M}_{g}^{n}$ be the universal
curve. An example of K\"ahler fibration structure on $\pi$ is provided
by Teichm\"uller theory (see for instance
\cite[Sec. 5]{Wolpert:chern}). By \cite[Thm. 1]{Faber-Pandharipande},
the tautological class
$\kappa_{g-2}:=\pi_{\ast}(c_{1}(\omega_{\pi})^{g-1})\in
H^{2(g-2)}(\mathcal{M}_{g}^{n},\CC)$ does not vanish. Taking $g=k+2$
and $f=\pi$, we conclude the proof of the theorem.
\end{proof}

We note that in the previous theorem, the existence is provided by
Bismut's theorem. It would be interesting to have a proof of the
existence of a theory satisfying the condition of Theorem \ref{thm:23}
from the axiomatic point of view.

\section{Direct images of hermitian structures}
\label{sec:direct-imag-compl}

As another application of a theory of generalized analytic
torsion classes, we construct direct images of metrized complexes. It
turns out that the natural place to define direct images is not the
category $\oDb(\cdot)$ but a new category $\hDb(\cdot)$ that is
the analogue to the arithmetic $K$-theory of Gillet and
Soul\'e \cite{GilletSoule:vbhm}.

Let $X$ be a smooth complex variety. The fibers of the forgetful
functor $\oDb(X)\to\Db(X)$ have a structure of $\KA(X)$-torsor, for
the action of $\KA(X)$ by translation of the hermitian structures
(\cite[Thm.~3.13]{BurgosFreixasLitcanu:HerStruc}). At the same time,
the group $\KA(X)$ acts on the group
$\oplus_{p}\widetilde{\mathcal{D}}^{2p-1}_{D}(X,p)$ by translation,
via the Bott-Chern character $\widetilde{\ch}$
(\cite[Prop.~4.6]{BurgosFreixasLitcanu:HerStruc}). Observe that all
Bott-Chern classes live in these
groups, as for the analytic torsion classes. It is thus natural to
build a product category over~$\KA(X)$.

\begin{definition} \label{def:hDb}
  Let $S\subset T^{\ast}X_{0}$ be a closed conical subset. We define
  \begin{displaymath}
    \hDb(X,S)=\oDb(X)\times_{\KA(X)}
    \bigoplus_{p}\widetilde{\mathcal{D}}^{2p-1}_{D}(X,S,p)
  \end{displaymath}
  to be the category whose objects are equivalence classes
  $[\ov{\mathcal{F}},\eta]$ of pairs $(\ov{\mathcal{F}},\eta)$
  belonging to
  $\Ob\oDb(X)\times\oplus_{p}\widetilde{\mathcal{D}}^{2p-1}_{D}(X,S,p)$,
  under the equivalence relation
  \begin{displaymath}
    (\ov{\mathcal{F}},\eta)\sim
    (\ov{\mathcal{F}}+[\ov{E}],\eta-\widetilde{\ch}(\ov E))
  \end{displaymath}
  for $[\ov E]\in\KA(X)$, and with morphisms
  \begin{displaymath}
    \Hom_{\hDb(X)}([\ov{\mathcal{F}},\eta],[\ov{\mathcal{G}},\nu])=
    \Hom_{\Db(X)}(\mathcal{F},\mathcal{G}).
  \end{displaymath}
\end{definition}

If $S\subset T$ are closed conical subsets of
$T^{\ast}X_{0}$, then $\hDb(X,S)$ is naturally a full subcategory of
$\hDb(X,T)$.

In the sequel, we extend the main operations in $\Db(X)$ to the
categories $\hDb(X,S)$. In particular, we use the theory of generalized
analytic torsion classes to construct push-forward morphisms attached
to morphisms in $\ov{\Sm}_{\ast/\CC}$.

The category $\hDb(X,S)$ has a natural additive structure. More
generally, if $S,T$ are closed conical subsets of $T^{\ast}X_{0}$,
then there is an obvious addition functor
\begin{displaymath}
  \hDb(X,S)\times\hDb(X,T)\overset{\oplus}{\longrightarrow}\hDb(X,S\cup T).
\end{displaymath}
The object $[\ov{0},0]$ is a neutral element for this
operation. If $S + T$ is disjoint with the zero section in
$T^{\ast}X$, then there is a product defined by the functor
\begin{equation}\label{eq:hDb_1}
  \begin{split}
    \Ob\hDb(X,S)\times\Ob\hDb(X,T)& \overset{\otimes}{\longrightarrow}
    \Ob\hDb(X,(S+T)\cup S\cup T)\\
    ([\ov{\mathcal{F}},\eta],[\ov{\mathcal{G}},\nu]) &\longmapsto
    [\ov{\mathcal{F}}\otimes\ov{\mathcal{G}},
    \ch(\ov{\mathcal{F}})\bullet\nu+
    \eta\bullet\ch(\ov{\mathcal{G}})+\dd_{D}\eta\bullet\nu]
  \end{split}
\end{equation}
and the obvious assignment for morphisms. This product is commutative
up to natural isomorphism. It induces on $\hDb(X,\emptyset)$ a
structure of commutative and associative ring category. Also,
$[\ov{\mathcal{O}}_{X},0]$ is a unity object for the product
structure. More generally, the category $\hDb(X,S)$ becomes a left and
right $\hDb(X,\emptyset)$ module. Under the same assumptions on $S$,
$T$ we may define an internal $\Hom$. For this, let
$[\ov{\mathcal{F}},\eta]\in\Ob\hDb(X,S)$ and
$[\ov{\mathcal{G}},\nu]\in\Ob\hDb(X,T)$. Then we put
\begin{displaymath}
  \underline{\Hom}([\ov{\mathcal{F}},\eta],[\ov{\mathcal{G}},\nu])
  =[\underline{\Hom}(\ov{\mathcal{F}},\ov{\mathcal{G}}),(\sigma\ch(\ov{\mathcal{F}}))\bullet\nu
  +(\sigma\eta)\bullet\ch(\ov{\mathcal{G}})+(\dd_{D}\sigma\eta)\bullet\nu],
\end{displaymath}
where we recall that $\sigma$ is the sign operator (Definition
\ref{def:20}). Using Corollary \ref{cor:7}, it is easily seen this is
well defined. In particular, we put
\begin{displaymath}
  [\ov{\mathcal{F}},\eta]^{\vee}:=\underline{\Hom}([\ov{\mathcal{F}},\eta],[\ov{\OO}_{X},0])=
  [\ov{\mathcal{F}}^{\vee},\sigma\eta].
\end{displaymath}

The shift $[1]$ on $\oDb(X)$ induces a well defined shift functor on
$\hDb(X,S)$, whose action on objects is
\begin{displaymath}
  [\ov{\mathcal{F}},\eta][1]=[\ov{\mathcal{F}}[1],-\eta].
\end{displaymath}

There is a Chern character
\begin{displaymath}
    \ch:\Ob\hDb(X,S)\longrightarrow\bigoplus_{p}\widetilde{\mathcal{D}}_{D}^{2p}(X,S,p),\quad
    [\ov{\mathcal{F}},\eta]\longmapsto\ch(\ov{\mathcal{F}})+\dd_{D}\eta,
\end{displaymath}
which is well defined because $\dd_{D}\widetilde{\ch}(\ov E)=\ch(\ov
E)$ for $[\ov E]\in\KA(X)$. The Chern character is additive and
compatible with the product structure:
\begin{displaymath}
  \ch([\ov{\mathcal{F}},\eta]\otimes[\ov{\mathcal{G}},\nu])=
  \ch([\ov{\mathcal{F}},\eta])\bullet\ch([\ov{\mathcal{G}},\nu]).	
\end{displaymath}
Notice the relations
\begin{align*}
  &\ch([\ov{\mathcal{F}},\eta]^{\vee})=\sigma\ch([\ov{\mathcal{F}},\eta]),\\
  &\ch([\ov{\mathcal{F}},\eta][1])=-\ch([\ov{\mathcal{F}},\eta]).
\end{align*}

We may also define Bott-Chern classes for isomorphisms and
distinguished triangles. Let
$\widehat{\varphi}\colon[\ov{\mathcal{F}},\eta]\dra
[\ov{\mathcal{G}},\nu]$ be an isomorphism in $\hDb(X,S)$, whose
underlying morphism in $\Db(X)$ we denote $\varphi$. While the class
$\widetilde{\ch}(\varphi\colon\ov{\mathcal{F}}\dra\ov{\mathcal{G}})$
depends on the representatives $(\ov{\mathcal{F}},\eta)$,
$(\ov{\mathcal{G}},\nu)$, the class
\begin{displaymath}
  \widetilde{\ch}(\widehat{\varphi})
  :=\widetilde{\ch}(\varphi\colon\ov{\mathcal{F}}\dra\ov{\mathcal{G}})
  +\nu-\eta
\end{displaymath}
is well defined.

\begin{lemma}\label{lemma:hDb_1}
  Let $\widehat{\varphi}:[\ov{\mathcal{F}},\eta]\dra
  [\ov{\mathcal{G}},\nu]$ be an isomorphism in $\hDb(X,S)$, with
  underlying morphism $\varphi$ in $\Db(X)$. Then, the following
  conditions are equivalent:
  \begin{enumerate}
  \item there exists $[\ov E]\in\KA(X)$ such that $\varphi$ induces a
    tight isomorphism between $\ov{\mathcal{F}}+[\ov E]$ and
    $\ov{\mathcal{G}}$, and $\nu=\eta-\widetilde{\ch}(\ov E)$;
  \item $\widetilde{\ch}(\widehat{\varphi})=0$.
  \end{enumerate}
\end{lemma}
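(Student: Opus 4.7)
The plan rests on a single transformation formula: for any $[\ov E]\in\KA(X)$ and any isomorphism $\varphi\colon\mathcal{F}\dra\mathcal{G}$ in $\Db(X)$,
\begin{equation*}
\widetilde{\ch}(\varphi\colon \ov{\mathcal{F}}+[\ov E]\dra \ov{\mathcal{G}})
= \widetilde{\ch}(\varphi\colon \ov{\mathcal{F}}\dra \ov{\mathcal{G}}) - \widetilde{\ch}(\ov E),
\end{equation*}
an identity that follows from the characterizing axioms of Bott-Chern classes for isomorphisms established in \cite[Thm.~4.11]{BurgosFreixasLitcanu:HerStruc} (both sides have the same differential $\dd_{\mathcal{D}}$ image, vanish on tight isomorphisms when the right hand side does, and are additive under direct sums of hermitian structures). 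An immediate corollary of this formula is that $\widetilde{\ch}(\widehat{\varphi})$ is independent of the chosen representative of the equivalence class in Definition \ref{def:hDb}, so the statement of the lemma makes sense.

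For (i)$\Rightarrow$(ii): given $[\ov E]$ as in (i), the tightness of $\varphi\colon\ov{\mathcal{F}}+[\ov E]\dra\ov{\mathcal{G}}$ gives $\widetilde{\ch}(\varphi\colon\ov{\mathcal{F}}+[\ov E]\dra\ov{\mathcal{G}})=0$, so the transformation formula yields $\widetilde{\ch}(\varphi\colon\ov{\mathcal{F}}\dra\ov{\mathcal{G}}) = \widetilde{\ch}(\ov E)$. Combining with $\nu-\eta = -\widetilde{\ch}(\ov E)$,
\begin{equation*}
\widetilde{\ch}(\widehat{\varphi})
= \widetilde{\ch}(\varphi\colon\ov{\mathcal{F}}\dra\ov{\mathcal{G}}) + \nu - \eta
= \widetilde{\ch}(\ov E) - \widetilde{\ch}(\ov E) = 0.
\end{equation*}

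For (ii)$\Rightarrow$(i): the key is to produce $[\ov E]$ concretely. Using the torsor property of hermitian structures \cite[Thm.~3.13]{BurgosFreixasLitcanu:HerStruc}, we transport the hermitian structure of $\ov{\mathcal{G}}$ back along $\varphi$ (picking a representative $E_{G}\dra\mathcal{G}$ of $\ov{\mathcal{G}}$ and composing with $\varphi^{-1}$) to obtain a hermitian structure $\varphi^{\ast}\ov{\mathcal{G}}$ on $\mathcal{F}$ such that $\varphi\colon\varphi^{\ast}\ov{\mathcal{G}}\dra\ov{\mathcal{G}}$ is tight by construction. Then there exists a unique $[\ov E]\in\KA(X)$ with $\varphi^{\ast}\ov{\mathcal{G}} = \ov{\mathcal{F}} + [\ov E]$; this element already ensures condition~(a) of (i). Applying the transformation formula to this particular $[\ov E]$ and using the hypothesis $\widetilde{\ch}(\widehat{\varphi})=0$, i.e., $\widetilde{\ch}(\varphi\colon\ov{\mathcal{F}}\dra\ov{\mathcal{G}}) = \eta - \nu$, we find $\widetilde{\ch}(\ov E) = \widetilde{\ch}(\varphi\colon\ov{\mathcal{F}}\dra\ov{\mathcal{G}}) = \eta - \nu$, hence $\nu = \eta - \widetilde{\ch}(\ov E)$, which is condition~(b).

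The only non-formal step is the transformation formula, and it is a direct consequence of the Bott-Chern formalism already set up in \cite{BurgosFreixasLitcanu:HerStruc}. In particular, no surjectivity statement for $\widetilde{\ch}\colon\KA(X)\to\bigoplus_{p}\widetilde{\mathcal{D}}^{2p-1}_{D}(X,S,p)$ is required, because the needed class $[\ov E]$ is produced canonically as the discrepancy $\varphi^{\ast}\ov{\mathcal{G}} - \ov{\mathcal{F}}$.
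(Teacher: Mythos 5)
Your proof is correct and follows essentially the same route as the paper: the paper also invokes the free and transitive action of $\KA(X)$ on hermitian structures to produce the unique $[\ov E]$ with $\varphi\colon\ov{\mathcal{F}}+[\ov E]\dra\ov{\mathcal{G}}$ tight, and then reads off $\widetilde{\ch}(\widehat{\varphi})=\widetilde{\ch}(\ov E)+\nu-\eta$, which is exactly your transformation formula in disguise. Your write-up merely makes explicit the sign convention and the two directions that the paper dismisses as ``a tautology''.
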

\begin{proof}
  This is actually a tautology. Because $\KA(X)$ acts freely and
  transitively on the possible hermitian structures on $\mathcal{F}$,
  there exists a unique $[\ov E]\in\KA(X)$ such that
  $\ov{\mathcal{F}}+[\ov E]$ is tightly isomorphic to
  $\ov{\mathcal{G}}$ via the morphism $\varphi$. Then we have
  \begin{displaymath}
    \widetilde{\ch}(\widehat{\varphi})=\widetilde{\ch}(\ov E)+\nu-\eta.
  \end{displaymath}
  The lemma follows.
\end{proof}

\begin{definition}
  Let $\widehat{\varphi}$ be an isomorphism in $\hDb(X,S)$. We say
  that $\widehat{\varphi}$ is tight if the equivalent conditions of
  Lemma \ref{lemma:hDb_1} are satisfied.
\end{definition}
In particular, if $\varphi\colon
\ov{\mathcal{F}}\dra\ov{\mathcal{G}}$ is a tight isomorphism in
$\oDb(X)$, then $\varphi$ induces a tight isomorphism
$[\ov{\mathcal{F}},\eta]\dra[\ov{\mathcal{G}},\nu]$ if and only if
$\eta=\nu$.

The following lemma provides an example involving the notion of
tight~isomorphism.
\begin{lemma}\label{lemma:hDb_2}
  Let $[\ov{\mathcal{F}},\eta]\in\hDb(X,S)$ and
  $[\ov{\mathcal{G}},\nu]\in\hDb(X,T)$. Assume that $S+T$ does not
  cross the zero section. Then there is a functorial tight isomorphism
  \begin{displaymath}
    [\ov{\mathcal{F}},\eta]^{\vee}\otimes [\ov{\mathcal{G}},\nu]
    \cong\underline{\Hom}([\ov{\mathcal{F}},\eta],[\ov{\mathcal{G}},\nu]).
  \end{displaymath}
\end{lemma}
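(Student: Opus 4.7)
The plan is to unpack both sides of the claimed isomorphism in terms of the defining formulas and see that they coincide on the nose once one invokes the corresponding tight isomorphism at the level of $\oDb(X)$. First, by definition
\[
[\ov{\mathcal{F}},\eta]^{\vee}\otimes[\ov{\mathcal{G}},\nu]
=[\ov{\mathcal{F}}^{\vee},\sigma\eta]\otimes[\ov{\mathcal{G}},\nu],
\]
and the product functor \eqref{eq:hDb_1} gives
\[
\bigl[\ov{\mathcal{F}}^{\vee}\otimes\ov{\mathcal{G}},\
\ch(\ov{\mathcal{F}}^{\vee})\bullet\nu
+(\sigma\eta)\bullet\ch(\ov{\mathcal{G}})
+\dd_{\mathcal{D}}(\sigma\eta)\bullet\nu\bigr].
\]
Since $\sigma$ commutes with $\dd_{\mathcal{D}}$ and with $\ch$, i.e.\ $\ch(\ov{\mathcal{F}}^{\vee})=\sigma\ch(\ov{\mathcal{F}})$ by \eqref{eq:39} and $\dd_{\mathcal{D}}\sigma\eta=\sigma\dd_{\mathcal{D}}\eta$ by Proposition~\ref{prop:13}, the current entry coincides term by term with the one defining $\underline{\Hom}([\ov{\mathcal{F}},\eta],[\ov{\mathcal{G}},\nu])$. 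So the two objects differ only in their underlying $\oDb(X)$-entry.

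Next, I invoke the standard functorial isomorphism
\[
\psi\colon \ov{\mathcal{F}}^{\vee}\otimes\ov{\mathcal{G}}
\overset{\sim}{\dra}\underline{\Hom}(\ov{\mathcal{F}},\ov{\mathcal{G}})
\]
in $\oDb(X)$. At the level of representing bounded complexes of hermitian vector bundles, one chooses $\ov E\dra\mathcal{F}$ and $\ov F\dra \mathcal{G}$, and then $\ov E^{\vee}\otimes\ov F$ represents both sides, with $\psi$ being the identity of this representing complex once the standard pairing identification $E^{\vee}\otimes F\cong\underline{\Hom}(E,F)$ is made. Because this identification is an isometry for the induced metrics on either side, $\psi$ is represented by the identity of a single hermitian complex, hence is a tight isomorphism in $\oDb(X)$ by \cite[Thm.~4.11]{BurgosFreixasLitcanu:HerStruc}. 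Consequently the Bott-Chern secondary character $\widetilde{\ch}(\psi)=0$.

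Finally, the morphism $\psi$ induces a morphism $\widehat{\psi}$ in $\hDb(X,S\cup T\cup(S+T))$ between the two objects above, since their form components agree. Applying Lemma~\ref{lemma:hDb_1}, this morphism is tight if and only if $\widetilde{\ch}(\widehat{\psi})=\widetilde{\ch}(\psi)+\nu_{2}-\nu_{1}$ vanishes, where $\nu_{1},\nu_{2}$ denote the form components of source and target. By the previous paragraph both summands vanish, so $\widehat{\psi}$ is tight. Functoriality in $\ov{\mathcal{F}}$ and $\ov{\mathcal{G}}$ is inherited from that of $\psi$ in $\oDb(X)$. No step presents a serious obstacle; the only point requiring care is checking that $\sigma$ commutes with $\dd_{\mathcal{D}}$ and $\ch$ so that the form entries of the two sides are literally equal (not merely equal modulo $\Im\dd_{\mathcal{D}}$), which is exactly the content of Proposition~\ref{prop:13}.
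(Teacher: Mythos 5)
Your proposal is correct: the form components of the two sides literally coincide once you use $\ch(\ov{\mathcal{F}}^{\vee})=\sigma\ch(\ov{\mathcal{F}})$ from Proposition \ref{prop:13}, the underlying natural isomorphism $\ov{\mathcal{F}}^{\vee}\otimes\ov{\mathcal{G}}\dra\underline{\Hom}(\ov{\mathcal{F}},\ov{\mathcal{G}})$ is an isometry on representing hermitian complexes and hence tight in $\oDb(X)$, and Lemma \ref{lemma:hDb_1} then gives tightness in $\hDb$. The paper states this lemma without proof, and your argument is exactly the routine verification it leaves implicit.
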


Assume now given a distinguished triangle
\begin{displaymath}
  \widehat{\tau}\colon\quad [\ov{\mathcal{F}},\eta]\dra [\ov{\mathcal{G}},\nu]\dra [\ov{\mathcal{H}},\mu]\dra [\ov{\mathcal{F}},\eta][1].
\end{displaymath}
Let $\ov{\tau}$ denote the distinguished triangle
$\ov{\mathcal{F}}\dra\ov{\mathcal{G}}\dra\ov{\mathcal{H}}\dra$ in
$\oDb(X)$. Then we put
\begin{displaymath}
  \widetilde{\ch}(\widehat{\tau})=
  \widetilde{\ch}(\ov{\mathcal{\tau}})+\eta-\nu+\mu.
\end{displaymath}
By \cite[Thm.~3.33~(vii)]{BurgosFreixasLitcanu:HerStruc}, this class
does not depend on the representatives and is thus well defined.

We study now the functoriality of $\hDb(X,S)$ with respect to
inverse and direct images.
Let $f\colon X\to Y$ be a morphism of smooth complex varieties. Let
$T\subset T^{\ast}Y_{0}$ be a closed conical subset disjoint with
$N_{f}$. The action of the left inverse image functor on objects
is
\begin{displaymath}
    \Ld f^{\ast}\colon\Ob\hDb(Y,T)\longrightarrow\Ob\hDb(X,f^{\ast}T),\qquad
    [\ov{\mathcal{F}},\eta]\longmapsto [\Ld
    f^{\ast}\ov{\mathcal{F}},f^{\ast}\eta].
\end{displaymath}
That this assignment is well defined amounts to the functoriality of
$\widetilde{\ch}$.

Let $\ov{f}$ be a morphism in the category $\ov\Sm_{\ast/\CC}$. The
definition of a direct image functor attached to $\ov{f}$ depends upon
the choice of a theory of generalized analytic torsion classes. Let
$T$ be such a theory. Then we define a functor $\Rd\ov{f}_{\ast}$
whose action on objects is
\begin{equation}\label{eq:hDb_2}
  \begin{split}
    \Rd\ov{f}_{\ast}\colon\Ob\hDb(X,S)&\longrightarrow\Ob\hDb(Y,f_{\ast}S)\\
    [\ov{\mathcal{F}},\eta]&\longmapsto [\ov{\Rd
      f_{\ast}\mathcal{F}},\ov{f}_{\ttwist}(\eta)-T(\ov{f},\ov{\mathcal{F}},\ov{\Rd
      f_{\ast}\mathcal{F}})],
  \end{split}
\end{equation}
where $\ov{\Rd f_{\ast}\mathcal{F}}$ is an arbitrary choice of
hermitian structure on $\Rd f_{\ast}\mathcal{F}$. By the anomaly
formulas, this definition does not depend on the representative
$(\ov{\mathcal{F}},\eta)$ nor on the choice of hermitian structure on
$\ov{\Rd f_{\ast}\mathcal{F}}$.

\begin{theorem}\label{thm:hDb_1}
  Let $\ov{f}:X\to Y$ and $\ov{g}:Y\to Z$ be morphisms in
  $\ov{\Sm}_{\ast/\CC}$. Let $S\subset T^{\ast}X_{0}$ and $T\subset
  T^{\ast}Y_{0}$ be closed conical subsets.
  \begin{enumerate}
  \item Let $[\ov{\mathcal{F}},\eta]\in\Ob\hDb(X,S)$. Then there is a
    functorial tight isomorphism
    \begin{displaymath}
      \Rd(\ov{g}\circ \ov{f})_{\ast}([\ov{\mathcal{F}},\eta])\cong \Rd \ov{g}_{\ast}\Rd \ov{f}_{\ast}([\ov{\mathcal{F}},\eta]).
    \end{displaymath}
  \item (Projection formula) Assume that $T\cap N_{f}=\emptyset$ and
    that $T+f_{\ast}S$ does not cross the zero section of
    $T^{\ast}Y$. Let $[\ov{\mathcal{F}},\eta]\in\Ob\hDb(X,S)$ and
    $[\ov{\mathcal{G}},\nu]\in\Ob\hDb(Y,T)$. Then there is a
    functorial tight isomorphism
    \begin{displaymath}
      \Rd\ov{f}_{\ast}([\ov{\mathcal{F}},\eta] \otimes\Ld
      f^{\ast}[\ov{\mathcal{G}},\nu])
      \cong \Rd\ov{f}_{\ast}[\ov{\mathcal{F}},\eta]\otimes [\ov{\mathcal{G}},\nu]
    \end{displaymath}
    in $\hDb(Y,W)$, where
    \begin{math}
      W=f_{\ast}(S + f^{\ast}T)\cup f_{\ast}S\cup f_{\ast}f^{\ast}T.
    \end{math}
  \item (Base change) Consider a cartesian diagram
    \begin{displaymath}
      \xymatrix{
        X'\ar[r]^{h'}\ar[d]_{f'}	&X\ar[d]^{f}\\
        Y'\ar[r]^{h}	&Y.
      }
    \end{displaymath}
    Suppose that $f$ and $h$ are transverse and that $N_{h'}$ is
    disjoint with $S$. Equip $f'$ with the hermitian structure induced
    by the natural isomorphism $\Ld h^{\ast} T_{f}\dra T_{f'}$. Let
    $[\ov{\mathcal{F}},\eta]\in\Ob\hDb(X,S)$. Then there is a
    functorial tight isomorphism
    \begin{displaymath}
      h^{\ast}\ov{f}_{\ast}[\ov{\mathcal{F}},\eta]\cong \ov{f}'_{\ast}{h'}^{\ast}[\ov{\mathcal{F}},\eta]
    \end{displaymath}
    in $\hDb(Y',f'_{\ast}{h'}^{\ast}S)$.
  \end{enumerate}
\end{theorem}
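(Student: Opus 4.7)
The plan is to unpack the definition \eqref{eq:hDb_2} in each case, choose compatible hermitian structures on all direct images appearing (which is harmless, since by the anomaly formulas the object of $\hDb$ does not depend on the choice), and then check that the natural isomorphism of the underlying complexes in $\Db$ lifts to a tight isomorphism, i.e., that the correction currents in the second slot of $[\cdot,\cdot]$ actually agree. The three axioms of a theory of generalized analytic torsion classes were designed to enforce exactly this compatibility, so the argument reduces in each case to a direct calculation using one of them together with the corresponding property of $\ov f_{\ttwist}$ established in Section~\ref{sec:transv-morph}.

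For (i), I choose a hermitian structure $\ov{\Rd(g\circ f)_{\ast}\mathcal{F}}$ and transport it via the canonical isomorphism $\Rd(g\circ f)_{\ast}\mathcal{F}\cong\Rd g_{\ast}\Rd f_{\ast}\mathcal{F}$ to a hermitian structure on the iterated direct image (which simultaneously induces one on $\Rd f_{\ast}\mathcal{F}$ via an auxiliary choice). With these selections the two objects of $\hDb(Z,(g\circ f)_{\ast}S)$ share the same first slot, and their second slots read, respectively,
\begin{displaymath}
(g\circ f)_{\ttwist}(\eta)-T(\ov g\circ\ov f)\quad\text{and}\quad
\ov g_{\ttwist}\ov f_{\ttwist}(\eta)-\ov g_{\ttwist}T(\ov f)-T(\ov g).
\end{displaymath}
Proposition \ref{prop:12}(i) identifies $(g\circ f)_{\ttwist}$ with $\ov g_{\ttwist}\circ\ov f_{\ttwist}$ on currents with the relevant wave front sets, and the transitivity axiom (Definition \ref{def:genAT}(v)) gives $T(\ov g\circ\ov f)=T(\ov g)+\ov g_{\ttwist}T(\ov f)$. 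These two equalities yield the desired tight isomorphism.

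For (ii), I expand $[\ov{\mathcal{F}},\eta]\otimes \Ld f^{\ast}[\ov{\mathcal{G}},\nu]$ using \eqref{eq:hDb_1}, apply $\Rd\ov f_{\ast}$, and fix a hermitian structure on $\Rd f_{\ast}(\mathcal{F}\Lotimes\Ld f^{\ast}\mathcal{G})$ compatible with the standard projection-formula isomorphism $\Rd f_{\ast}(\mathcal{F}\Lotimes\Ld f^{\ast}\mathcal{G})\cong\Rd f_{\ast}\mathcal{F}\Lotimes\mathcal{G}$. The first slots then coincide. To match the second slots, I use Proposition \ref{prop:12}(ii) to push $f^{\ast}$-pullbacks across $\ov f_{\ttwist}$, the projection-formula axiom (Definition \ref{def:genAT}(iv)) applied to $T$, and the differential equation \eqref{eq:16} to absorb the $\dd_{\mathcal{D}}\eta\bullet\nu$ term coming from \eqref{eq:hDb_1}. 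The bookkeeping of signs and Deligne degrees is the most delicate part of the argument, but the hypotheses on $S$ and $T$ ensure every product and pull-back involved is well defined. For (iii), transversality of $f$ and $h$ gives tor-independence, hence a canonical base-change isomorphism $\Ld h^{\ast}\Rd f_{\ast}\mathcal{F}\dra\Rd f'_{\ast}\Ld {h'}^{\ast}\mathcal{F}$, which I use to transport hermitian structures. By Proposition \ref{prop:11} we have $h^{\ast}\ov f_{\ttwist}=\ov f'_{\ttwist}{h'}^{\ast}$, and by the functoriality axiom (Definition \ref{def:genAT}(ii)), since $h$ is transverse to $f$, we have $h^{\ast}T(\ov f)=T(\ov f',\Ld{h'}^{\ast}\ov{\mathcal{F}},\Ld h^{\ast}\ov{\Rd f_{\ast}\mathcal{F}})$. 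Combining these two identities shows that the correction currents on both sides coincide.

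The main obstacle across all three items is not conceptual but a careful combination of three layers: the identities in $\Db$, the behavior of $\ov f_{\ttwist}$ on currents with constrained wave front sets, and the axiomatic properties of $T$. The most subtle step is the projection formula (ii), where the $\dd_{\mathcal{D}}\eta\bullet\nu$ correction in \eqref{eq:hDb_1} forces one to invoke the differential equation for $T$ to rewrite it in terms of $\ch(\ov{\Rd f_{\ast}\mathcal{F}})$ and $\ov f_{\ttwist}\ch(\ov{\mathcal{F}})$ before comparing with the left-hand side; the bidegree symmetry of $\ch$ makes all relevant sign contributions cancel, but this must be checked explicitly.
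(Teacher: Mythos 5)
Your proposal is correct and follows essentially the same route as the paper, whose proof is a one-line citation of Proposition \ref{prop:12}, the transitivity and projection formula axioms for $T$ (items (i)--(ii)), and functoriality together with Proposition \ref{prop:11} (item (iii)). Your explicit invocation of the differential equation \eqref{eq:16} to absorb the $\dd_{\mathcal{D}}\eta\bullet\nu$ term in (ii) is a detail the paper leaves implicit, but it is the right bookkeeping and does not change the argument.
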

\begin{proof}
  The first and the second assertions follow from Proposition
  \ref{prop:12}, the transitivity and
  the projection formula for $T$. For the third item, one uses the
  functoriality of the analytic torsion classes and
  Proposition \ref{prop:11}.
\end{proof}
We close this section with an extension of Grothendieck duality to
$\hDb$. Let $\ov{f}:X\to Y$ be a
morphism is $\ov{\Sm}_{\ast/\CC}$. To enlighten notations, we denote
by $\bomega_{\ov{f}}$ the object $[\bomega_{\ov{f}},0]$ in
$\hDb(X,\emptyset)$ (Definition \ref{def:21}). Suppose given a closed
conical subset $T\subset T^{\ast}Y_{0}$ such that $T\cap
N_{f}=\emptyset$. Then we define the functor $\ov{f}^{!}$ whose action
on objects is
\begin{displaymath}
    \ov{f}^{!}:\Ob\hDb(Y,T)\longrightarrow\Ob\hDb(X,f^{\ast}T),\qquad
    [\ov{\mathcal{F}},\eta]\longmapsto
    f^{\ast}[\ov{\mathcal{F}},\eta]\otimes\bomega_{\ov{f}}.
\end{displaymath}
Observe the equality
\begin{equation}\label{eq:hDb_4}
  [\ov{\mathcal{G}},\nu]\otimes\bomega_{\ov{f}}
  =[\ov{\mathcal{G}}\otimes\bomega_{\ov{f}},\nu\bullet\ch(\bomega_{\ov{f}})].
\end{equation}
Now fix a theory of generalized analytic torsion classes. To the
morphism $\ov{f}$ we have attached the direct image functor
$\Rd\ov{f}_{\ast}$. We denote by
$\Rd\ov{f}^{\vee}_{\ast}$ the direct image functor associated to
$\ov{f}$ and the dual theory (Theorem
Definition \ref{thm-def:T_dual}).

\begin{theorem}[Grothendieck duality for $\hDb$]\label{thm:groth_dual}
  Let $\ov{f}:X\to Y$ be a morphism in $\ov{\Sm}_{\ast/\CC}$. Let
  $S\subset T^{\ast}X_{0}$ and $T\subset T^{\ast}Y_{0}$ be closed
  conical subsets such that $T\cap N_{f}=\emptyset$ and $T+f_{\ast}S$
  is disjoint with the zero section. Let
  $[\ov{\mathcal{F}},\eta]\in\Ob\hDb(X,S)$ and
  $[\ov{\mathcal{G}},\sigma]\in\Ob\hDb(Y,T)$. Then there is a
  functorial tight isomorphism
  \begin{displaymath}
    \underline{\Hom}(\Rd \ov{f}_{\ast}[\ov{\mathcal{F}},\eta],[\ov{\mathcal{G}},\nu])
    \cong \Rd\ov{f}^{\vee}_{\ast}\underline{\Hom}([\ov{\mathcal{F}},\eta], \ov{f}^{!}[\ov{\mathcal{G}},\nu])
  \end{displaymath}
  in $\hDb(Y,W)$, where
  \begin{math}
    W=f_{\ast}(S + f^{\ast}T)\cup f_{\ast}S\cup f_{\ast}f^{\ast}T.
  \end{math}
\end{theorem}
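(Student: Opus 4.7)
The overall strategy is to reduce the identity to the statement
\begin{equation}\label{eq:plan_core}
(\Rd\ov f_{\ast}[\ov{\mathcal{F}},\eta])^{\vee}\;\cong\;\Rd\ov f^{\vee}_{\ast}\bigl([\ov{\mathcal{F}},\eta]^{\vee}\otimes\bomega_{\ov f}\bigr)
\end{equation}
and then to verify \eqref{eq:plan_core} by unwinding the two constructions and applying the definition of the dual theory from Theorem Definition \ref{thm-def:T_dual}. Concretely, using Lemma \ref{lemma:hDb_2} to rewrite the internal $\Hom$'s as dual tensor products, the left hand side is $(\Rd\ov f_{\ast}[\ov{\mathcal{F}},\eta])^{\vee}\otimes [\ov{\mathcal{G}},\nu]$, while the right hand side becomes $\Rd\ov f^{\vee}_{\ast}\bigl([\ov{\mathcal{F}},\eta]^{\vee}\otimes \Ld f^{\ast}[\ov{\mathcal{G}},\nu]\otimes \bomega_{\ov f}\bigr)$, which by the projection formula Theorem \ref{thm:hDb_1}(ii) is tightly isomorphic to $\Rd\ov f^{\vee}_{\ast}([\ov{\mathcal{F}},\eta]^{\vee}\otimes \bomega_{\ov f})\otimes [\ov{\mathcal{G}},\nu]$. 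The hypothesis $T\cap N_f=\emptyset$ and the disjointness of $T+f_{\ast}S$ with the zero section are exactly what are needed to apply the projection formula and to guarantee that $W$ has the correct shape; the wave front set bookkeeping follows mechanically from Proposition \ref{prop:12}. Thus everything reduces to \eqref{eq:plan_core}.

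To verify \eqref{eq:plan_core}, I would first unpack both sides. The underlying isomorphism in $\Db(Y)$ is precisely classical Grothendieck duality $\Rd f_{\ast}(\mathcal{F}^{\vee}\Lotimes\bomega_f)\cong (\Rd f_{\ast}\mathcal{F})^{\vee}$ recalled in \eqref{eq:31}. Fix hermitian structures on $\Rd f_{\ast}\mathcal{F}$ and on $\Rd f_{\ast}(\mathcal{F}^{\vee}\otimes\bomega_f)$ that correspond under this identification, so that as an object of $\oDb(Y)$ we may write $\ov{\Rd f_{\ast}(\mathcal{F}^{\vee}\otimes\bomega_f)}=(\ov{\Rd f_{\ast}\mathcal{F}})^{\vee}$. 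With this choice, the definition \eqref{eq:hDb_2} of $\Rd\ov f_{\ast}$ and $\Rd\ov f^{\vee}_{\ast}$, together with \eqref{eq:hDb_4} for the tensor with $\bomega_{\ov f}$, give
\begin{align*}
(\Rd\ov f_{\ast}[\ov{\mathcal{F}},\eta])^{\vee}
&=\bigl[(\ov{\Rd f_{\ast}\mathcal{F}})^{\vee},\;\sigma\bigl(\ov f_{\ttwist}(\eta)-T(\ov{\xi})\bigr)\bigr],\\
\Rd\ov f^{\vee}_{\ast}\bigl([\ov{\mathcal{F}},\eta]^{\vee}\otimes\bomega_{\ov f}\bigr)
&=\bigl[(\ov{\Rd f_{\ast}\mathcal{F}})^{\vee},\;\ov f_{\ttwist}\bigl(\sigma\eta\bullet\ch(\bomega_{\ov f})\bigr)-T^{\vee}(\ov f,\ov{\mathcal{F}}^{\vee}\!\otimes\bomega_{\ov f},(\ov{\Rd f_{\ast}\mathcal{F}})^{\vee})\bigr],
\end{align*}
where $\ov\xi=(\ov f,\ov{\mathcal{F}},\ov{\Rd f_{\ast}\mathcal{F}})$.

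The comparison of these two pairs rests on two identities. The first is the commutation rule
\begin{equation*}
\sigma\,\ov f_{\ttwist}(\eta)=\ov f_{\ttwist}\bigl(\sigma\eta\bullet\ch(\bomega_{\ov f})\bigr),
\end{equation*}
which follows by combining $\sigma f_{\ast}=(-1)^e f_{\ast}\sigma$ with the identity $\sigma\Td(\ov f)=(-1)^e\Td(\ov f)\bullet\ch(\bomega_{\ov f})$ provided by Proposition \ref{prop:13}\,\eqref{eq:34}. The second is the identity
\begin{equation*}
T^{\vee}\bigl(\ov f,\ov{\mathcal{F}}^{\vee}\otimes\bomega_{\ov f},(\ov{\Rd f_{\ast}\mathcal{F}})^{\vee}\bigr)=\sigma\,T(\ov\xi),
\end{equation*}
which is Theorem Definition \ref{thm-def:T_dual} applied to the Grothendieck-dualized triple, together with the canonical identifications $(\ov{\mathcal{F}}^{\vee}\otimes\bomega_{\ov f})^{\vee}\otimes\bomega_{\ov f}\cong\ov{\mathcal{F}}$ and $((\ov{\Rd f_{\ast}\mathcal{F}})^{\vee})^{\vee}\cong\ov{\Rd f_{\ast}\mathcal{F}}$ of hermitian objects. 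These two identities together show that the current components of the two pairs agree, so the classical Grothendieck isomorphism lifts to a tight isomorphism in $\hDb(Y,W)$. Functoriality is clear from the functoriality of classical Grothendieck duality, of the projection formula in Theorem \ref{thm:hDb_1}(ii), and of Lemma \ref{lemma:hDb_2}.

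The main obstacle I expect is not the core computation, which is almost a tautology once one believes in Proposition \ref{prop:13} and Theorem Definition \ref{thm-def:T_dual}, but rather the careful bookkeeping of hermitian structures and of the equivalence relation defining $\hDb$: one must check that the choice of compatible hermitian structures used above is inessential, i.e.\ that changing the representatives of the classes in $\hDb$ only modifies both sides of \eqref{eq:plan_core} by the same $\KA$-action, and that the tight isomorphism established depends only on $[\ov{\mathcal{F}},\eta]$ and $[\ov{\mathcal{G}},\nu]$. This is precisely where Lemma \ref{lemma:hDb_1} and the anomaly formulas (Proposition \ref{prop:1tris}) are invoked.
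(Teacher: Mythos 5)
Your proposal is correct and follows essentially the same route as the paper: reduction to the isomorphism $(\Rd \ov{f}_{\ast}[\ov{\mathcal{F}},\eta])^{\vee}\cong \Rd\ov{f}^{\vee}_{\ast}([\ov{\mathcal{F}},\eta]^{\vee}\otimes\bomega_{\ov{f}})$ via Lemma \ref{lemma:hDb_2} and the projection formula of Theorem \ref{thm:hDb_1}, and then the same two key identities (the relation $\sigma T=T^{\vee}$ on the Grothendieck-dualized triple with compatible metrics, and the commutation of $\sigma$ with $\ov f_{\ttwist}$ via Proposition \ref{prop:13} and equation \eqref{eq:hDb_4}). The extra bookkeeping you flag about representatives and the $\KA$-action is exactly the routine part the paper leaves implicit.
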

In particular, we have
\begin{equation}\label{eq:hDb_3}
  (\Rd
  \ov{f}_{\ast}[\ov{\mathcal{F}},\eta])^{\vee}\cong
  \Rd\ov{f}^{\vee}_{\ast}([\ov{\mathcal{F}},\eta]^{\vee}
  \otimes\bomega_{\ov{f}}).
\end{equation}
\begin{proof}
  By Lemma \ref{lemma:hDb_2} and Proposition \ref{thm:hDb_1}, we are
  reduced to establish the functorial tight isomorphism
  (\ref{eq:hDb_3}). The proof follows readily from the definitions,
  Grothendieck duality and the following two observations. First of
  all, if $T$ is the theory of analytic torsion classes, then by the
  very definition of $T^{\vee}$ we find
  \begin{displaymath}
    \sigma T(\ov{f},\ov{\mathcal{F}},\ov{\Rd f_{\ast}\mathcal{F}})=
    T^{\vee}(\ov{f},\ov{\mathcal{F}}^{\vee}\otimes\bomega_{\ov{f}},
    \ov{\Rd f_{\ast}(\mathcal{F}^{\vee}\otimes\bomega_{f})}),
  \end{displaymath}
  where the metric on $\ov{\Rd
    f_{\ast}(\mathcal{F}^{\vee}\otimes\bomega_{f})}$ is chosen so that
  Grothendieck duality provides a tight isomorphism
  \begin{displaymath}
    \ov{\Rd f_{\ast}\mathcal{F}}^{\vee}\cong \ov{\Rd f_{\ast}(\mathcal{F}^{\vee}\otimes\bomega_{f})}.
  \end{displaymath}
  Secondly, for direct images of currents, we compute
  \begin{displaymath}
      \sigma\ov{f}_{\ttwist}(\eta)=\sigma
      f_{\ast}(\eta\bullet\Td(T_{\ov{f}}))
      =(-1)^{e}f_{\ast}(\sigma\eta\bullet\sigma\Td(T_{\ov{f}}))
      =f_{\ast}(\sigma\eta\bullet\ch(\bomega_{\ov{f}})\bullet\Td(T_{\ov{f}})).
  \end{displaymath}
  Here $e$ is the relative dimension of $f$, and to derive the last
  equality we appeal to Proposition \ref{prop:13}. To conclude, we
  recall equation \eqref{eq:hDb_4}.
\end{proof}
\begin{corollary}
  Let $T$ be a self-dual theory of generalized analytic torsion
  classes.
  \begin{enumerate}
  \item Then there is a functorial isomorphism
    \begin{math}
      (\Rd \ov{f}_{\ast}[\ov{\mathcal{F}},\eta])^{\vee}\cong
      \Rd\ov{f}_{\ast}([\ov{\mathcal{F}},\eta]^{\vee}
      \otimes\bomega_{\ov{f}}).
    \end{math}
  \item If the hermitian structure of $\ov{f}$ comes from chosen
    metrics on $T_{X}$, $T_{Y}$ and $\bomega_{X}$, $\bomega_{Y}$ are
    equipped with the induced metrics, then we have a commutative
    diagram
    \begin{displaymath}
      \xymatrix{
        \hDb(X,S)\ar[d]_{\ov{f}_{\ast}}\ar[r]^{(\cdot)^{\vee}\otimes\ov{\bomega}_{X}	}
        &\hDb(X,S)\ar[d]^{\ov{f}_{\ast}}\\
        \hDb(Y,f_{\ast}S)\ar[r]^{(\cdot)^{\vee}\otimes\ov{\bomega}_{Y}}
        &\hDb(Y,f_{\ast}S).
      }
    \end{displaymath}
  \end{enumerate}
\end{corollary}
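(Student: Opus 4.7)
The plan is to reduce both statements to Theorem \ref{thm:groth_dual} (the general Grothendieck duality for $\hDb$), combined with the projection formula and some compatibility between the various metrized dualizing complexes. The self-duality assumption $T=T^{\vee}$ is what removes the discrepancy between $\Rd\ov{f}_{\ast}$ and $\Rd\ov{f}^{\vee}_{\ast}$ that appears in the general duality statement \eqref{eq:hDb_3}.

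For part (i), I would simply specialize Theorem \ref{thm:groth_dual} to the case $[\ov{\mathcal{G}},\nu]=[\ov{\OO}_{Y},0]$ and $T=\emptyset$, which recovers isomorphism \eqref{eq:hDb_3}:
\begin{displaymath}
  (\Rd\ov{f}_{\ast}[\ov{\mathcal{F}},\eta])^{\vee}\cong
  \Rd\ov{f}^{\vee}_{\ast}([\ov{\mathcal{F}},\eta]^{\vee}\otimes\bomega_{\ov{f}}).
\end{displaymath}
Since the theory is self-dual, the direct image functor constructed from $T^{\vee}$ agrees term by term with the one constructed from $T$ (because $T(\ov{f},\ov{\mathcal{G}},\ov{\Rd f_{\ast}\mathcal{G}})=T^{\vee}(\ov{f},\ov{\mathcal{G}},\ov{\Rd f_{\ast}\mathcal{G}})$ for any relative metrized complex, by the defining formula \eqref{eq:hDb_2}). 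Hence $\Rd\ov{f}^{\vee}_{\ast}=\Rd\ov{f}_{\ast}$ and the claim follows.

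For part (ii), the key additional input is the tight identification $\bomega_{\ov{f}}\cong\ov{\bomega}_{X}\otimes\Ld f^{\ast}\ov{\bomega}_{Y}^{\vee}$ in $\oDb(X)$, which holds because the hermitian structures on $T_{X},T_{Y},T_{f}$ are compatible via the exact sequence $0\to T_{f}\to T_{X}\to f^{\ast}T_{Y}\to 0$ (this is essentially \cite[Ex.~5.7]{BurgosFreixasLitcanu:HerStruc}). I would start from part (i), tensor the right-hand side by $\ov{\bomega}_{Y}$, and apply the projection formula (Theorem \ref{thm:hDb_1}~(ii)) to move $\ov{\bomega}_{Y}$ past $\Rd\ov{f}_{\ast}$:
\begin{displaymath}
  (\Rd\ov{f}_{\ast}[\ov{\mathcal{F}},\eta])^{\vee}\otimes\ov{\bomega}_{Y}
  \cong \Rd\ov{f}_{\ast}\bigl([\ov{\mathcal{F}},\eta]^{\vee}\otimes\bomega_{\ov{f}}\otimes \Ld f^{\ast}\ov{\bomega}_{Y}\bigr).
\end{displaymath}
Then I substitute the above identification of $\bomega_{\ov{f}}$ to obtain the functorial tight isomorphism
\begin{displaymath}
  (\Rd\ov{f}_{\ast}[\ov{\mathcal{F}},\eta])^{\vee}\otimes\ov{\bomega}_{Y}\cong \Rd\ov{f}_{\ast}\bigl([\ov{\mathcal{F}},\eta]^{\vee}\otimes\ov{\bomega}_{X}\bigr),
\end{displaymath}
which is precisely the commutativity of the square in (ii).

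The main obstacle, as far as I can see, is not conceptual but bookkeeping: one has to verify carefully that the tight isomorphisms produced by the projection formula and by the metric identification $\bomega_{\ov{f}}\cong\ov{\bomega}_{X}\otimes\Ld f^{\ast}\ov{\bomega}_{Y}^{\vee}$ are functorial in $[\ov{\mathcal{F}},\eta]$, so that the diagram commutes in the $2$-categorical sense (i.e.\ naturally). This reduces to the compatibility of Bott-Chern classes with duality and with inverse images, which is covered by the formalism of \cite{BurgosFreixasLitcanu:HerStruc} and by Corollary~\ref{cor:7}.
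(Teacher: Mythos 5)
Your argument is correct and follows the same route as the paper: part (i) is the specialization of Theorem \ref{thm:groth_dual} (i.e.\ of \eqref{eq:hDb_3}) together with the observation that self-duality makes $\Rd\ov{f}^{\vee}_{\ast}=\Rd\ov{f}_{\ast}$, and part (ii) is obtained from (i) by the projection formula of Theorem \ref{thm:hDb_1} and the tight identification $\bomega_{\ov{f}}\otimes\Ld f^{\ast}\ov{\bomega}_{Y}\cong\ov{\bomega}_{X}$ coming from the chosen metrics on $T_{X}$ and $T_{Y}$. This is exactly the paper's (very terse) proof, with the bookkeeping made explicit.
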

\begin{proof}
  The first claim is immediate from Theorem \ref{thm:groth_dual}. The
  second item follows from the first one and the projection formula
  (Proposition \ref{thm:hDb_1}).
\end{proof}

\section{Analytic torsion for degenerating families of curves}
\label{sec:analyt-tors-degen}
As a second example of application of the theory developed in this article,
we describe the singularities of the analytic torsion for degenerating
families of curves. The results we prove are particular instances of
those obtained by Bismut-Bost \cite{Bismut-Bost}, Bismut
\cite{Bismut:degeneracy} and Yoshikawa \cite{Yoshikawa}. Although the
methods of this section can be extended to recover the results of
Yoshikawa in \cite{Yoshikawa}, for simplicity, we will restrict
ourselves to fibrations in curves over a curve.

In fact, our proof is not very different from the one in \cite{Bismut:degeneracy}
and \cite{Yoshikawa}. For instance, one of the
main ingredients of the proof of the results in \cite{Bismut:degeneracy}
and \cite{Yoshikawa} is the Bismut-Lebeau immersion formula. Our
approach implicitly uses Bismut's generalization of the immersion
formula, encoded in the
existence of analytic torsion theories for arbitrary projective
morphisms. We expect that the techniques of this section can be used to
generalize the above results to situations more general
than the ones considered by Yoshikawa.

Let $S$ be a smooth complex curve and $f\colon X\to S$ a projective
morphism of smooth complex varieties, whose fibers are reduced curves
with at most ordinary double singular points. We assume that $f$ is
generically smooth. Following Bismut-Bost
\cite[Sec. 2(b)]{Bismut-Bost}, we call such a family an
f.s.o. (\emph{famille \`a singularit\'es ordinaires}). The singular
locus of $f$, to be denoted $\Sigma$, is a zero dimensional reduced
closed subset of $X$. Its direct image $\Delta=f_{\ast}(\Sigma)$ is
the Weil divisor
\begin{displaymath}
  \Delta=\sum_{p\in S}n_{p}p,
\end{displaymath}
where $n_{p}$ is the number of singular points of the fiber
$f^{-1}(p)$. We will abusively identify $\Delta$ with its
support. With these notations, we put
$V=S\setminus\Delta$. Locally for the analytic topology, the morphism
$f$ can be written in complex coordinates either as
$f(z_{0},z_{1})=z_{0}$ or $f(z_{0},z_{1})=z_{0}z_{1}$
\cite[Sec. 3(a)]{Bismut-Bost}. In the second case, the point of
coordinates $(z_{0},z_{1})=(0,0)$ belongs to the singular locus
$\Sigma$.

For a vector bundle $F$ over $X$, let $\PP(F)$ be the projective
space of lines in $F$. The differential $df\colon T_{X}\to
f^{\ast}T_{S}$ induces a section $\OO_{X}\to\Omega_{X}\otimes
f^{\ast}T_{S}$. Because $f$ is smooth over $X\setminus\Sigma$, this
section does not vanish on $X\setminus\Sigma$. Therefore there is an
induced map
\begin{displaymath}
  \mu\colon X\setminus\Sigma\longrightarrow \PP(\Omega_{X}\otimes
  f^{\ast}T_{S})\cong\PP(\Omega_{X}),
\end{displaymath}
called the \emph{Gauss map}. Notice that this map was already used in
\cite{Bismut:degeneracy} and \cite{Yoshikawa}.

We next study the blow-up $\widetilde{X}=\text{Bl}_{\Sigma}(X)$ of $X$
at $\Sigma$ and relate it to the Gauss map. Let $\pi\colon
\widetilde{X}\to X$ be the natural projection. Let $E$ be the
exceptional divisor of $\pi$,
\begin{displaymath}
  E=\bigsqcup_{p\in\Sigma}E_{p},\quad E_{p}\cong\PP(T_{p}X),
\end{displaymath}
with the reduced scheme structure.  For every $p\in\Sigma$, there is
an identification $T_{p}X\cong\Omega_{X,p}$ provided by the hessian of
$f$, which is a non-degenerate bilinear form on $T_{p}X$. The local
description of the blow-up at a point implies:

\begin{lemma}
  There is a commutative diagram
  \begin{displaymath}
    \xymatrix{
      E_{p}=\PP(T_{p}X)\ar[r]^{\hspace{0.3cm}\sim}\ar@{^{(}->}[d]	&\PP(\Omega_{X,p})\ar@{^{(}->}[d]\\
      \widetilde{X}\ar[r]^{\widetilde{\mu}}\ar[d]_{\pi}	&\PP(\Omega_{X})\ar[dl]^{p}\\
      X	&X\setminus\Sigma.\ar[u]_{\mu}\ar@{_{(}->}[l]
    }
  \end{displaymath}
  Denote by $\OO(-1)$ the tautological divisor either on
  $\PP(\Omega_{X})$ or on $E_{p}$. Then there is a natural isomorphism
  \begin{math}
    \widetilde{\mu}^{\ast}\OO(-1)\mid_{E_{p}}\cong\OO(-1).
  \end{math}
\end{lemma}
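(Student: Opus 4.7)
The whole statement is local near each singular point $p\in\Sigma$, so my plan is to reduce to an explicit coordinate computation. Because $f$ is an f.s.o., I can choose analytic coordinates $(z_0,z_1)$ on $X$ near $p$ and $t$ on $S$ near $f(p)$ so that $f(z_0,z_1)=z_0z_1=t$. Trivializing $f^{\ast}T_S$ by $\partial/\partial t$, the section $\OO_X\to\Omega_X\otimes f^{\ast}T_S$ defined by $df$ becomes $z_1\,dz_0+z_0\,dz_1$, so in the basis $(dz_0,dz_1)$ the Gauss map reads $\mu(z_0,z_1)=[z_1:z_0]\in\PP(\Omega_{X,(z_0,z_1)})$ for $(z_0,z_1)\neq(0,0)$.

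I would next realize the blow-up concretely as
\begin{displaymath}
  \pi^{-1}(U)=\{((z_0,z_1),[u_0:u_1])\in U\times\PP^{1}\mid z_0u_1=z_1u_0\}
\end{displaymath}
on a neighbourhood $U$ of $p$. The defining relation forces $[u_0:u_1]=[z_0:z_1]$ off $E_p$, so the formula $\widetilde{\mu}((z_0,z_1),[u_0:u_1])=[u_1:u_0]\in\PP(\Omega_{X,(z_0,z_1)})$ provides a regular extension of $\mu\circ\pi$ to all of $\pi^{-1}(U)$. Gluing with $\mu$ on $\pi^{-1}(X\setminus\Sigma)$ yields the desired morphism $\widetilde{\mu}\colon\widetilde X\to\PP(\Omega_X)$ and makes the two lower triangles of the diagram commute by construction.

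The third step is to identify $\widetilde{\mu}|_{E_p}$ with the isomorphism in the top row. On $E_p=\{(0,0)\}\times\PP^{1}_{[u_0:u_1]}$ the formula gives $[u_0:u_1]\mapsto[u_1:u_0]$, namely the projectivization of the linear map $T_pX\to\Omega_{X,p}$ sending $\partial_{z_i}\mapsto dz_{1-i}$. In the coordinates $(z_0,z_1)$ the Hessian matrix of $f=z_0z_1$ at $p$ is antidiagonal with $1$'s in the off-diagonal entries, and this is exactly the induced map $T_pX\to\Omega_{X,p}$; hence $\widetilde{\mu}|_{E_p}$ agrees with the Hessian identification claimed in the lemma.

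For the tautological bundle claim I would use that $\OO_{\PP(\Omega_X)}(-1)$ is, by definition, the line subsheaf of $p^{\ast}\Omega_X$ whose fibre at $[\omega]$ is $\CC\omega$, so $\widetilde{\mu}^{\ast}\OO(-1)$ is the line subsheaf of $\pi^{\ast}\Omega_X$ whose fibre at $((z_0,z_1),[u_0:u_1])$ is $\CC(u_1\,dz_0+u_0\,dz_1)$. Restricting to $E_p$ and applying the Hessian identification $\Omega_{X,p}\cong T_pX$ (swapping $dz_0\leftrightarrow\partial_{z_1}$ and $dz_1\leftrightarrow\partial_{z_0}$), this becomes the tautological line $\CC(u_0\partial_{z_0}+u_1\partial_{z_1})$ in $\PP(T_pX)=E_p$, which is precisely $\OO_{E_p}(-1)$. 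The only real difficulty is bookkeeping: keeping straight the index swap introduced by the normal form $f=z_0z_1$ and checking that each chain of identifications is coordinate-independent, but no deeper ingredient is required.
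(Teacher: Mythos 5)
Your proof is correct and is exactly the argument the paper has in mind: the lemma is stated there as an immediate consequence of ``the local description of the blow-up at a point,'' and your coordinate computation with $f(z_0,z_1)=z_0z_1$, the explicit chart $\{z_0u_1=z_1u_0\}$, and the antidiagonal Hessian simply carries that out in detail (the overall scalar in the Hessian being irrelevant projectively). The identification $\widetilde{\mu}^{\ast}\OO(-1)\mid_{E_p}\cong\OO(-1)$ could also be read off directly from the commutativity of the diagram, since the top arrow is induced by a linear isomorphism, but your direct verification of the tautological line is equally valid.
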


Consider now the short exact sequence of vector bundles on
$\PP(\Omega_{X})$
\begin{displaymath}
  0\to\OO(-1)\to p^{\ast}\Omega_{X}\to Q\to 0,
\end{displaymath}
where $Q$ is the universal quotient bundle. Observe that $Q$ is of
rank 1. The dual exact sequence is
\begin{displaymath}
  0\to U\to p^{\ast}T_{X}\to\OO(1)\to 0,
\end{displaymath}
$U$ being the universal vector subsheaf. We denote by $\eta $
the induced exact sequence on~$\widetilde X$
\begin{equation} \label{eq:87}
\eta\colon  0\to \widetilde{\mu}^{\ast} U\to
  \pi^{\ast}T_{X}\to\widetilde{\mu}^{\ast} \OO(1)\to 0,
\end{equation}
From \eqref{eq:87}
and the definition $\omega_{X/S}=\omega_{X}\otimes f^{\ast}T_S$, we
derive a natural isomorphism
\begin{equation}\label{eq:bb_1}
  \widetilde{\mu}^{\ast}U\otimes\pi^{\ast}\omega_{X/S}\cong
  \widetilde{\mu}^{\ast}\OO(-1)
  \otimes\widetilde{f}^{\ast}T_S.
\end{equation}
\begin{lemma}\label{lemma:bb_1}
  We have
  \begin{equation}\label{eq:bb_1bis}
    \widetilde{\mu}^{\ast}\OO(-1)\otimes\widetilde{f}^{\ast}T_S=\OO(E).
  \end{equation}
\end{lemma}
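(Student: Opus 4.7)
The plan is to exhibit a canonical global section of the line bundle $\widetilde{\mu}^{\ast}\OO(-1)\otimes\widetilde{f}^{\ast}T_S$ and show that its divisor is exactly $E$. The candidate is $\pi^{\ast}df$, where $df\in\Gamma(X,\Omega_X\otimes f^{\ast}T_S)$ is the usual differential of $f$.

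First I would recall that $df$ is a section of $\Omega_X\otimes f^{\ast}T_S$ whose zero scheme is (set-theoretically) $\Sigma$. Away from $\Sigma$, this nowhere-vanishing section is exactly what defines the Gauss map $\mu$: it lifts to a trivialization of the tautological sub-line-bundle on $\PP(\Omega_X\otimes f^{\ast}T_S)=\PP(\Omega_X)$ pulled back along $\mu$. Since on the latter projective bundle the tautological sub-line-bundle is $\OO(-1)\otimes p^{\ast}f^{\ast}T_S$, this gives on $X\setminus\Sigma$ a canonical isomorphism $\mu^{\ast}\OO(-1)\otimes f^{\ast}T_S\cong\OO_{X\setminus\Sigma}$. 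Composing with $\pi$ and using $\widetilde{\mu}|_{\widetilde{X}\setminus E}=\mu\circ\pi$, we obtain an isomorphism
\begin{displaymath}
  \widetilde{\mu}^{\ast}\OO(-1)\otimes\widetilde{f}^{\ast}T_S\cong\OO_{\widetilde{X}\setminus E}
\end{displaymath}
on $\widetilde{X}\setminus E$, equivalently a global section $s$ of $\widetilde{\mu}^{\ast}\OO(-1)\otimes\widetilde{f}^{\ast}T_S$ (namely $\pi^{\ast}df$) whose zero locus is contained in $E$. So the line bundle in question is of the form $\OO(D)$ for some effective divisor $D$ supported on $E$.

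It remains to identify $D=E$, and this is the only real point of the proof. Because $E$ is a disjoint union of the $E_p$ for $p\in\Sigma$ and the question is local, we may pick analytic coordinates $(z_0,z_1)$ near a singular point so that $f=z_0z_1$, trivialize $f^{\ast}T_S$ locally by $\partial/\partial t$, and work in one of the standard charts of the blow-up, say $(u,v)\mapsto(u,uv)$, in which $E=\{u=0\}$. Then $df=z_1\,dz_0+z_0\,dz_1$, so in the chart
\begin{displaymath}
  \pi^{\ast}df=uv\,\pi^{\ast}dz_0+u\,\pi^{\ast}dz_1=u\,(v\,\pi^{\ast}dz_0+\pi^{\ast}dz_1).
\end{displaymath}
A direct inspection of the blow-up chart (or of $\widetilde{\mu}$ via the hessian identification $T_pX\cong\Omega_{X,p}$ recalled above) shows that the frame of $\widetilde{\mu}^{\ast}\OO(-1)$ on this chart is precisely $v\,\pi^{\ast}dz_0+\pi^{\ast}dz_1$. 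Hence $\pi^{\ast}df$ factors genuinely through the tautological sub-line-bundle and vanishes to order exactly $1$ along $E$.

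The main obstacle is purely bookkeeping: keeping straight the twist between $\PP(\Omega_X)$ and $\PP(\Omega_X\otimes f^{\ast}T_S)$ (so that the relevant line bundle on the former is $\OO(-1)\otimes p^{\ast}f^{\ast}T_S$), and matching the local description of $\widetilde{\mu}$ on $E_p$ via the hessian with the local coordinates on the blow-up. Once these identifications are fixed, the one-line local computation above yields $\mathrm{div}(\pi^{\ast}df)=E$, which gives the claimed isomorphism $\widetilde{\mu}^{\ast}\OO(-1)\otimes\widetilde{f}^{\ast}T_S\cong\OO(E)$.
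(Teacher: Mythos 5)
Your proof is correct, but it takes a different route from the one in the paper. The paper first observes that $\widetilde{\mu}^{\ast}U\otimes\pi^{\ast}\omega_{X/S}$ is trivial on $W$ (because $\widetilde{\mu}^{\ast}U\mid_{W}=\ker(df)\mid_{W}=\omega_{X/S}^{\vee}\mid_{W}$), so that by \eqref{eq:bb_1} the line bundle $\widetilde{\mu}^{\ast}\OO(-1)\otimes\widetilde{f}^{\ast}T_S$ equals $\OO(\sum_{p}m_{p}E_{p})$ for some integers $m_{p}$, and then pins down $m_{p}=1$ by a degree computation on each $E_{p}$, using $\widetilde{\mu}^{\ast}\OO(-1)\mid_{E_{p}}=\OO(-1)$ (from the preceding lemma), $\deg\widetilde{f}^{\ast}T_S\mid_{E_{p}}=0$ and $(E_{p}\cdot E_{p})=-1$. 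You instead produce the canonical global section $\pi^{\ast}df$ of $\widetilde{\mu}^{\ast}\OO(-1)\otimes\widetilde{f}^{\ast}T_S$ — the twist bookkeeping $\OO_{\PP(\Omega_X\otimes f^{\ast}T_S)}(-1)=\OO_{\PP(\Omega_X)}(-1)\otimes p^{\ast}f^{\ast}T_S$ is exactly right, and the factorization of $\pi^{\ast}df$ through the sub-line-bundle over all of $\widetilde{X}$ follows since it holds on the dense open $W$ — and then compute its order of vanishing along $E$ in the blow-up charts for the local model $f=z_0z_1$; your chart computation (frame $v\,\pi^{\ast}dz_0+\pi^{\ast}dz_1$, so $\pi^{\ast}df$ vanishes to order exactly one along $\{u=0\}$, and symmetrically in the other chart) is accurate and consistent with the hessian identification used in the paper. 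The trade-off: your argument is self-contained and yields a canonical trivializing section, hence a canonical isomorphism with $\OO(E)$, at the cost of a local coordinate computation; the paper's argument avoids coordinates at this step (pushing the coordinate content into the preceding lemma) but only determines the isomorphism class via intersection numbers, which is all that is needed later.
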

\begin{proof}
  First of all we observe that
  $\widetilde{\mu}^{\ast}U\otimes\pi^{\ast}\omega_{X/S}$ is trivial on
  the open $W=\widetilde{X}\setminus E$. Indeed, by construction of
  the Gauss map we have
  \begin{displaymath}
    \widetilde{\mu}^{\ast}U\mid_{W}
    =\ker(df\colon T_X\to f^{\ast}T_S)\mid_{W}
    =\omega_{X/S}^{\vee}\mid W.
  \end{displaymath}
  Hence by equation \eqref{eq:bb_1} we can write
  \begin{displaymath}
    \widetilde{\mu}^{\ast}\OO(-1)\otimes\widetilde{f}^{\ast}T_S
    =\OO(\sum_{p\in\Sigma}m_{p}E_{p}).
  \end{displaymath}
  To compute the multiplicities $m_{p}$ we use that
  $\widetilde{\mu}^{\ast}\OO(-1)\mid_{E_{p}}=\OO(-1)$, $(E_{p}\cdot
  \widetilde{f}^{\ast}T_S)=0$ and $(E_{p}\cdot E_{p})=-1$:
  \begin{displaymath}
    -m_{p}=\deg(\widetilde{\mu}^{\ast}\OO(-1)\otimes
    \widetilde{f}^{\ast}T_S)\mid_{E_{p}}
    =-1+0=-1.
  \end{displaymath}
  The lemma follows.
\end{proof}
Later we will need the commutative diagram of exact sequences
\begin{equation}\label{eq:bb_2}
  \xymatrix{
    \eta\mid_{W} \colon	&0\ar[r]	&\widetilde{\mu}^{\ast}
    U\mid_{W}\ar[r]\ar[d]^{\alpha}
    &T_X\mid_{W}\ar[r]\ar[d]^{\beta}
    &\widetilde{\mu}^{\ast}\OO(1)\mid_{W}\ar[r]\ar[d]^{\gamma}	&0\\
    \varepsilon \colon	&0\ar[r]
    &\omega_{X/S}^{\vee}\mid_{W}\ar[r]	&T_X\mid_{W}\ar[r]
    &f^{\ast}T_S\mid_{W}\ar[r]	&0.
  }
\end{equation}
After the identification
$\widetilde{\mu}^{\ast}\OO(-1)\otimes\widetilde{f}^{\ast}T_S=\OO(E)$
provided by the lemma, the morphism $\gamma$ is the restriction to $W$
of the natural inclusion
$\widetilde{\mu}^{\ast}\OO(1)\to\widetilde{\mu}^{\ast}\OO(1)\otimes\OO(E)$. This
fact will be used below.

We now proceed to introduce the hermitian vector bundles and the
analytic torsion classes we aim to study. We fix a theory of
generalized analytic torsion classes $T$.

Let $f\colon X\to S$, $\widetilde{f}\colon \widetilde{X}\to S$ be f.s.o. as
above. Recall that we write $W=X\setminus\Sigma=\widetilde{X}\setminus E$
and $V=S\setminus\Delta$, so that $f^{-1}(V)\subset W$. We endow the
tangent spaces $T_{X}$ and $T_{S}$ with smooth hermitian metrics. We
will denote by $\ov{f}$ the corresponding morphism in the category
$\ov{\Sm}_{\ast/\CC}$. On the open subset $W$, there
is a quasi-isomorphism
\begin{displaymath}
  \omega_{X/S}^{\vee}\mid_{W}=\bomega_{X/S}^{\vee}[1]\mid_{W}\to T_{f}
\end{displaymath}
induced by the identification
$\omega_{X/S}^{\vee}\mid_{W}=\ker(T_{X}\mid_{W}\to f^{\ast}T_{S})$. On
$\omega_{X/S}^{\vee}\mid_{W}$, and in particular on
$\omega_{f^{-1}(V)/V}^{\vee}$, we will put the metric induced by
$\ov{T_{X}}\mid_{W}$. We will write $\ov{f}'\colon f^{-1}(V)\to V$ for the
corresponding morphism in $\ov{\Sm}_{\ast/\CC}$. Observe that the
restriction of $f$ to $W$, and hence to $f^{-1}(V)$, may be identified
with the restriction of $\widetilde{f}$. Let $\ov{\mathcal{F}}$ be an
object in $\oDb(X)$
and fix a hermitian structure on $\Rd f_{\ast}\mathcal{F}$. Then we
consider the relative metrized complexes
\begin{displaymath}
  \ov{\xi}=(\ov{f},\ov{\mathcal{F}},\ov{\Rd f_{\ast}\mathcal{F}}),\qquad
  \ov{\xi}'=(\ov{f}',\ov{\mathcal{F}}\mid_{f^{-1}(V)},\ov{\Rd
    f_{\ast}\mathcal{F}}\mid_{V}),
\end{displaymath}
and the corresponding analytic torsion classes
\begin{displaymath}
  T(\ov{\xi})\in\bigoplus_{p}\widetilde{\mathcal{D}}_{D}^{2p-1}(S,N_{f},p),\qquad
  T(\ov{\xi}')
  \in\bigoplus_{p}\widetilde{\mathcal{D}}_{D}^{2p-1}(V,\emptyset,p).
\end{displaymath}
By the functoriality of analytic torsion classes and the anomaly
formulas, we have
\begin{equation}\label{eq:bb_3}
  T(\ov{\xi}')=T(\ov{\xi})\mid_{V}-
  \ov{f}_{\ttwist}[\ch(\ov{\mathcal{F}}\mid_{f^{-1}(V)})
  \widetilde{\Td}_{m}(\ov{\varepsilon}\mid_{f^{-1}(V)})].
\end{equation}
Here $\ov{\varepsilon}$ is the exact sequence in \eqref{eq:bb_2}, with
the hermitian metrics we have just defined. From now on we will 
omit the reference to $f^{-1}(V)$ and $V$ in the formulas.

We consider the hermitian structures on the sheaves $U$ and $\OO(1)$
on $\PP(\Omega_{X})$
induced by $p^{\ast}\ov{T_X}$. We will write $\ov{\eta}$ for the exact
sequence in
\eqref{eq:87} and
$\ov{\alpha}$, $\ov{\beta}$ and $\ov{\gamma}$ for the vertical
isomorphisms in diagram \eqref{eq:bb_2}, all provided with the
corresponding metrics. Notice that
$\ov{\alpha}$ and $\ov{\beta}$ are isometries. By the properties of
the Bott-Chern  class $\widetilde{\Td}_{m}$, we have
\begin{equation}\label{eq:bb_4}
  \widetilde{\Td}_{m}(\ov{\varepsilon})=\widetilde{\Td}_{m}(\ov{\eta})
  +\Td(\ov{\eta})\widetilde{\Td}_{m}(\ov{\gamma}).
\end{equation}
Hence, from \eqref{eq:bb_3}--\eqref{eq:bb_4} and identifying $f$ with
$\widetilde{f}$ over $V$, we have
\begin{multline}\label{eq:bb_4bis}
  T(\ov{\xi}')=T(\ov{\xi})-
  \widetilde{f}_{\ast}[\pi^{\ast}\ch(\ov{\mathcal{F}})\pi^{\ast}\Td(\ov{f})
  \widetilde{\Td}_{m}(\ov{\eta})]\\
  -\widetilde{f}_{\ast}[\pi^{\ast}\ch(\ov{\mathcal{F}})
  \pi^{\ast}\Td(\ov{f})\Td(\ov{\eta})
  \widetilde{\Td}_{m}(\ov{\gamma})].
\end{multline}
It will be convenient to have a precise description of
$\widetilde{\Td}_{m}(\ov{\gamma})$ at our disposal.

For shorthand, we write $L:=\widetilde{\mu}^{\ast}\OO(1)$ and
$\|\cdot\|_{0}$ for its hermitian structure constructed before. We
denote by $\|\cdot\|_{1}$ the metric on $\OO(E)$ such that the
isomorphism $\ov{\OO(E)}_{1}=\ov{L}_{0}^{-1}\otimes
\widetilde{f}^{\ast}\ov{T_S}$ (Lemma \ref{lemma:bb_1}) is an
isometry. Recall that $\gamma$ gets identified with the restriction to
$W$ of the natural inclusion $L\to L\otimes\OO(E)$. We let
$\|\cdot\|_{\infty}$ be the hermitian metric on $L\mid_{W}$ such that
$\gamma$ is an isometry. Hence, if $\mathbf{1}$ denotes the canonical
section of $\OO(E)$ and $\ell$ is any section of $L\mid_{W}$, then we
have
\begin{displaymath}
	\|\ell\|_{\infty}=\|\ell\|_{0}\|\mathbf{1}\|_{1}.
\end{displaymath}
To simplify the notations, we will skip the reference to $W$. We then have on $W$
\begin{displaymath}
  \widetilde{\Td}_{m}(\ov{\gamma})= \widetilde{\Td}_{m}(\ov{L}_{0}
  \overset{\Id}{\to}\ov{L}_{\infty}).
\end{displaymath}
To compute a representative of this class, we fix a smooth function
$h\colon \PP^{1}_{\CC}\to\RR$ such that $h(0)=0$ and
$h(\infty)=1$. Then we proceed by a deformation argument. Let $q\colon
W\times\PP^{1}_{\CC}\to W$ be the projection to the first factor. On
the line bundle $q^{\ast}L$ we put the metric that, on the fiber at the
point $(w,t)\in W\times\PP^{1}_{\CC}$, is determined by the formula
\begin{displaymath}
  \|\ell\|_{(w,t)}=\|\ell\|_{0,w}\|\mathbf{1}\|_{1,w}^{h(t)}.
\end{displaymath}
We will write $\|\cdot\|_{t}$ for this family of metrics parametrized
by $\PP^{1}_{\CC}$. Define
\begin{displaymath}
  \ov{\Td}(\ov{L}_{0}\to\ov{L}_{\infty})=
  \frac{1}{2\pi i}\int_{\PP^{1}_{\CC}}
  \frac{-1}{2}\log(t\ov{t})(\Td(\ov{q^{\ast}L}_{t})- \Td(\ov{q^{\ast}L}_{0})).
\end{displaymath}
Then
\begin{equation}\label{eq:bb_5bis}
  \ov{\Td}_{m}(\ov{\gamma})=\Td^{-1}(\ov{L}_{0})
  \ov{\Td}(\ov{L}_{0}\to\ov{L}_{\infty})
\end{equation}
represents the class $\widetilde{\Td}_{m}(\gamma)$. Let us develop
$\ov{\Td}_{m}(\ov{\gamma})$. If $\ov{\OO}_{t}$ denotes the trivial
line bundle on $W\times\PP^{1}_{\CC}$ with the norm
$\|\mathbf{1}\|_{t}=\|\mathbf{1}\|_{1}^{h(t)}$, then we compute
\begin{displaymath}
	\Td(\ov{q^{\ast}L}_{t})-\Td(\ov{q^{\ast}L}_{0})
	=\frac{1}{2}c_{1}(\ov{\OO}_{t})+\frac{1}{6}c_{1}(\ov{\OO}_{t})q^{\ast}c_{1}(\ov{L}_{0})
	+\frac{1}{12}c_{1}(\ov{\OO}_{t})^{2}.
\end{displaymath}
By the very definition of $c_{1}$, we find
\begin{multline*}
    c_{1}(\ov{\OO}_{t})=\pd\cpd\log\|\mathbf{1}\|_{t}^{2}=
    \pd\cpd(h(t)\log\|\mathbf{1}\|_{1}^{2})\\
    =h(t)c_{1}(\ov{\OO(E)}_{1})
    +\log\|\mathbf{1}\|_{1}^{2}\pd\cpd h(t)
    +\pd h(t)\wedge\cpd\log\|\mathbf{1}\|_{1}^{2}
    +\pd\log\|\mathbf{1}\|_{1}^{2}\wedge\cpd h(t).
\end{multline*}
We easily obtain
\begin{align}
  &\frac{1}{2\pi i}\int_{\PP^{1}_{\CC}}\frac{-1}{2}\log(t\ov{t})
  \frac{1}{2}c_{1}(\ov{\OO}_{t})
  =-\frac{1}{2}\log\|\mathbf{1}\|_{1},\label{eq:bb_5}\\
  &\frac{1}{2\pi i}\int_{\PP^{1}_{\CC}}
  \frac{-1}{2}\log(t\ov{t}) \frac{1}{6}q^{\ast}c_{1}(\ov{L}_{0})
  c_{1}(\ov{\OO}_{t})=-\frac{1}{6}
  \log\|\mathbf{1}\|_{1}c_{1}(\ov{L}_{0}).\label{eq:bb_6}
\end{align}
With some more work, we have
\begin{equation}\label{eq:bb_7}
  \begin{split}
    \frac{1}{2\pi
      i}\int_{\PP^{1}_{\CC}}\frac{-1}{2}\log(t\ov{t})
    \frac{1}{12}c_{1}(\ov{\OO}_{t})^{2}=&
    -\frac{a}{6}\log\|\mathbf{1}\|_{1}c_{1}(\ov{\OO(E)}_{1})\\
    &
    +\frac{b}{3}\pd(\log\|\mathbf{1}\|_{1}\
    \cpd\log\|\mathbf{1}\|_{1}),
  \end{split}
\end{equation}
where
\begin{equation}\label{eq:bb_9}
  a=\frac{1}{2\pi
    i}\int_{\PP^{1}_{\CC}}\log(t\ov{t})\frac{1}{2}\pd\cpd(h(t)^{2}), \qquad
  b=\frac{1}{2\pi i}\int_{\PP^{1}_{\CC}}\log(t\ov{t})\pd
  h(t)\wedge\cpd h(t).
\end{equation}
We observe that
\begin{displaymath}
  a=\frac{1}{2\pi i}\int_{\PP^{1}_{\CC}}\log(t\ov{t})
  \frac{1}{2}\pd\cpd(h(t)^{2})=\frac{1}{2},
\end{displaymath}
which is independent of $h$. All in all, equations
\eqref{eq:bb_5bis}--\eqref{eq:bb_9} provide the following expression
for the representative $\ov{\Td}_{m}(\ov{\gamma})$ of
$\widetilde{\Td}_{m}(\ov{\gamma})$:
\begin{equation}\label{eq:bb_7bis}
  \begin{split}
    \ov{\Td}_{m}(\ov{\gamma})=\Td^{-1}(\ov{L}_{0})
    \Big(&-\frac{1}{2}\log\|\mathbf{1}\|_{1}
    -\frac{1}{6}\log\|\mathbf{1}\|_{1}c_{1}(\ov{L}_{0})\\
    &-\frac{1}{12}\log\|\mathbf{1}\|_{1} c_{1}(\ov{\OO(E)}_{1})
    +\frac{b}{3}\pd(\log\|\mathbf{1}\|_{1}\cpd\log\|\mathbf{1}\|_{1})\Big).
  \end{split}
\end{equation}

Given a current $\eta\in \mathcal{D}_{D}^{n}(X,p)$, we will call
$(n,p)$ its \emph{Deligne bidegree}, while we will call the
\emph{Dolbeault bidegree} to the bidegree in the Dolbeault
complex. When it is clear from the context to which bidegree we are
referring, we call it bidegree.

We now study the singularities of the component of Deligne bidegree
$(1,1)$
of $T(\ov{\xi}')$ near the divisor
$\Delta$. For this we first recall the decomposition of equation
\eqref{eq:bb_4bis}. Observe that
$\widetilde{\mathcal{D}}_{D}^{1}(V,\emptyset,1)$ gets identified with
the space of smooth real functions on $V$. In the sequel, for an
element $\vartheta\in\oplus_{p}\widetilde{D}_{D}^{2p-1}(\ast,p)$, we
write $\vartheta^{(2r-1,r)}$ to refer to its component of bidegree
$(2r-1,r)$. By construction of the Deligne complex, an element of
Deligne bidegree $(2r-1,r)$ is just a current of Dolbeault bidegree
$(r-1,r-1)$.

The following assertion is well-known. See for instance \cite[Lemma 2.1,
Cor.~2.2]{Wolpert}.
\begin{lemma}\label{lemma_wolpert}
  Let $\Omega\subset\CC$ be an open subset and $\vartheta$ a current
  of Dolbeault bidegree $(0,0)$ on $\Omega$. Let $\Delta$ be the standard
  laplacian. If the current $\Delta \vartheta$ is represented by a
  locally bounded measurable function, then $\vartheta$ is represented
  by a continuous function.
\end{lemma}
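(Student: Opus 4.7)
The plan is to reduce the statement to a standard elliptic regularity argument via the Newtonian potential on $\RR^2\cong\CC$. The problem is local on $\Omega$, so I would fix an arbitrary point $z_0\in\Omega$, choose an open disk $D$ with $\overline D\subset\Omega$ on which $f:=\Delta\vartheta$ is represented by a bounded measurable function, and take a compactly supported smooth cutoff $\chi$ on $\Omega$ that equals $1$ on a slightly smaller disk $D'\subset D$ containing $z_0$. Then $\chi f$ is a bounded function of compact support in $\Omega$.

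Next I would introduce the fundamental solution $G(z)=\tfrac{1}{2\pi}\log|z|$ of the Laplacian on $\RR^2$ and set $g(z):=(G\ast(\chi f))(z)=\int_{\CC}G(z-w)(\chi f)(w)\,dm(w)$, where $dm$ is Lebesgue measure. Since $\chi f$ is bounded with compact support and $G$ is locally integrable near $0$ (the only singularity of $G$), the integral is absolutely convergent for every $z$. The classical estimates for the Newtonian potential in dimension two show that $g$ is continuous on $\CC$ (in fact $C^{1,\alpha}$ for every $\alpha<1$); continuity can be proved directly by splitting the integration domain into a small disk around $z$, on which $|G(z-w)|(\chi f)(w)$ is dominated by an integrable function independent of $z$ in a compact neighborhood, and its complement, on which the integrand is continuous in $z$ by the continuity of $G$ away from $0$, and then applying dominated convergence.

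By construction $\Delta g=\chi f$ as currents on $\CC$, hence $\Delta(\vartheta-g)=f-\chi f=0$ on $D'$. Weyl's lemma then implies that $\vartheta-g$ is represented on $D'$ by a harmonic, in particular smooth, function. Consequently $\vartheta$ agrees on $D'$ with the continuous function $g+(\vartheta-g)$, proving continuity at $z_0$. Since $z_0\in\Omega$ was arbitrary, $\vartheta$ is represented on all of $\Omega$ by a continuous function.

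The only genuinely analytic ingredient is the continuity of the Newtonian potential $G\ast(\chi f)$ of a bounded compactly supported source; this is where the dimension $2$ specificity enters through the local integrability of $\log|z|$, and it is the step I expect to require the most care to verify cleanly. Everything else—cutting off, applying $\Delta$ through convolution, and invoking Weyl's lemma—is formal.
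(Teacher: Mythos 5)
Your argument is correct. Note that the paper does not actually prove this lemma: it simply records it as well known and cites Wolpert (Lemma 2.1 and Corollary 2.2 of \emph{On obtaining a positive line bundle from the Weil--Petersson class}), so there is no in-paper proof to compare against. Your proposal supplies the standard self-contained argument: localize, cut off the bounded source $f=\Delta\vartheta$, form the logarithmic Newtonian potential $g=G\ast(\chi f)$ with $G(z)=\tfrac{1}{2\pi}\log|z|$, check its continuity by the splitting-plus-dominated-convergence argument (local integrability of $\log|z|$ is exactly what makes this work in dimension two), and then use $\Delta(\vartheta-g)=0$ on the smaller disk together with Weyl's lemma to conclude that $\vartheta-g$ is harmonic, hence smooth, there. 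All the steps are sound: a current of Dolbeault bidegree $(0,0)$ is just a distribution, $\Delta(G\ast u)=u$ holds for any compactly supported distribution $u$ since $G$ is a fundamental solution, and continuity of the potential of a bounded compactly supported density is classical (your parenthetical $C^{1,\alpha}$ claim is true but not needed). This is presumably the same potential-theoretic content as the cited results of Wolpert, so your proof is a legitimate replacement for the citation rather than a divergent route.
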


\begin{proposition}\label{prop:bb_1}
  The current
  $T(\ov{\xi})^{(1,1)}\in\widetilde{\mathcal{D}}_{D}^{1}(S,N_{f},1)$
  is represented by a continuous function on $S$.
\end{proposition}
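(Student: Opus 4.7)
My strategy is to apply Lemma~\ref{lemma_wolpert} to $\vartheta = T(\ov{\xi})^{(1,1)}$, viewed as a real distribution on the Riemann surface $S$ (smooth on $V = S\setminus\Delta$). Since the Deligne differential in bidegree $(1,1)$ is proportional to $\pd\cpd$, verifying the hypothesis of the lemma amounts to showing that the current $\dd_{\mathcal{D}}T(\ov{\xi})^{(1,1)}$ is represented by a locally bounded measurable function on $S$.

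By the differential equation of Definition~\ref{def:genAT}\ref{item:1AT}, extracted in bidegree $(2,1)$, this current equals
\[
\ch(\ov{\Rd f_\ast\mathcal{F}})^{(2,1)} - f_\ast\bigl[\ch(\ov{\mathcal{F}})\cdot\Td(\ov{T}_f)\bigr]^{(2,1)}.
\]
The first term is a smooth $(1,1)$-form on $S$, hence locally bounded. For the second, I would select the natural hermitian structure on $T_{\ov{f}}$ coming from the resolution $T_f\simeq [T_X\overset{df}{\to} f^\ast T_S]$ with metrics induced from $\ov{T}_X$ and $\ov{T}_S$, so that $\Td(\ov{T}_f)$ is represented by the smooth form $\Td(\ov{T}_X)\cdot f^\ast\Td(\ov{T}_S)^{-1}$ on all of $X$. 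The projection formula then reduces the problem to analyzing the $(1,1)$-component on $S$ of $f_\ast$ applied to a specific smooth top $(2,2)$-form on $X$.

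The main obstacle is the local analysis near a critical point of $f$. In analytic coordinates where $f(z_0, z_1) = z_0 z_1$, fiber integration of a generic smooth top form $\phi\, dz_0\, d\bar{z}_0\, dz_1\, d\bar{z}_1$ exhibits a logarithmic divergence as $s\to 0$, which on its own would not suffice to invoke Lemma~\ref{lemma_wolpert}. The technical heart of the proof is therefore to show that the particular combination dictated by $(\ch(\ov{\mathcal{F}})\cdot\Td(\ov{T}_X))^{(2,2)}$ yields a locally bounded pushforward; one leverages that $f^\ast c_1(\ov{T}_S)$ vanishes at critical points of $f$ together with the explicit form of the low-degree Todd polynomial.

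If this direct route does not fully deliver boundedness, I expect to fall back on the alternative decomposition of $T(\ov{\xi})|_V$ provided by \eqref{eq:bb_4bis} via the blow-up $\widetilde{X}\to X$ and the Gauss map. The explicit representative \eqref{eq:bb_7bis} of $\widetilde{\Td}_m(\ov{\gamma})$, in which the logarithmic singularity $\log\|\mathbf{1}\|_1$ is a Green function for the exceptional divisor $E$, permits one to track the singular contributions of each piece and verify that the logarithms coming from $T(\ov{\xi}')$, from $\widetilde{f}_\ast$ applied to the $\widetilde{\Td}_m(\ov{\eta})$-term, and from the $\widetilde{\Td}_m(\ov{\gamma})$-term cancel pairwise. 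Once boundedness of $\dd_{\mathcal{D}}T(\ov{\xi})^{(1,1)}$ is established by either route, Lemma~\ref{lemma_wolpert} delivers the desired continuity.
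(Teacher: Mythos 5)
Your overall strategy---Lemma~\ref{lemma_wolpert} applied to the distributional equation \eqref{eq:bb_11}, smoothness of $\ch(\ov{\Rd f_{\ast}\mathcal{F}})^{(2,1)}$, and a local analysis of the pushforward near a node in coordinates $f(z_{0},z_{1})=z_{0}z_{1}$---is the same as the paper's. The genuine gap is in what you call the technical heart: the claim that the specific $(2,2)$-component of $\ch(\ov{\mathcal{F}})\Td(\ov{f})$ has locally bounded pushforward. The vanishing of $f^{\ast}c_{1}(\ov{T}_{S})$ at the critical points only disposes of the terms containing $f^{\ast}c_{1}(\ov{T}_{S})$, and those are harmless anyway (by the projection formula their pushforward is $f_{\ast}(\text{smooth }(1,1))\wedge c_{1}(\ov{T}_{S})$, bounded by Proposition~\ref{prop:f_ast_theta}). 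What survives at a node $p$, since $df_{p}=0$, is $\rk(\mathcal{F})\Td_{2}(\ov{T}_{X})+\tfrac{1}{2}\ch_{1}(\ov{\mathcal{F}})c_{1}(\ov{T}_{X})+\ch_{2}(\ov{\mathcal{F}})$, a generically nonzero $(2,2)$-covector at $p$; and the pushforward of a top form that is nonzero at a node is genuinely logarithmically divergent: in the local model the fiber integral is $\bigl(\int_{|w|\le|z_{0}|\le 1}h(z_{0},w/z_{0})\,|z_{0}|^{-2}\,dz_{0}\wedge d\bar{z}_{0}\bigr)\,dw\wedge d\bar{w}$, whose density grows like $c\,h(0,0)\log(1/|w|)$ with $c\neq 0$ (test with $h\equiv 1$). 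So no cancellation internal to the Chern--Todd form can give boundedness, and your fallback is not a proof: pairwise cancellation of logarithms in \eqref{eq:bb_4bis} presupposes that the $\log|w|$ singularity of $T(\ov{\xi}')$ is already known, whereas in this section it is \emph{deduced} from the proposition you are proving (knowing it independently means importing Bismut--Bost, which defeats the purpose).

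What saves the statement is that boundedness is not needed. An estimate $O(\log(1/|w|))$ for the right-hand side of \eqref{eq:bb_11} suffices: $\log(1/|w|)$ lies in $L^{p}_{\mathrm{loc}}$ for every $p<\infty$, so any distributional solution of $\pd\cpd u=g$ with such $g$ lies in $W^{2,p}_{\mathrm{loc}}\subset C^{0}$ in real dimension two; equivalently, the logarithmic potential of an $L^{p}$ density with $p>1$ is continuous. You should therefore replace the appeal to Lemma~\ref{lemma_wolpert} by this slightly stronger elliptic-regularity statement and prove only the logarithmic bound, which your own local computation already yields. (For comparison, the paper's proof asserts boundedness of the fiber integral using the kernel $|w|^{2}/|z_{0}|^{4}$; that is the kernel for restricting a $(1,1)$-form to the fibers, as in Proposition~\ref{prop:f_ast_theta}, not for integrating a $(2,2)$-form along them, so the same repair is needed there---your observation that a generic smooth top form pushes forward with a logarithmic singularity is correct, but your proposed way around it does not work.)
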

\begin{proof}
The differential equation satisfied by $T(\ov{\xi})^{(1,1)}$ is
\begin{equation}\label{eq:bb_11}
	\dd_{\mathcal{D}} T(\ov{\xi})^{(1,1)}=\ch(\ov{\Rd f_{\ast}\mathcal{F}})^{(2,1)}
	-f_{\ast}[\ch(\ov{\mathcal{F}})\Td(\ov{f})]^{(2,1)}.
\end{equation}
In local coordinates, the operator $\dd_{\mathcal{D}}=-2\pd\cpd$ is a
rescaling of the laplacian $\Delta$. By the lemma, it is enough we
prove that the current at the right hand side of \eqref{eq:bb_11} is
represented by a locally bounded measurable differential form. Because
$\ch(\ov{\Rd f_{\ast}\mathcal{F}})^{(2,1)}$ and
$\ch(\ov{\mathcal{F}})\Td(\ov{f})$ are smooth differential forms, we
are reduced to study currents of the form $f_{\ast}[\theta]^{(2,1)}$,
where $\theta$ is a smooth differential form. By a partition of unity
argument, we reduce to the case where $f\colon \CC^{2}\to\CC$ is the
morphism $f(z_{0},z_{1})=z_{0}z_{1}$ and $\theta$ is a differential
form of Dolbeault bidegree (2,2) with compact support. Then we need to
prove that the fiber integral
\begin{displaymath}
	G(w)=\int_{z_{0}z_{1}=w}\theta
\end{displaymath}
is a bounded form in a neighborhood of $w=0$. Write
\begin{math}
	\theta=h(z_{0},z_{1})dz_{0}\wedge d\ov{z}_{0}\wedge dz_{1}\wedge d\ov{z}_{1}.
\end{math}
We reduce to study integrals of the form
\begin{displaymath}
	G(w)=\left(\int_{|w|<|z_{0}|<1}h(z_{0},z_{0}/w)\frac{|w|^{2}}{|z_{0}|^{4}}dz_{0}\wedge\ov{z}_{0}\right)dw\wedge d\ov{w}.
\end{displaymath}
The property follows from an easy computation in polar coordinates.
\end{proof}

\begin{proposition}\label{prop:f_ast_theta}
Let $\theta$ be a differential form of Dolbeault bidegree (1,1) on
$\widetilde{X}$. Then the current $\widetilde{f}_{\ast}[\theta]$ is
represented by a bounded function on $S$.
\end{proposition}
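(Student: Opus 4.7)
The plan is to reduce the proposition to an explicit local fiber-integration argument, in the spirit of the computation already carried out in the proof of Proposition \ref{prop:bb_1}. The claim is local on $S$ and trivial on $V = S \setminus \Delta$, where $\widetilde{f}$ restricts to a smooth submersion, so that $\widetilde{f}_{\ast}\theta$ is smooth there. It therefore suffices to bound $\widetilde{f}_{\ast}\theta$ on a small disc $U$ around a critical value $p_{0}\in\Delta$. Since $\widetilde{f}$ is proper, the preimage $\widetilde{f}^{-1}(U)$ is a compact neighborhood of the central fiber, and a partition of unity argument reduces us to the case where $\theta$ is either supported where $\widetilde{f}$ is a submersion (trivial), or supported in an analytic chart around a point of $E$. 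In the latter case, we may choose coordinates $(z_{0},u)$ in which $E = \{z_{0}=0\}$ and $\widetilde{f}(z_{0},u) = z_{0}^{2}u$, and assume $\supp\theta \subset \{|z_{0}|\le R,\ |u|\le R'\}$.

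For $w\ne 0$ small, the fiber $\widetilde{f}^{-1}(w)\cap\supp\theta$ is parametrized by $z_{0}$ via $u = w/z_{0}^{2}$, and the constraint $|u|\le R'$ forces $|z_{0}|\ge \sqrt{|w|/R'}$. Decomposing
\begin{displaymath}
\theta = a\,\tfrac{i}{2}dz_{0}\wedge d\bar z_{0}
+ b\,\tfrac{i}{2}dz_{0}\wedge d\bar u
+ c\,\tfrac{i}{2}du\wedge d\bar z_{0}
+ d\,\tfrac{i}{2}du\wedge d\bar u
\end{displaymath}
with smooth bounded coefficients, and substituting $du = -2wz_{0}^{-3}\,dz_{0}$ along the fiber, each of the four resulting pieces of $\theta|_{\text{fiber}}$ is bounded in polar coordinates $z_{0} = re^{i\psi}$ by a constant multiple of $r\,dr\,d\psi$, $|w|r^{-2}\,dr\,d\psi$, $|w|r^{-2}\,dr\,d\psi$ and $|w|^{2}r^{-5}\,dr\,d\psi$ respectively.

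Integrating the first three terms over the annulus $\sqrt{|w|/R'}\le r \le R$ clearly yields contributions bounded uniformly in $w$. The only term demanding care is the last, for which the explicit primitive gives
\begin{displaymath}
|w|^{2}\int_{\sqrt{|w|/R'}}^{R} r^{-5}\,dr
= \tfrac{(R')^{2}}{4} - \tfrac{|w|^{2}}{4R^{4}},
\end{displaymath}
again bounded as $w\to 0$. The crux of the argument, and essentially the only obstacle, is this precise compensation between the prefactor $|w|^{2}$ arising from the pullback of $du\wedge d\bar u$ and the lower limit of integration dictated by the compact support of $\theta$. A symmetric computation in the other coordinate chart of the blow-up completes the verification, and a standard regularization argument shows that the pointwise fiber integral we have bounded represents the current $\widetilde{f}_{\ast}[\theta]$ on all of $S$.
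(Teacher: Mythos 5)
Your argument is correct, and it is essentially the approach the paper intends: the paper's proof simply cites Bismut--Bost, Prop.~5.2, remarking that one only has to check the local fibre-integral estimate carries over to the non-reduced fibres created by blowing up the nodes, and your explicit computation in the blow-up chart $\widetilde f(z_0,u)=z_0^2u$ (with the compensation between the $|w|^2$ prefactor and the lower limit $\sqrt{|w|/R'}$), together with the symmetric chart and the trivial submersion case, is precisely that verification. Your estimates and the final reduction of the current $\widetilde f_\ast[\theta]$ to the bounded fibre integral are accurate, so the proposal supplies in detail what the paper leaves to the reference.
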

\begin{proof}
The proof is the same as in
\cite[Prop. 5.2]{Bismut-Bost}. One only has to show that the argument
in \emph{loc. cit.} carries over to the case of the non-reduced
fibres that have appeared when blowing up the nodes.
\end{proof}

\begin{corollary}\label{cor:bb_1}
The current
$\widetilde{f}_{\ast}[\pi^{\ast}\ch(\ov{\mathcal{F}})
\pi^{\ast}\Td(\ov{f})\widetilde{\Td}_{m}(\ov{\eta})]$
is represented by a bounded function on $S$.
\end{corollary}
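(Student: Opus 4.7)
The plan is to reduce the statement to Proposition \ref{prop:f_ast_theta} by showing that the integrand is a smooth differential form on $\widetilde{X}$. The key point is that the short exact sequence $\ov{\eta}$ displayed in \eqref{eq:87} consists entirely of smooth hermitian vector bundles on $\widetilde{X}$. Indeed, by the universal property of the blow-up the Gauss map $\mu$ extends to a morphism $\widetilde{\mu}\colon\widetilde{X}\to\PP(\Omega_X)$ defined on all of $\widetilde{X}$, so $\widetilde{\mu}^{\ast}U$ and $\widetilde{\mu}^{\ast}\OO(1)$ are honest smooth vector bundles on $\widetilde{X}$; their hermitian metrics are obtained by pulling back the smooth metrics built from $p^{\ast}\ov{T_X}$ on $\PP(\Omega_X)$, while the middle term $\pi^{\ast}T_X$ inherits its smooth metric from $\ov{T_X}$. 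Consequently the multiplicative Bott-Chern class $\widetilde{\Td}_m(\ov{\eta})$, which by the construction in \cite{BurgosFreixasLitcanu:HerStruc} is a secondary class attached to a short exact sequence of smooth hermitian vector bundles, is represented by a smooth differential form on the whole of $\widetilde{X}$ (and in particular across the exceptional divisor $E$).

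Combining this observation with the smoothness of $\ch(\ov{\mathcal{F}})$ and $\Td(\ov f)$ on $X$ (the former because $\ov{\mathcal{F}}\in\Ob\oDb(X)$ is represented by smooth hermitian bounded complexes, the latter because the metric on $T_X$ is smooth), I conclude that
\begin{displaymath}
  \theta:=\pi^{\ast}\ch(\ov{\mathcal{F}})\bullet\pi^{\ast}\Td(\ov f)\bullet\widetilde{\Td}_m(\ov{\eta})
\end{displaymath}
is a smooth differential form on $\widetilde{X}$. Since $\widetilde{f}$ is proper of relative complex dimension $1$, the component of the pushforward current $\widetilde{f}_{\ast}[\theta]$ that lives in Deligne bidegree $(1,1)$ on $S$, that is, the component represented by a real function on $S$, is precisely $\widetilde{f}_{\ast}$ applied to the Dolbeault bidegree $(1,1)$ part $\theta^{(1,1)}$ of $\theta$. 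This $\theta^{(1,1)}$ is itself a smooth $(1,1)$-form on $\widetilde{X}$, so Proposition \ref{prop:f_ast_theta} applies verbatim and yields that $\widetilde{f}_{\ast}[\theta^{(1,1)}]$ is represented by a bounded function on $S$.

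There is no serious obstacle here: once one accepts that $\widetilde{\Td}_m(\ov{\eta})$ is smooth on $\widetilde{X}$, the corollary is just a direct application of Proposition \ref{prop:f_ast_theta}. The only care required is to distinguish the behavior of the input exact sequence $\ov{\eta}$ on $\widetilde{X}$ from that of $\ov{\varepsilon}$ on $W$ (which does develop singularities along $E$ via the morphism $\ov{\gamma}$, and is treated separately through the explicit representative $\ov{\Td}_m(\ov{\gamma})$ in \eqref{eq:bb_7bis}); it is exactly the fact that $\ov{\eta}$ globalizes the smooth picture that makes the present corollary an immediate consequence of the smooth-form pushforward estimate.
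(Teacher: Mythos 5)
Your proof is correct and follows essentially the same route as the paper: the paper's own argument is precisely the observation that $\pi^{\ast}\ch(\ov{\mathcal{F}})\pi^{\ast}\Td(\ov{f})\widetilde{\Td}_{m}(\ov{\eta})$ is smooth on all of $\widetilde{X}$ (since $\ov{\eta}$ is an exact sequence of smooth hermitian bundles pulled back via the everywhere-defined map $\widetilde{\mu}$), followed by an application of Proposition \ref{prop:f_ast_theta}. You merely spell out the smoothness of $\widetilde{\Td}_{m}(\ov{\eta})$ and the reduction to the Dolbeault $(1,1)$ component, which the paper leaves implicit.
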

\begin{proof}
It suffices to observe that the differential form
$\pi^{\ast}\ch(\ov{\mathcal{F}})
\pi^{\ast}\Td(\ov{f})\widetilde{\Td}_{m}(\ov{\eta})$
is actually smooth on the whole $\widetilde{X}$.
\end{proof}
According to \eqref{eq:bb_4bis}, it remains to study the current
\begin{displaymath}
  \widetilde{f}_{\ast}[\pi^{\ast}\ch(\ov{\mathcal{F}})
  \pi^{\ast}\Td(\ov{f})\Td(\ov{\eta})
  \widetilde{\Td}_{m}(\ov{\gamma})]\mid_{V}.
\end{displaymath}
The main difference with the situation in Corollary \ref{cor:bb_1} is
that the class $\widetilde{\Td}_{m}(\ov{\gamma})$ is not defined on
the whole $\widetilde{X}$, but only on $W=\widetilde{X}\setminus
E$. In the following discussion we will use the representative
$\ov{\Td}_{m}(\ov{\gamma})$ defined in \eqref{eq:bb_5bis} at the place
of $\widetilde{\Td}_{m}(\ov{\gamma})$. In view of equations
\eqref{eq:bb_5}--\eqref{eq:bb_7}, the first result we need is the following
statement.

\begin{proposition}
  Let $\theta$ be a smooth and $\pd, \cpd$ closed differential form on
  $\widetilde{X}$, of Dolbeault bidegree $(1,1)$. Let $w$ be an
  analytic coordinate in a neighborhood of $p\in\Delta$ with
  $w(p)=0$. Write $D_{p}=E\cap\widetilde{f}^{-1}(p)$. Then, the
  current
  \begin{displaymath}
    \widetilde{f}_{\ast}[\log\|\mathbf{1}\|_{1}\theta]- \left(\frac{1}{2\pi
        i}\int_{D_{p}}\theta\right)[\log|w|]
  \end{displaymath}
  is represented by a continuous function in a neighborhood of $p$. In
  particular, if $\theta$ is cohomologous to a form
  $\pi^{\ast}\vartheta$, where $\vartheta$ is a smooth and $\pd,\cpd$
  closed differential form on $X$, then
  $\widetilde{f}_{\ast}[\log\|\mathbf{1}\|_{1}\theta]$ is represented by
  a continuous function on $S$.
\end{proposition}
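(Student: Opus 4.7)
The plan is to isolate the logarithmic singularity of $\widetilde{f}_{\ast}[\log\|\mathbf{1}\|_{1}\theta]$ at each $p\in\Delta$ by computing its image under $\dd_{\mathcal{D}}$ and invoking Lemma~\ref{lemma_wolpert}. Since $\theta$ is smooth and both $\pd$- and $\cpd$-closed, the Leibniz rule gives
\[
\dd_{\mathcal{D}}(\log\|\mathbf{1}\|_{1}\theta) = -2\pd\cpd(\log\|\mathbf{1}\|_{1})\wedge\theta,
\]
and the Poincar\'e--Lelong formula for the canonical section of the metrized line bundle $\ov{\OO(E)}_{1}$ produces
\[
-2\pd\cpd\log\|\mathbf{1}\|_{1} = -2\pi i\,c_{1}(\ov{\OO(E)}_{1}) + 2\pi i\,\delta_{E},
\]
with $c_{1}(\ov{\OO(E)}_{1})$ a smooth $(1,1)$-form on $\widetilde{X}$.

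Pushing forward, the singular contribution is
\[
2\pi i\,\widetilde{f}_{\ast}[\delta_{E}\wedge\theta]=\sum_{p\in\Delta}\Bigl(\int_{D_{p}}\theta\Bigr)\delta_{p},
\]
because $\widetilde{f}$ is constant on each irreducible component $E_{q}$ of $E$, so that pairing the current with a test function reduces to summing $\phi(p)\int_{D_p}\theta$. Combining this with the classical identity $\dd_{\mathcal{D}}\log|w|=2\pi i\,\delta_{w=0}$, I would verify that setting
\[
R \;=\; \widetilde{f}_{\ast}[\log\|\mathbf{1}\|_{1}\theta]-\Bigl(\tfrac{1}{2\pi i}\int_{D_{p}}\theta\Bigr)[\log|w|]
\]
yields $\dd_{\mathcal{D}}R = -2\pi i\,\widetilde{f}_{\ast}[c_{1}(\ov{\OO(E)}_{1})\wedge\theta]$ in a neighborhood of $p$. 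This current is represented by a locally bounded $(1,1)$-form on $S$, by the same polar coordinate argument that proves Proposition~\ref{prop:bb_1}. Since $\dd_{\mathcal{D}}$ agrees up to a non-zero constant with the scalar Laplacian on $(0,0)$-currents over a curve, Lemma~\ref{lemma_wolpert} ensures that $R$ is represented by a continuous function in a neighborhood of $p$.

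For the ``in particular'' statement, suppose $\theta$ is de Rham cohomologous to $\pi^{\ast}\vartheta$ with $\vartheta$ smooth and $\pd,\cpd$-closed on $X$. Then $\int_{D_{p}}\theta=\int_{D_{p}}\pi^{\ast}\vartheta$, since $D_{p}$ is a compact closed $1$-cycle and the difference $\theta-\pi^{\ast}\vartheta$ is $d$-exact. But $\pi$ contracts each irreducible component of $D_{p}$ to a point, so $\pi^{\ast}\vartheta|_{D_{p}}=0$ and hence $\int_{D_{p}}\theta=0$. Therefore the correction term vanishes and $\widetilde{f}_{\ast}[\log\|\mathbf{1}\|_{1}\theta]$ is continuous at each $p\in\Delta$; off $\Delta$, the integrand is smooth on $\widetilde{f}^{-1}(V)$ and $\widetilde{f}$ is a submersion there, so the push-forward is already smooth on $V$. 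Continuity on all of $S$ follows.

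The main obstacle is the verification that the smooth contribution $\widetilde{f}_{\ast}[c_{1}(\ov{\OO(E)}_{1})\wedge\theta]$ is locally bounded on $S$: this requires the same local coordinate estimate used in the proof of Proposition~\ref{prop:bb_1} for degenerate fibrations of curves, and is the only non-formal step of the argument.
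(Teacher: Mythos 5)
Your argument is correct and follows essentially the same route as the paper: Poincar\'e--Lelong for the canonical section of $\ov{\OO(E)}_{1}$, commutation of $\dd_{\mathcal{D}}$ with proper push-forward, the identity $\dd_{\mathcal{D}}[\log|w|]=\delta_{p}$, boundedness of the push-forward of the remaining smooth form, Lemma \ref{lemma_wolpert}, and $\int_{D_{p}}\theta=\int_{D_{p}}\pi^{\ast}\vartheta=0$ for the last assertion. The only cosmetic differences are your $2\pi i$ normalizations (immaterial for the conclusion) and that the boundedness of $\widetilde{f}_{\ast}$ applied to a smooth $(1,1)$-form is exactly Proposition \ref{prop:f_ast_theta} of the paper --- whose local model along $E$ involves the non-reduced fibres created by the blow-up, so it is not quite the same computation as Proposition \ref{prop:bb_1} --- rather than a new estimate to be redone.
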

\begin{proof}
  Recall that the Poincar\'e-Lelong formula provides the equality of
  currents
  \begin{displaymath}
    \dd_{\mathcal{D}}[\log\|\mathbf{1}\|_{1}^{-1}]=[c_{1}(\ov{\OO(E)}_{1})]-\delta_{E}.
  \end{displaymath}
  Moreover, the operator $\dd_{\mathcal{D}}$ commutes with proper
  push-forward. Therefore, taking into account that $\theta$ is $\pd$
  and $\cpd$ closed, the
  equation
  \begin{equation}\label{eq:bb_corr_1}
    \dd_{\mathcal{D}}\widetilde{f}_{\ast}[\log\|\mathbf{1}\|_{1}\theta]=
    \left(\frac{1}{2\pi i}\int_{D_{p}}\theta\right)\delta_{p}-
    \widetilde{f}_{\ast}[c_{1}(\ov{\OO(E)}_{1})\theta].
  \end{equation}
  holds in a neighborhood of $p$. On the other hand, the Poincar\'e-Lelong equation also gives
  $\dd_{\mathcal{D}}[\log|w|]=\delta_{p}$.
  Using \eqref{eq:bb_corr_1}, we see that
  \begin{displaymath}
    \dd_{\mathcal{D}}\left(\widetilde{f}_{\ast}[\log\|\mathbf{1}\|_{1}\theta]
      -\left( \frac{1}{2\pi i}\int_{D_{p}}\theta\right)[\log|w|]\right)=
    -\frac{1}{2}\widetilde{f}_{\ast}[c_{1}(\ov{\OO(E)}_{1})\theta].
  \end{displaymath}
  Finally, by Proposition \ref{prop:f_ast_theta}, the current
  $\widetilde{f}_{\ast}[c_{1}(\ov{\OO(E)}_{1})\theta]$ is represented by
  a continuous function on $S$. Hence the first assertion follows from
  Lemma \ref{lemma_wolpert}. For the second assertion, we just observe
  that, in this case,
  \begin{displaymath}
    \int_{D_{p}}\theta=\int_{D_{p}}\pi^{\ast}\vartheta=0.
  \end{displaymath}
  The proof is complete.
\end{proof}
\begin{corollary}\label{cor:bb_corr_1}
  Let $n_{p}$ be the multiplicity of $\Delta$ at $p$ and $O(1)$
  the current represented by a locally bounded function. 
  The following estimates hold in a neighborhood of $p$
  \begin{align*}
    &\widetilde{f}_{\ast}[\log\|\mathbf{1}\|_{1}c_{1}(\pi^{\ast}\ov{T_{X}})]=O(1),\\
    &\widetilde{f}_{\ast}[\log\|\mathbf{1}\|_{1}c_{1}(\ov{\OO(E)}_{1})]=-n_{p}[\log|w|]+O(1),\\
    &\widetilde{f}_{\ast}[\log\|\mathbf{1}\|_{1}c_{1}(\ov{L}_{0})]=n_{p}[\log|w|]+O(1),\\
    &\widetilde{f}_{\ast}[\log\|\mathbf{1}\|_{1}
    c_{1}(\widetilde{\mu}^{\ast}\ov{U})]=-n_{p}[\log|w|]+O(1).
  \end{align*}
\end{corollary}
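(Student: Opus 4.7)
\textbf{Proof plan for Corollary \ref{cor:bb_corr_1}.} The strategy is to apply the preceding proposition with each of the four choices of $\theta$, verifying that these forms are smooth and $\pd,\cpd$-closed $(1,1)$-forms on $\widetilde X$ (automatic, since each is a first Chern form of a hermitian line or vector bundle, hence a closed form of pure type $(1,1)$), and computing the period $\frac{1}{2\pi i}\int_{D_p}\theta$ in each case. Here $D_p=\bigsqcup_{q\in \Sigma,\, f(q)=p}E_q$ is a disjoint union of $n_p$ exceptional $\PP^1$'s, and we use throughout the paper's normalization in which $\frac{1}{2\pi i}\int_{\PP^1}c_1(\OO(1))=1$.

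The first estimate is the easiest: $c_1(\pi^{\ast}\ov{T_X})=\pi^{\ast}c_1(\ov{T_X})$ is the pullback of a smooth $\pd,\cpd$-closed form from $X$, so the ``In particular'' clause of the preceding proposition gives directly that $\widetilde f_{\ast}[\log\|\mathbf{1}\|_1 c_1(\pi^{\ast}\ov{T_X})]$ is represented by a continuous function near $p$, hence is $O(1)$. For the second estimate, the normal bundle identification $\OO(E)|_{E_q}=N_{E_q/\widetilde X}\cong\OO_{\PP^1}(-1)$ (each $E_q$ is a $(-1)$-curve on the surface $\widetilde X$) yields $\frac{1}{2\pi i}\int_{E_q}c_1(\OO(E))=-1$, so summing over the $n_p$ components $\frac{1}{2\pi i}\int_{D_p}c_1(\ov{\OO(E)}_1)=-n_p$; the proposition then gives $-n_p[\log|w|]+O(1)$.

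For the third estimate, Lemma \ref{lemma:bb_1} combined with the trivialization of $\widetilde f^{\ast}T_S$ on $E_q$ gives $L|_{E_q}=\widetilde\mu^{\ast}\OO(1)|_{E_q}$; via the commutative diagram preceding the lemma, the restriction $\widetilde\mu^{\ast}\OO(-1)|_{E_q}$ is identified with the tautological $\OO(-1)$ on $E_q\cong\PP(T_pX)$, so $L|_{E_q}\cong\OO_{\PP^1}(1)$ and $\frac{1}{2\pi i}\int_{D_p}c_1(\ov L_0)=n_p$. For the fourth estimate, the exact sequence \eqref{eq:87} restricted to $E_q$ reads
\begin{displaymath}
  0\longrightarrow\widetilde\mu^{\ast}U|_{E_q}\longrightarrow\pi^{\ast}T_X|_{E_q}\longrightarrow L|_{E_q}\longrightarrow 0.
\end{displaymath}
Since $\pi(E_q)$ is a point, $\pi^{\ast}T_X|_{E_q}$ is trivial, so in cohomology $c_1(\widetilde\mu^{\ast}U|_{E_q})=-c_1(L|_{E_q})$, and any $(1,1)$-form integrated over the closed $E_q$ depends only on its cohomology class. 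Therefore $\frac{1}{2\pi i}\int_{D_p}c_1(\widetilde\mu^{\ast}\ov U)=-n_p$, and the proposition produces $-n_p[\log|w|]+O(1)$.

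The proof is essentially bookkeeping; there is no serious obstacle beyond keeping track of the $(2\pi i)$-conventions used in the Deligne complex (under which Chern forms carry an extra factor of $2\pi i$ compared to the ``real valued'' normalization) and the sign of the self-intersection of the exceptional divisor. Once these are correctly accounted for, the four cases follow directly from the preceding proposition together with the elementary degree computations above.
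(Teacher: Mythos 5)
Your proposal is correct and follows essentially the same route as the paper: apply the preceding proposition to each of the four Chern forms and compute the coefficient $\frac{1}{2\pi i}\int_{D_p}\theta$ by a degree computation on the exceptional curves. The paper phrases this bookkeeping through the relations \eqref{eq:bb_1}--\eqref{eq:bb_1bis} and the intersection numbers $(D_{p}\cdot D_{p})=(D_{p}\cdot E)=-n_{p}$, whereas you restrict to each $E_{q}$ and use $\widetilde{\mu}^{\ast}\OO(-1)|_{E_{q}}\cong\OO(-1)$ together with the sequence \eqref{eq:87}; these are equivalent computations.
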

\begin{proof}
  We use 
  \eqref{eq:bb_1}--\eqref{eq:bb_1bis} and the intersection numbers
  $(D_{p}\cdot D_{p})=(D_{p}\cdot E)=-n_{p}$.
\end{proof}
\begin{corollary}\label{cor:bb_2}
  With the notations above, the development
  \begin{displaymath}
    \widetilde{f}_{\ast}[\pi^{\ast}\ch(\ov{\mathcal{F}})
    \pi^{\ast}\Td(\ov{f})\Td(\ov{\eta}) \widetilde{\Td}_{m}(\ov{\gamma})]^{(3,2)}
    =\rk(\ov{\mathcal{F}})\frac{n_{p}}{6}[\log|w|]+O(1)
  \end{displaymath}
  holds in a neighborhood
  of $p$.
\end{corollary}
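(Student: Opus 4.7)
The plan is to substitute the explicit representative $\ov{\Td}_{m}(\ov{\gamma})$ from \eqref{eq:bb_7bis} for $\widetilde{\Td}_{m}(\ov{\gamma})$, expand the integrand into pieces that can be individually pushed forward, and then identify the $\log|w|$ contribution by applying Corollary \ref{cor:bb_corr_1} together with the metric identity $c_{1}(\ov{L}_{0})+c_{1}(\ov{\OO(E)}_{1})=\widetilde{f}^{\ast}c_{1}(\ov{T_{S}})$ (Lemma \ref{lemma:bb_1}). Writing $\alpha=\pi^{\ast}\ch(\ov{\mathcal{F}})\,\pi^{\ast}\Td(\ov{f})\,\Td(\ov{\eta})$, the Deligne $(3,2)$ component of the pushforward is the pushforward of the Dolbeault $(2,2)$ component of $\alpha\cdot\ov{\Td}_{m}(\ov{\gamma})$ on $\widetilde{X}$, and this decomposes according to bidegree as
\begin{displaymath}
\alpha^{(0,0)}\cdot\ov{\Td}_{m}(\ov{\gamma})^{(2,2)}+\alpha^{(1,1)}\cdot\ov{\Td}_{m}(\ov{\gamma})^{(1,1)}+\alpha^{(2,2)}\cdot\ov{\Td}_{m}(\ov{\gamma})^{(0,0)},
\end{displaymath}
with $\alpha^{(0,0)}=\rk(\ov{\mathcal{F}})$ and $\alpha^{(1,1)}$, $\alpha^{(2,2)}$ explicit smooth closed forms built from $c_{1}(\ov{\mathcal{F}})$, $c_{1}(\ov{T_{f}})$ and $c_{1}(\pi^{\ast}\ov{T_{X}})$.

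First I would expand $\Td^{-1}(\ov{L}_{0})=1-\tfrac{1}{2}c_{1}(\ov{L}_{0})+\tfrac{1}{6}c_{1}(\ov{L}_{0})^{2}-\ldots$ and collect the bidegree pieces of $\ov{\Td}_{m}(\ov{\gamma})$. A direct computation yields $\ov{\Td}_{m}(\ov{\gamma})^{(0,0)}=-\tfrac{1}{2}\log\|\mathbf{1}\|_{1}$ and, after the $\tfrac{1}{12}-(-\tfrac{1}{12})$ cancellation in the $c_{1}(\ov{L}_{0})^{2}$ terms,
\begin{displaymath}
\ov{\Td}_{m}(\ov{\gamma})^{(1,1)}=\tfrac{1}{12}\log\|\mathbf{1}\|_{1}\bigl(c_{1}(\ov{L}_{0})-c_{1}(\ov{\OO(E)}_{1})\bigr)+\tfrac{b}{3}\pd\bigl(\log\|\mathbf{1}\|_{1}\cpd\log\|\mathbf{1}\|_{1}\bigr),
\end{displaymath}
together with an analogous expression for $\ov{\Td}_{m}(\ov{\gamma})^{(2,2)}$ in which the only $\log\|\mathbf{1}\|_{1}$ factor appears multiplying smooth $(2,2)$ forms or expressions of the form $c_{1}(\ov{L}_{0})\cdot\pd(\log\|\mathbf{1}\|_{1}\cpd\log\|\mathbf{1}\|_{1})$.

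The core step is to isolate the $\log|w|$ singularity in each piece after pushforward. For the $\tfrac{1}{12}\log\|\mathbf{1}\|_{1}(c_{1}(\ov{L}_{0})-c_{1}(\ov{\OO(E)}_{1}))$ contribution, I would use the identity $c_{1}(\ov{L}_{0})-c_{1}(\ov{\OO(E)}_{1})=2c_{1}(\ov{L}_{0})-\widetilde{f}^{\ast}c_{1}(\ov{T_{S}})$, so the piece of interest contains a factor of $\widetilde{f}^{\ast}c_{1}(\ov{T_{S}})$ which, by the projection formula (Proposition \ref{prop:12}), reduces the pushforward to $c_{1}(\ov{T_{S}})\cdot\widetilde{f}_{\ast}[\alpha^{(0,0)}\log\|\mathbf{1}\|_{1}c_{1}(\ov{L}_{0})]$ plus lower order terms. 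Applying Corollary \ref{cor:bb_corr_1}, this yields the expected coefficient $\rk(\ov{\mathcal{F}})\cdot\tfrac{1}{12}\cdot 2n_{p}=\rk(\ov{\mathcal{F}})\tfrac{n_{p}}{6}$ on the singular part of $\log|w|$. The remaining terms (those involving $\alpha^{(2,2)}\cdot(-\tfrac{1}{2}\log\|\mathbf{1}\|_{1})$, the $c_{1}^{2}$-type pieces of $\ov{\Td}_{m}(\ov{\gamma})^{(2,2)}$, and the $\pd\cpd$-terms) must be shown to contribute only $O(1)$.

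The main obstacle is precisely establishing this vanishing of the remaining terms to $O(1)$. For pushforwards of $\log\|\mathbf{1}\|_{1}$ multiplied by smooth top-degree forms $\theta^{(2,2)}$, the argument is that the extension of the proposition preceding Corollary \ref{cor:bb_corr_1} to higher Dolbeault bidegree forces the coefficient of $\log|w|$ to be $\tfrac{1}{2\pi i}\int_{D_{p}}\theta^{(2,2)}$, which vanishes by dimension since $D_{p}$ is a disjoint union of rational curves of complex dimension one. For the $\pd\cpd$-terms of the form $(1,1)\text{-smooth}\cdot\pd(\log\|\mathbf{1}\|_{1}\cpd\log\|\mathbf{1}\|_{1})$, I would integrate by parts against the closed smooth form $\alpha^{(1,1)}$ (modulo absorbing exact corrections into $O(1)$), reducing them to expressions of the type $\log\|\mathbf{1}\|_{1}^{2}\cdot(\text{smooth top form})$ whose pushforwards are locally bounded by standard estimates near the nodes in the local model $\widetilde{f}(s,t)=s^{2}t$. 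Once these reductions are in place, collecting the arithmetic of the coefficients from $\ov{\Td}_{m}(\ov{\gamma})^{(1,1)}$ and Corollary \ref{cor:bb_corr_1} produces precisely the stated coefficient $\rk(\ov{\mathcal{F}})\tfrac{n_{p}}{6}$.
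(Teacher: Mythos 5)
Your overall strategy---substitute the representative \eqref{eq:bb_7bis}, expand by bidegree, and read off the $\log|w|$ coefficient from Corollary \ref{cor:bb_corr_1}---is the paper's strategy, and your formula for $\ov{\Td}_{m}(\ov{\gamma})^{(1,1)}$ is correct. However, the execution has a genuine bidegree error at the first step. The superscript $(3,2)$ is the Deligne bidegree of the current inside the brackets on $\widetilde{X}$, i.e.\ its Dolbeault $(1,1)$ component; its pushforward is then a Dolbeault $(0,0)$ (Deligne $(1,1)$) current on $S$, which is what is compared with $[\log|w|]$ and what is fed into the final theorem through \eqref{eq:bb_4bis}. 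It is \emph{not} the Dolbeault $(2,2)$ component of the integrand: with your reading the left-hand side would be a top-degree current on the curve $S$, the right-hand side $[\log|w|]$ would be of the wrong type, and none of the available tools (Lemma \ref{lemma_wolpert}, the proposition preceding Corollary \ref{cor:bb_corr_1}) applies in that degree. In the correct component the leading term comes out directly: it is $\rk(\ov{\mathcal{F}})\tfrac{1}{12}\,\widetilde{f}_{\ast}[\log\|\mathbf{1}\|_{1}(c_{1}(\ov{L}_{0})-c_{1}(\ov{\OO(E)}_{1}))]=\rk(\ov{\mathcal{F}})\tfrac{n_{p}}{6}[\log|w|]+O(1)$ by the second and third estimates of Corollary \ref{cor:bb_corr_1}; your detour through $2c_{1}(\ov{L}_{0})-\widetilde{f}^{\ast}c_{1}(\ov{T_{S}})$ and the projection formula leaves an extraneous smooth factor $c_{1}(\ov{T_{S}})$ in front of the pushforward and therefore does not yield the stated display (the arithmetic $\tfrac{1}{12}\cdot 2n_{p}=\tfrac{n_{p}}{6}$ is right, but the route to it is inconsistent).

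The second gap is the term $\tfrac{b}{3}\pd(\log\|\mathbf{1}\|_{1}\cpd\log\|\mathbf{1}\|_{1})$. In the correct component it is multiplied by the constant $\alpha^{(0,0)}=\rk(\ov{\mathcal{F}})$, so there is no closed $(1,1)$ form to integrate by parts against, and your appeal to ``standard estimates'' in the local model $s^{2}t$ is unsubstantiated and unnecessary. The missing idea is exact, not an $O(1)$ bound: over $V$ the map $\widetilde{f}$ is a smooth proper fibration with one-dimensional fibres, the $(0,2)$ part of $d$ vanishes fibrewise, so the fibre integral of $\pd(\log\|\mathbf{1}\|_{1}\cpd\log\|\mathbf{1}\|_{1})$ equals that of $d(\log\|\mathbf{1}\|_{1}\cpd\log\|\mathbf{1}\|_{1})$ and is zero by Stokes; hence this term contributes nothing on $V$. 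Finally, you still owe the argument that the remaining $(1,1)$ contribution $\alpha^{(1,1)}\cdot(-\tfrac{1}{2}\log\|\mathbf{1}\|_{1})$, with $\alpha=\pi^{\ast}\ch(\ov{\mathcal{F}})\pi^{\ast}\Td(\ov{f})\Td(\ov{\eta})$, is $O(1)$: this requires expanding $\ch(\ov{\mathcal{F}})$, $\Td(\ov{f})$ and $\Td(\ov{\eta})$ and using \emph{all} the estimates of Corollary \ref{cor:bb_corr_1} (those for $c_{1}(\pi^{\ast}\ov{T_{X}})$, $c_{1}(\ov{L}_{0})$ and $c_{1}(\widetilde{\mu}^{\ast}\ov{U})$), together with the cancellations forced by the isometries behind \eqref{eq:bb_1}; your proposal does not address this step in the relevant degree.
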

\begin{proof}
  We take into account the expression \eqref{eq:bb_7bis} for the
  representative $\ov{\Td}_{m}(\ov{\gamma})$, the developments of the
  smooth differential forms $\ch(\ov{\mathcal{F}})$, $\Td(\ov{f})$,
  $\Td(\ov{\eta})$ and $\Td^{-1}(\ov{L}_{0})$, and then apply
  Corollary \ref{cor:bb_corr_1}. We find
  \begin{multline*}
    \widetilde{f}_{\ast}[\pi^{\ast}\ch(\ov{\mathcal{F}})
    \pi^{\ast}\Td(\ov{f})\Td(\ov{\eta}) \widetilde{\Td}_{m}(\ov{\gamma})]^{(3,2)}
    =\\ \rk(\ov{\mathcal{F}})\frac{n_{p}}{6}[\log|w|]+
    \rk(\ov{\mathcal{F}})\frac{b}{3}
    \widetilde{f}_{\ast}[\pd(\log\|\mathbf{1}\|_{1}\cpd\log\|\mathbf{1}\|_{1})]+O(1).
  \end{multline*}
  To conclude we observe that, on $V$, the term
  $\widetilde{f}_{\ast}[\pd(\log\|\mathbf{1}\|_{1}\cpd\log\|\mathbf{1}\|_{1})]$
  vanishes. Indeed, the morphism $\widetilde{f}_{\ast}$ is smooth on
  $V$ with one dimensional fibers. Hence this current is represented
  by the function
  \begin{displaymath}
      V\ni s\mapsto \frac{1}{2\pi i}\int_{\widetilde{f}^{-1}(s)}
      \pd(\log\|\mathbf{1}\|_{1}\cpd\log\|\mathbf{1}\|_{1})=
      \frac{1}{2\pi i}\int_{\widetilde{f}^{-1}(s)}
      d(\log\|\mathbf{1}\|_{1}\cpd\log\|\mathbf{1}\|_{1})=0.
  \end{displaymath}
  This ends the proof.
\end{proof}

The results of this section are summarized in the following statement.
\begin{theorem}
  Let $p\in\Delta$ and let $n_{p}$ be the number of singular points of
  $f\colon X\to S$ lying above $p$. Let $w$ be a local coordinate on
  $S$, centered at $p$. Then, in a neighborhood of $p$, we have the
  estimate
  \begin{displaymath}
    T(\ov{\xi}')^{(1,1)}=-\frac{\rk{\ov{\mathcal{F}}}}{6}n_{p}[\log|w|]+O(1).
  \end{displaymath}
\end{theorem}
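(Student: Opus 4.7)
The plan is to combine directly the decomposition of $T(\ov{\xi}')$ obtained in equation \eqref{eq:bb_4bis} with the three estimates that have already been established in the preceding propositions and corollaries. In particular, starting from
\[
T(\ov{\xi}')=T(\ov{\xi})\mid_{V}-\widetilde{f}_{\ast}[\pi^{\ast}\ch(\ov{\mathcal{F}})\pi^{\ast}\Td(\ov{f})\widetilde{\Td}_{m}(\ov{\eta})]
-\widetilde{f}_{\ast}[\pi^{\ast}\ch(\ov{\mathcal{F}})\pi^{\ast}\Td(\ov{f})\Td(\ov{\eta})\widetilde{\Td}_{m}(\ov{\gamma})],
\]
I would take the component of Deligne bidegree $(1,1)$ of both sides and treat each of the three terms on the right hand side separately, in a neighborhood of the point $p\in\Delta$.

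First, by Proposition \ref{prop:bb_1}, the current $T(\ov{\xi})^{(1,1)}$ is represented by a continuous function on $S$, so its restriction to $V$ contributes $O(1)$ near $p$. Next, since the integrand in the second term is a smooth differential form defined on all of $\widetilde{X}$ (the class $\widetilde{\Td}_{m}(\ov{\eta})$ being globally defined), Corollary \ref{cor:bb_1} asserts that this push-forward is represented by a bounded function, contributing once again $O(1)$. For the third term, recall that the map $\widetilde f$ has relative dimension one, so that $\widetilde{f}_{\ast}$ decreases Deligne bidegree by $(2,1)$; thus the $(1,1)$-component of the third term is precisely minus the push-forward of the $(3,2)$-component of its integrand. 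Corollary \ref{cor:bb_2} gives exactly this estimate, namely $\rk(\ov{\mathcal{F}})\frac{n_{p}}{6}[\log|w|]+O(1)$.

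Combining the three estimates with the signs appearing in the decomposition yields
\[
T(\ov{\xi}')^{(1,1)}=O(1)-O(1)-\Bigl(\rk(\ov{\mathcal{F}})\frac{n_{p}}{6}[\log|w|]+O(1)\Bigr)=-\frac{\rk(\ov{\mathcal{F}})}{6}n_{p}[\log|w|]+O(1),
\]
which is the desired conclusion. The argument is essentially a bookkeeping of the three contributions, with no additional analytic obstacle: all the hard analytic work is already encapsulated in Proposition \ref{prop:bb_1} (which rests on the regularity lemma \ref{lemma_wolpert} applied to the Grothendieck--Riemann--Roch right hand side), Proposition \ref{prop:f_ast_theta} (the boundedness of fiber integrals of smooth forms of bidegree $(1,1)$ across ordinary double points), and the explicit computation \eqref{eq:bb_7bis} combined with Corollary \ref{cor:bb_corr_1}, which extracts the logarithmic singularity from $\log\|\mathbf{1}\|_{1}$ via Poincar\'e--Lelong and the intersection numbers $(D_{p}\cdot D_{p})=(D_{p}\cdot E)=-n_{p}$. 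Consequently, there is no genuine obstacle at this stage; the step that required the most care was Corollary \ref{cor:bb_2}, where one had to verify that the seemingly dangerous term $\widetilde{f}_{\ast}[\pd(\log\|\mathbf{1}\|_{1}\cpd\log\|\mathbf{1}\|_{1})]$ vanishes on $V$ by Stokes' theorem on the smooth one-dimensional fibers over $V$.
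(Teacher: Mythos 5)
Your proposal is correct and follows exactly the paper's own argument, which likewise just combines the decomposition \eqref{eq:bb_4bis} with Proposition \ref{prop:bb_1}, Corollary \ref{cor:bb_1} and Corollary \ref{cor:bb_2}; your bidegree bookkeeping (the $(1,1)$-component of the push-forward coming from the $(3,2)$-component of the integrand, since $\widetilde f$ has relative dimension one) is the intended reading of the notation in Corollary \ref{cor:bb_2}. No gaps.
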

\begin{proof}
  It is enough to join \eqref{eq:bb_4bis},
  Proposition \ref{prop:bb_1}, Corollary \ref{cor:bb_1} and
  \ref{cor:bb_2}.
\end{proof}

\begin{corollary}
  Assume that $\ov{\mathcal{F}}=\ov{E}$ is a vector bundle placed in
  degree $0$, and that $R^{1}f_{\ast}E=0$ on $S$. Endow $f_{\ast}E$
  with the $L^2$ metric on $V$ depending on $\ov{E}$ and the metric on
  $\ov{\omega_{f^{-1}(V)/V}}$. Write
  $\xi''=(\ov{f}',\ov{E},\ov{f_{\ast}E}_{L^{2}})$ for the
  corresponding relative metrized complex on $V$. Let $p$ and $w$ be
  as in the theorem. Then we have
  \begin{displaymath}
    T(\ov{\xi}'')^{(1,1)}=-\frac{\rk(\ov{\mathcal{F}})}{6}n_{p}[\log|w|]+O(\log\log|w|^{-1})
  \end{displaymath}
  as $w\to 0$.
\end{corollary}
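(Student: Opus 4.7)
The plan is to reduce the corollary to the preceding theorem via a change-of-metric argument on $f_{\ast}E$. By the anomaly formula for the third slot (Proposition \ref{prop:1tris}\ref{item:8tris}) applied over $V$, where $\ov{\Rd f_{\ast}\mathcal{F}}|_{V}$ is simply $f_{\ast}E$ with whatever smooth hermitian structure restricted from $S$, we obtain
\begin{displaymath}
T(\ov{\xi}'')^{(1,1)} = T(\ov{\xi}')^{(1,1)} - \widetilde{\ch}\bigl(\ov{f_{\ast}E}_{L^{2}},\ov{\Rd f_{\ast}\mathcal{F}}|_{V}\bigr)^{(1,1)}.
\end{displaymath}
The first term on the right is $-\frac{\rk(\ov{\mathcal{F}})}{6}n_{p}[\log|w|] + O(1)$ by the preceding theorem. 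Thus the whole problem reduces to showing that the Bott-Chern component has growth $O(\log\log|w|^{-1})$ as $w\to 0$.

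In Deligne bidegree $(1,1)$, the class $\widetilde{\ch}(\ov{f_{\ast}E}_{L^{2}},\ov{f_{\ast}E})^{(1,1)}$ is represented, up to a normalization constant fixed by the conventions of \cite{BurgosFreixasLitcanu:HerStruc}, by the function $\log\det(h_{L^{2}}/h_{\smooth})$ on $V$, where $h_{\smooth}$ is a fixed smooth hermitian metric on the locally free sheaf $f_{\ast}E$ on $S$ (recall $R^{1}f_{\ast}E=0$ globally on $S$, so $f_{\ast}E$ is locally free and $h_{\smooth}$ exists in a neighborhood of $p$). Since $h_{\smooth}$ extends smoothly across $p$, the only singular contribution comes from $\log\det h_{L^{2}}$, and we are reduced to proving the pointwise bound $\log\det h_{L^{2}}(s) = O(\log\log|w(s)|^{-1})$.

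This last step is the classical asymptotic for the $L^{2}$ metric on the direct image of a holomorphic vector bundle over a nodal degeneration of curves. The argument proceeds by passing to the local analytic model $f(z_{0},z_{1}) = z_{0}z_{1} = w$ near a node and computing, in a trivialization of $E$ and of $\omega_{X/S}$ near that point, the $L^{2}$ pairing of local holomorphic sections with respect to the induced fiberwise volume form. Since $\omega_{X/S}$ acquires a simple pole along each branch of the nodal fiber (the relative dualizing sheaf being generated by $dz_{0}/z_{0}$ and $-dz_{1}/z_{1}$ on the two branches), the fiber $|w|<|z_{0}|<1$ integrated in polar coordinates yields a contribution of order $\log\log|w|^{-1}$, with the non-vanishing of the $L^{2}$ norm at the boundary provided by the smooth part of the integrand.

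The hard part is precisely this local $L^{2}$ estimate near the node: while a pure polynomial blow-up is easy to exclude by a direct computation with the model, pinning down the sharp $\log\log|w|^{-1}$ bound requires the well-known analysis of integrals of the form $\int_{|w|<|z_{0}|<1}\psi(z_{0},w/z_{0})\,\frac{i}{2}dz_{0}\wedge d\bar z_{0}/|z_{0}|^{2}$ for smooth $\psi$, together with the observation that the contribution of the smooth part of the fiber is bounded. This estimate is the one-dimensional specialization of the standard behavior of $L^{2}$-metrics on Hodge bundles under nodal degenerations and can be invoked directly from \cite{Bismut-Bost}, completing the proof.
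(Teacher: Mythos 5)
Your proposal is correct and follows essentially the same route as the paper: introduce an auxiliary smooth metric on $f_{\ast}E$, apply the preceding theorem to it, use the anomaly formula (Proposition \ref{prop:1tris}) to pass to the $L^{2}$ metric, and bound the resulting Bott-Chern class by the Bismut--Bost estimate \cite[Prop.~7.1]{Bismut-Bost} on the singularity of the $L^{2}$ metric. Your additional sketch of the local $L^{2}$ computation near the node is not needed, since in the end you (like the paper) simply invoke that result.
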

\begin{proof}
  Introduce an auxiliary smooth hermitian metric on the vector bundle
  $f_{\ast}E$ on $S$, and let
  $\ov{\xi}'=(\ov{f}',\ov{E},\ov{f_{\ast}E})$ be the corresponding
  relative metrized complex. Then the theorem applies to
  $\ov{\xi}'$. By the anomaly formulas, on $V$ we have
  \begin{displaymath}
    T(\ov{\xi}'')^{(1,1)}=T(\ov{\xi}')^{(1,1)}+
    \widetilde{\ch}(\ov{f_{\ast}E},\ov{f_{\ast}E}_{L^2})^{(1,1)}.
  \end{displaymath}
  By \cite[Prop. 7.1]{Bismut-Bost}, the $L^2$ metric has logarithmic
  singularities near $w=0$ and
  \begin{displaymath}
    \widetilde{\ch}(\ov{f_{\ast}E},\ov{f_{\ast}E}_{L^2})=O(\log\log|w|^{-1})
  \end{displaymath}
  as $w\to 0$. This proves the corollary.
\end{proof}
\begin{remark}
  The corollary is to be compared with
  \cite[Thm. 9.3]{Bismut-Bost}. The difference of sign is due to the
  fact that Bismut and Bost work with the inverse of the usual
  determinant line bundle. The approach of \emph{loc. cit.} is more
  analytic and requires the spectral description of the Ray-Singer
  analytic torsion.
\end{remark}

\bigskip
\footnotesize
\noindent\textit{Acknowledgments.}
During the elaboration of this paper we have benefited from
conversations with many colleagues, that helped us to understand some
points, to clarify others or to find the relevant bibliography. Our
thanks to J.-M. Bismut, J.-B. Bost, D. Burghelea, D. Eriksson, J. Kramer,
U. K\"uhn, X. Ma, V. Maillot, D. R\"ossler, C. Soul\'e.

We would like to thank the following institutions where part of the
research conducting to this paper was done: the CRM in Bellaterra
(Spain), the CIRM in Luminy (France), the Morningside Institute of Beijing
(China), the University of Barcelona and the IMUB, the Alexandru Ioan
Cuza University of Iasi,
the Institut de Math\'ematiques de Jussieu and the ICMAT (Madrid).

Burgos and Freixas were partially supported by grant MTM2009-14163-
C02-01, Burgos was partially supported by CSIC research project
2009501001, Li\c tcanu was partially supported by CNCSIS -UEFISCSU,
project number PNII - IDEI 2228/2008 .


\newcommand{\noopsort}[1]{} \newcommand{\printfirst}[2]{#1}
  \newcommand{\singleletter}[1]{#1} \newcommand{\switchargs}[2]{#2#1}
\providecommand{\bysame}{\leavevmode\hbox to3em{\hrulefill}\thinspace}
\providecommand{\MR}{\relax\ifhmode\unskip\space\fi MR }
\providecommand{\MRhref}[2]{%
  \href{http://www.ams.org/mathscinet-getitem?mr=#1}{#2}
}
\providecommand{\href}[2]{#2}

\end{document}